\newcommand{\ord}{\mbox{ord}}
\newcommand{\id}{\mbox{id}}
\theoremstyle{plain}
\newtheorem*{theorem*}{Theorem}
\newtheorem*{corollary*}{Corollary}
\newtheorem{theorem}{Theorem}[section]
\newtheorem{lemma}[theorem]{\bf Lemma}
\newtheorem{corollary}[theorem]{\bf Corollary}
\newtheorem{proposition}[theorem]{\bf Proposition}
\theoremstyle{definition}
\theoremstyle{remark}
\newtheorem{remark}[theorem]{\bf Remark}
\newtheorem{notation}[theorem]{\bf Notation}
\newtheorem{notationassumptions}[theorem]{\bf Notation and Assumptions}
\newtheorem{assumption}[theorem]{\bf Assumption}
\newtheorem{claim}[theorem]{\bf Claim}
\renewcommand{\AA}{{\mathfrak A}}
\newcommand{\BB}{{\mathfrak B}}
\newcommand{\EE}{{\mathfrak E}}
\newcommand{\calS}{{\mathcal S}}
\newcommand{\calV}{{\mathcal V}}
\newcommand{\F}{{\mathbb F}}
\newcommand{\G}{{\mathbb G}}
\newcommand{\Q}{{\mathbb Q}}
\newcommand{\Z}{{\mathbb Z}}
\newcommand{\Aa}{{\mathfrak A}}
\newcommand{\pp}{{\mathfrak p}}
\newcommand{\Qq}{{\mathfrak Q}}
\newcommand{\qq}{{\mathfrak q}}
\newcommand{\ttt}{{\mathfrak t}}
\newcommand{\aaa}{{\mathfrak a}}
\newcommand{\be}{\begin{enumerate}}
\newcommand{\ee}{\end{enumerate}}
\def\ord{\mathop{\mathrm{ord}}\nolimits}
\newcommand\cyr{%
\renewcommand\rmdefault{wncyr}%
\renewcommand\sfdefault{wncyss}%
\renewcommand\encodingdefault{OT2}%
\normalfont
\selectfont}
\DeclareTextFontCommand{\textcyr}{\cyr}
\begin{document}
\bibliographystyle{plain}%
 \title{On definitions of polynomials over function fields of positive characteristic}%
\author{Alexandra Shlapentokh}%
\thanks{The research for this paper has been partially supported by NSF grant DMS-0650927,  DMS-1161456,   a grant from John Templeton Foundation. }
\address{Department of Mathematics \\ East Carolina University \\ Greenville, NC 27858}%
\email{shlapentokha@ecu.edu }
\urladdr{www.personal.ecu.edu/shlapentokha} \subjclass[2000]{Primary 11U05; Secondary 11G05} \keywords{Hilbert's Tenth
Problem, Diophantine definition, First-Order Definition}

\date{\today}

\maketitle

\bigskip

\bigskip

\begin{abstract}
 We consider the problem of defining polynomials over function fields of positive characteristic.  Among other results, we show that the following assertions are true.\\
 1.  Let $\G_p$ be an  algebraic extension of  a field of $p$ elements and assume $\G_p$ is not algebraically closed. Let $t$ be transcendental over $\G_p$, and let $K$ be a finite extension of $\G_p(t)$.  In this case $\G_p[t]$ has a definition (with parameters) over $K$ of the form $\forall \exists \ldots \exists P$  with only one variable in the range of the universal quantifier and $P$ being a polynomial over $K$.\\
 2.  For any $q$, for all $p \not=q$  and all function fields $K$ as above with $\G_p$ having an extension of degree $q$ and a primitive $q$-th root of unity, there is a uniform in $p$ and $K$ definition (with parameters) of $\G_p[t]$, of the form $\exists \ldots \exists \forall \forall \exists \ldots \exists P$ with only two variables in the range of universal quantifiers and $P$  being a finite collection of disjunction and conjunction of polynomial  equations over $\Z/p$.  Further, for any finite collection $\calS_K$ of primes of $K$ of fixed size $m$, there is a uniform in $K$ and $p$ definition of the ring of $\calS_K$-integers of the form $\forall\forall\exists \ldots \exists P$ with the range of universal quantifiers and $P$ as above.\\
 3.  Let $M$ be a function field of positive characteristic in one variable $t$ over an arbitrary constant field $H,$  and let $\G_p$ be the algebraic closure of a finite field in $H$.  Assume $\G_p$ is not algebraically closed.  In this case $\G_p[t]$ is first-order definable over $M$.

\end{abstract}%

\section{Introduction}
The immediate inspiration for this paper came from a classical result of R. Rumely concerning global function fields (see \cite{Rum}) and from more recent  results of B. Poonen (see \cite{PO4})  and J. Koenigsmann (see \cite{Koenig2}) for number fields.  However, the origins of the questions discussed in this paper can be traced back  to work of J. Robinson and to questions surrounding  attempts to prove that the analog of Hilbert's Tenth Problem is true for $\Q$.  

In 1949 J. Robinson gave a first-order definition of integers over $\Q$, thus proving undecidability of the first order theory of $\Q$ (see \cite{Rob1}).   Ten years later she proved that $\Z$ is definable over the ring of integers of any number field (using only {\it one} universal quantifier) and the ring of integers is definable over the number field (see \cite{Rob2}).  Thus the number fields also have an undecidable first-order theory.  J. Robinson's first definitions of integers over number fields were field-dependent and used many universal quantifiers (when the definitions are rewritten in the form $E_1 \ldots E_n P$ with $E_i$ being a universal or existential quantifier and $P$ being a system of polynomial equations).  In \cite{Rob3}, J. Robinson produced an uniform definition of $\Z$ over rings of integers of number fields. In  a 1980 paper R. Rumely updated J. Robinson's definition of integers over number fields making it uniform across number fields (see \cite{Rum}).    

The number of  universal quantifiers used in the definitions of integers became an issue in connection  to attempts to solve Hilbert's Tenth Problem over $\Q$.    Hilbert's Tenth Problem (HTP in the future) in its original form was a question posed by Hilbert at the beginning of the XX century concerning an algorithm to determine the solvability of polynomial equations over $\Z$.   In 1969, Yu. Matiasevich, building on work of M. Davis, H. Putnam and J. Robinson, showed that such an algorithm does not exist (see \cite{Da2}).  In fact, more was shown: it was proved that every r.e. subset of integers had a definition of the form $\exists \ldots \exists P$, where $P$ is again a system of polynomial equations.

  Hilbert's question of course makes sense for any recursive ring, and in particular for $\Q$.  The HTP analog for $\Q$ is currently unresolved.  One way to show that the problem is undecidable for $\Q$ is to give an existential definition of $\Z$ over $\Q$. 
Thus there is an interest in reducing (to zero if possible) the number of universal quantifiers in a definition of $\Z$ over $\Q$. B. Poonen reduced this number to two in his definition of algebraic integers over number fields while keeping the definitions uniform.  J. Koenigsmann reduced the number of quantifiers to one in his definition of $\Z$ over $\Q$.  This could very well be the optimal result as some conjectures imply that $\Z$ does not have an existential definition over $\Q$.  (See \cite{M1}, \cite{M2}, \cite{M3}, \cite{M4}, \cite{CTSDS} for a discussion of conjectures of B. Mazur on topology of rational points and \cite{Koenig2}  for a discussion of a consequence of the strong Bombieri-Lang conjecture.)

  Over function fields of positive characteristic we understand the analog of  HTP much better than over number fields.  In particular we know that the problem is undecidable over any function field of positive characteristic as long as it does not contain the algebraic closure of a finite field (see \cite{ES2}) and over function fields over algebraically closed fields of constants of transcendence degree at least 2 over a finite field (see \cite{Eis2012}).  
  
  It is natural to consider the analog of existentially defining $\Z$ over $\Q$ (or the ring of integers over a number field)  in this setting.  This analog is a construction of an existential definition of a ring of $\calS$-integers over a function field.  One could argue that the chances that such a definition exists are much higher over function  fields of positive characteristic than over number fields.
  
    First of all, one implication of a $p$-adic conjecture of B. Mazur implying the lack of an existential definition of $\Z$ over $\Q$, does not hold over function fields of positive characteristic.   More specifically, the $p$-adic conjecture of B. Mazur implies 
that there are no infinite $p$-adically discrete Diophantine subsets of $\Q$.  Of course we now know that over any function field of positive characteristic such infinite $p$-adically discrete subsets exist.  These sets are of the form $\{t^{p^s}\}$, where $t$ is some non-constant element of the field and $s \in \Z_{\geq 0}$ (see \cite{ES2}).  

Further, the author of this paper has shown that for any $\varepsilon >0$ in any global function field there is a big ring where the Dirichlet density of inverted primes is bigger than $1-\varepsilon$ and any ring of $\calS$-integers contained in the big ring has an existential definition (see \cite{Sh25}).   This existence of the function field big ring result  is in contrast to the fact that we know of no big subring of $\Q$, where we can define integers existentially.  Such subrings however, have been constructed by the author in some number fields (see \cite{Sh36}) and Poonen has constructed a {\it model} of $\Z$ in a family of big subrings of $\Q$ in \cite{Po2}.  (See \cite{PS}, \cite{EE} and \cite{EES} for other results constructing models of $\Z$ in big rings.)

   The desire for an existential definition of $\calS$-integers over a function  field of positive characteristic is motivated not just by the number field analog but also by a result of J. Demeyer (see \cite{Demeyer}) showing that over polynomial rings over finite fields of constants all r.e. sets are Diophantine.   Since polynomials are existentially definable in all rings of $\calS$-integers of global function fields (see \cite{Sh14}), an existential definition of a ring of $\calS$-integers over the field   would produce an existential definition of a polynomial ring and thus show that all r.e. subsets of the field were Diophantine.     Unfortunately we do not succeed in producing an existential definition, only a strong analog of J. Koenigsmann's result.

  In considering the simplicity of a first-order definition, besides the number of different quantifiers, one should also consider the number of parameters. R. Rumely's definition of polynomial rings over global fields uses just one (absolutely necessary) parameter.  In our definitions, driven by the agenda described above, our goal was always minimizing the number of universal quantifiers.  We were nevertheless also interested in uniform definitions and the smallest possible numbers of parameters.  However, it was not always possible to achieve all the goals in one definition.  So we generally sacrificed minimization of the number of parameters and produced uniform and non-uniform versions when a non-uniform version resulted in fewer universal quantifiers.   Before we state our main results we remark on using a single polynomial equation versus a finite collection of disjunctions and conjunctions of polynomial equations.
  \begin{remark}
 Over any not algebraically closed field a finite collection of disjunctions and conjunctions of polynomial equations can always be converted to an equivalent polynomial equation.  However, this conversion, more specifically the conversion of conjunctions or systems of polynomial equations requires a choice of a polynomial without roots in the field.  Thus across fields with different collections of polynomials without roots this conversion is not uniform.  Hence, when we are concerned about uniformity, we opt for leaving the conjunctions and disjunctions in their original multi-equation form.
  \end{remark}
  Below we state our main theorems listed in the order of generality with respect to the collection of fields covered by the results.  For the first result we sacrificed  uniformity, minimization of the number of universal quantifiers and parameters to be able to tackle an arbitrary constant field.
  
  \begin{theorem*}
  Let $M$ be a function field of positive characteristic in one variable $t$ over an arbitrary constant field $H,$  and let $\G_p$ be the algebraic closure of a finite field in $H$.  Assume $\G_p$ is not algebraically closed.  In this case $\G_p[t]$ is first-order definable over $M$.  (Theorem \ref{thm:general}.)
  \end{theorem*}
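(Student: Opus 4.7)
I aim to give a first-order definition of $\G_p[t]$ in $M$ with parameters, by first isolating $\G_p$ inside $M$ by a first-order formula and then characterizing $\G_p[t]$ via integrality conditions using $t$ as a parameter.

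First observe that $\G_p = M \cap \overline{\F_p}$, since $H$ is algebraically closed inside $M$ and $\G_p$ is the algebraic closure of $\F_p$ in $H$. The tempting candidate condition ``$\exists n \in \N \colon x^{p^n} = x$'' is not a first-order formula in the language of rings, as $n$ ranges over $\N$. The hypothesis that $\G_p$ is not algebraically closed is essential here: it supplies a prime $q$ and an irreducible $g \in \G_p[X]$ of degree $q$. I use $g$ (together with a small finite set of associated parameters, including a root of $g$ in a fixed degree-$q$ extension $L = \G_p[X]/(g)$) to write a first-order formula $\varphi(x)$ defining $\G_p$ in $M$. Morally, $L$ serves as a ``ceiling'' just above $\G_p$ inside $\overline{\F_p}$, and membership $x \in \G_p$ is detected via norm equations and irreducibility conditions for $g$ over certain subrings generated by $x$---conditions whose failure pattern distinguishes $\G_p$ from every other element of $M$ without unbounded quantification over $\N$.

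With $\G_p$ first-order defined, I characterize $\G_p[t]$ as the set of $x \in M$ satisfying two conditions: first, $x$ lies in the subfield $\G_p(t) \subseteq M$ (expressible first-order by requiring existence of $a,b$ built from $\G_p$-scalars and $t$-powers with $bx = a$, which is first-order once $\G_p$ is); and second, $x$ is integral at every finite place of $\G_p(t)$. The integrality condition can be captured by an existential formula via the Diophantine integrality definitions for function fields over constant fields satisfying the non-algebraic-closure hypothesis, as developed in earlier work of the author and of Eisentr\"ager--Shlapentokh; these techniques adapt once $\G_p$ is accessible as a first-order definable subring of $M$. Combining the two conditions yields the required first-order definition of $\G_p[t]$ in $M$.

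The \emph{main obstacle} is the first step: producing a first-order definition of $\G_p$ in $M$ when $H$ is arbitrary. The constant field $H$ may be a transcendental extension of $\G_p$, a strict algebraic subextension of $\overline{\F_p}$, or a combination of both, so the formula must uniformly separate $\G_p$ from every competing subring of $M$. The assumption that $\G_p$ is not algebraically closed is indispensable: only then does $\G_p$ admit a proper algebraic extension, which is precisely what powers the norm/irreducibility conditions that convert the naive unbounded-$n$ characterization into a genuine first-order formula. If this step is carried out, the remainder of the argument is mostly a reduction to existing integrality machinery.
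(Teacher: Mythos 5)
There is a genuine gap, in fact two. First, your foundational step---a first-order definition of $\G_p$ itself inside $M$---is only described ``morally'': you say membership in $\G_p$ is detected ``via norm equations and irreducibility conditions for $g$ over certain subrings generated by $x$,'' but ``the subring generated by $x$'' is not a first-order object, and no actual formula or argument is given for separating the elements algebraic over $\F_p$ from the other constants of $M$ (recall the constant field $H$ may contain elements transcendental over $\F_p$, and $\G_p$ is a countable union of finite fields, i.e.\ the naive description quantifies over $n$ in $x^{p^n}=x$). The paper never defines $\G_p$ first; it goes the other way around: it pins down (a set squeezed between) $\G_p[t]$ and the algebraic closure $K$ of $\G_p(t)$ in $M$ directly, using existentially definable $p$-th power relations $v=t^{p^s}$, norm equations over carefully chosen constant-field extensions (Proposition \ref{prop:norm2t}, with the degree-$p$ analogue when no prime-to-$p$ extension exists), and the ``transcendental weak vertical method'' (Lemma \ref{le:algebraic} via Lemma \ref{le:manys}): forcing $\frac{f^{qp^s}-f^q}{t^{p^s}-t}$ to be pole-free at primes above $t-a$ for infinitely many algebraic $a$ forces the coefficients of the minimal polynomial of $f^q$ to lie in $\G_p^{\infty}(t)$. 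Constants such as $\G_p$ would then be recovered from $\G_p[t]$, not the reverse; your plan omits exactly the mechanism (specialization at infinitely many algebraic points, controlled by norm equations) that makes any of this first-order.

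Second, even granting a definition of $\G_p$, your reduction is circular: you propose to express ``$x\in\G_p(t)$'' by asking for $a,b$ ``built from $\G_p$-scalars and $t$-powers'' with $bx=a$. Saying that an element is a finite $\G_p$-linear combination of powers of $t$ is precisely membership in $\G_p[t]$, the set you are trying to define; there is no first-order formula expressing ``lies in the subfield generated by $\G_p$ and $t$'' from a predicate for $\G_p$ alone, since field (or ring) generation involves unbounded finite expressions. So the second half of your plan presupposes its own conclusion. The integrality part (cutting $\G_p[t]$ out of $\G_p(t)$ by integrality away from the pole of $t$) is the unproblematic portion, and indeed corresponds to conditions the paper does impose, but it cannot rescue the two missing steps above. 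Additionally, the paper has to deal with reductions you do not mention but which are needed for an arbitrary $H$: replacing $t$ by $w$ with $t=w^{p^m}$ to handle inseparability, passing to a finite extension $\bar H(\alpha,w)$ and its Galois closure so that the Galois-theoretic hypotheses of Proposition \ref{prop:definable} hold, and treating separately the case where $\G_p$ has only extensions of $p$-power degree.
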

  The second theorem concerns the definition which is the analog of Koenigsmann's result.
 \begin{theorem*}
 Let $\G_p$ be an  algebraic extension of  a field of $p$ elements and assume $\G_p$ is not algebraically closed. Let $t$ be transcendental over $\G_p$, and let $K$ be a finite extension of $\G_p(t)$.  In this case $\G_p[t]$ has a (non-uniform) definition (with parameters) over $K$ of the form $\forall \exists \ldots \exists P$,  with only one variable in the range of the universal quantifier and $P$ being a polynomial over $K$. (See Theorem \ref{thm:best}.)
 \end{theorem*}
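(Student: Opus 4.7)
The plan is to mimic Koenigsmann's $\forall\exists$-definition of $\Z$ in $\Q$, exploiting a constant field extension $L = K\G_q$ where $\G_q/\G_p$ is a nontrivial finite subextension (which exists because $\G_p$ is not algebraically closed). For concreteness take $\G_q = \G_p(\alpha)$ of prime degree $\ell$, yielding $L/K$ of degree $\ell$, unramified at every prime of $K$. The norm form $N_{L/K}$ supplies a Hilbert-symbol style local obstruction at each prime $\pp$ of $K$ inert in $L/K$: an element $x \in K$ fails to be integral at such a $\pp$ precisely when a suitable norm equation involving $x$ and a non-$\ell$th-power parameter $c \in \G_p$ has no solution in $K$. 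Building on the function-field analogs of Rumely's theorem developed in the author's earlier work, this yields an existential definition of a ring of $\calS$-integers of $K$ for any finite set $\calS$ of primes containing the split primes of $L/K$ together with the primes above $\infty$.

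The single universal quantifier is then introduced to sweep over primes. Let $y$ range over $K$ and use $y$ to index the inert prime (or small cluster of inert primes) at which the existential integrality check is performed; varying $y$ over all of $K$ forces integrality at every prime outside a fixed exceptional set $\calS_0$. This step is delicate: for each $y$, the existential block must produce witnesses certifying that $x$ is integral at the prime(s) determined by $y$ when $y$ avoids certain degeneracies, and must succeed vacuously when $y$ lies in the degenerate locus, so that the overall $\forall y$ assertion is not spuriously falsified. The primes in $\calS_0$, including those above the infinite prime of $\G_p(t)$, are handled by fixing appropriate parameters (the image of $t$, the chosen $c$, and a generator of the integral closure of $\G_p[t]$ in $K$) and adding existential clauses depending on those parameters.

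After integrality at all primes outside $\calS_0$ is encoded, the remaining task is to cut down from $\calO_{K,\calS_0}$ to $\G_p[t]$ itself, which requires both forcing $x \in \G_p(t) \subseteq K$ and integrality at the exceptional primes; both can be folded into the existential part using fixed parameters that pin down $t$ and the relevant residue data. Finally, because $\G_p$ is not algebraically closed, $K$ possesses a polynomial $f(u) \in K[u]$ with no root in $K$; with $f$ in hand, one collapses any finite conjunction of polynomial equations produced by the existential block into a single equation $P = 0$ over $K$, yielding the desired $\forall\exists\cdots\exists P$ shape with $P$ a single polynomial.

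The main obstacle is the clean implementation of the single universal quantifier so that one variable $y$ sweeps through \emph{all} primes outside a fixed finite set while the auxiliary existential witnesses depend polynomially on $y$ and $x$. In Koenigsmann's $\Q$ argument the analogous sweep uses Hilbert symbols packaged to detect membership in $\Z_p$ at every finite $p$ simultaneously; the function-field counterpart will similarly rely on a global norm identity ensuring that the local conditions assemble into a single polynomial family indexed by $y$. Non-uniformity across $K$ is explicitly permitted and is used precisely to allow the choice of $\G_q$, of $c$, and of $f$ to depend on $K$, so uniformity imposes no additional difficulty here.
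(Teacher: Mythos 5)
Your outline breaks down at the step you yourself flag as delicate, and the fix is not a technicality but the paper's main idea. With a \emph{fixed} constant-field extension $L=K\G_q$ and a \emph{fixed} non-power $c\in\G_p$, the norm obstruction only sees primes of $K$ that do not split in $L/K$; the infinitely many split primes are invisible, and no amount of indexing by a universal variable $y$ recovers them, since neither the extension nor $c$ varies with $y$. (Even at a non-split prime the obstruction only rules out poles of order prime to $\ell$, so poles of order divisible by $\ell$ also escape.) In the paper's proof of Theorem \ref{thm:best} the single universal variable \emph{is} the Kummer element $c$, ranging over all of $K$: for every prime $\qq_K$ away from the poles of $t$ there is, by Lemma \ref{le:bigc}, a $c$ that is a non-$q$-th-power residue at $\qq_K$ with prescribed pole order $-ep^s$ at $\pp_{K,\infty}$, and the norm equation is taken over the ``floating'' field $L_4=K\bigl(\sqrt[q]{1+x_i^{-1}},\sqrt[q]{1+(c+c^{-1})x_i^{-1}}\bigr)$ built from $c$ and the element being tested (Proposition \ref{prop:norm2}); that is exactly what lets one universal quantifier catch every prime. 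Your appeal to ``function-field analogs of Rumely'' for an \emph{existential} definition of a ring of $\calS$-integers is also unavailable: no such existential definition is known --- this is precisely the open problem the paper discusses --- and the author's density result concerns big rings, not rings of $\calS$-integers with the split primes of $L/K$ inverted (which would in any case be an infinite $\calS$).

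The second genuine gap is the descent from integrality to $\G_p[t]$, which you dispose of by saying it ``can be folded into the existential part using fixed parameters.'' Cutting down from the integral closure of $\G_p[t]$ in $K$ to $\G_p[t]$ itself is the technical heart of the paper and uses tools specific to positive characteristic that your sketch never invokes: the non-uniform existential definition of $p$-th powers $P(K)$ (Proposition \ref{prop:pthpower}), the requirement that $t^i\frac{f^{qp^{s}}-f^q}{t^{p^{s}}-t}$ be pole-free away from the poles of $t$, and the weak vertical method with an effective Chebotarev count (Lemma \ref{use order}) forcing $f^q\in\F_{p^{s}}(t)$ once $p^{s}$ dominates the relevant heights --- with the exponent $s$ encoded by the pole order of the universal $c$ at $\pp_{K,\infty}$, and the passage from $f^q$ to $f$ handled by the auxiliary element $\bar f=E(t)f+1$. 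Without substitutes for these ingredients your formula at best defines $O_{K,\calS}$, not $\G_p[t]$. Finally, when $\G_p$ admits no prime $q\neq p$ with $\xi_q\in\G_p$ suitable for Kummer theory, the norm machinery must be replaced by Artin--Schreier-type extensions generated by $T^p-a^{p-1}T-1$ (Section \ref{degp}), a case your choice of an arbitrary prime-degree constant extension does not cover. The one step that is fine is the last: since $\G_p$ is not algebraically closed, conjunctions can indeed be collapsed into a single polynomial equation, exactly as the paper remarks.
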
  

Given an ability to define any polynomial ring within a function field $K$, we can of course define any ring of $\calS_K$-integers while not adding any universal quantifiers. All we need to do is to select a $t$ having poles at all elements of $\calS_K$ but no other poles  and define the integral closure of $\G_p[t]$ in $K$,  while reusing the one variable in the scope of the universal quantifier each time we need a variable taking values in the polynomial ring.
\begin{corollary*}
Let $K$ and $\G_p$ be as above.  In this case any ring of $\calS_K$-integers  has a definition of the form $\exists \ldots \exists\forall \exists \ldots \exists P$, with only one variable in the range of the universal quantifier and $P$ being a polynomial equation over $K$.
\end{corollary*}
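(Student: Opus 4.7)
The plan is to leverage the Theorem directly, converting the definition of $\calO_{K,\calS_K}$ as the integral closure of a suitable $\G_p[t]$ into an existential condition on the coefficients of a monic polynomial, and then pulling the single universal quantifier supplied by the Theorem out of each such coefficient constraint.

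First I would choose a transcendental element $t \in K$ whose pole divisor on $K$ is supported on exactly $\calS_K$. The existence of such a $t$ is a standard Riemann--Roch / strong approximation argument: take $D = \sum_{\pp \in \calS_K} \pp$ and, for $n$ large, pick $t \in L(nD) \setminus \bigcup_{\pp \in \calS_K} L(nD - \pp)$. With this choice, the primes of $K$ lying above the infinite place of $\G_p(t)$ are precisely $\calS_K$, so $\calO_{K,\calS_K}$ coincides with the integral closure of $\G_p[t]$ in $K$. Let $d = [K : \G_p(t)]$; every element of $\calO_{K,\calS_K}$ then satisfies a monic equation $x^d + a_{d-1} x^{d-1} + \dots + a_0 = 0$ with each $a_i \in \G_p[t]$, and conversely.

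Next I would invoke the Theorem to obtain a definition $\phi(y) \equiv \forall u\, \exists v_1 \dots \exists v_m\, P(y,u,v_1,\dots,v_m) = 0$ of $\G_p[t]$ over $K$, with a single universal variable and a single polynomial $P$ over $K$. Writing the integrality condition yields
\[
x \in \calO_{K,\calS_K} \iff \exists a_0,\dots,a_{d-1}\;\Bigl(x^d + a_{d-1}x^{d-1} + \dots + a_0 = 0 \;\wedge\; \bigwedge_{i=0}^{d-1} \phi(a_i)\Bigr).
\]
The key manoeuvre is to merge the $d$ copies of the universal quantifier into one: since $\bigwedge_i \forall u\, \psi_i(u) \equiv \forall u\, \bigwedge_i \psi_i(u)$, I can replace all $u_i$ by a common $u$, rename the inner existential blocks as $\vec v_0,\dots,\vec v_{d-1}$, and prenex them out past the universal quantifier. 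This produces a formula of shape
\[
\exists a_0,\dots,a_{d-1}\; \forall u\; \exists \vec v_0,\dots,\vec v_{d-1}\;\Bigl(x^d + \dots + a_0 = 0 \;\wedge\; \bigwedge_{i=0}^{d-1} P(a_i,u,\vec v_i) = 0\Bigr),
\]
which already has exactly one universal variable.

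Finally I would collapse the conjunction of polynomial equations into a single polynomial equation in the standard way. Since $K$ is a function field in one variable, it is not algebraically closed; so $K$ admits a finite extension $L/K$ of some degree $n \geq 2$, whose norm form $N_{L/K}$ is a polynomial in $n$ variables with the property that $N_{L/K}(\alpha_1,\dots,\alpha_n) = 0$ (with $\alpha_j \in K$) forces $\alpha_1 = \dots = \alpha_n = 0$. Iterating this construction on the finite list of defining polynomials replaces $\bigwedge Q_j = 0$ by a single equation $P' = 0$ over $K$, giving the stated $\exists \dots \exists\, \forall\, \exists \dots \exists\, P$ form. The only place where any real care is required is the selection of $t$ and the verification that the merged system indeed still defines $\calO_{K,\calS_K}$; every subsequent step is a syntactic manipulation made legitimate by the Theorem and by the non-closedness of $K$, so there is no serious obstacle once the Theorem is in hand.
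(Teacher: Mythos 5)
Your proposal is correct and takes essentially the same route as the paper, whose proof of this corollary is exactly the observation you spell out: choose $t$ with pole set precisely $\calS_K$, identify $O_{K,\calS_K}$ with the integral closure of $\G_p[t]$ in $K$ via a monic equation of degree $[K:\G_p(t)]$, reuse the single universal variable of Theorem \ref{thm:best} for all the coefficient conditions, and use the non-algebraic-closedness of $K$ (as in the paper's introductory remark) to compress the resulting conjunction into one polynomial equation. The only detail worth tightening is the selection of $t$: over a small finite constant field the union $\bigcup_{\pp\in\calS_K}L(nD-\pp)$ could a priori cover $L(nD)$, but one gets such a $t$ immediately by summing, for each $\pp\in\calS_K$, an element whose only pole is at $\pp$ (no cancellation can occur at distinct primes), so this is a cosmetic fix rather than a gap.
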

 The next two results are the uniform versions of definitions above.

  \begin{theorem*}  Let $\G_p$ be an  algebraic extension of  a field of $p$ elements and assume $\G_p$ has an extension of degree $q \not =p$ for some prime $q$ and also a primitive $q$-th  root of unity. Let $K$ be a function field over  $\G_p$.    In this case for any finite collection $\calS_K$ of primes of $K$ of fixed size $m$,  there is a uniform in $p$ and $K$ definition with $m+1$ parameters  of the ring of $\calS_K$-integers of $K$ of the form $\forall\forall\exists \ldots \exists P$ with only two variables the range of universal quantifiers and $P$ being a finite collection of disjunctions and conjunctions of polynomial  equations over $\Z/p$.    (See Theorem \ref{S-integers}.) 

  \end{theorem*}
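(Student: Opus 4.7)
The plan is to adapt the uniform polynomial-ring definition from the preceding theorem so that the $m+1$ parameters carry enough global data to absorb the leading existential block, leaving only the two universal quantifiers followed by existentials. The guiding observation is that once $t\in K$ is chosen with pole divisor supported exactly on $\calS_K$, the ring $\calO_{\calS_K}$ is the integral closure of $\G_p[t]$ in $K$; so $m$ of the parameters will specify the individual primes $\pp_1,\dots,\pp_m$ of $\calS_K$, and the remaining parameter will encode a suitable $t$ (or, together with the rest, a primitive element of $K$ over $\G_p(t)$ of uniformly bounded degree, giving a uniform bound $n$ for any monic integrality relation).

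With $t$ and $\calS_K$ so fixed, I would replay the construction of the polynomial-ring formula and substitute parameters in place of the outer existentially quantified auxiliary data. In the polynomial-ring formula, those leading existentials pick out global Kummer-theoretic data: a generator for the cyclic degree-$q$ extension of $\G_p$, a constant witnessing the primitive $q$-th root of unity, and norm-form parameters built from $t$ and primes of $K$. All of this data is uniformly constructible from the parameters already on hand, so it can be stored in the parameter list rather than produced by outer existential quantifiers. After the substitution, the resulting formula starts with the two universal quantifiers and an existential block, and the inner matrix $P$ is still a finite Boolean combination of polynomial equations with coefficients in $\Z/p$, because finite conjunctions of such combinations preserve the form and the additional equations needed (for example, those expressing an integrality relation $x^n+a_{n-1}x^{n-1}+\cdots+a_0=0$) also have coefficients in $\Z/p$ once $t$ and a primitive element are read off the parameters.

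To close the argument, I would verify both directions of the equivalence. The \emph{only if} direction reduces to the polynomial-ring case: an $\calS_K$-integer is integral over $\G_p[t]$, so each coefficient of its monic relation lies in $\G_p[t]$ and is witnessed by the polynomial-ring formula; the several copies of that formula, one per coefficient, can all be answered using the same two universal values, and the corresponding existential witnesses can depend on the universal values, so nothing is lost by pulling the $\forall\forall$ to the front. The \emph{if} direction uses the same norm-form/Hasse-type machinery as in the polynomial-ring proof to upgrade the existence of witnesses against all universal choices into genuine $\calS_K$-integrality.

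The main obstacle is the parameter bookkeeping in the second paragraph: one has to demonstrate that the global data existentially chosen in the polynomial-ring proof can indeed be packaged into exactly $m+1$ parameters shared across $K$ and $p$, without reintroducing existential quantifiers outside the $\forall\forall$ block. This relies on the specific structure of the polynomial-ring definition together with the standing hypotheses on $\G_p$ (existence of a degree-$q$ extension and a primitive $q$-th root of unity), and on the fact that a single copy of the Kummer-theoretic data can be reused across all local conditions imposed at the various primes of $K$ appearing in the formula.
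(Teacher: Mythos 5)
Your proposal runs in the opposite direction from what the structure of the problem allows, and the key step fails. You want to start from the uniform polynomial-ring definition (of shape $\exists\ldots\exists\forall\forall\exists\ldots\exists$), absorb its leading existential block into the $m+1$ parameters, and then express $O_{K,\calS_K}$ as the integral closure of $\G_p[t]$. But the leading existentials in any such polynomial-ring formula are not fixed ``Kummer-theoretic data'': they are witnesses that depend on the element $f$ being tested (for instance, decompositions of powers of $f$ and of $t$ as ratios of elements of $O_{K,\{\pp_K\}}$, where membership in $O_{K,\{\pp_K\}}$ is itself only $\forall\forall\exists\ldots\exists$-definable). Data depending on $f$ cannot be stored in parameters, and $\exists x\,\forall u\,\phi$ is not equivalent to $\forall u\,\exists x\,\phi$, so the prefix $\exists\ldots\exists\forall\forall\exists\ldots\exists$ cannot be collapsed to $\forall\forall\exists\ldots\exists$ by your bookkeeping. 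Moreover, in this paper the dependence goes the other way: the uniform polynomial-ring definition is \emph{built on top of} the uniform $\calS_K$-integers definition (the detour through $O_{K,\{\pp_K\}}$ is what makes the $p$-th power equations uniform), so the result you propose to quote is not available prior to Theorem \ref{S-integers}. A second, independent obstruction is the integral-closure step: writing ``$x$ is integral over $\G_p[t]$'' needs a monic relation of degree $[K:\G_p(t)]$, and this degree (the degree of the pole divisor of $t$, supported on $\calS_K$) is unbounded over the class of fields and sets $\calS_K$ considered, so the number of existential variables in your matrix would vary with $K$, destroying uniformity; it would also inflate the parameter list beyond $m+1$, since the polynomial-ring formula carries its own parameters in addition to $t$.

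The paper's proof is instead direct and does not mention polynomial rings at all: $x\in O_{K,\calS_K}\setminus\{0\}$ iff for all $c$ and all $b$ in $K$, either $c-1$ fails to be integral at some $\pp_K\in\calS_K$ (an \emph{existential} condition, by Proposition \ref{finitely many for p not q} and Remark \ref{rem:existential}, using the parameters $a_{\pp_K}$, one per prime of $\calS_K$, and a single $b_{\calS_K}$ --- this is where the $m+1$ parameters come from), or the norm equation ${\mathbf N}_{L_3(\sqrt[q]{c})/L_3}(y)=bx^q+b^q$ has a solution, where $L_3$ is the ``floating'' extension built from $x,b,c$ (Propositions \ref{prop:badprime}--\ref{prop:norm}, Corollary \ref{diffversion}). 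The two universal quantifiers are exactly $c$ and $b$; all extensions involved have degree bounded by a power of $q$, so the norm equation can be rewritten, uniformly in $p$ and $K$, as a fixed finite system of polynomial equations over $\Z/p$ in variables ranging over $K$ --- that rewriting (including the case analysis on whether $1+(c+c^{-1})x^{-1}$ is a $q$-th power in $L_2$) is the actual content of the proof of Theorem \ref{S-integers}, and it is the part your proposal does not supply.
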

  
\begin{theorem*}
 Let $\G_p$ be an  algebraic extension of  a field of $p$ elements and assume $\G_p$ has an extension of degree $q \not =p$ for some prime $q$ and also a primitive $q$-th  root of unity. Let $t$ be transcendental over $\G_p$, and let $K$ be a finite extension of $\G_p(t)$.   In this case there is a uniform in $p$ and $K$ definition (with four parameters) of $\G_p[t]$, of the form $\exists \ldots \exists \forall \forall \exists \ldots \exists P$ with only two variables in the range of universal quantifiers and $P$  being a finite collection of disjunction and conjunction of polynomial  equations over $\Z/p$.  

  \end{theorem*}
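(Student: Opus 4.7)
The plan is to deduce this theorem by combining the preceding uniform definition of $\calS_K$-integer rings (Theorem \ref{S-integers}) with an existential definition of $\G_p[t]$ inside an appropriately chosen $\calS_K$-integer ring.  Take $\calS_K$ to be the set of primes of $K$ lying above the pole of $t$ in $\G_p(t)$, so that the corresponding ring $\calO_{\calS_K}$ of $\calS_K$-integers is precisely the integral closure of $\G_p[t]$ in $K$; the size $|\calS_K|$ is bounded by $[K:\G_p(t)]$.  Theorem \ref{S-integers} applied to this $\calS_K$ produces a uniform-in-$p$-and-$K$ definition
\[
x \in \calO_{\calS_K} \iff \forall u\, \forall v\, \exists z_1, \ldots, z_k\; P(x, u, v, z_1, \ldots, z_k),
\]
where $P$ is a finite disjunction/conjunction of polynomial equations with coefficients in $\Z/p$, using a bounded number of parameters.

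Next I would establish an existential definition of $\G_p[t]$ inside $\calO_{\calS_K}$ that is uniform in $p$ and $K$.  This is a function field analog of the existential definitions of polynomial rings inside $\calS$-integer rings from \cite{Sh14}, adapted from the finite-constant-field setting to the present one in which $\G_p$ may be an infinite algebraic extension of $\F_p$.  The hypothesis that $\G_p$ admits an extension of degree $q \neq p$ containing a primitive $q$-th root of unity is precisely the ingredient needed to cut out $\G_p$ itself via norm equations from that degree-$q$ extension; together with existential access to $t$ as a parameter, this should single out $\G_p[t]$ as the subring of $\calO_{\calS_K}$ generated by $\G_p$ and $t$.

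To splice the two pieces into the target quantifier pattern, observe that if the Step 2 definition introduces witnesses $y_1, \ldots, y_n$ each required to lie in $\calO_{\calS_K}$, then the conjunction $\bigwedge_{i=1}^n (y_i \in \calO_{\calS_K})$ can be rewritten as
\[
\forall u\, \forall v\, \exists \vec z_1 \cdots \exists \vec z_n\; \bigwedge_{i=1}^n P(y_i, u, v, \vec z_i),
\]
because the two universal variables $u, v$ are independent of $i$ and may be shared across all $n$ instances.  Prefixing the outer existentials that introduce the $y_i$ (and any further Step 2 witnesses living in $K$ rather than in $\calO_{\calS_K}$) and conjoining the Step 2 polynomial relation then delivers a formula of shape $\exists \ldots \exists \forall \forall \exists \ldots \exists P$, as required.

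The main obstacle will be Step 2: producing an existential definition of $\G_p[t]$ inside $\calO_{\calS_K}$ that is genuinely uniform in both $p$ and $K$, with all polynomial relations having coefficients in $\Z/p$ rather than depending on the ambient field, and doing so within a total parameter budget of four.  This forces $t$ together with the chosen generator of the degree-$q$ extension of $\G_p$ (and the primitive $q$-th root of unity) to serve as parameters that are shared with, rather than added on top of, the parameters of Theorem \ref{S-integers}.  It also requires checking that the Diophantine ingredients used in the finite-constant-field version of \cite{Sh14}---existentially carving out $\G_p$ via $q$-th power norms and separating $\G_p[t]$ from the rest of $\calO_{\calS_K}$---continue to work verbatim when $\G_p$ is an arbitrary non-algebraically-closed algebraic extension of $\F_p$ satisfying the $q$-hypothesis.
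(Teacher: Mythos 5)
Your outer architecture (define a ring of $\calS$-integers with the two universal quantifiers, then finish existentially, sharing the universal variables across all instances) matches the paper in spirit, but the heart of your plan --- Step 2, a \emph{uniform} existential definition of $\G_p[t]$ inside $O_{K,\calS_K}$ --- is precisely the open difficulty, not a citable ingredient, and the paper does not prove the theorem that way. The definitions of polynomial rings inside rings of $\calS$-integers in \cite{Sh14} are for global function fields (finite constant fields) and are field-dependent, hence not uniform in $p$ and $K$, and they do not cover infinite $\G_p$; the paper is explicit that the $p$-th power relation is only known to be uniformly definable over a ring $O_{K,\{\pp_K\}}$ with a \emph{single} allowed prime in the denominator (Lemma \ref{uniformpowers}), which is exactly why its proof takes a detour through such a ring rather than through the integral closure of $\G_p[t]$. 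Moreover, even inside that ring the paper's mechanism for recognizing polynomials is not existential: the pole conditions are imposed by norm equations of the form ``$\forall c\, \exists y$'' (Proposition \ref{prop:norm2}, equation \eqref{eq:norms1.1}), with the universally quantified $c$ re-used from the definition of $O_{K,\{\pp_K\}}$, and the conclusion $f^q\in\F_{p^s}(t)$ is extracted by the weak vertical method plus Chebotarev (Lemma \ref{use order}), with a height bound enforced through the parameter $d=t^{p^{zu}}$ and inequality \eqref{eq:2}, followed by the extra equations \eqref{eq:f0} to pass from $f^q$ to $f$. None of this is replaced in your proposal by an actual construction; you correctly flag it as ``the main obstacle'' but that obstacle is the theorem.

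There is also a concrete uniformity/parameter problem with your choice of $\calS_K$: taking $\calS_K$ to be the set of primes of $K$ above the pole of $t$ makes $|\calS_K|$ depend on $K$ (up to $[K:\G_p(t)]$), whereas Theorem \ref{S-integers} is uniform only for $\calS_K$ of a \emph{fixed} size and uses $|\calS_K|+1$ parameters; so your formula would neither be uniform across the stated class of fields nor fit the four-parameter budget. The paper avoids this by choosing a single prime $\pp_K$ \emph{not} in the divisor of $t$ (Section \ref{sec:uniform definition}), using the parameters $a_{\pp_K}, b_{\pp_K}, d$ together with $t$, and imposing ``poles only at poles of $t$'' through divisibility conditions inside $O_{K,\{\pp_K\}}$ (equation \eqref{eq:all poles}) rather than through the choice of $\calS_K$. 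To repair your argument you would essentially have to reproduce this machinery, at which point it is the paper's proof.
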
  
    If we restrict ourselves to global fields, then for all global fields of characteristic not equal to 2, we can use $q=2$ and for $p=2$, assuming the constant field contains $\F_4$, we could use $q=3$.  
\begin{corollary*}
Let $K$ be a global field with a constant field of odd size or divisible by 4.   In this case a ring of $\calS_K$-integers,  where we  fix the size of $\calS_K$, and any polynomial ring contained in $K$ have uniform in $K$ and $p$ definitions (with parameters) as described above. 
\end{corollary*}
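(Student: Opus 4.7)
The plan is to reduce the corollary directly to the two preceding uniform theorems by choosing, in each characteristic, a prime $q$ that meets their hypotheses on the constant field $\G_p$.

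First I would treat odd characteristic. Let $K$ be a global function field of characteristic $p > 2$, so $\G_p$ is a finite field of odd order. Take $q = 2$. Then $q \neq p$; since $\G_p$ is finite it admits an extension of every degree, in particular of degree $2$; and the primitive square root of unity $-1$ lies in $\G_p$ because the characteristic is odd. Hence the hypotheses of the two uniform theorems above are satisfied with $q = 2$, and the uniform (in $K$ and $p$) definitions of $\G_p[t]$ and of any ring of $\calS_K$-integers with $|\calS_K| = m$ follow by invoking those theorems verbatim.

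Next I would treat characteristic $2$ with constant field of size divisible by $4$, i.e., with $\F_4 \subseteq \G_p$. Take $q = 3$. Again $q \neq p$; the finite field $\G_p$ has extensions of every degree (in particular of degree $3$); and the primitive third roots of unity lie in $\F_4 \subseteq \G_p$ by assumption. So the hypotheses of the preceding uniform theorems are satisfied, this time with $q = 3$. In both cases $\G_p$ is finite and therefore not algebraically closed, so the standing non-algebraically-closed hypothesis of the earlier theorems is automatic.

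The content of the corollary is thus precisely this verification: the two choices $q=2$ (for odd $p$) and $q=3$ (for $p=2$ with $\F_4 \subseteq \G_p$) together exhaust every global function field admitted in the statement, and there is no genuine obstacle. The only global function field excluded is the one of characteristic $2$ with constant field exactly $\F_2$, where no small $q \neq p$ simultaneously supplies a primitive $q$-th root of unity in $\G_p$ and an extension of $\G_p$ of degree $q$; consequently the uniform machinery of the preceding theorems is not directly available in that single case.
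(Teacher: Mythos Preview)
Your proposal is correct and matches the paper's own reasoning exactly: the paper does not give a separate proof of this corollary but simply remarks, immediately before stating it, that for characteristic $p\neq 2$ one takes $q=2$, and for $p=2$ with $\F_4$ contained in the constant field one takes $q=3$. Your verification that these choices satisfy the hypotheses of the two preceding uniform theorems is precisely what is needed, and your closing observation about the excluded $\F_2$ case is also consistent with the paper's subsequent comment.
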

If we want to cover all global fields, then we have to add a primitive third of unity to characteristic 2 fields and end up with an extra universal quantifier.  

\section{How Do Our Results Compare to Rumley's and an Overview of the Proof.}
\subsection{New vs. Old}    
As we have mentioned in the introduction, one of the goals of our construction was to produce definitions of polynomial rings of the  of the form 
\[
E_1 x_1 \ldots E_n x_nP(t, \bar w, \bar x),
\]
 where each $E_i$ is either a universal or an existential quantifier,  $P$ is a polynomial equation or a system of polynomial equations, and $\bar w$ is a vector of parameters, while minimizing the number of universal quantifiers.  It is natural to ask what happens to the definitions constructed by R. Rumely if we rewrite them in this (prenex normal) form.  We carried out some of the rewriting in a direct manner in the appendix and obtained a lower bound on the order of 16 variables within the range of a universal quantifier for  a definition of a ring of integral functions or a polynomial ring.  So in this respect, our definitions do much better.  Our uniform definition for polynomial rings and the rings of $\calS$-integers requires two universal quantifiers, and the non-uniform version requires just one.  

The other aspect of improvement in our results is the scope of our formulas.  R. Rumely considered only global function fields, i.e. function fields in one variable with finite constant fields.  For our uniform and one-universal-quantifier definitions we consider function fields of transcendence degree one over any field algebraic over a finite field and not algebraically closed.  We further consider arbitrary function fields not containing the algebraic closure of a finite field, though in the most general case we give up on counting universal quantifiers and uniformity.

R. Rumely's formula remain superior to ours in one important way: the number of parameters.  He uses only one.  We need at least four.

\subsection{The Main Tools}
We use two main tools: $p$-th power equations and norm equations.  Let $K$ be any field of characteristic $p>0$ and let $p(K)=\{(x, x^{p^m})| x \in K, m \in \Z_{\geq 0}\}$.  By $p$-th power equations we mean a Diophantine defintion  of $P(K)$ (or an existential defintion of $P(K)$ in the language of rings) and we have it over any function field of positive characteristic (see \cite{ES2}).  Unfortunately, this definition is very much field dependent (so non-uniform).  To get to a uniform definition  of $P(K)$ we had to make a detour to a ring of $\calS$-integers with only one prime allowed in the denominator. Over these rings the $p$-th power equations are uniform, but the detour required an extra universal quantifier.  We return to the uniformity issue later, and now explain the role of $P(K)$ in our definition of polynomials.  
\subsubsection{Using $p$-th power extensions to get rid of primes in the denominator} 

We start with $K=\F_p(t)$, where $\F_p$ is  a finite field of $p$ elements and $q$ is a rational prime  which may be equal to $p$.  Let $y \in \F_p(t)$ and consider the following equation as $m \rightarrow \infty$:
\[
w_m=\frac{y^{qp^m}-y^q}{t^{p^m}-t}.
\]
Now,
\be
\item \label{it:1}If $y$ is a polynomial in $t$, then for all $m \in \Z_{>0}$ we have that $w_m$ is a polynomial in $t$. (See Lemma \ref{will work}.)
\item \label{it:2} For any prime $\Aa$ of $\F_p(t)$ such that $\Aa$ is not the pole of $t$, we have that for some $m_0 \in \Z_{>0}$ for all positive integer $m$ divisible by $m_0$ it is the case that $\ord_{\Aa}(t^{p^m}-t)=1$.
\item \label{it:3} If $\Aa$ is a pole of $y$, then $\ord_{\Aa}(y^{qp^m}-y) \equiv 0 \mod q$.
\ee    
Let $ \Qq$ be  the pole of $t$.  Now consider the following set
\[
\left \{y \in \F_p(t)| \forall m \in \Z_{>0} \forall \Aa \not =\Qq: \ord_{\Aa}\frac{y^{qp^m}-y^q}{t^{qp^m}-t} \geq 0 \lor \ord_{\Aa}\frac{y^{qp^m}-y}{t^{p^m}-t}\equiv 0 \mod q\right \} 
\]
We claim that the elements in this set are precisely the polynomials in $t$.  If $\Aa\not =\Qq$ and $\ord_{\Aa}y <0$, then for some $m$ we have that $\ord_{\Aa}(t^{p^m}-t)=1$ and
\[
\ord_{\Aa}\frac{y^{qp^m}-y^q}{t^{p^m}-t}=qp^m\ord_{\Aa}y-1 \not \equiv 0 \mod q.
\]
As we have pointed out above, the polynomials are in this set.  So the set must coincide with the polynomial ring.  Now we note the following.
\be
\item To accommodate bigger constant fields we can allow for all $m$ to be divisible by a constant $m_0$, if the constant field is of the size $p^{m_0}$.  If the constant field is infinite then we can rephrase the requirement for $m$ to ask for the existence of a constant $m_0$ depending on $y$ so that for all $m$ divisible by $m_0$ the order conditions are satisifed.
\item Instead of quantifiying over $m$ we can state the following:
\[
\forall z \in \F_p(t): (\ord_{\Qq}z \geq 0) \lor (\exists t^{p^m}: \ord_{\Qq}t^{p^m} > \ord_{\Qq}z >\ord_{\Qq}t^{p^{m+1}}) \lor
\]
\[
 (\exists t^{p^m}: \ord_{\Qq}t^{p^m} = \ord_{\Qq}z  \land \forall \Aa \not = \Qq: \ord_{\Aa}\frac{y^{qp^m}-y^q}{t^{p^m}-t} \geq 0 \lor \ord_{\Aa}\frac{y^{qp^m}-y^q}{t^{p^m}-t}\equiv 0 \mod q)
 \]
 \item Raising $t$ and $y^q$ to the same $p^m$-th power while operating over a field is explained in Lemma \ref{prop:pthpower2}.
 \item The condition 
\begin{equation}
\label{eq:order}
\forall \Aa \not =\Qq: \ord_{\Aa}\frac{y^{qp^m}-y^q}{t^{p^m}-t} \geq 0 \lor \ord_{\Aa}\frac{y^{qp^m}-y^q}{t^{p^m}-t}\equiv 0 \mod q
\end{equation}
 is written down  using a norm equation as will be explained below.  This statement also requires a universal quantifier but the Strong Approximation Theorem allows us to reuse the same $z$.
 \item All the requirements concerning the order at a single prime can be stated as existential statements.
 \ee
 \subsubsection{Using $p$-th power equations to pick out rational functions in algebraic extensions}
 To simplify the presentation let $K/\F_p(t)$ be a cyclic extension and let $\Aa \not | \Qq$ be a prime of $K$.  (We now think of $\Qq$ as a not necessarily prime divisor.)  Now let $y \in K$ and let's see what \eqref{eq:order} for some positive integer $m$ rewritten over $K$ tells us about $y$.  
\begin{equation}
\label{eq:orderabove}
\mbox{For every prime } \aaa \mbox{ of } K \mbox{ not dividing } \Qq  \mbox{ and not ramified over } \F_p(t): 
\end{equation}
\[
\ord_{\aaa}\frac{y^{qp^m}-y^q}{t^{p^m}-t} \geq 0 \lor \ord_{\aaa}\frac{y^{p^m}-y}{t^{p^m}-t}\equiv 0 \mod q
\]
As before, we conclude $\ord_{\aaa}y \geq 0$, but we actually get more information about $y$.  We also conclude that $\ord_{\aaa}(y^{qp^m} -y^q) >0$ for all $\aaa$ dividing $t^{p^m}-t$.  To take full full advantage of this information, we need  to expand the field $K$ by adjoining $\F_{p^m}$ -- finite field of $p^m$ elements.  Now the extension $K\F_{p^m}/\F_{p^m}(t)$  is still cyclic.  By an effective version of Chebotarev Density Theorem we know that for all sufficiently large $m$ we have degree one primes which do not split in this extension and the number of such primes increases to infinity as $m \rightarrow \infty$.  (We have explicit formulas giving the lower bound on the number of degree one primes not splitting in such an extension depending on $m$ and $[K:F]$.)  So let $\aaa$ be a degree one prime lying above a rational prime $\Aa$ in $\F_{p^m}(t)$ and not splitting in the extension $\F_{p^m}K/\F_{p^m}(t)$.  Thus $\aaa$ must correspond to a linear polynomial $t-a, a \in \F_{p^m}$ and $\ord_{\aaa}(y^{qp^m}-y^q) >0$.  Factoring $y^{qp^m}-y^q$, we conclude that for some $b \in \F_{p^m}$ it is the case that $\ord_{\aaa}(y^q-b) >0$.  Now if we have sufficiently many such primes $\aaa$ relative to the height of $y^q$ (or in other words $m$ is sufficiently large relative to the height of $y$), by the Weak Vertical Method we can conclude that $y^q \in \F_{p^m}(t)$.  (The details of the application of the Weak Vertical Method and Chebotarev Density Theorem with all the relevant references are in Section \ref{subsec:weak}.)

To get back to the constant field of the required size (when this size is less than $p^m$) we can still use $p$-th power equations.  (See Lemma \ref{rational}.)  A few more equations may be required to conclude that $y$ (as opposed to $y^q$) is in the rational field.  (See, for example, Equations \eqref{eq:qth1}, \eqref{eq:x2} in the one-universal-quantifier definition of polynomials.)  Finally we note that a complete one-universal-quantifier definition of polynomials over a function field $K$ of transcendence degree one over a finite field and not containing the algebraic closure of a finite field is in Theorem \ref{thm:best}.
 \subsubsection{Using $p$-th power equations to pick out algebraic functions in a transcendental extension}
When the extension is transcendental we have a ``transcendental'' version of the Weak Vertical Method in Lemma \ref{le:algebraic}.  To simplify the discussion we again return  to a rational function field.  Let $H(t)$ be a rational field containing $\F_p(t)$, and let $h \in H(t)$ be such that  for infinitely many $a$ algebraic over $\F_p$ we have that $h(a)$ is algebraic over $\F_p$.  In this case, by Lemma \ref{le:algebraic}, we conclude that $h \in \F_p(t)$.  So, as above, it is enough to have for infinitely many algebraic $\Aa$ (i.e. $\Aa$ corresponding to prime polynomials with coefficients algebraic over $\F_p$),
\begin{equation}
\label{eq:order2}
 \ord_{\Aa}\frac{y^{qp^m}-y^q}{t^{p^m}-t} \geq 0 \lor \ord_{\Aa}\frac{y^{qp^m}-y^q}{t^{p^m}-t}\equiv 0 \mod q
\end{equation}
Using norm equations one can require that the statement above holds for infinitely many algebraic primes.  A first-order definition of algebraic polynomial rings (i.e. of transcendence degree one over a finite field) over arbitrary function fields not containing the algebraic closure of a finite field can be found in  Proposition \ref{prop:definable}
\subsubsection{Using Norm Equations}
Our use of norm equations is based on the Hasse Norm Principle as was the case for R. Rumely.  However, we do employ  a unique,  to our knowledge,  variation of the norm method.  More specifically, as explained below, we do not fix the top or the bottom field in the norm equation, but allow these fields to vary depending on the elements involved.  As long as the degree of all extensions involved is bounded, such a ``floating'' norm equation is still (effectively) translatable into a system of polynomial equations over the given field.  (See the section on coordinate polynomials in the appendix of  \cite{Sh34} for a general and formal discussion of  rewriting techniques and the proof of Theorem \ref{S-integers}  for a more informal description of this translation).   

To explain our use of norm equations, we again, as above, make some simplifying assumptions.  More specifically let 
\begin{itemize}
\item $q$ be a rational prime number different from the characteristic of the field,
\item $K$ be a global function field containing a primitive $q$-th root of unity,
\item $c \in K$ be such that it is not a $q$-th power but has order at every prime divisible by $q$,
\item $x \in K$ be such that  all zeros of $x$ are of order divisible by $q$.
\end{itemize}
  Now consider the norm equation
\begin{equation}
\label{norm:prelim}
{\mathbf N}_{K(\sqrt[q]{c})/K}(y)=x.
\end{equation}
Using the Hasse Norm Principle, under our assumptions, this norm equation has solution if only if every pole  $\pp_K$ of $x$ either splits in the extension $K(\sqrt[q]{c})/K$ or has order divisible by $q$.  Indeed, consider the following.  Our conditions on $c$ insure that the extension is unramified.  So if at some prime $\pp_K$ the local degree is not equal to one, i.e. if the prime does not  split, the norm equation has solution if and only if the order at this prime is divisible by $q$.  If the order is divisible by $q$, then locally $x=\varepsilon\pi^{mq}$, where $\varepsilon$ is a unit, $\pi$ is a local uniformizing parameter and $m \in \Z$.  By the Local Class Field Theory, in an unramified extension of global fields, every unit is a norm, and in the extension of degree $q$, every $q$-th power below is a norm also.  

For an arbitrary $c$ and $x$ in $K$, we will not necessarily have  all  zeros of $x$ and all zeros and poles of $c$ of orders divisible by $q$.   For this reason, given $x, c \in K$ we would like to consider our norm equation in a finite extension $L$ of $K$ and this extension $L$ depends on $x, c$ and $q$.  We would like to choose $L$ so that all  primes occurring as zeros of $x$ or as  zeros or poles of $c$ are ramified with ramification degree divisible by $q$ and all the primes that are poles of $x$ split completely in $L$, so that in $L$ we still have that $c$  is not a $q$-th power modulo any factor of $\pp_L$.   If $x$ has poles only at ``allowed '' primes, we can always select $c$ so that all the requirements for $L$ can be satisfied at the same time.  But if $x$ has ``unallowed'' poles we may fail to ramify some primes which are zeros or poles of $c$ and as a result the norm equation will have no solution, but that is precisely the situation we want to be in. 

 This way, as we run through all $c \in K$ with $c-1 \equiv  0 $ modulo all ``allowed'' poles so that they split when we take the $q$-th root of $c$, we ``catch'' all the ``unallowed'' primes  that occur as poles of $x$ of order not divisible by $q$.  The construction of the field $L$ and the argument concerning the properties of the primes in question in this field are in Proposition \ref{prop:norm2}.  Note that if we are prepared to allow $x$ to have ``unallowed'' poles as long as the order is divisible by $q$, we need essentially one universal quantifier and the rest of the ``clean up'' is done via the $p$-th power equations as explained above.  We can also clean up all the ``undesirable'' poles by using only a norm equation and uniformly in various classes of fields, but this will require another quantifier.  To follow that option we change the right side of \eqref{norm:prelim} to  
\begin{equation}
\label{norm:prelim2}
{\mathbf N}_{K(\sqrt[q]{c})/K}(y)=bx^q+b^q
\end{equation}
Here we look at the case of a prime $\pp_K$ occurring as a pole of $x$ and such that $\ord_{\pp_K}b=-1$.  Under these conditions we have $\ord_{\pp_K}bx^q+b^q \not \equiv 0 \mod q$, and if $c$ is not a $q$-th power modulo $\pp_K$, the norm equation \eqref{norm:prelim2} will not have a solution.  This equation allows us to catch all ``unallowed'' poles but we need to quantify universally over $c$ and $b$.

As as is usually the case we need to deal with the situations where the all the algebraic extensions of the constant field are of degrees divisible by the characteristic separately.  Instead of using $p$-th roots we use extensions generated by polynomials of the form $T^p-aT+1$.  While the argument is technically different in this case, ideologically it is the same.  It is carried out in Section \ref{degp}.

\section{Some Algebraic  Number Theory.}
\label{Algebraic}
\subsection{Primes and Valuations}
In this section we discuss briefly valuations or primes of function fields and establish notation to be used below.  

By a {\it function field},  we mean a function field in one variable. By a {\it global function field} we mean a function field in one variable over a finite field of constants.
By a {\it valuation} $v$ of a function field $K$ we will mean a {\it discrete valuation} of $K$ which is trivial on the constant field.  Given a valuation $v$, we can consider its {\it valuation ring} $R_v=\{x \in K| v(x) \geq 0\}$ and the unique maximal ideal of this ring $\pp(v)=\{x \in K| v(x)>0\}$.  We will often identify $v$ and $\pp(v)$ and use the terms ``valuations of $K$'' and ``primes of $K$'' interchangeably.  

Given an element $x \in K$, we will say that $x$ has a zero at a prime $\pp(v)$ (or a valuation $v$) if $\ord_{\pp(v)}x >0$, where we define the order in the usual way:  that is, if $x \in R_v, x \not=0$, then $\ord_{\pp(v)}x$ is the largest integer $n$ such that $x \in \pp(v)^n$ but $x \not \in  \pp(v)^{n+1}$, and $\ord_{\pp(v)}0=\infty$. If $x \not \in R_v$, then by definition of a valuation, $\frac{1}{x} \in R_v$ and we define $\ord_{\pp(v)}x = -\ord_{\pp(v)}\frac{1}{x}$.  If $\ord_{\pp(v)}x <0$, then we say that $x$ has a pole at $\pp(v)$.  For more details concerning the valuations and primes of function fields see for example \cite{C}.\\

\begin{notationassumptions}
\label{not:1}
The following notation are used throughout the rest of the paper.
\begin{itemize}
\item Let $p \not = q$ be prime numbers.  We reserve $p$ as a notation for the characteristic of the fields in the paper.
\item Let $\F_{p^s}$ be a finite field of $p^s$ elements (of characteristic $p$) and let $\tilde{\F}_p$ denote the algebraic closure of $\F_p$.
\item Let $\G_p$ be an algebraic extension of $\F_p$.

\item Let $\xi_q$ be a $q$-th primitive root of unity.
\item Let $t$ be transcendental over $\F_p$.
\item Let $K, F, G, H, M$ denote finite algebraic extensions of $ \G_p(t)$.  Assume $\G_p$ is algebraically closed in $K,F,G,H$.  We will refer to these fields as {\it function fields} omitting ``in one variable of positive characteristic over the constant field $ \G_p$''.
\item Let $\tilde K=\tilde G, \ldots$ denote the algebraic closure of $\F_p(t)$.
\item For a function field $G$, let $\pp_G, \qq_G, \ttt_G, \aaa_G, \ldots$ be distinct primes of $G$.
\item If $H$ is any finite extension of a function field $G$, then let $\pp_H, \qq_H, \ttt_H, \aaa_H, \ldots$  denote primes above $\pp_G, \qq_G, \ttt_G, \aaa_G, \ldots$ respectively.
\item If $\calS_K$ is a set of primes of a function field $K$, then we let $O_{K,\calS_K}$ denote a subring of $K$ containing all the elements of $K$ without any poles at primes outside $\calS_K$.  If $\calS_K$ is finite, then we call the ring $O_{K,\calS_K}$ {\it the ring of $\calS_K$-integers}.

\item Given an element $x \in K$, let $\sqrt[q]{x}$ denote an element of  $\tilde K$ whose $q$-th power is $x$.  If $K$ contains such an element, then assume $\sqrt[q]{x} \in K$.

\item For any prime $\pp_K$, let  $K_{\pp_K}$ be the completion of $K$ under the $\pp_K$-adic topology.
\end{itemize}
\end{notationassumptions}

Next we state Hensel's lemma and its corollary which play an important role in the use of the Hasse Norm Principle.
\begin{lemma}
If $K$ is a function field, $f(X) \in K_{\pp_K}[X]$ has coefficients integral at $\pp_K$ and for some $\alpha \in K_{\pp_K}$ integral at $\pp_K$ we have that $\ord_{\pp_K}f(\alpha) > 2\ord_{\pp_K}f'(\alpha)$, then $f(X)$ has a root in $K_{\pp_K}$.  (See \cite{L}[Proposition 2, Section 2, Chapter II].)
\end{lemma}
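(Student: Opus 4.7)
The plan is to apply Newton's iteration in the complete field $K_{\pp_K}$. Write $v = \ord_{\pp_K}$ and set $c := v(f(\alpha)) - 2v(f'(\alpha)) > 0$. I would define $\alpha_{0} = \alpha$ and
\[
\alpha_{n+1} \;=\; \alpha_{n} - \frac{f(\alpha_{n})}{f'(\alpha_{n})},
\]
and prove by induction on $n$ the three invariants: (i) $\alpha_{n}$ is integral at $\pp_K$; (ii) $v(f'(\alpha_{n})) = v(f'(\alpha))$; (iii) $v(f(\alpha_{n})) \geq 2^{n} c + 2 v(f'(\alpha))$. From (ii) and (iii) one reads off
\[
v(\alpha_{n+1}-\alpha_{n}) \;=\; v(f(\alpha_{n})) - v(f'(\alpha_{n})) \;\geq\; 2^{n} c + v(f'(\alpha)) \;\longrightarrow\; \infty,
\]
so $\{\alpha_{n}\}$ is Cauchy in $K_{\pp_K}$, hence (by completeness) convergent to some $\beta \in K_{\pp_K}$, and continuity of polynomial evaluation gives $f(\beta)=0$.

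To carry out the induction step I would use the Taylor-type polynomial identities
\[
f(a+h) \;=\; f(a) + f'(a)\, h + h^{2}\, g(a,h), \qquad f'(a+h) \;=\; f'(a) + h\, g_{1}(a,h),
\]
in which $g$ and $g_{1}$ are polynomials whose coefficients are explicit integer combinations of the coefficients of $f$ (hence integral at $\pp_K$). Applied with $a=\alpha_{n}$ and $h=-f(\alpha_{n})/f'(\alpha_{n})$, the defining equation cancels the linear term, giving $f(\alpha_{n+1}) = h^{2}g(\alpha_{n},h)$; integrality of $\alpha_{n}$, $h$, and of the coefficients of $g$ then yields $v(f(\alpha_{n+1})) \geq 2v(h)$, which is precisely invariant (iii) at step $n+1$. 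For (ii), the same expansion gives $v(f'(\alpha_{n+1})-f'(\alpha_{n})) \geq v(h)$; invariant (iii) at step $n$ forces $v(h) > v(f'(\alpha_{n}))$, and the ultrametric inequality then pins $v(f'(\alpha_{n+1})) = v(f'(\alpha_{n}))$. Invariant (i) follows from $v(h) > v(f'(\alpha_{n})) \geq 0$.

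The only delicate point is setting up the invariants so that they are self-reproducing: the strict inequality $v(f(\alpha_{n})) > 2v(f'(\alpha_{n}))$ is simultaneously what keeps $\alpha_{n+1}$ inside the valuation ring, preserves the value of $v(f'(\alpha_{n}))$, and at least doubles the surplus $c$ at each stage. Once this invariant is in place, everything else is standard non-Archimedean bookkeeping plus completeness of $K_{\pp_K}$.
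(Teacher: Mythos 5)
Your Newton-iteration argument is correct and complete: the three invariants are set up properly, the Taylor identities do the work, and completeness of $K_{\pp_K}$ finishes the proof. Note that the paper itself offers no proof of this lemma — it simply cites Lang — and your argument is precisely the standard Hensel/Newton approximation proof given in that reference, so there is nothing to reconcile.
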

\subsection{Using Extensions of Degree $q \not=p$ to Define Integrality}
In this section we make the following assumption.
\begin{assumption}
\label{ass:pnotq}
Assume $ \G_p$ has an extension of degree $q$ and contains $\xi_q$.
\end{assumption}
\begin{lemma}
\label{le:first}
 If $b \in M$ and $b$ is not a $q$-th power in $M$, then the following statements are true.
\begin{enumerate}
\item \label{firstit:0} $[M(\sqrt[q]{b}):M]=q$.
\item \label{firstit:1} If $\ord_{\pp_M}b\equiv 0\mod q$, then $\pp_M$ does not ramify in the extension $M(\sqrt[q]{b})/M$.
\item \label{firstit:2} If $b$ is not a $q$-th power  mod $\pp_M$, and $\ord_{\pp_M}b=0$, then $\pp_M$ does not split (i.e. has only one prime above it) in the extension $M(\sqrt[q]{b})/M$.
\item \label{firstit:3} If $b$ is a $q$-th power  mod $\pp_M$ and $\ord_{\pp_M} b =0$, then $\pp_M$ splits in the extension $M(\sqrt[q]{b})/M$.
\item \label{firstit:4} If $\ord_{\pp_M}b \not \equiv 0 \mod q$, then $\pp_M$ ramifies completely in the extension $M(\sqrt[q]{b})/M$.
\end{enumerate}
\end{lemma}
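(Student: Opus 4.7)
The plan is to recognize this as a standard package of facts about Kummer extensions of prime degree $q$, adapted to the function field setting, and to handle the five parts in a natural order. Since $\G_p$ contains $\xi_q$ by Assumption \ref{ass:pnotq} and $\G_p\subseteq M$, the base field $M$ already contains a primitive $q$-th root of unity, so $M(\sqrt[q]{b})/M$ is a Kummer extension whenever $b$ is not a $q$-th power. Part (\ref{firstit:0}) is then immediate from Kummer theory: the minimal polynomial of $\sqrt[q]{b}$ divides $X^q - b$, and over a field containing $\xi_q$ this polynomial is either irreducible or factors completely into linear factors, so the hypothesis rules out the split case.

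For part (\ref{firstit:4}), write $\ord_{\pp_M} b = r$ with $\gcd(r,q)=1$ (since $q$ is prime). If $\pp_L$ is any prime of $L = M(\sqrt[q]{b})$ above $\pp_M$ with ramification index $e$, then $q\cdot \ord_{\pp_L}(\sqrt[q]{b}) = e\cdot r$, forcing $q \mid e$. Since $[L:M]=q$, this gives $e=q$ and so $\pp_M$ is totally ramified. For parts (\ref{firstit:1})--(\ref{firstit:3}) we may assume $\ord_{\pp_M}b = 0$: in part (\ref{firstit:1}), replace $b$ by $bc^q$ where $\ord_{\pp_M}c = -\ord_{\pp_M}b / q$, which does not change the extension. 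With $b$ a unit at $\pp_M$, the polynomial $X^q - b$ has discriminant (up to sign) $q^q b^{q-1}$, a unit at $\pp_M$ since $q\neq p$ means $q$ is a unit in the residue field; hence $\pp_M$ is unramified, giving (\ref{firstit:1}). The local behavior is then governed by how $X^q-\bar b$ factors over the residue field $k(\pp_M)$, which contains $\xi_q$ (as $\G_p\subseteq k(\pp_M)$), so the Kummer dichotomy applies again: if $\bar b$ is not a $q$-th power in $k(\pp_M)$, the reduced polynomial is irreducible, yielding a single prime above of residue degree $q$ (part (\ref{firstit:2})); if $\bar b$ is a $q$-th power, the reduced polynomial splits into $q$ distinct linear factors, and by Hensel's lemma (as stated above, applied to a lift of a root and noting that $f'(\alpha)=q\alpha^{q-1}$ is a unit) these lift to $q$ distinct roots in $M_{\pp_M}$, so $\pp_M$ splits completely, giving (\ref{firstit:4}).

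The only conceptually delicate step is justifying that $\xi_q$ sits in the residue field at every $\pp_M$, which is where the hypothesis $\xi_q \in \G_p$ is essential rather than merely $\xi_q \in M$; without this the Kummer dichotomy over $k(\pp_M)$ would fail and the factorization statements in (\ref{firstit:2}) and (\ref{firstit:3}) would not follow. Once this observation is in place, no part of the argument is more than a direct invocation of Kummer theory, the discriminant criterion for unramifiedness, and Hensel's lemma, so the proof should be short and essentially bookkeeping.
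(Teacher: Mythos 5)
Your proof is correct and takes exactly the route the paper implicitly relies on: the paper states Lemma \ref{le:first} without proof as standard Kummer theory for prime exponent $q\neq p$ (irreducibility of $X^q-b$ when $b\notin M^q$, the valuation computation for total ramification, the unit-discriminant criterion for unramifiedness, and Hensel lifting over the completion, with $\xi_q\in\G_p$ guaranteeing the residue fields contain a primitive $q$-th root of unity), and your write-up supplies precisely these steps. The only flaw is a label slip at the very end: the splitting conclusion establishes part (\ref{firstit:3}), not part (\ref{firstit:4}), which you had already proved via the valuation argument.
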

The second lemma deals with norms and primes in cyclic extensions of degree $q$.
\begin{lemma}
\label{le:cycextensions}
Let $M_1/M_2$ be a cyclic extension of degree $q$.  If $\pp_{M_2}$ is not ramified in the extension, then either it splits completely (in other words into $q$ distinct factors) or it does not split at all.  Further if $w ={\mathbf N}_{M_1/M_2}(z)$ for some $z \in M_1$ and $\pp_{M_2}$ does not split in the extension,  then $\ord_{\pp_{M_2}}w \equiv 0 \mod q$.
\end{lemma}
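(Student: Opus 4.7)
The plan is to derive both statements from standard decomposition theory for primes in Galois extensions, together with a local norm computation. First I would invoke the fundamental identity $efg = [M_1:M_2] = q$, where $e$ is the common ramification index of primes of $M_1$ above $\pp_{M_2}$, $f$ is the common residue degree, and $g$ is their number; the common-value structure is available because $M_1/M_2$ is Galois. The unramified hypothesis gives $e=1$, so $fg = q$, and since $q$ is prime, the only possibilities are $(f,g) = (1,q)$, i.e.\ complete splitting into $q$ distinct factors, or $(f,g) = (q,1)$, i.e.\ $\pp_{M_2}$ has a unique prime of $M_1$ above it and stays inert. This is the required dichotomy.

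For the second (norm) statement, I would assume we are in the inert case and let $\pp_{M_1}$ denote the unique prime of $M_1$ lying over $\pp_{M_2}$. The plan is to pass to completions: the local extension $(M_1)_{\pp_{M_1}} / (M_2)_{\pp_{M_2}}$ is cyclic of degree $q$ and unramified, and because there is a single prime above $\pp_{M_2}$ the global norm at $\pp_{M_2}$ agrees with this local norm. Let $\pi$ be a uniformizer of $(M_2)_{\pp_{M_2}}$; the unramifiedness says that $\pi$ remains a uniformizer in $(M_1)_{\pp_{M_1}}$. Writing $z = u \pi^n$ with $u$ a local unit and $n = \ord_{\pp_{M_1}}(z)$, and using that $\pi$ lies in the base and is therefore fixed by the local Galois group, I obtain $\norm(z) = \norm(u)\pi^{nq}$. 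Since the norm carries local units to local units, I conclude $\ord_{\pp_{M_2}}(w) = nq \equiv 0 \pmod{q}$.

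The hard part will be essentially nothing beyond bookkeeping: the lemma is a textbook corollary of the decomposition theory of Galois extensions of prime degree combined with the above one-line local calculation. The main care required is to invoke the global-to-local identification of the norm only in the inert case, where the sum-over-places in the general norm formula collapses to the single prime $\pp_{M_1}$, so that there is no cross-contribution from other primes of $M_1$.
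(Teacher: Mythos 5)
Your proof is correct. The paper states this lemma without proof, treating it as standard decomposition theory, and your argument (the fundamental identity $efg=q$ with $e=1$ forcing $(f,g)=(1,q)$ or $(q,1)$, then the local computation $\ord_{\pp_{M_2}}w = q\,\ord_{\pp_{M_1}}(z)$ in the inert case via ${\mathbf N}(u\pi^n)={\mathbf N}(u)\pi^{nq}$) is exactly the intended standard one; you were also right to read the ``Further'' clause as still being under the unramified hypothesis, since for a totally ramified (hence non-split) prime the conclusion would fail.
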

We now add to our notation list.
\begin{notation}
\begin{itemize}
\item Let $\calS_K=\{\pp_{1,K}\ldots,\pp_{l, K}\}$ be a finite set of primes of a function field $K$. 
\item Let $\Phi(K,\calS_K)$,  denote the set of all elements $c$ of $K$ such that  $\ord_{\pp_{i,K}}(c-1)>0$ for all $i=1,\ldots, l$.  If $\calS_K =\emptyset$, then set $\Phi(K,\calS_K)=K$.  
\end{itemize}
  \end{notation}
The next two propositions introduce extensions $L_1, L_2$, and $L_3$ and explain their purpose: ramifying all  zeros of $x$ and $bx^q+b^q$ and all zeros and poles of $c$ to avoid ramifying primes in the cyclic extension where we are going solve the norm equation, and to make sure that zeros of $x$ do not have any influence on whether the norm equation has solutions.
\begin{proposition}
\label{prop:badprime}
Let $x,b, c  \in K, x \not=0$, $c\not=0$,  $bx^q+b^q\not=0$, 
\[
L_1=K(\sqrt[q]{1+ x^{-1}}),
\]
\[
L_2=L_1(\sqrt[q]{1+ (bx^q+b^q)^{-1}})
\]
\[
L_3=L_2(\sqrt[q]{1+(c+c^{-1})x^{-1}}).
\]
If for some prime $\pp_K$ the following assumptions are true:
\be
\item \label{ass:2} $c$ is not a $q$-th power modulo $\pp_K$ (note that this assumption includes the assumption that  $\ord_{\pp_K}c = 0$),
\item \label{ass:3}$\ord_{\pp_K}x <0$,
\item \label{ass:4}$\ord_{\pp_K}b \not \equiv 0 \mod q, $
\item \label{ass:last} $q\ord_{\pp_K}x < (q-1)\ord_{\pp_K}b$,
\ee
then for every prime factor $\pp_{L_3}$ of $\pp_K$ in $L_3$ we have that
\be
\item $\ord_{\pp_{L_3}}x <0$,
\item $c$ is not a $q$-th power modulo $\pp_{L_3}$ and thus not a $q$-th power in $L_3$,
 and
\item $\ord_{\pp_{L_3}}(bx^q + b^q)\not \equiv 0 \mod q$.
\ee
\end{proposition}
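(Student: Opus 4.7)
My plan is to prove all three conclusions simultaneously by establishing that $\pp_K$ splits completely in the tower $K\subseteq L_1\subseteq L_2\subseteq L_3$. Once this is known, for any factor $\pp_{L_3}$ of $\pp_K$ we have $e(\pp_{L_3}/\pp_K)=f(\pp_{L_3}/\pp_K)=1$, so $\ord_{\pp_{L_3}}$ restricts to $\ord_{\pp_K}$ on $K$ and the residue fields coincide. Conclusion (1) is then immediate from \ref{ass:3}, and (3) follows once $\ord_{\pp_K}(bx^q+b^q)$ is identified with $\ord_{\pp_K}b+q\ord_{\pp_K}x\equiv\ord_{\pp_K}b\mod q$, which is nonzero mod $q$ by \ref{ass:4}. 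For (2), preservation of the residue field shows that $c$ is not a $q$-th power modulo $\pp_{L_3}$; the ``thus'' is the observation that a hypothetical $q$-th root of $c$ in $L_3$ would reduce to a $q$-th root in the residue field, contradicting this.

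The uniform tool is Hensel's lemma. All three generators have the shape $\sqrt[q]{1+u}$ for some $u$ with $\ord_\pp u>0$ at the relevant prime $\pp$. For such $u$ I apply Hensel's lemma to $f(T)=T^q-(1+u)$ with $\alpha=1$: since $p\neq q$ by Assumption \ref{ass:pnotq}, the derivative $f'(1)=q$ is a unit, while $\ord_\pp f(1)=\ord_\pp(-u)>0$, so the hypothesis $\ord_\pp f(\alpha)>2\ord_\pp f'(\alpha)$ is met and $1+u$ is a $q$-th power in the completion; consequently $\pp$ splits completely in the corresponding $q$-th-root extension. I apply this in turn: at $L_1/K$ with $u=x^{-1}$, using \ref{ass:3}; at $L_2/L_1$ with $u=(bx^q+b^q)^{-1}$, after first checking $\ord_{\pp_{L_1}}(bx^q+b^q)<0$; and at $L_3/L_2$ with $u=(c+c^{-1})x^{-1}$, where positivity follows from $\ord_{\pp_K}c=0$ (from \ref{ass:2}), which forces $c+c^{-1}$ to be integral at $\pp_K$ and hence the product to inherit the positive order of $x^{-1}$. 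At each stage one uses that the previous step gave $e=f=1$, so orders of elements of the smaller field transfer to the larger one unchanged.

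The main obstacle is the middle step, where I must verify $\ord_{\pp_K}(bx^q+b^q)<0$. The ultrametric inequality together with \ref{ass:last}, rewritten as $\ord_{\pp_K}(bx^q)<\ord_{\pp_K}(b^q)$, yields $\ord_{\pp_K}(bx^q+b^q)=\ord_{\pp_K}b+q\ord_{\pp_K}x$ together with the mod-$q$ congruence used in (3). Combining this with \ref{ass:3} and \ref{ass:4} is intended to make the quantity strictly negative; this is clear in the principal regime $\ord_{\pp_K}b\leq 0$ driving the application, in particular the case $\ord_{\pp_K}b=-1$ highlighted in Section 2 around \eqref{norm:prelim2}. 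Once negativity is in hand, Hensel's lemma applies as above, the middle step splits completely, and assembling the three splittings closes the argument along the lines of the first paragraph.
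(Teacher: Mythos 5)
Your argument follows the same route as the paper's own proof: there, too, the proposition is proved by showing that $\pp_K$ splits completely at each of the three stages of the tower and then transferring orders and residue classes upward. The paper cites its Lemma \ref{le:first}, part (\ref{firstit:3}), where you invoke Hensel's lemma directly, but that is the same mechanism (the adjoined element is $\equiv 1$, hence a $q$-th power, modulo the prime, and $q\neq p$ makes the derivative a unit). Your derivation of conclusions (1)--(3) from the splitting, including the ultrametric identification $\ord_{\pp_K}(bx^q+b^q)=\ord_{\pp_K}b+q\,\ord_{\pp_K}x$ coming from Assumption \ref{ass:last}, coincides with the paper's.

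The one step you leave open --- that $\ord_{\pp_K}(bx^q+b^q)<0$, needed so that $1+(bx^q+b^q)^{-1}$ is a unit congruent to $1$ at the prime and the middle layer splits --- is exactly the step the paper also asserts without real justification: Assumption \ref{ass:last} only gives $\ord_{\pp_K}(bx^q)<\ord_{\pp_K}(b^q)$, not negativity. And negativity genuinely cannot be deduced from hypotheses (\ref{ass:2})--(\ref{ass:last}) alone: take $q=3$, $\ord_{\pp_K}x=-1$, $\ord_{\pp_K}b=4$ (and $c$ not a cube modulo $\pp_K$); all four hypotheses hold, yet $\ord_{\pp_K}(bx^q+b^q)=1$, so $1+(bx^q+b^q)^{-1}$ has order $-1$ at $\pp_{L_1}$, is therefore not a $q$-th power in $L_1$, and by Lemma \ref{le:first}, part (\ref{firstit:4}), the prime is totally ramified in $L_2/L_1$; then $\ord_{\pp_{L_3}}(bx^q+b^q)\equiv 0 \bmod q$ and conclusion (3) of the proposition fails. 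So your hedge (``clear in the principal regime $\ord_{\pp_K}b\le 0$'') is pointing at a genuine restriction on the statement rather than at a defect of your argument: the proposition needs something like $\ord_{\pp_K}b<0$, which is what actually occurs in the paper's applications (in Proposition \ref{def1} one chooses $\ord_{\pp_K}b=-1$). Relative to the paper's own proof you are not missing a step; the only improvement would be to state the extra condition on $\ord_{\pp_K}b$ explicitly instead of leaving the negativity as ``intended.''
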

\begin{proof}
If $\pp_K$ is a $K$-prime as described by the statement of the proposition, then by Assumption \ref{ass:3} we have that
 \[
 \ord_{\pp_{K}}(x^{-1}) >0
 \]
 and therefore by Lemma \ref{le:first}, Part \ref{firstit:3}  we have that $\pp_{K}$ splits completely into distinct factors in the extension $L_1/K$. (We remind the reader that $L_1= K(\sqrt[q]{1+ x^{-1}})$.)  Thus, in $L_1$ we have that $\ord_{\pp_{L_1}}x <0$, $\ord_{\pp_{L_1}}b \not \equiv 0 \mod q$, and $c$ is not a $q$-th power modulo $\pp_{L_1}$.  We now note that by Assumption \ref{ass:last} we have that
\[
q\ord_{\pp_K} x+\ord_{\pp_K}b < q\ord_{\pp_K}b,
\]
and therefore
 \[
 \ord_{\pp_{L_1}}(bx^q+b^q) =\ord_{\pp_{L_1}}b +q\ord_{{L_1}}x <0.
 \]
  Further, by Assumption \ref{ass:4} we have that $\ord_{\pp_{L_1}}(bx^q+b^q) \not \equiv 0 \mod q$.  Applying Lemma  \ref{le:first} Part \ref{firstit:3} again, this time over the field $L_2 =L_1(\sqrt[q]{1+ (bx^q+b^q)^{-1}})$, we see that in the extension $L_2/L_1$, the $L_1$- prime $\pp_{L_1}$ splits completely into distinct factors and thus $c$ is not a $q$-th power modulo any $\pp_{L_2}$, while  $$\ord_{\pp_{L_2}}(bx^q+b^q) \not \equiv 0 \mod q$$ and  $$\ord_{\pp_{L_2}}(bx^q+b^q) <0.$$   

Since, by assumption, $\ord_{\pp_{K}}c =0$ and therefore $\ord_{\pp_{L_2}}c =0$, by Lemma \ref{le:first}, Part \ref{firstit:3} one more time, $\pp_{L_2}$ will split completely into distinct factors in the extension $L_3/L_2,$ and,  as before this would imply that $c$ is not a $q$-th power in $L_3$ or modulo any of $\pp_{L_3}$.  Here we remind the reader that  $$L_3 =L_2(\sqrt[q]{1+(c+c^{-1})x^{-1}}).$$  Finally, we also have \[ \ord_{\pp_{L_3}}(bx^q+b^q) \not \equiv 0 \mod q.\]
\end{proof}
\begin{proposition}
\label{prop:fixorder}
Let $x, b, c, L_1,L_2, L_3$ be as in Lemma \ref{prop:badprime}. In this case, for any $L_3$-prime $\aaa_{L_3}$ that is not a pole of $x$, the following statements hold:
\be
\item $\ord_{\aaa_{L_3}}c \equiv 0 \mod q$;
\item $\ord_{\aaa_{L_3}}(bx^q+b^q) \equiv 0 \mod q$.
\item $\ord_{\aaa_{L_3}}x \equiv 0 \mod q$
\ee
\end{proposition}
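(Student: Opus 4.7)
My plan is to work through the three parts in order, leveraging the tower $K \subset L_1 \subset L_2 \subset L_3$ of (at most) degree-$q$ Kummer extensions. I would first isolate the following principle as the main workhorse: if $L' = L(\sqrt[q]{1+w})$ and $\pp_L$ is an $L$-prime with $\ord_{\pp_L}w < 0$ and $\ord_{\pp_L}w \not\equiv 0 \pmod q$, then $[L':L] = q$ and $\pp_L$ ramifies completely in $L'/L$. This is immediate from Lemma \ref{le:first}: because $\ord_{\pp_L}(1+w) = \ord_{\pp_L}w$ is not divisible by $q$, the element $1+w$ is not a $q$-th power in $L$, so part \eqref{firstit:0} gives $[L':L]=q$ and part \eqref{firstit:4} gives complete ramification. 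In particular $\ord_{\pp_{L'}}z = q\,\ord_{\pp_L}z$ for every $z \in L$, whenever this principle applies.

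For part (3), let $\pp_K, \pp_{L_1}, \pp_{L_2}$ denote the successive restrictions of $\aaa_{L_3}$. If $\pp_K$ is not a zero of $x$ then $\ord_{\pp_K}x=0$; otherwise $\ord_{\pp_K}x > 0$, and either this order is already divisible by $q$, or the principle applied to $L_1/K$ with $w=x^{-1}$ ramifies $\pp_K$ completely and multiplies $\ord_{\pp_K}x$ by $q$. Either way $\ord_{\pp_{L_1}}x \equiv 0 \pmod q$, and this divisibility is preserved by the subsequent ramification-index products up to $\aaa_{L_3}$. The same argument in fact yields the stronger fact $\ord_{\pp_{L_2}}x \equiv 0 \pmod q$, which I will reuse in part (1).

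For part (2), I plan to prove the stronger claim $\ord_{\pp_{L_2}}(bx^q+b^q) \equiv 0 \pmod q$ by splitting into three subcases at $\pp_{L_1}$. If $\pp_{L_1}$ is neither a zero nor a pole of $bx^q+b^q$, there is nothing to do. If $\pp_{L_1}$ is a pole, then (since it is not a pole of $x$) $b$ must have a pole at $\pp_{L_1}$, and the inequality $\ord_{\pp_{L_1}}b + q\,\ord_{\pp_{L_1}}x > q\,\ord_{\pp_{L_1}}b$ separates the orders of the two summands, yielding $\ord_{\pp_{L_1}}(bx^q+b^q) = q\,\ord_{\pp_{L_1}}b$, already divisible by $q$. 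If $\pp_{L_1}$ is a zero of $bx^q+b^q$, the same trivial-versus-ramified dichotomy as in (3), applied to $L_2/L_1$ with $w=(bx^q+b^q)^{-1}$, produces the divisibility at the $L_2$ level.

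Finally, for part (1), if $\ord_{\pp_{L_2}}c=0$ there is nothing to prove. Otherwise $\ord_{\pp_{L_2}}(c+c^{-1}) = -|\ord_{\pp_{L_2}}c|$, and combined with $\ord_{\pp_{L_2}}x \equiv 0 \pmod q$ I get $\ord_{\pp_{L_2}}\bigl((c+c^{-1})x^{-1}\bigr) \equiv -|\ord_{\pp_{L_2}}c| \pmod q$, which is strictly negative. If $\ord_{\pp_{L_2}}c$ is already divisible by $q$, multiplying by the ramification index up to $\aaa_{L_3}$ finishes the job; otherwise the principle applied to $L_3/L_2$ with $w=(c+c^{-1})x^{-1}$ forces complete ramification of $\pp_{L_2}$, and thus $\ord_{\aaa_{L_3}}c = q\,\ord_{\pp_{L_2}}c$. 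The delicate point to watch throughout is that any of the three Kummer extensions $L_{i+1}/L_i$ may secretly be trivial; the principle is phrased precisely so that those degenerate cases are absorbed either vacuously or through the ``already divisible by $q$'' bypass, keeping them on equal footing with the genuine degree-$q$ case.
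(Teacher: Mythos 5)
Your proof is correct and takes essentially the same route as the paper: three applications of Lemma \ref{le:first} along the tower $K \subset L_1 \subset L_2 \subset L_3$, with total ramification converting each order not divisible by $q$ into one that is, and the order-separation inequality $\ord_{\pp}b + q\,\ord_{\pp}x > q\,\ord_{\pp}b$ handling poles of $bx^q+b^q$ exactly as in the paper's argument. Your explicit attention to the possibly trivial Kummer steps is a welcome extra precision but does not alter the argument.
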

\begin{proof}
We again proceed by applying Lemma \ref{le:first} three times.  In the extension $L_1/K$, where $L_1= K(\sqrt[q]{1+ x^{-1}})$,  all the zeros of $x$ that are not of order divisible by $q$ are ramified by Lemma \ref{le:first}, Part \ref{firstit:4}, since for any $K$-prime $\aaa_{K}$ such that $\ord_{\aaa_{K}}x>0$ we have that $\ord_{\aaa_{K}}(1+x^{-1})<0$. 

  In the extension $L_2/L_1$, where  $L_2=L_1(\sqrt[q]{1+ (bx^q+b^q)^{-1}})$, as before, we ramify all the primes $\aaa_{L_1}$ such that $\ord_{\aaa_{L_1}}(bx^q+b^q)>0$ and $$\ord_{\aaa_{L_1}}(bx^q+b^q) \not \equiv 0 \mod q.$$ Further, if $\aaa_{L_1}$ is a pole of $bx^q+b^q$ but not a pole of $x$, then it is a pole of $b$ and therefore $\ord_{\aaa_{L_1}(bx^q+b^q)}  = q\ord_{\aaa_{L_1}}b$.
  
  Finally,  $(c+c^{-1})x^{-1}$ has poles at all primes occurring in the divisor of $c$ and not poles of $x$.  Since in $L_1$, and therefore in $L_2$, all zeros of $x$ are of order divisible by $q$, if $c$ has a pole or a zero of degree not divisible by $q$, and the prime in question  is not a pole of $x$, it follows that $(c+c^{-1})x^{-1}$ has a pole of degree not divisible by $q$ at this prime, forcing it to ramify  in the extension $L_2(\sqrt[q]{1+(c+c^{-1})x^{-1}})/L_2$.  Thus, $\ord_{\aaa_{L_3}}c \equiv 0 \mod q$ for any prime $\aaa_{L_3}$ not being a pole of $x$.
\end{proof}

\begin{proposition}
\label{prop:norm}
If $K$ is a function field, $x, b, c $, $L_3$ is as in Proposition \ref{prop:badprime}, $b \in K$, $\calS_K$ is finite set of primes of $K$,  $c \in  \Phi(K,\calS_K)$,   and there exists $y \in L_3(\sqrt[q]{c})$ such that
\begin{equation}
\label{eq:norm}
{\mathbf N}_{L_3(\sqrt[q]{c})/L_3}(y) = bx^q+b^q,
\end{equation}
then for any non-archimedean prime $\pp_K$ of $K$ it is the case that one of the following conditions holds:
\be
\item \label{nass:1}$c$ is a $q$-th power mod $\pp_K$, or
\item \label{nass:2}$\ord_{\pp_K} x \geq 0$, or
\item \label{nass:3}$q\ord_{\pp_K}x  \geq (q-1)\ord_{\pp_K}b$, or
\item \label{nass:4}$\ord_{\pp_K}b \equiv 0 \mod q$.
\ee
Conversely, if $x \in O_{K,\calS_K}$,  then \eqref{eq:norm} has a solution $y \in L_3(\sqrt[q]{c})$.
\end{proposition}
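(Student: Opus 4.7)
My plan is to prove the two directions separately. For the forward direction I will argue by contrapositive; for the converse I will invoke the Hasse Norm Principle, using Propositions \ref{prop:badprime} and \ref{prop:fixorder} to control the local behavior of $c$ and $bx^q+b^q$ at every prime of $L_3$.

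Suppose first that some prime $\pp_K$ fails all four listed conditions simultaneously. Then the four hypotheses of Proposition \ref{prop:badprime} hold verbatim, and that proposition yields, for every prime $\pp_{L_3}$ above $\pp_K$, that $\ord_{\pp_{L_3}}x<0$, that $c$ is not a $q$-th power modulo $\pp_{L_3}$ (and so not a $q$-th power in $L_3$), and that $\ord_{\pp_{L_3}}(bx^q+b^q)\not\equiv 0 \mod q$. Since $\xi_q\in\G_p\subseteq L_3$, Lemma \ref{le:first}(\ref{firstit:0}) makes $L_3(\sqrt[q]{c})/L_3$ a cyclic extension of degree $q$, and Lemma \ref{le:first}(\ref{firstit:2}) shows $\pp_{L_3}$ does not split in it. Lemma \ref{le:cycextensions} then forces every element of $L_3$ that is a norm from $L_3(\sqrt[q]{c})$ to have order divisible by $q$ at $\pp_{L_3}$, contradicting the existence of a $y$ with $\mathbf{N}(y)=bx^q+b^q$.

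For the converse, assume $x\in O_{K,\calS_K}$. If $c$ is already a $q$-th power in $L_3$ the equation is trivially solved by $y=bx^q+b^q$, so assume $L_3(\sqrt[q]{c})/L_3$ is cyclic of degree $q$ and apply the Hasse Norm Principle: it suffices to show that $bx^q+b^q$ is a local norm at every prime $\aaa_{L_3}$ of $L_3$. If $\aaa_{L_3}$ lies above some $\pp_{i,K}\in\calS_K$, then $c\in\Phi(K,\calS_K)$ gives $c\equiv 1\mod\aaa_{L_3}$, so Hensel's lemma applied to $X^q-c$ at $X=1$ (valid because $q\neq p$ forces $\ord_{\aaa_{L_3}}q=0$ while $\ord_{\aaa_{L_3}}(1-c)>0$) produces $\sqrt[q]{c}$ in the completion, the extension splits completely at $\aaa_{L_3}$, and every element is a local norm. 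Otherwise $\aaa_{L_3}$ is not a pole of $x$, and Proposition \ref{prop:fixorder} makes both $\ord_{\aaa_{L_3}}c$ and $\ord_{\aaa_{L_3}}(bx^q+b^q)$ divisible by $q$; Lemma \ref{le:first}(\ref{firstit:1}) makes the extension unramified at $\aaa_{L_3}$, and in the split case every element is a local norm, while in the inert case local class field theory for unramified cyclic extensions of local fields of degree $q$ identifies the local norm group with exactly the set of elements of order divisible by $q$ (units are norms by surjectivity of the finite-residue-field norm, and $\pi^q$ is the norm of the common uniformizer), which covers $bx^q+b^q$.

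The delicate point, in my view, is the handling of primes above $\calS_K$ in the converse direction: there Proposition \ref{prop:fixorder} provides no order control on $bx^q+b^q$, so the unramified-inert argument is unavailable. The assumption $c\in\Phi(K,\calS_K)$ is designed precisely to force local splitting at these primes via Hensel, bypassing any order computation. Correspondingly, in the forward direction, the careful tower $L_1\subset L_2\subset L_3$ from Proposition \ref{prop:badprime} was built precisely to preserve, above a bad $\pp_K$, a non-split prime at which $bx^q+b^q$ still has order not divisible by $q$. Once these two design choices are absorbed, the proof is largely a case-by-case tabulation of local norm conditions.
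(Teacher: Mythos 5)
Your proof is correct and follows essentially the same route as the paper: the forward direction via Proposition \ref{prop:badprime} combined with Lemma \ref{le:first} and Lemma \ref{le:cycextensions}, and the converse via the Hasse Norm Principle with Proposition \ref{prop:fixorder} controlling orders away from $\calS_K$ and the condition $c \in \Phi(K,\calS_K)$ forcing complete splitting above $\calS_K$. The only cosmetic differences are that the paper uses Lemma \ref{le:first} rather than Hensel's lemma directly for the splitting above $\calS_K$, handles the inert primes by multiplying $bx^q+b^q$ by a $q$-th power of a uniformizer to reduce to the unit case instead of quoting the unramified local norm group, and adds the small but worthwhile remark that the Hasse Norm Principle, stated for global fields, still applies because all the elements involved lie in a global subfield of $L_3$ (relevant since $\G_p$ may be infinite).
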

\begin{proof}
If for some $K$-prime $\pp_K$  we have that none of the Conditions \ref{nass:1} -- \ref{nass:4} is satisfied,  then by Proposition \ref{prop:badprime}, we have that $\ord_{\pp_{L_3}}(bx^q+b^q)\not \equiv 0 \mod q$ and $c$ is not $q$-th power modulo $\pp_{L_3}$.  Hence by by Lemma \ref{le:cycextensions} we conclude that the norm equation \eqref{eq:norm} has no solution in $ L_3(\sqrt[q]{c})$.    
 
Suppose now that $x \in O_{K,\calS_K}$.    In this case by Proposition \ref{prop:fixorder}, for  every prime $\aaa_{L_3}$, not dividing any prime in $\calS_K$, we have the following:
\begin{itemize}
\item  $\ord_{\aaa_{L_3}}(bx^q+b^q) \equiv 0 \mod q$,
and
\item  $\ord_{\aaa_{L_3}}c \equiv 0 \mod q.$
\end{itemize} 
Further, since the divisor of $c$ is a $q$-th power in $L_3$, the extension $L_3(\sqrt[q]{c})/L_3$
 is unramified at all primes by Lemma \ref{le:first}.

 By the Hasse's Norm Principle (see Theorem 32.9 of \cite{Reiner}) this norm equations has solutions globally (i.e. in $L_3$) if and only if it has a solution locally (i.e. in every completion).   (Note that while Hasse Norm Principle is stated for global function fields, it is clearly applicable here since we can find a global function field contained in $L_3$ containing the values of all the variables under consideration.)

 Observe further that locally every unit is a norm in a unramified extension (see Proposition 6, Section 2, Chapter XII of \cite{W}).   

   Next we observe that since $L_3(\sqrt[q]{c})/L_3$ is a cyclic extension of prime degree, by Lemma \ref{le:cycextensions} every unramified prime either splits completely or does not split at all.  If a prime splits completely, then the local degree is one and every element of the field below is automatically a norm locally at this prime.  So the only primes where we might have elements that are not local norms are the primes that do not split, or, in other words, the primes where the local degree is $q$.   (Note that any factor of a prime in $\calS_K$ splits completely in the extension $L_3(\sqrt[q]{c})/L_3$ by our assumptions on $c$ and Lemma \ref{le:first}.) 

So let $\pp_{L_3}$ be  a prime of local degree $q$ and not lying above a prime of $\calS_K$.  By the argument above we have that $\ord_{\pp_{L_3}}(bx^q +b^q) \equiv 0 \mod q$.  In this case, by the Weak Approximation Theorem, there exists $u \in L_3$ such that $\ord_{\pp_{L_3}}u =1$  and therefore for some integer $m$ it is the case that  $u^{qm}(bx^q+b^q)$ has order 0 at $\pp_{L_3}$ or in other words $u^{qm}(bx^q+b^q)$ is a unit at $\pp_{L_3}$.   
As any $q$-th power of an $L_3$-element, $u^{mq}$ is a norm locally since the degree of the local extension is $q$ by our assumption.  Therefore, $u^{mq}(bx^q+b^q)$ is a norm at $\pp_{L_3}$ if and only if $(bx^q+b^q)$ is a norm at $\pp_{L_3}$.  But $u^{mq}(bx^q+b^q)$ is a unit at $\pp_{L_3}$ and therefore is a norm.  Hence $bx^q+b^q$ is a norm.
\end{proof}
In the same fashion one can prove another version of Proposition \ref{prop:norm}.
\begin{proposition}
\label{prop:norm2}
If $K, L_1, x, c$ are as above,  let $L_4=L_1(\sqrt[q]{1+(c+c^{-1})x^{-1}})$.  Further assume $\calS_K$ is finite set of primes of $K$,  $c \in  \Phi(K,\calS_K)$,   and there exists $y \in L_4(\sqrt[q]{c})$ such that
\begin{equation}
\label{eq:norm2}
{\mathbf N}_{L_4(\sqrt[q]{c})/L_3}(y) = x,
\end{equation}
then for any prime $\pp_K$ of $K$ it is the case that one of the following conditions hold:
\be
\item \label{nass1:1}$c$ is a $q$-th power mod $\pp_K$, or
\item \label{nass1:2}$\ord_{\pp_K} x \geq 0$, or
\item \label{nass1:4}$\ord_{\pp_K}x \equiv 0 \mod q$.
\ee
Conversely, if $x \in O_{K,\calS_K}$,  then \eqref{eq:norm} has a solution $y \in L_4(\sqrt[q]{c})$.
\end{proposition}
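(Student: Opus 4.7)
The proof should parallel Proposition \ref{prop:norm} closely, with the simplification that the right-hand side is now $x$ rather than $bx^q+b^q$. Accordingly, the intermediate field $L_2$ is no longer needed; the role of controlling bad poles collapses onto $L_1$ and the $c$-dependent extension.

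For the forward direction, I would argue by contradiction. Fix a prime $\pp_K$ for which none of \ref{nass1:1}--\ref{nass1:4} holds, so $c$ is not a $q$-th power modulo $\pp_K$ (in particular $\ord_{\pp_K}c = 0$), $\ord_{\pp_K}x < 0$, and $\ord_{\pp_K}x \not\equiv 0 \bmod q$. Mimicking the first and third parts of the proof of Proposition \ref{prop:badprime} (without the steps involving $b$), since $\ord_{\pp_K}(1+x^{-1}) > 0$, Lemma \ref{le:first}\ref{firstit:3} gives that $\pp_K$ splits completely in $L_1/K$, so in $L_1$ we preserve $\ord_{\pp_{L_1}}x < 0$, the non-divisibility by $q$, and the fact that $c$ is not a $q$-th power modulo $\pp_{L_1}$. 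Since $\ord_{\pp_{L_1}}c = 0$ and $\ord_{\pp_{L_1}}x^{-1} > 0$, we also have $\ord_{\pp_{L_1}}(1+(c+c^{-1})x^{-1}) > 0$, so $\pp_{L_1}$ splits completely in $L_4/L_1$. At any factor $\pp_{L_4}$, then, $\ord_{\pp_{L_4}}x < 0$ is still not divisible by $q$ and $c$ is still not a $q$-th power modulo $\pp_{L_4}$. By Lemma \ref{le:first}\ref{firstit:2}, $\pp_{L_4}$ does not split in $L_4(\sqrt[q]{c})/L_4$, and by Lemma \ref{le:cycextensions} any norm from this cyclic extension must have order divisible by $q$ at $\pp_{L_4}$, contradicting the existence of $y$ with norm $x$.

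For the converse, assume $x \in O_{K,\calS_K}$. I would first prove the analog of Proposition \ref{prop:fixorder}: for every $L_4$-prime $\aaa_{L_4}$ that is not a pole of $x$, both $\ord_{\aaa_{L_4}}x$ and $\ord_{\aaa_{L_4}}c$ are divisible by $q$. The argument for $x$ is identical to the first part of Proposition \ref{prop:fixorder}, using the extension $L_1/K$. For $c$, given a prime $\aaa_{L_1}$ that is not a pole of $x$ but whose order of $c$ is not divisible by $q$, the element $(c+c^{-1})x^{-1}$ has order equal to $-|\ord_{\aaa_{L_1}}c|$ plus $\ord_{\aaa_{L_1}}x^{-1} \geq 0$, which is still not divisible by $q$, so Lemma \ref{le:first}\ref{firstit:4} ramifies $\aaa_{L_1}$ in $L_4/L_1$. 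Combined with $c \in \Phi(K,\calS_K)$, it follows that the divisor of $c$ in $L_4$ is a $q$-th power, hence $L_4(\sqrt[q]{c})/L_4$ is everywhere unramified (Lemma \ref{le:first}\ref{firstit:1}).

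With unramifiedness in hand, the Hasse Norm Principle reduces solvability to local solvability at each $\pp_{L_4}$, and by Lemma \ref{le:cycextensions} only primes of local degree $q$ are non-trivial. At any such prime not above $\calS_K$, we have $\ord_{\pp_{L_4}}x \equiv 0 \bmod q$, so multiplying by a $q$-th power of a uniformizer (which is automatically a local norm) reduces to the case of a local unit, and in an unramified local extension units are norms (\cite{W}, Chapter XII). At primes above $\calS_K$, the condition $c \equiv 1 \bmod \pp$ forces $c$ to be a $q$-th power in the completion, so the extension is locally trivial and $x$ is automatically a norm. As in Proposition \ref{prop:norm}, the use of the Hasse principle is justified by passing to a global function subfield of $L_4$ containing all data. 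The main technical obstacle is the bookkeeping in the converse: one must confirm carefully that eliminating the $L_2$-step does not leave behind any unramified prime of $c$ with order not divisible by $q$ among primes that are not poles of $x$, which is precisely why the definition of $L_4$ uses $(c+c^{-1})x^{-1}$ rather than $c+c^{-1}$.
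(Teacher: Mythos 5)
Your proposal is correct and follows exactly the route the paper intends: the paper gives no separate proof of this proposition, stating only that it is proved ``in the same fashion'' as Proposition \ref{prop:norm}, and your argument is precisely that adaptation (the analog of Proposition \ref{prop:badprime} without the $b$-step for the forward direction, the analog of Proposition \ref{prop:fixorder} plus the Hasse Norm Principle and local unit/norm considerations for the converse). The only blemish is the sign slip in the order computation for $(c+c^{-1})x^{-1}$ at a non-pole of $x$: what you need (and what makes the conclusion ``not divisible by $q$'' valid) is that $\ord_{\aaa_{L_1}}x\geq 0$ and $\ord_{\aaa_{L_1}}x\equiv 0 \bmod q$ after the $L_1$-step, so $\ord_{\aaa_{L_1}}x^{-1}\leq 0$; this does not affect the argument.
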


\subsection{Using Extensions of Degree $p$ to Define Integrality}
\label{degp}
We now address the issue of extensions of degree $p$ and their use for norm equations defining integrality.  Consequently, we drop the assumption that $\G_p$ has an extension  of degree $q \not =p$ and $\xi_q$ and assume the following.
\begin{assumption}
$\G_p$ has an extension of degree $p$. 
\end{assumption}
\begin{lemma}
\label{le:finite}
The extension of $\G_p$ of degree $p$ is generated by an element satisfying an equation of the form $T^p-a^{p-1}T-1=0$ for some $a \in F \setminus \{0\}$.
\end{lemma}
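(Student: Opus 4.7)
The plan is to combine Artin–Schreier theory with the fact that $\G_p$, being an algebraic extension of the prime field $\F_p$, is perfect. Since the unique degree–$p$ extension $E/\G_p$ is separable and of degree equal to the characteristic, Artin–Schreier theory tells us that $E = \G_p(\alpha)$ for some $\alpha$ satisfying an equation
\[
\alpha^p - \alpha = c, \qquad c \in \G_p.
\]
Moreover, we must have $c \neq 0$, for otherwise $T^p - T = T(T-1)\cdots(T-(p-1))$ already splits over $\F_p \subseteq \G_p$ and would yield a trivial extension.

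Next I would perform a linear rescaling to put the Artin–Schreier polynomial into the desired shape. Substituting $\alpha = \beta / a$ with a yet-to-be-chosen nonzero $a \in \G_p$ turns the relation $\alpha^p - \alpha = c$ into
\[
\beta^p - a^{p-1}\beta - c\,a^p = 0.
\]
We therefore want to choose $a$ so that $c\,a^p = 1$, i.e.\ $a^p = c^{-1}$. Here is where perfection of $\G_p$ enters: since $\G_p$ is algebraic over $\F_p$, the Frobenius is surjective, so $c^{-1}$ has a (unique) $p$-th root in $\G_p$. Taking $a$ to be this root produces $\beta \in E$ satisfying $\beta^p - a^{p-1}\beta - 1 = 0$, and because $a$ is a nonzero element of the base field $\G_p$, the elements $\alpha$ and $\beta$ generate the same extension, so $E = \G_p(\beta)$.

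The only small subtlety — and the one step that requires care — is the appeal to perfection to extract the $p$-th root of $c^{-1}$; without this, the linear substitution would force us into a larger field. Everything else is routine: the Artin–Schreier description of degree–$p$ separable cyclic extensions, the non-vanishing of $c$, and the direct computation of the substitution. If one wanted to avoid invoking Artin–Schreier as a black box, one could instead argue directly that $E/\G_p$ is Galois (being normal, as any degree–$p$ extension of a perfect field is both separable and, over an algebraic extension of a finite field, automatically normal), pick a generator $\sigma$ of $\mathrm{Gal}(E/\G_p)$, use the additive form of Hilbert 90 to produce $\alpha \in E$ with $\sigma(\alpha) - \alpha = 1$, and then rescale as above; but invoking Artin–Schreier directly is cleaner.
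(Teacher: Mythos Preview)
Your proof is correct and takes a genuinely different route from the paper. The paper reduces to the largest finite subfield $\F_{p^k} \subseteq \G_p$ and uses a counting argument: the polynomials $T^p - a^{p-1}T - 1$ for $a \in \F_{p^k}^{\times}$ fall into $(p^k-1)/(p-1)$ classes with pairwise disjoint root sets (since $a^{p-1}=b^{p-1}$ iff $a/b \in \F_p^{\times}$, and the roots of a given polynomial form a coset $\alpha + \F_p a$), each of size $p$ and avoiding $0$; since $p\cdot\frac{p^k-1}{p-1} > p^k-1$, at least one such polynomial has no root in $\F_{p^k}$ and is therefore irreducible. Your approach via Artin--Schreier together with the Frobenius rescaling $\alpha \mapsto a\alpha$ is cleaner and more conceptual, and in fact works over any perfect field of characteristic $p$ admitting a degree-$p$ (necessarily separable) extension, not only over algebraic extensions of $\F_p$. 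The paper's argument, by contrast, is entirely self-contained and avoids invoking Artin--Schreier as a black box, in keeping with the hands-on style of the surrounding section.
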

\begin{proof}
Since any element generating an extension of degree $p$ over $\G_p$ will also be generating an extension of degree $p$ over the largest finite field contained in $\G_p$ of size $p^k$ with $ k \in \Z_{>0}$, and since any finite field has a unique extension of any degree, it is enough to show that any finite field has an extension of degree $p$ generated by a root of a polynomial of the form $P(T)=T^p-a^{p-1}T-1$, with $a$ being a non-zero element of the field.  We demonstrate this by counting the polynomials and potential solutions in a finite field.  Observe that if $\alpha$ satisfies $P(T)$, then $(\alpha + ia)$ is also a root of $P(T)$ for $i=0,\ldots, p-1$.  Further, for non-zero $a, b$ in our finite field,  $a^{p-1}=b^{p-1}$ if and only if $a=i b$, where $i=1, \ldots,p-1$ and if $Q(T)=T^p-b^{p-1}T-1$, then $P(T)$ and $Q(T)$ have a common root if and only if $a^{p-1}=b^{p-1}$.  Thus, if the field has $p^k$ elements, we have $\frac{p^k-1}{p-1}$ polynomials with pairwise non-intersecting root sets.   Note that 0 is not a root of any of these polynomials.   If all of these polynomials have roots in our finite field, then 
\[
p\frac{p^k-1}{p-1} \leq p^k-1,
\]
\[
p^{k+1}-p \leq p^{k+1}-p-p^{k}+1,
\]
 and 
\[
0\leq 1-p^{k}.
\]
  Thus, at least one polynomial $P(T)$ does not have a root in our finite field and its root generates the extension of degree $p$.
\end{proof}
\begin{lemma}
\label{lemma:allsol}
Let $a \in K\setminus \{0\}$ and let $\alpha$ be a root of the equation
\begin{equation}
\label{eq:5}
T^p - a^{p-1}T- 1 = 0.
\end{equation}
In this case either $\alpha \in K$ or $\alpha$ is of degree $p$ over
$K$.  In the second case the extension $K(\alpha)/K$ is cyclic of
degree $p$ and the only primes possibly ramified in this extension are zeros of $a$.  More precisely, if for some $K$-prime $\aaa_K$, we have that
$\ord_{\aaa_K}a \not \equiv 0$ modulo $p$ and $\ord_{\aaa_K}a>0$,  then a factor of ${\aaa_K}$ in $K(\alpha)$ will be
ramified completely. Further, if $\ttt_K$ is a pole of $a$, then $\ttt_K$ splits completely in the extension.
\end{lemma}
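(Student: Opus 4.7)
The plan is to reduce the equation to standard Artin--Schreier form and then invoke the well-known local theory of Artin--Schreier extensions. Starting from $\alpha^p - a^{p-1}\alpha - 1 = 0$, I would first substitute $\beta = \alpha/a$ to obtain $a^p\beta^p - a^p\beta - 1 = 0$, i.e.\ $\beta^p - \beta = a^{-p}$. Since we are in characteristic $p$, $a^{-p} = (a^{-1})^p$, so setting $\delta = \beta - a^{-1}$ gives
\[
\delta^p - \delta = (\beta^p - \beta) - (a^{-p} - a^{-1}) = a^{-1}.
\]
Both substitutions are $K$-rational, hence $K(\alpha) = K(\beta) = K(\delta)$, and $\delta$ is a root of the Artin--Schreier polynomial $T^p - T - a^{-1}$. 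Artin--Schreier theory then gives that $T^p - T - a^{-1}$ either splits completely over $K$ (so $\alpha \in K$) or is irreducible, and in the latter case $K(\delta)/K$ is cyclic of degree $p$ with Galois group generated by $\delta \mapsto \delta + 1$.

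For the ramification and splitting assertions, assume the nontrivial case and argue locally. If $\pp_K$ is not a zero of $a$, then $a^{-1}$ is integral at $\pp_K$, so $T^p - T - a^{-1}$ has $\pp_K$-integral coefficients and unit derivative $-1$; Hensel's lemma applied to each simple root of the reduction in the residue field forces $\pp_K$ to be unramified in $K(\delta)/K$, which already shows that only zeros of $a$ can ramify. Moreover, if $\ttt_K$ is a pole of $a$, then $a^{-1} \equiv 0 \pmod{\ttt_K}$ and $T^p - T - a^{-1}$ reduces to $\prod_{i \in \F_p}(T - i)$, so Hensel's lemma produces $p$ distinct roots in $K_{\ttt_K}$ and $\ttt_K$ splits completely. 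Finally, if $\aaa_K$ is a zero of $a$ with $n := \ord_{\aaa_K}(a) > 0$ and $p \nmid n$, the Newton polygon of $T^p - T - a^{-1}$ over $K_{\aaa_K}$ consists of the single segment from $(0, -n)$ to $(p, 0)$ of slope $n/p$, so every root $\delta$ satisfies $v(\delta) = -n/p$; since the unique extension of $\ord_{\aaa_K}$ to $K(\delta)$ takes values in $\tfrac{1}{e}\Z$ with $e$ the ramification index, and $p \nmid n$, we must have $p \mid e$, whence $e = p$ and $\aaa_K$ is totally ramified.

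The main obstacle, which is essentially bookkeeping, is assembling the local ramification data in each of the three cases above; the one step that warrants checking is the Newton polygon computation at zeros of $a$ of order coprime to $p$, but this is exactly the standard principle that a prime is totally ramified in an Artin--Schreier extension $L = K(\delta)$ with $\delta^p - \delta = c$ whenever $\ord_\pp(c)$ is negative and coprime to $p$.
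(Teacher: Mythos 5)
Your proof is correct, and it goes by a somewhat different route than the paper's. You first normalize the equation all the way to standard Artin--Schreier form: $\beta=\alpha/a$ gives $\beta^p-\beta=a^{-p}$, and $\delta=\beta-a^{-1}$ gives $\delta^p-\delta=a^{-1}$, after which the Galois/cyclic dichotomy, the unramifiedness away from zeros of $a$ (integral coefficients, unit derivative $-1$), the complete splitting at poles of $a$ (reduction $T^p-T$ plus Hensel), and the total ramification at zeros of order prime to $p$ (single Newton-polygon segment of slope $n/p$, forcing $p\mid e$) are all standard local facts about $T^p-T-c$. The paper instead works directly with $T^p-a^{p-1}T-1$: it notes the roots are $\alpha+ia$ to get the cyclic structure, uses the fact that the different of $\alpha$ is a power of $a$ (citing Chevalley) to rule out ramification at primes not dividing $a$ where $\alpha$ is integral, proves total ramification at the relevant zeros of $a$ by the order computation $(p-1)\ord(a)=\ord(\alpha^p-1)=\ord\bigl((\alpha-1)^p\bigr)\equiv 0 \bmod p$, and only performs the substitution $\beta=\alpha/a$ (yielding $T^p-T-a^{-p}$) to handle splitting at poles of $a$. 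Your version buys uniformity and citability --- everything reduces to the textbook local theory of Artin--Schreier extensions, and the non-zero-of-$a$ case handles units and poles of $a$ in one stroke --- while the paper's argument is more self-contained and avoids Newton polygons, needing only the different criterion and an elementary valuation count. One small point of hygiene: at primes where $a^{-1}$ is integral, what you really use is that the reduction of $T^p-T-a^{-1}$ is separable (equivalently, $f'(\delta)=-1$ is a unit, so the different is trivial); Hensel's lemma as literally stated lifts simple roots into the completion and is the right tool for the splitting claim at poles, but the unramifiedness claim rests on the separable-reduction/different criterion rather than on Hensel itself.
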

\begin{proof}
 As we have already observed in the proof of Lemma \ref{le:finite}, if $\alpha$ is a root of
  (\ref{eq:5}) in the algebraic closure of $K$, then $\alpha +ia, i=0,\ldots,p-1$ are also roots.    Hence either the
  left side of (\ref{eq:5}) factors completely or it is irreducible.
  In the second case $\alpha$ is of degree $p$ over $K$ and
  $K(\alpha)$ contains all the conjugates of $\alpha$ over $K$.  Therefore
  the extension $K(\alpha)/K$ is Galois of degree $p$, and hence
  cyclic.  
  
  Next consider the different of $\alpha$.  This different
  is a power of $a$.  By \cite[Lemma 2, page 71]{C}, this implies that no
  prime of $K$, not dividing $a$,  at which $\alpha$ is integral has any ramified factors
  in the extension $K(\alpha)/K$.  Suppose now ${\aaa_K}$ is a prime of
  $K$ described in the statement of the lemma.  Let ${\aaa_{K(\alpha)}}$ be a $K(\alpha)$-prime above ${\aaa_K}$.  Then $\ord_{\aaa_{K(\alpha)}}\alpha =0$ and 
  \[
(p-1)\ord_{\aaa_{K(\alpha)}}a =\ord_{ \aaa_{K(\alpha)}}(\alpha^p-1)= \ord_{ \aaa_{K(\alpha)}}(\alpha-1)^p\equiv 0 \mod p.
\]
  Thus,
  ${\aaa_{K(\alpha)}}$ must be totally ramified over ${\aaa_K}$. 
  
  To see that the poles of $a$ split in the extension, consider the minimal polynomial of $\beta =\frac{\alpha}{a}$.    We have that 
\[
\frac{\alpha^p}{a^p}-\frac{\alpha}{a}-\frac{1}{a^p}=0
\]
 or $\beta$ is a root of 
\[
T^p-T-\frac{1}{a^p}=0.
\]
  The coefficients of this polynomial are integral at poles of $a$, its discriminant is a constant and modulo any pole of $a$ the polynomial splits completely.

\end{proof}
\begin{proposition}
\label{prop:close}
Let $a \in K\setminus \{0\}$, $\pp_K$-- a prime of $K, \alpha \in \tilde K$ a root of the  polynomial 
\begin{equation}
\label{eq:d}
T^p-a^{p-1}T-1=0,
\end{equation}
 where $\ord_{\pp_K}a >0$.  Let $z \in K$ be such that $\ord_{\pp_K}z=0$ and for some $x_0 \in K, x_1 \in K \setminus\{0\}$, integral at $\pp_K$,  we have that
\[
\ord_{\pp_K}(x_0^p -a^{p-1}x_0x_1^{p-1} +x_1^p - z) > 2(p-1)\ord_{\pp_K}(ax_1).
\]
  In this case, for some $y \in K_{\pp_K}(\alpha)$ we have that ${\mathbf N}_{K_{\pp_K}(\alpha)/K_{\pp_K}}(y)=z$.
\end{proposition}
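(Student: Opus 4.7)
The plan is to recognize the expression $x_0^p - a^{p-1}x_0 x_1^{p-1} + x_1^p$ appearing in the hypothesis as the norm ${\mathbf N}_{K_{\pp_K}(\alpha)/K_{\pp_K}}(x_0 + \alpha x_1)$, and then translate $x_0$ by a single new variable and apply the form of Hensel's lemma already stated in the paper. If $T^p - a^{p-1}T - 1$ factors over $K_{\pp_K}$, the extension is trivial and $y = z$ works, so we may assume $[K_{\pp_K}(\alpha):K_{\pp_K}] = p$.

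By Lemma \ref{lemma:allsol}, the $K_{\pp_K}$-conjugates of $\alpha$ are $\alpha + ia$ for $i = 0,\ldots,p-1$, hence
\[
{\mathbf N}_{K_{\pp_K}(\alpha)/K_{\pp_K}}(x_0 + \alpha x_1) = \prod_{i=0}^{p-1}\bigl((x_0 + \alpha x_1) + i\,a x_1\bigr).
\]
The characteristic-$p$ identity $\prod_{i \in \F_p}(U + iv) = U^p - v^{p-1}U$ (a specialization of $\prod_i(S+i) = S^p - S$) applied with $U = x_0 + \alpha x_1$ and $v = a x_1$, combined with the relation $\alpha^p = a^{p-1}\alpha + 1$, collapses this product to exactly $x_0^p - a^{p-1}x_0 x_1^{p-1} + x_1^p$.

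Now set
\[
h(T) = (x_0 + T)^p - a^{p-1}(x_0 + T) x_1^{p-1} + x_1^p - z \in K_{\pp_K}[T],
\]
so that $h(T) = {\mathbf N}\bigl((x_0 + T) + \alpha x_1\bigr) - z$. All coefficients are integral at $\pp_K$ since $a$, $x_0$, $x_1$, $z$ are, and $h(0)$ is precisely the quantity bounded below in the hypothesis. In characteristic $p$ the term $(x_0+T)^p$ contributes $0$ to $h'(T)$, leaving $h'(T) = -a^{p-1}x_1^{p-1}$, a constant polynomial with $\ord_{\pp_K} h'(0) = (p-1)\ord_{\pp_K}(ax_1)$. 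Thus the hypothesis reads exactly $\ord_{\pp_K} h(0) > 2\,\ord_{\pp_K} h'(0)$, and Hensel's lemma (the version stated just before Assumption \ref{ass:pnotq}) supplies a root $t \in K_{\pp_K}$ of $h$. Then $y := (x_0 + t) + \alpha x_1 \in K_{\pp_K}(\alpha)$ satisfies ${\mathbf N}_{K_{\pp_K}(\alpha)/K_{\pp_K}}(y) = z$, as required. The only subtle point is choosing the right perturbation: translating $x_0$ makes $h'(T)$ a nonzero constant whose valuation is exactly half the bound assumed for $h(0)$, whereas varying $x_1$ or scaling the whole element multiplicatively either kills the derivative in characteristic $p$ or produces a Hensel inequality incompatible with the hypothesis.
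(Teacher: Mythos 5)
Your proof is correct and follows essentially the same route as the paper: identify $x_0^p - a^{p-1}x_0x_1^{p-1} + x_1^p$ as ${\mathbf N}_{K_{\pp_K}(\alpha)/K_{\pp_K}}(x_0+\alpha x_1)$ via the conjugates $\alpha+ia$, then apply the stated Hensel's lemma to the resulting polynomial in the $x_0$-direction, whose derivative $-a^{p-1}x_1^{p-1}$ has valuation $(p-1)\ord_{\pp_K}(ax_1)$, exactly matching the hypothesis. Your translation $T\mapsto x_0+T$ is just the paper's application of Hensel at the approximate root $x_0$ written in shifted coordinates, so there is no substantive difference.
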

\begin{proof}
Observe that by Lemma \ref{lemma:allsol} we have that $\alpha \in K$ or of degree $p$ over $K$.  A similar argument applies to $K_{\pp_K}$.  So without loss of generality we can assume that $\alpha$ is of degree $p$ over $K_{\pp_K}$.   Next we note that ${\mathbf N}_{K_{\pp_K}(\alpha)/K_{\pp_K}}(\alpha)=1$ and if we let $\alpha_i=\alpha+ia, i=0,\ldots,p-1$, then $\sum_{i_1<i_2<\ldots <i_{p-1}}\alpha_{i_1}\ldots\alpha_{i_{p-1}}=-a^{p-1}$.  All other basic symmetric functions of the conjugates are zero.

 We will seek an element of $K_{\pp_K}(\alpha)$ of the form $x_0+x_1\alpha$ to have the norm equal to $z$.  In other words we want to solve 
\begin{equation}
\label{eq:P1}
P(x_0,x_1,z)={\mathbf N}_{K_{\pp_K}(\alpha)/K_{\pp_K}}(x_0+x_1\alpha)-z=0
\end{equation}
in $K_{\pp_K}$.  We rewrite \eqref{eq:P1} as
\[
P(x_0,x_1,z)={\mathbf N}_{K_{\pp_K}(\alpha)/K_{\pp_K}}(x_0+x_1\alpha)-z=\prod_{i=0}^{p-1}(x_0+x_1\alpha_i)-z= 
\]
\[
x_0^p +x_0^{p-1}x_1\left (\sum_i \alpha_i\right) + x_0^{p-2}x_1^2\left (\sum_{i\not=j} \alpha_i\alpha_j \right )+ \ldots 
\]
\[
+x_0x_1^{p-1}\left (\sum_{i_1<i_2<\ldots<i_{p-1}}\alpha_{i_1}\ldots\alpha_{i_{p-1}}\right ) + x_1^p\left (\prod_i \alpha_i \right )-z=
\]
\[
x_0^p -a^{p-1}x_0x_1^{p-1} +x_1^p - z.
\]
Further,
\[
\partial P(x_0,x_1,z)/\partial x_0=a^{p-1}x_1^{p-1} \not=0.
\]
Therefore, given our assumptions,  Hensel's lemma guarantees us a solution in $K_{\pp_K}$.
\end{proof}
\begin{corollary}
\label{big enough}
Under assumptions of Proposition \ref{prop:close}, suppose $\ord_{\qq_K}z \equiv 0 \mod p$ for all primes $\qq_K$ of $K$ not in the zero divisor of $a$, and $\ord_{\pp_K}(z-1) > 2(p-1)\ord_{\pp_K}a$ for all $\pp_K$ occurring in the zero divisor of $a$.  In this case there exists  $y \in K(\alpha)$ such that ${\mathbf N}_{K(\alpha)/K}(y)=z$.
\end{corollary}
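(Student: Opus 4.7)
The plan is to apply the Hasse Norm Principle to the cyclic degree-$p$ extension $K(\alpha)/K$ and verify local solvability at every prime of $K$. Lemma \ref{lemma:allsol} hands us exactly the splitting picture we need: either $\alpha \in K$ (in which case the claim is trivial since the norm map is the identity and $z$ is a norm of itself), or $K(\alpha)/K$ is cyclic of prime degree $p$, with ramification confined to the zero divisor of $a$, and with every pole of $a$ splitting completely.

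First I would dispose of the ``easy'' primes. At a pole $\ttt_K$ of $a$ the local degree equals $1$, so every element of $K_{\ttt_K}$, in particular $z$, is automatically a local norm. At a prime $\qq_K$ not occurring in the divisor of $a$, Lemma \ref{lemma:allsol} says the extension is unramified at $\qq_K$; because the global degree is a prime $p$, the prime $\qq_K$ either splits completely (local degree $1$, trivially a norm) or is inert with local degree $p$. In the inert case pick a uniformizer $\pi \in K$ at $\qq_K$ and write $z = u\pi^n$ with $u$ a local unit; the hypothesis $\ord_{\qq_K} z \equiv 0 \mod p$ gives $n=pm$, so $\pi^{pm} = {\mathbf N}_{K_{\qq_K}(\alpha)/K_{\qq_K}}(\pi^m)$ is a norm. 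The unit $u$ is a norm by local class field theory in an unramified cyclic extension (Proposition 6, Section 2, Chapter XII of \cite{W}, already invoked in the proof of Proposition \ref{prop:norm}), so $z$ is a local norm at $\qq_K$.

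Next I would handle the one subtle case: primes $\pp_K$ in the zero divisor of $a$, which can ramify. Here I would plug into Proposition \ref{prop:close} with the trivial choice $x_0 = 0,\, x_1 = 1$, giving
\[
x_0^p - a^{p-1}x_0 x_1^{p-1} + x_1^p - z \;=\; 1 - z.
\]
The corollary's hypothesis $\ord_{\pp_K}(z-1) > 2(p-1)\ord_{\pp_K}a = 2(p-1)\ord_{\pp_K}(ax_1)$ is precisely the hypothesis required by Proposition \ref{prop:close}, and it also forces $\ord_{\pp_K}z = 0$. The proposition then delivers local solvability of the norm equation at every zero of $a$.

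Having verified local solvability at every prime of $K$, the Hasse Norm Principle for cyclic extensions (applied over a sufficiently large global function subfield of $K$ containing $a$ and $z$, just as in the proof of Proposition \ref{prop:norm}) furnishes the desired $y \in K(\alpha)$ with ${\mathbf N}_{K(\alpha)/K}(y) = z$. The only potential obstacle is the bookkeeping at the ramified primes, but this has been entirely prepackaged into Proposition \ref{prop:close}; the unramified case reduces to the standard fact that units are norms in unramified cyclic local extensions, and the completely split case is automatic.
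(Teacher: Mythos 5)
Your proposal is correct and follows essentially the same route as the paper: the Hasse Norm Principle argument at unramified primes exactly as in Proposition \ref{prop:norm}, with the ramified primes (zeros of $a$) handled by Proposition \ref{prop:close} specialized to $x_0=0$, $x_1=1$. The extra details you supply (the split/inert dichotomy, units being norms, and the observation that $\ord_{\pp_K}(z-1)>0$ forces $\ord_{\pp_K}z=0$) are just an expansion of the paper's condensed proof.
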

\begin{proof}
We use Hasse norm principle as we did in Proposition \ref{prop:norm}.  The only difference is that we now have primes ramifying in the extension.  For these ramifying primes we apply Proposition \ref{prop:close} with $x_0=0$ and $x_1=1$.

\end{proof}

We now use Corollary \ref{big enough} to set up a test for integrality when only extensions of degree $p$ are available.  
\begin{proposition}
\label{eq:ordp}
Let $a, b, x \in K$, $1 + a^{\ell}(bx^p+b^p) \not =0$ for all $\ell >\ell_0$.  Let $G=K(\delta)$ where $\delta=\delta(\ell)$  is a root of the polynomial $T^p+T+\frac{1}{1 + a^{\ell}(bx^p+b^p)}$ for $\ell >\ell_0$. If $\alpha \in \tilde K$ is a root of \eqref{eq:d},  $[K(\alpha):K]=p$, then for all sufficiently large $\ell >\ell_0$ the norm equation
\begin{equation}
\label{eq:normp}
{\mathbf N}_{G(\alpha)/G}(y)=1+a^{\ell}(bx^p+b^p)
\end{equation}
has a solution $y \in G(\alpha)$ if and only if for every $\pp_K$ occurring in the pole divisor of $b$ and not in the divisor of $a$ one of the following alternatives hold:
\be
\item \label{option1} $\pp_K$ splits in the extension $K(\alpha)/K$;
\item \label{option2}  $(\ord_{\pp_K}b  \equiv 0 \mod p) \lor (\ord_{\pp_K}(bx^p) > \ord_{\pp_K}b^p)$
\ee

\end{proposition}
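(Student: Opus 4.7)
My plan is to apply the Hasse Norm Principle to the cyclic degree-$p$ extension $G(\alpha)/G$ and verify local solvability of the norm equation prime by prime, in the spirit of the proof of Proposition \ref{prop:norm}. First I would confirm that $[G(\alpha):G]=p$: the extensions $K(\alpha)/K$ and $G/K$ correspond to distinct Artin--Schreier classes of $K$ (the first generated by $1/a^p$, after the substitution $\beta=\alpha/a$ in \eqref{eq:d}, the second by $-1/z$), and for sufficiently large $\ell$ these classes are $\F_p$-linearly independent, so $G(\alpha)/G$ is cyclic of degree $p$. A preliminary algebraic check reduces the disjunction in condition (2) -- that $\ord_{\pp_K}b\equiv 0\pmod p$ or $\ord_{\pp_K}(bx^p)>\ord_{\pp_K}b^p$ -- to the single congruence $\ord_{\pp_K}(bx^p+b^p)\equiv 0\pmod p$, which one verifies by comparing the orders of the two summands in each subcase.

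The central structural observation is that at any $\pp_K$ in the pole divisor of $b$ and not in the divisor of $a$, the prime $\pp_K$ splits completely in the Artin--Schreier extension $G/K$: indeed, $\ord_{\pp_K}a=0$ and $\ord_{\pp_K}(bx^p+b^p)<0$, so $\ord_{\pp_K}(1/z)>0$ and Hensel's lemma applied to $T^p+T+1/z$ at the root $T=0$ (where the derivative is $1$) produces a full factorisation. Hence, for any $\pp_G\mid\pp_K$, the local completion $G_{\pp_G}$ equals $K_{\pp_K}$, and the decomposition of $\pp_G$ in $G(\alpha)/G$ mirrors that of $\pp_K$ in $K(\alpha)/K$. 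This immediately yields the forward direction: if both (1) and (2) fail at some such $\pp_K$, then $\pp_K$ is inert in $K(\alpha)/K$ while $\ord_{\pp_K}z\not\equiv 0\pmod p$, so by Lemma \ref{le:cycextensions} the norm equation has no local solution at the inert $\pp_G$, whence no global solution.

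For the converse I would assemble local solvability prime by prime for $\ell$ sufficiently large: (a) at the $\pp_K$ of the statement, either (1) forces $\pp_G$ to split in $G(\alpha)/G$, or (2) yields $\ord_{\pp_G}z\equiv 0\pmod p$ at an unramified inert prime, which is a local norm since units are norms in unramified cyclic degree-$p$ extensions and $p$-th powers are always local norms, exactly as in the end of the proof of Proposition \ref{prop:norm}; (b) at primes above the zero divisor of $a$, the estimate $\ord_{\pp_K}(z-1)=\ell\ord_{\pp_K}a+\ord_{\pp_K}(bx^p+b^p)>2(p-1)\ord_{\pp_K}a$ holds once $\ell$ exceeds a threshold depending on the finitely many primes in the supports of $a,b,x$, and Proposition \ref{prop:close} then supplies a local solution with $x_0=0,\,x_1=1$; (c) poles of $a$ split in $K(\alpha)/K$ by Lemma \ref{lemma:allsol}, hence also in $G(\alpha)/G$; (d) all remaining primes carry $\ord z\equiv 0\pmod p$ or split in $G/K$, trivialising the local constraint. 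The main obstacle is the bookkeeping in (d), especially at primes that are simultaneously zeros of $b$ and poles of $x$ outside the divisor of $a$, where one must combine the Hensel-based splitting behaviour of $G/K$ with the parity of $\ord(bx^p+b^p)$ to confirm that no stray local obstruction survives and that the iff condition indeed pins down exactly the primes in the pole divisor of $b$.
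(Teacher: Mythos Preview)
Your proposal is correct and takes essentially the same approach as the paper: both reduce via the Hasse Norm Principle to local solvability, exploit the complete splitting of the relevant $\pp_K$ in $G/K$ to identify the decomposition in $G(\alpha)/G$ with that in $K(\alpha)/K$, ramify the zeros of $z$ through the construction of $G$, and handle primes in the zero divisor of $a$ by taking $\ell$ large (the paper packages this endgame as an application of Corollary~\ref{big enough}, while you unpack it into cases (a)--(d)). Your explicit verification that $[G(\alpha):G]=p$ and your direct appeal to Hensel's lemma (where the paper cites Lemma~\ref{lemma:allsol}) are minor clarifications, and the residual case you flag in (d)---zeros of $b$ that are poles of $x$ outside the divisor of $a$---is indeed not separately addressed in the paper's proof.
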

\begin{proof}
First of  let $\pp_K$ be a prime of $K$ such that neither alternative holds.  In this case  
\[
\ord_{\pp_K}(bx^p) < \ord_{\pp_K}b^p <0
\]
 and since $\ord_{\pp_K}a =0$ by assumption, we conclude that 
\begin{equation}
\label{eq:notequivp}
(\ord_{\pp_K}(1+a^{\ell}(bx^p+b^p))<0) \land (\ord_{\pp_K} bx^p \not \equiv 0 \mod p) \land (\ord_{\pp_K}(1+a^{\ell}(bx^p+b^p)) \not \equiv 0 \mod p).
\end{equation}
  Now note that by Lemma \ref{lemma:allsol}, for any $\ell$ it is the case that $\pp_K$ splits completely in the extension $G/K$ and therefore in the extension $G(\alpha)/G$ no factor $\pp_{G}$ of $\pp_K$ splits.  Further, from \eqref{eq:notequivp} for every such factor we have
\begin{equation}
\label{eq:notequivpup}
(\ord_{\pp_G}(1+a^{\ell}(bx^p+b^p))<0) \land (\ord_{\pp_G}(1+a^{\ell}(bx^p+b^p)) \not \equiv 0 \mod p).
\end{equation}
Hence the norm equation cannot have a solution.

If for some pole  $\pp_K$ of $b$, not occurring in the divisor of $a$,  alternative \eqref{option1} holds, then we have that \eqref{eq:d} has a solution in the residue field of $\pp_K$.  Further, no factor of $\pp_K$ in $G$ will be ramified in the extension $G(\alpha)/G$ and the residue field of any factor of $\pp_K$ is an extension of the residue field of $\pp_K$.  Hence \eqref{eq:d} will have a root in that field too.  Thus, every factor of $\pp_K$ in $G$ will split completely in the extension $G(\alpha)/G$.  Hence the norm equation will be solvable locally at all the factors of $\pp_K$.  

If for some pole  $\pp_K$ of $b$, not occurring in the divisor of $a$,  alternative \eqref{option2} holds, then $\ord_{\pp_K}(1+a^{\ell}(bx^p+b^p)) \equiv 0 \mod p$.

Additionally, if $\aaa_K$ is any prime of $K$ such that 
\[
[\ord_{\aaa_K}(1+a^{\ell}(bx^p+b^p)) >0] \land [\ord_{\aaa_K}(1+a^{\ell}(bx^p+b^p)) \not \equiv 0 \mod p],
\]
then $\aaa_K$ is ramified completely in the extension $G/K$.  Hence in $G$ all zeros of $1+a^{\ell}(bx^p+b^p)$ are of order divisible by $p$.  Finally for all sufficiently large $\ell$, for any $\ttt_K$  occurring in the zero divisor of $a$, we have 
\[
\ord_{\ttt_K}a^{\ell}(bx^p+b^p) > 2(p-1)\ord_{\ttt_K}a.
\]
Thus by Corollary \ref{big enough}, the norm equation will have a solution.
\end{proof}
In the same fashion one can prove another version of this proposition.
\begin{proposition}
\label{prop:need later}
Let $x, Y \in K$, with $1+a^{\ell}Y \not =0$ for all $\ell >\ell_0$.  If $\alpha \in \tilde K$ is a root of \eqref{eq:d},  $[K(\alpha):K]=p$,   $H=K(\beta)$, where $\beta=\beta(\ell)$ is a root of $T^p+T+\frac{1}{1 + a^{\ell}Y}$ for some $\ell > \ell_0$, then for all sufficiently large $\ell >\ell_0$ the norm equation
\begin{equation}
\label{eq:normp}
{\mathbf N}_{H(\alpha)/H}(y)=1+a^{\ell}Y
\end{equation}
has a solution $y \in H(\alpha)$ if and only if for every $\pp_K$ occurring in the pole divisor of $Y$ and not in the divisor of $a$ we have either $\ord_{\pp_K}Y \equiv 0 \mod p$ or $\pp_K$ splits completely in the $K(\alpha)/K$.
\end{proposition}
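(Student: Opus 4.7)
The plan is to run the argument of Proposition \ref{eq:ordp} almost verbatim, with the expression $Y$ playing the role formerly played by $bx^p+b^p$. The key structural facts that carried that proof through depend only on $Y$ (not on its factorization), so the adaptation is mechanical.

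For the ``only if'' direction, suppose $\pp_K$ is a pole of $Y$ not in the divisor of $a$ with $\ord_{\pp_K}Y \not\equiv 0 \bmod p$ and $\pp_K$ not splitting completely in $K(\alpha)/K$. Since $\ord_{\pp_K}a=0$ and $\ord_{\pp_K}Y<0$, one gets $\ord_{\pp_K}(1+a^{\ell}Y)=\ord_{\pp_K}Y<0$ with order not divisible by $p$, while $\frac{1}{1+a^{\ell}Y}$ has a zero at $\pp_K$; hence by Hensel $T^p+T+\frac{1}{1+a^{\ell}Y}$ has a root in $K_{\pp_K}$, so $\pp_K$ splits completely in $H/K$. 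By Lemma \ref{lemma:allsol}, only zeros of $a$ can ramify in $K(\alpha)/K$, so $\pp_K$ is inert there, and consequently every factor $\pp_H$ of $\pp_K$ in $H$ is inert in $H(\alpha)/H$ with $\ord_{\pp_H}(1+a^{\ell}Y)\not\equiv 0\bmod p$. Lemma \ref{le:cycextensions} then rules out a norm from $H(\alpha)$.

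For the converse, assume every pole $\pp_K$ of $Y$ outside the divisor of $a$ either has $\ord_{\pp_K}Y\equiv 0\bmod p$ or splits completely in $K(\alpha)/K$. I will verify the hypotheses for applying the Hasse Norm Principle to the cyclic degree $p$ extension $H(\alpha)/H$ locally at each $H$-prime. Primes where $\ord_{\pp_K}Y\equiv 0\bmod p$ split completely in $H/K$ by the same Hensel argument, so the factors $\pp_H$ have $\ord_{\pp_H}(1+a^{\ell}Y)\equiv 0\bmod p$ and are unramified, giving local solvability. Primes where $\pp_K$ splits in $K(\alpha)/K$ force the local degree in $H(\alpha)/H$ to be one above every such factor, so local solvability is automatic. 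Any prime $\aaa_K$ at which $1+a^{\ell}Y$ has a zero of order not divisible by $p$ is an Artin–Schreier pole of $\frac{1}{1+a^{\ell}Y}$, hence ramifies completely in $H/K$; therefore in $H$ every zero of $1+a^{\ell}Y$ has order divisible by $p$, handling the inert unramified primes.

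The remaining case is the primes $\ttt_K$ in the zero divisor of $a$, which are exactly the primes that can ramify in $H(\alpha)/H$ by Lemma \ref{lemma:allsol}. Here I invoke Proposition \ref{prop:close} with $z=1+a^{\ell}Y$, $x_0=0$, $x_1=1$, noting that $z-1=a^{\ell}Y$ so $\ord_{\ttt_K}(z-1)=\ell\,\ord_{\ttt_K}a+\ord_{\ttt_K}Y$, which exceeds $2(p-1)\ord_{\ttt_K}a$ once $\ell$ is taken large enough (uniformly in the finitely many primes in the zero divisor of $a$). This yields local solvability at all ramifying primes. Combining these local solvabilities and applying Hasse (inside a global subfield containing all relevant data), the norm equation is globally solvable. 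The main nuisance to watch is the uniform choice of $\ell_0$ large enough to simultaneously (i) make all the Hensel lifts at zeros of $a$ succeed and (ii) keep $1+a^{\ell}Y$ nonzero, but this is the same bookkeeping as in the proof of Proposition \ref{eq:ordp}.
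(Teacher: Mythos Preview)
Your proposal is correct and follows essentially the same approach as the paper, which simply states that the proposition is proved ``in the same fashion'' as Proposition \ref{eq:ordp}; you have carried out precisely that adaptation with $Y$ in place of $bx^p+b^p$. One small imprecision: from a single Hensel root of $T^p+T+\frac{1}{1+a^{\ell}Y}$ at a pole $\pp_K$ of $Y$ you conclude that $\pp_K$ splits completely in $H/K$, but a single root only guarantees one factor $\pp_H$ with $e=f=1$; this is harmless, since that one factor already yields a local obstruction to the norm (and in fact, since $T^p-a^{p-1}T-1$ is irreducible modulo $\pp_K$ and every residue extension at the remaining factors has degree $<p$, all factors are inert in $H(\alpha)/H$), so the conclusion stands and mirrors the paper's own shortcut in the proof of Proposition \ref{eq:ordp}.
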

\section{Defining rings of $\calS_K$ integers uniformly in $p$ and $K$ when the field of constants has an extension of degree not divisible by $p$.}
\label{divorder}
\setcounter{equation}{0}

We now consider the first version of a definition of the ring of $\calS_K$-integers under Assumption \ref{ass:pnotq}.

\begin{proposition}
\label{def1}
If $\calS_K$ is a finite set of primes of  $K$ then 
\begin{equation}
\label{eq:A}
\begin{array}{c}
O_{K, \calS_K} \setminus \{0\} =\\
\{x \in K|\forall c \in \Phi(K,\calS_K) \setminus \{0\} \forall b \in K \mbox{ with }  (bx^q+b^q \not = 0 ) \exists y \in L_3(\sqrt[q]{c}):
 {\mathbf N}_{L_3(\sqrt[q]{c})/L_3}(y) = bx^q+b^q\}
 \end{array}
\end{equation}
\end{proposition}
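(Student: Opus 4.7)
The plan is to prove both inclusions by invoking Proposition \ref{prop:norm}, which characterizes solvability of the norm equation ${\mathbf N}_{L_3(\sqrt[q]{c})/L_3}(y) = bx^q+b^q$ via the disjunction of four local conditions (\ref{nass:1})--(\ref{nass:4}) at each prime of $K$. For ``$\subseteq$'', if $x \in O_{K,\calS_K}\setminus\{0\}$, then for every admissible $c$ and $b$ the converse half of Proposition \ref{prop:norm} applied to $x$ immediately supplies a solution $y \in L_3(\sqrt[q]{c})$, so $x$ lies in the right-hand set.

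For ``$\supseteq$'', I would argue by contrapositive: suppose $x \in K$ has a pole at some prime $\pp_K \notin \calS_K$, and construct $b \in K$ and $c \in \Phi(K,\calS_K)\setminus\{0\}$ such that at $\pp_K$ none of the conditions (\ref{nass:1})--(\ref{nass:4}) holds, so that Proposition \ref{prop:norm} forbids any solution $y$ and witnesses $x$ outside the right-hand set. Pick $b \in K$ with $\ord_{\pp_K}b = 1$ via weak approximation: then $\ord_{\pp_K}b = 1 \not\equiv 0 \pmod q$, killing (\ref{nass:4}); and $(q-1)\ord_{\pp_K}b = q-1 > 0 > q\ord_{\pp_K}x$, killing (\ref{nass:3}); (\ref{nass:2}) fails by hypothesis. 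A quick order comparison shows $bx^q+b^q \ne 0$, since $\ord_{\pp_K}(bx^q) = 1 + q\ord_{\pp_K}x < q = \ord_{\pp_K}(b^q)$. For $c$, apply weak approximation once more to find $c \in K$ with $c \equiv 1 \pmod{\pp_{i,K}}$ for every $\pp_{i,K} \in \calS_K$ and with residue at $\pp_K$ equal to a prescribed non-$q$-th power of the residue field $\bbk(\pp_K)$; this kills (\ref{nass:1}).

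The delicate step is guaranteeing a non-$q$-th power in $\bbk(\pp_K)$. Since $\bbk(\pp_K)$ is a finite algebraic extension of $\G_p$, its Steinitz supernatural number differs from that of $\G_p$ by a finite factor, and the hypothesis that $\G_p$ admits an extension of degree $q$ (equivalently, finite $q$-adic valuation of its Steinitz number) is inherited by $\bbk(\pp_K)$. Combined with $\xi_q \in \G_p \subseteq \bbk(\pp_K)$, Kummer theory yields $\bbk(\pp_K)^{\ast}/(\bbk(\pp_K)^{\ast})^q \ne 1$, producing the required element. This is the main obstacle; once it is resolved, the rest of the argument is a clean application of Proposition \ref{prop:norm}, with the two weak-approximation selections carrying the combinatorial content of the construction.
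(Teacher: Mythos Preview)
Your proposal is correct and follows essentially the same route as the paper: both directions are reduced to Proposition~\ref{prop:norm}, with the contrapositive direction handled by choosing $b$ of order $\pm 1$ at $\pp_K$ (you take $+1$, the paper takes $-1$; either works) and $c\in\Phi(K,\calS_K)$ with non-$q$-th-power residue at $\pp_K$ via approximation. Your Steinitz-number justification for the existence of a non-$q$-th power in the residue field is a nice explicit version of what the paper records separately in Lemma~\ref{le:bigc}.
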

\begin{proof}
Denote by $A$ the set defined by the right-side of  \eqref{eq:A}.  Observe now that if  $\pp_K \not \in \calS_K$, then by the Strong Approximation Theorem, there exists $c \in K$ such that $c$ is not a $q$-th power mod $\pp_K$ and $c \in   \Phi(K, \calS_K)$.  Next assuming $\ord_{\pp_K}x <0$, let $b \in K$ be such that  $\ord_{\pp_K}b=-1$, and note that by Proposition \ref{prop:norm} we have that $x \not \in A$.  The fact that any element of $O_{K, \calS_K}$ belongs to $A$ also follows directly from Proposition \ref{prop:norm}.
\end{proof}
If we let $\calS_K$ be empty, then we define the set of elements of $K$ without any poles, i. e. constants.
\begin{corollary}
The set of constants of $K$ has the following definition:
\[
\{x \in K|\forall c  \in K\setminus \{0\}  \forall b \in K  \mbox{ with }  (bx^q+b^q \not = 0 ) \exists y \in L_3(\sqrt[q]{c}): {\mathbf N}_{L_3}(\sqrt[q]{c})/L_3(y) = bx^q+b^q\}.
\]
\end{corollary}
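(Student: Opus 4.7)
The plan is to derive this corollary as a direct specialization of Proposition \ref{def1} with $\calS_K = \emptyset$, and then identify the resulting ring with the constant field. First I would invoke the convention from the notation section that $\Phi(K, \emptyset) = K$. With this substitution the outer universal quantifier in Proposition \ref{def1} ranges over $c \in K \setminus \{0\}$, which matches the condition stated in the corollary. Hence Proposition \ref{def1} gives us
\[
O_{K, \emptyset} \setminus \{0\} = \{x \in K \setminus \{0\} \mid \forall c \in K\setminus\{0\}\ \forall b \in K\ \text{with } bx^q + b^q \neq 0,\ \exists y \in L_3(\sqrt[q]{c}): {\mathbf N}_{L_3(\sqrt[q]{c})/L_3}(y) = bx^q+b^q\}.
\]

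Next I would identify $O_{K, \emptyset}$ with the field of constants $\G_p$. By the definition of $O_{K, \calS_K}$ as those elements of $K$ without poles at any prime outside $\calS_K$, the ring $O_{K, \emptyset}$ consists of elements with no poles at any prime of $K$. Since $K$ is a function field in one variable over $\G_p$ (with $\G_p$ algebraically closed in $K$ per Notation \ref{not:1}), the elements without poles are exactly the constants, i.e.\ $O_{K, \emptyset} = \G_p$. This identifies the right-hand side of the display above, after adding back $\{0\}$, with the constant field minus $\{0\}$.

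Finally I would handle $x = 0$ separately to show it also satisfies the defining condition, so that the set in the corollary equals the full constant field $\G_p$ rather than $\G_p \setminus \{0\}$. When $x = 0$, the condition $bx^q + b^q \neq 0$ forces $b \neq 0$, and then $bx^q + b^q = b^q$; but $b \in L_3 \subset L_3(\sqrt[q]{c})$ has norm ${\mathbf N}_{L_3(\sqrt[q]{c})/L_3}(b) = b^q$ since the extension has degree $q$, so $y = b$ is a solution. Thus $0$ lies in the right-hand side, and combining everything yields the asserted equality. No step here poses a substantive obstacle, as all the work was done in proving Proposition \ref{def1}; the only thing to be careful about is the bookkeeping with the excluded zero element and making sure the convention $\Phi(K, \emptyset) = K$ is applied correctly.
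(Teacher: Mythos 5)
Your proposal is correct and follows essentially the same route as the paper, which justifies the corollary in one line as Proposition \ref{def1} specialized to $\calS_K=\emptyset$, using the convention $\Phi(K,\emptyset)=K$ and the fact that the elements of $K$ with no poles are exactly the constants $\G_p$. Your additional bookkeeping for $x=0$ is extra care the paper omits (note only that $L_3$ is built from $x^{-1}$ and so is not literally defined at $x=0$, a degeneracy the paper's statement also glosses over, and that when $c$ is a $q$-th power the extension is trivial so one should take $y=b^q$ rather than $y=b$); this does not affect the substance.
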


We now consider definitions of integrality at finitely many primes to define elements of $K$ contained in $\Phi(K, \calS_K)$.  
\begin{proposition}
\label{finitely many for p not q}
Let $\qq_K \not=\pp_K$ be $K$-primes.  Let  $a_{\pp_K}$ be an element of $K$ of order $-1$ at $\pp_K$ and with only one other pole, at $\qq_K$.  Assume $b_{\pp_K} \in K$ is such that it is not a $q$-th power modulo  $\pp_K$ and is equivalent to 1 modulo $\qq_K$. 
  Now let 
\[
L_4=K(\sqrt[q]{1+ a_{\pp_K}^{-1}},\sqrt[q]{1+ (a_{\pp_K}x^q+a_{\pp_K}^q)^{-1}},\sqrt[q]{1+(b_{\pp_K}+b_{\pp_K}^{-1})a_{\pp_K}^{-1}})
\]
and
\[
B(K,b_{\pp_K},a_{\pp_K})=
\{x \in K|\exists y \in L_4(\sqrt[q]{b_{\pp_K}}): {\mathbf N}_{L_4(\sqrt[q]{b_{\pp_K}})/L_4}(y) = a_{\pp_K}x^q+a_{\pp_K}^q\}.
\]
We claim $B(K,b_{\pp_K},a_{\pp_K})=\{x \in K|\ord_{\pp_K}x \geq 0\}$.
\end{proposition}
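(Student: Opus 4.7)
The plan is to mirror the proof of Proposition~\ref{prop:norm} in the specialized situation $b=a_{\pp_K}$, $c=b_{\pp_K}$, with $\{\qq_K\}$ playing the role of $\calS_K$, while noting that $L_4$ differs from the $L_3$ of Proposition~\ref{prop:norm} only by having $a_{\pp_K}$ replace $x$ inside the first and third radicals. The point of this modification is that $a_{\pp_K}$ has poles exactly at $\pp_K$ and $\qq_K$, so the role played in the original argument by ``poles of $x$'' is now played by the fixed set $\{\pp_K,\qq_K\}$, and the outer shape of $L_4$ depends only on the parameters $a_{\pp_K}, b_{\pp_K}$ and not on the particular $x$ being tested.

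For the backward inclusion, assume $\ord_{\pp_K}x<0$ and track $\pp_K$ through the tower $K\subseteq L_1'\subseteq L_2'\subseteq L_4$, where $L_1'=K(\sqrt[q]{1+a_{\pp_K}^{-1}})$ and $L_2'=L_1'(\sqrt[q]{1+(a_{\pp_K}x^q+a_{\pp_K}^q)^{-1}})$. Since $\ord_{\pp_K}x\le -1$ and $\ord_{\pp_K}a_{\pp_K}=-1$, one has $q\ord_{\pp_K}x\le -q<-(q-1)=(q-1)\ord_{\pp_K}a_{\pp_K}$, so $\ord_{\pp_K}(a_{\pp_K}x^q+a_{\pp_K}^q)=-1+q\ord_{\pp_K}x$ and all four hypotheses of Proposition~\ref{prop:badprime} hold at $\pp_K$. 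Replacing $x$ by $a_{\pp_K}$ inside the outer radicals does not affect the splitting computation: the expression under each $\sqrt[q]{\phantom{x}}$ has the form $1+\epsilon$ with $\ord_{\pp_K}\epsilon>0$, so by Lemma~\ref{le:first} part~\ref{firstit:3}, $\pp_K$ splits completely at every step. Hence any factor $\pp_{L_4}$ satisfies $\ord_{\pp_{L_4}}(a_{\pp_K}x^q+a_{\pp_K}^q)=-1+q\ord_{\pp_K}x\not\equiv 0\mod q$, while $b_{\pp_K}$ remains a non-$q$-th power modulo $\pp_{L_4}$. Lemma~\ref{le:cycextensions} applied to the inert unramified extension $L_4(\sqrt[q]{b_{\pp_K}})/L_4$ then rules out any solution.

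For the forward inclusion, assume $\ord_{\pp_K}x\ge 0$ and appeal to the Hasse Norm Principle. The extension $L_4(\sqrt[q]{b_{\pp_K}})/L_4$ is unramified: at $\pp_K$ and $\qq_K$ we have $\ord b_{\pp_K}=0$ by construction, and at any other $\aaa_K$ the third radical $\sqrt[q]{1+(b_{\pp_K}+b_{\pp_K}^{-1})a_{\pp_K}^{-1}}$ ramifies completely those factors where $\ord b_{\pp_K}\not\equiv 0\mod q$ by Lemma~\ref{le:first} part~\ref{firstit:4}, so the divisor of $b_{\pp_K}$ is a $q$-th power in $L_4$. For local solvability, at any factor of $\qq_K$ the congruence $b_{\pp_K}\equiv 1\mod\qq_K$ combined with $q\ne p$ and Hensel's lemma yields a local $q$-th root of $b_{\pp_K}$, so the extension splits and the equation is trivially solvable; at any factor of $\pp_K$, the splitting computation from the backward direction (now with $\ord_{\pp_K}x\ge 0$) again gives complete splitting of $\pp_K$ in $L_4/K$ and $\ord_{\pp_{L_4}}(a_{\pp_K}x^q+a_{\pp_K}^q)=-q\equiv 0\mod q$, which is a local norm since units are norms in an unramified extension and any $q$-th power of a uniformizer is a norm.

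The remaining and main technical task is the analogue of Proposition~\ref{prop:fixorder}: showing $\ord_{\aaa_{L_4}}(a_{\pp_K}x^q+a_{\pp_K}^q)\equiv 0\mod q$ at every $L_4$-prime $\aaa_{L_4}$ not lying over $\pp_K$ or $\qq_K$. Because the $L_1'$-step now ramifies zeros of $a_{\pp_K}$ rather than of $x$, the argument requires a case analysis on $n:=\ord_{\aaa_K}a_{\pp_K}\ge 0$ and $m:=\ord_{\aaa_K}x$. If $\aaa_K$ is not a pole of $x$, the only ``bad'' primes are the zeros of $a_{\pp_K}x^q+a_{\pp_K}^q$ and are handled by the $L_2'/L_1'$-step exactly as in the original proof. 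If $\aaa_K$ is a pole of $x$, then $\ord_{\aaa_K}(a_{\pp_K}x^q+a_{\pp_K}^q)=n+qm$, which is already divisible by $q$ when $n=0$ or $n\equiv 0\mod q$; when $n>0$ with $n\not\equiv 0\mod q$, the $L_1'/K$-step ramifies $\aaa_K$ completely, multiplying $n$ by $q$ and restoring divisibility. The remaining $L_4/L_2'$-step can only multiply orders by $1$ or $q$, preserving the required divisibility, and the proof is complete by Hasse.
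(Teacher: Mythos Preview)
Your proof is correct and follows essentially the same approach as the paper's, namely specializing the machinery of Propositions~\ref{prop:badprime}, \ref{prop:fixorder}, and~\ref{prop:norm} to the choice $b=a_{\pp_K}$, $c=b_{\pp_K}$, with the first and third radicals modified to use $a_{\pp_K}$ in place of $x$. Your case analysis in the forward direction at primes $\aaa_K\notin\{\pp_K,\qq_K\}$ that are poles of $x$ (splitting on whether $n=\ord_{\aaa_K}a_{\pp_K}$ is divisible by $q$) is more explicit than the paper's terse treatment, which simply points to Proposition~\ref{prop:norm} and records the two key observations about the divisor of $b_{\pp_K}$ in $L_4$ and the order computation at $\pp_K$.
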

\begin{remark}
Note that $a_{\pp_K}$ and $b_{\pp_K}$ as specified above exist by the Strong Approximation Theorem.
\end{remark}
\begin{proof}
The proof of the proposition is almost identical to the proof of Proposition \ref{prop:norm}.  
\be
\item By construction no pole of $a_{\pp_K}$ in $K$ occurs in the divisor of $b_{\pp_K}$, since $b_{\pp_K} \equiv 1 \mod \qq_K$  and is not a $q$-th power  modulo $\pp_K$.  Thus, $(b_{\pp_K}+b_{\pp_K}^{-1})a_{\pp_K}^{-1}$ has poles at all the primes occurring in the divisor of $b_{\pp_K}$.  Also, all zeros of $a_{\pp_K}$ of orders not divisible by $q$ in $K$ are ramified with ramification degree $q$ before  we adjoin $\sqrt[q]{1+(b_{\pp_K}+b_{\pp_K}^{-1})a_{\pp_K}^{-1}}$, and therefore in $L_4$ all zeros and poles of $b_{\pp_K}$ have order divisible by $q$.
 
\item  We have that  
\[
\ord_{\pp_K}(a_{\pp_K}x^q) \not = \ord_{\pp_K}(a_{\pp_K}^q),
\]
since the left order is not equivalent to 0 mod $q$ and the right one is.  Thus under these circumstances, $\ord_{\pp_K}(a_{\pp_K}x^q+a_{\pp_K}^q) \equiv 0 \mod q$, implies that 
\[
\ord_{\pp_K}(a_{\pp_K}x^q+a_{\pp_K}^q)=\ord_{\pp_K}(a_{\pp_K}^q)
\]
 and 
\[
 \ord_{\pp_K}x > \frac{q-1}{q} \ord_{\pp_K}a_{\pp_K} > \ord_{\pp_K}a_{\pp_K}=-1. 
 \]
  Of course, the  inequality cannot hold if $\ord_{\pp_K}x <0$.
  \ee
\end{proof}
\begin{remark}
\label{rem:existential}
We note that the complement of $B(K,a_{\pp_K},b_{\pp_K})$ is also existentially definable: 
\[
\overline{B(K,a_{\pp_K},b_{\pp_K})}=\{x \in K\setminus \{0\}|\frac{a_{\pp_K}}{x} \in B(K,a_{\pp_K},b_{\pp_K})\}.
\]
Thus, we have an existential definition for  $\Phi(K,\calS_K)$ and its complement with parameters in $K$: we need an element $a_{\pp_K}$, as described above for each $\pp_K$, but by the Strong Approximation Theorem, we can find a $b_{\pp_K}=b_{\calS_K}$ to work for all $\pp_K \in \calS_K$.
\end{remark}
The discussion above can be summarized in the following corollary.
\begin{corollary}
\label{diffversion}
Let $L_3, a_{\pp_K}, \pp_K \in \calS_K, b_{\calS_K}$ be as described above.  In this case
 \[
 x \in O_{K,\calS_K} \setminus \{0\}
 \]
 \[
 \Updownarrow
 \]
 \[
\forall c\in K \setminus \{0\} \forall b \in K  \mbox{ with }  (bx^q+b^q \not = 0 ) 
\]
\begin{equation}
\label{eq:Sint}
 ( \lor_{\pp_K \in \calS_K} ((c-1)\not \in B(K,a_{\pp_K},b_{\calS_K}) ) \lor  \exists y \in L_3(\sqrt[q]{c}):{\mathbf N}_{L_3(\sqrt[q]{c})/L_3}(y) = bx^q+b^q).
\end{equation}
 
\end{corollary}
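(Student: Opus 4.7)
The plan is to derive Corollary \ref{diffversion} by merging Proposition \ref{def1} (which defines $O_{K,\calS_K}$ via a universal quantifier bounded to $\Phi(K,\calS_K)$) with the existential description of integrality at a single prime provided by Proposition \ref{finitely many for p not q}, together with its complement from Remark \ref{rem:existential}. The idea is to trade the bounded quantifier $\forall c \in \Phi(K,\calS_K)\setminus\{0\}$ in \eqref{eq:A} for an unbounded $\forall c \in K\setminus\{0\}$ by prepending a disjunctive clause that is automatically true whenever $c \notin \Phi(K,\calS_K)$, so that the only nontrivial obligation falls on precisely those $c$ for which Proposition \ref{def1} applies.

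For the easier direction, assume $x \in K$ satisfies \eqref{eq:Sint} and fix $c \in \Phi(K,\calS_K)\setminus\{0\}$ and an admissible $b$. By definition of $\Phi$, we have $\ord_{\pp_K}(c-1) > 0$ at every $\pp_K \in \calS_K$, so $(c-1)$ is integral at each such prime, and Proposition \ref{finitely many for p not q} gives $(c-1) \in B(K,a_{\pp_K},b_{\calS_K})$ for every $\pp_K \in \calS_K$. Hence the disjunction in \eqref{eq:Sint} fails, and the norm equation must have a solution. Since this is exactly the hypothesis used to characterize $O_{K,\calS_K}\setminus\{0\}$ in Proposition \ref{def1}, we conclude $x \in O_{K,\calS_K}\setminus\{0\}$.

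For the other direction, take $x \in O_{K,\calS_K}\setminus\{0\}$, and fix $c \in K\setminus\{0\}$ and $b \in K$ with $bx^q+b^q \neq 0$. If $c \in \Phi(K,\calS_K)$, Proposition \ref{def1} directly supplies a solution of the norm equation in $L_3(\sqrt[q]{c})$, so the right-hand disjunct of \eqref{eq:Sint} holds. If $c \notin \Phi(K,\calS_K)$, then by definition there exists $\pp_K \in \calS_K$ with $\ord_{\pp_K}(c-1) \le 0$, and Proposition \ref{finitely many for p not q} together with Remark \ref{rem:existential} lets us translate this failure of integrality into the condition $(c-1)\notin B(K,a_{\pp_K},b_{\calS_K})$, activating the left-hand disjunct.

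The delicate point, and what I expect to be the main technical obstacle, is the reconciliation between the strict inequality built into $\Phi(K,\calS_K)$ (which asks $\ord_{\pp_K}(c-1) > 0$) and the non-strict inequality captured by $B$ (which encodes $\ord_{\pp_K}(c-1) \ge 0$). One must verify that in the borderline case $\ord_{\pp_K}(c-1)=0$ for some $\pp_K \in \calS_K$, either the norm equation over $L_3(\sqrt[q]{c})$ is still solvable thanks to $x \in O_{K,\calS_K}$, or the disjunction is triggered at another prime of $\calS_K$; this requires revisiting the Hasse-principle argument of Proposition \ref{prop:norm} at factors of primes in $\calS_K$ where $c$ is a unit but not congruent to $1$, and using the careful construction of $L_3$ (especially the radical $\sqrt[q]{1+(c+c^{-1})x^{-1}}$) to ensure local solvability at such primes when $x$ has a pole there.
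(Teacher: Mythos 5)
Your overall route---reducing the corollary to Proposition \ref{def1} together with Proposition \ref{finitely many for p not q} and Remark \ref{rem:existential}---is exactly the intended one (the paper gives no separate argument; the corollary is presented as a summary of that discussion). However, the point you defer at the end as a ``delicate point to be verified'' is a genuine gap, and it cannot be closed in the way you hope. Since $B(K,a_{\pp_K},b_{\calS_K})=\{z\in K:\ord_{\pp_K}z\ge 0\}$, the disjunct ``$(c-1)\notin B(K,a_{\pp_K},b_{\calS_K})$'' fires only when $c-1$ has a \emph{pole} at some prime of $\calS_K$, while $\Phi(K,\calS_K)$ requires $\ord_{\pp_K}(c-1)>0$; and in the borderline case $\ord_{\pp_K}(c-1)=0$ the norm equation is in general \emph{not} solvable even for $x\in O_{K,\calS_K}$. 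Concretely, take $\calS_K=\{\pp_K\}$, $x\in O_{K,\calS_K}$ with a pole at $\pp_K$, $c$ a unit at $\pp_K$ which is not a $q$-th power modulo $\pp_K$ (such $c$ exists by the residue-field argument of Lemma \ref{le:bigc} and approximation; then $\ord_{\pp_K}(c-1)=0$ because $1$ is a $q$-th power), and $b$ with $\ord_{\pp_K}b=-1$. All four hypotheses of Proposition \ref{prop:badprime} hold at $\pp_K$, so at every factor $\pp_{L_3}$ one has $\ord_{\pp_{L_3}}(bx^q+b^q)\not\equiv 0 \bmod q$ with $c$ a non-residue, and Lemma \ref{le:cycextensions} then excludes any solution of the norm equation; since $c-1$ has no pole on $\calS_K$, the other disjunct fails as well, so the right-hand side of \eqref{eq:Sint} fails although $x\in O_{K,\calS_K}\setminus\{0\}$. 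Nothing in the construction of $L_3$ rescues this: it is precisely the congruence $c\equiv 1$ modulo the primes of $\calS_K$ that makes their factors split in $L_3(\sqrt[q]{c})/L_3$ in the converse half of Proposition \ref{prop:norm}, and that congruence is exactly what is lost in the borderline case.

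The correct repair is therefore not to ``verify'' solvability there, but to make the first disjunct define the complement of $\Phi(K,\calS_K)$ exactly, i.e.\ the condition $\ord_{\pp_K}(c-1)\le 0$ rather than $\ord_{\pp_K}(c-1)<0$: for instance $(c-1)a_{\pp_K}\notin B(K,a_{\pp_K},b_{\calS_K})$ (recall $\ord_{\pp_K}a_{\pp_K}=-1$), or equivalently $1/(c-1)\in B(K,a_{\pp_K},b_{\calS_K})$ for $c\ne 1$; this is still existential by Remark \ref{rem:existential}, which is exactly the point of that remark. With the disjunct so interpreted, the clause is vacuous for $c\notin\Phi(K,\calS_K)$ and coincides with the condition in \eqref{eq:A} for $c\in\Phi(K,\calS_K)\setminus\{0\}$, so both directions are immediate from Proposition \ref{def1}; your first two paragraphs then constitute the entire proof, and no further Hasse-principle analysis is needed.
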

The definability aspects of Corollary \ref{diffversion} can be restated as the following theorem.
\begin{theorem}
\label{S-integers}
If $K$ is a function field of characteristic $p>0$,  $q$ is a prime number different from $p$, $\calS_K$ is a finite set of primes of $K$, and the field of constants $\G_p$ of $K$ satisfies the following conditions:
\be
\item $ \G_p$ is algebraic over a finite field,
\item $ \G_p$ has an extension of degree $q$,
\item $ \G_p$ contains a primitive $q$-th root of unity,
\ee
then the ring of $\calS_K$-integers has a $\forall\forall \exists \ldots \exists$-definition over $K$ requiring $|\calS_K|+1$ parameters and this definition is uniform across all the fields satisfying the conditions above.
Further the field of constants has a  $\forall\forall \exists \ldots \exists$-definition over $K$ requiring no parameters  and this definition is uniform across all the fields satisfying the conditions above.  
\end{theorem}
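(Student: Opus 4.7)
The plan is to read off the theorem from Corollary \ref{diffversion}, then verify that the defining formula \eqref{eq:Sint} can be put into the advertised prenex form with the right quantifier pattern and parameter count, and finally that the translation does not depend on the particular field $K$ in the class.

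First I would note that \eqref{eq:Sint} already has the shape $\forall c\,\forall b\,(\Psi(x,c,b))$, where $\Psi$ is a disjunction built out of (i) the statements $(c-1)\notin B(K,a_{\pp_K},b_{\calS_K})$ for $\pp_K\in\calS_K$ and (ii) the solvability of a norm equation over $L_3(\sqrt[q]{c})/L_3$. By Remark \ref{rem:existential}, each statement in (i) is purely existential over $K$. So the only thing to do is to rewrite the norm-equation clause in (ii) as an existential statement over $K$ in a form whose ``coefficient data'' depend only on $p$, $q$ and bounded-degree data of $K$, not on $K$ itself.

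Next I would implement the ``floating norm'' idea from the overview. The field $L_3=L_3(x,b,c)$ is obtained by three successive degree-$q$ radical extensions of $K$, so $[L_3(\sqrt[q]{c}):K]$ divides $q^4$, independently of $K$. Choose an indeterminate basis of $L_3(\sqrt[q]{c})$ over $K$ expressed in terms of the parameters $x,b,c$ and the four radicals appearing in the definitions of $L_1,L_2,L_3$ and of $\sqrt[q]{c}$; the radicals themselves are introduced by existentially quantified variables satisfying explicit $q$-th power equations over $K$. Writing $y$ as a $K$-linear combination of this basis with existentially quantified coefficients and expanding the product ${\mathbf N}_{L_3(\sqrt[q]{c})/L_3}(y)=\prod_\sigma y^\sigma$ over the Galois conjugates, the equation ${\mathbf N}_{L_3(\sqrt[q]{c})/L_3}(y)=bx^q+b^q$ becomes a finite system of polynomial equations over $\Z[1/q]$ (in fact over $\Z$) in the coordinates of $y$, the radicals, and $x,b,c$. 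Crucially, the combinatorics of this expansion depend only on $q$, so the resulting system is the same finite collection of polynomial identities for every admissible $K$ and $p$. The same procedure applied to the norm equations inside each $B(K,a_{\pp_K},b_{\calS_K})$ handles the clauses in (i) uniformly.

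Collecting the rewrites, the full formula becomes
\[
\forall c\,\forall b\,\exists \bar u\, \bigl(P(x,b,c,\bar u;\,a_{\pp_1},\dots,a_{\pp_m},b_{\calS_K})\bigr),
\]
where $P$ is a finite disjunction of conjunctions of polynomial equations over $\Z/p$, the parameters are the $m=|\calS_K|$ elements $a_{\pp_K}$ together with the single element $b_{\calS_K}$, and $\bar u$ is the tuple of coordinates of $y$, of the several radicals, and of the auxiliary witnesses for the existential statements in (i). The correctness of this formula as a definition of $O_{K,\calS_K}\setminus\{0\}$ is Corollary \ref{diffversion}; membership of $0$ can be adjoined by a harmless disjunction. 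Setting $\calS_K=\emptyset$ kills all the parameters and yields the parameter-free uniform $\forall\forall\exists\ldots\exists$-definition of the constant field, giving the second assertion.

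The main obstacle is purely bookkeeping: checking that the coordinate-polynomial translation of the norm equation is genuinely uniform across the class of fields, i.e.\ that the basis size $q^4$ and the structure constants of the tower $L_3(\sqrt[q]{c})/K$ are controlled only by $q$ and by the defining radicands $1+x^{-1}$, $1+(bx^q+b^q)^{-1}$, $1+(c+c^{-1})x^{-1}$, $c$, which are built from the existentially/universally quantified $x,b,c$. Once the (by now standard) coordinate-polynomial formalism from \cite{Sh34} is invoked, this uniformity is automatic; the rest of the proof is the straightforward prenex-normal-form reorganization above.
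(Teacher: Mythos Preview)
Your proposal is correct and follows essentially the same route as the paper: read off the defining formula from Corollary \ref{diffversion}, then translate the norm equation over the tower $K\subset L_1\subset L_2\subset L_3\subset L_3(\sqrt[q]{c})$ into polynomial equations over $K$ using coordinate polynomials, checking uniformity in $K$ and $p$.

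Two points the paper treats more explicitly than you do, and which you should not leave entirely to the citation. First, the tower can collapse: any of the radicands $1+x^{-1}$, $1+(bx^q+b^q)^{-1}$, $1+(c+c^{-1})x^{-1}$, $c$ may already be a $q$-th power at the relevant stage, so $[L_3(\sqrt[q]{c}):K]$ need not equal $q^4$. The paper verifies that the polynomial system obtained by formally treating $1,\gamma,\ldots,\gamma^{q-1}$ as linearly independent still has the correct solvability behavior even when $\gamma$ lies in the base field (in particular, when $c$ is a $q$-th power in $L_3$ the norm form $\bar N(\xi,u_1,\ldots,u_q,c,z)=0$ is solvable for every $z$, which is exactly what one needs). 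Your phrase ``basis size $q^4$ and the structure constants'' underplays this; the real issue is the degenerate cases, not the generic one. Second, uniformity in $p$ requires replacing the specific primitive $q$-th root $\xi_q$ by an existentially quantified variable $\xi$ satisfying $\xi^q=1$ together with $(\xi^j-1)y_j=1$ for $j=1,\ldots,q-1$; the paper makes this explicit, and you should too, since otherwise the coefficients of your norm polynomial depend on the particular embedding of $\mu_q$ in $\G_p$.
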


\begin{proof}
The only point that needs to be clarified here is how to rewrite the norm equation 
\begin{equation}
{\mathbf N}_{L_3(\sqrt[q]{c})/L_3}(y) = bx^q+b^q
\end{equation} 
in a polynomial form with variables taking values in $K$.   We start with rewriting the norm equation itself.   If $M$ is any field of characteristic $p\not =q$ containing $\xi_q$ and $c \in M \setminus M^q$, $u_1,\ldots, u_{q}, z \in M$, $y = \sum_{i=1}^{q}a_i\sqrt[q]{c}^{(i-1)}$, then
\begin{equation}
\label{eq:normrewrite}
{\mathbf N}_{M(\sqrt[q]{c})/M}(y)-z =\prod_{j=0}^{q-1}\sum_{i=1}^{q}u_i\xi_q^{(i-1)j}\sqrt[q]{c}^{(i-1)}-z=N(\xi_q,u_1,\ldots,u_{q},c,z) \in \F_p[\xi_q, U_1,\ldots,U_{q}, C, Z],
\end{equation}
and the coefficients of $N(X, U_1,\ldots,U_{q}, C, Z)$ depend on $q$ only.  While the specific value of $\xi_q$ depends on $p$, we  can replace $\xi_q$ by a variable $\xi$ such  that a system of equations in the variables $\xi, y_1,\ldots, y_{q-1}$ can be satisfied over $G$: 
\[
\left \{
\begin{array}{c}
(\xi-1)y_1=1\\
(\xi^2-1)y_2=1\\
\ldots\\
(\xi^{q-1}-1)y_{q-1}=1\\
\xi^q=1
\end{array}
\right .
\]
We denote the system above together with $N(\xi,u_1,\ldots,u_{q},c,z)$ by $\bar N(\xi,u_1,\ldots,u_{q},c,z)$.
If $c, w \in M, c=w^q$, then for any $z \in F$ the equation $\bar N(\xi, U_1,\ldots,U_q,c,z)=0$ has solutions $a_1,\ldots, a_{q} \in M(\xi_q)$.
Indeed, consider the following system of equations:
\[
\left \{
\begin{array}{c}
\sum_{i=0}^{q-1}a_iw^{i}=z,\\
\sum_{i=0}^{q-1}a_i\xi_q^{ij}w^{i}=1, j=1,\ldots,q-1
\end{array}
\right .
\]
This is a nonsingular system with a matrix $(\xi_q^{ij}w^{i}), i=0,\ldots, q-1,j = 0,\ldots,q-1$ having all of its entries in $M(\xi_q)=M$.  Since the vector $(z,1,\ldots,1)$ also has all of its entries in $M$, we conclude that the system has a unique solution in $M$.
So if we, for example, consider ${\mathbf N}_{L_3(\sqrt[q]{c})/L_3}(y) = bx^q+b^q$ with  potential solutions $y$ ranging over $L_3(\sqrt[q]{c})$, then we can conclude that that this norm equation is equivalent to a polynomial equation 
\begin{equation}
\label{stepdown}
\bar N(\xi,u_1,\ldots,u_{q},c,bx^q+b^q) =0
\end{equation}
 with coefficients in $ \G_p$ and potential solutions 
 \[
 u_1,\ldots, u_q \in L_3=L_2(\sqrt[q]{1+(c+c^{-1})x^{-1}}).  
\]
We now would like to replace \eqref{stepdown} by an equivalent equation but with solutions in $L_2$.  We have to consider two options: either there exists $\gamma \in L_2$ such that 
\begin{equation}
\label{eq:gamma}
\gamma^q=1+(c+c^{-1})x^{-1}
\end{equation}
 and in this case all the solutions $u_1,\ldots, u_q \in L_2$, or $1+(c+c^{-1})x^{-1}$ is not a $q$-th power in $L_2$, so that $u_i=\sum_{j=0}^{q-1}u_{i,j}\gamma^j$, where $\gamma$ is as in \eqref{eq:gamma} and $u_{i,j} \in L_2$.  In the latter case we can rewrite \eqref{stepdown} first as 
\begin{equation}
\label{eq:system}
N(\sum_{j=0}^{q-1}u_{1,j}\gamma^j,\ldots,\sum_{j=0}^{q-1}u_{q,j}\gamma^j,c,bx^q+b^q) =0,
\end{equation}
 and then as a system of equations over $L_2$ using the fact that the first $q$ powers of $\gamma$ are linearly independent over $L_2$.   Thus for any $c, b, x \in K$ we can conclude that \eqref{stepdown} has solutions $u_1,\ldots, u_q \in L_3$ if and only if either there exists $\gamma \in L_2$ satisfying \eqref{eq:gamma} and there exists $u_1,\ldots, u_q \in L_2$ satisfying \eqref{stepdown} or there exist $u_{1,0}, \ldots, u_{q, q-1} \in  L_2$ satisfying \eqref{eq:system} rewritten as a system of equations with coefficients in  $L_2$ under the assumption that $1,\gamma, \ldots, \gamma^{q-1}$ are linearly independent over $ L_2$ and $\gamma$ satisfies  \eqref{eq:gamma}.  

Note that if \eqref{stepdown} has solutions in $L_3$ and $\gamma \in L_2$ (so that $L_2=L_3$), we can still find solutions to \eqref{eq:system}  using the system of equations obtained from rewriting \eqref{eq:system} under the assumption that powers of $\gamma$ are linearly independent.  This is so because the resulting system is equivalent to \eqref{eq:system} modulo $\gamma^q-1-(c+c^{-1})x^{-1}$.  So any solution of this system obtained with values of $x, c \not=0$ and $\gamma$ satisfying \eqref{eq:gamma}  will produce a solution to \eqref{eq:system}.  Further, if $\gamma \in L_2$, then \eqref{eq:system} has solutions with $u_{i,0}=u_i$ and  $u_{i,j}=0$ for $j>0$ and these solutions will remain solutions of \eqref{eq:system} rewritten as a system.  Thus, whether or not $\gamma \in L_2$, we can replace $\eqref{eq:system}$ by the system obtained by treating $\{1,\gamma, \ldots,\gamma^{q-1}\}$ as being linearly independent and thus construct a system of equations having solutions in $L_2$ if and only if \eqref{stepdown} had solutions in $L_3$.

  We can clearly can continue in this manner until we reach $K$.
 
  For a more general and formal discussion of the rewriting techniques we refer the reader to the section on coordinate polynomials in \cite{Sh34}. 
\end{proof}

\begin{notation}
 \label{not:S-integers}
   Extension $L_3(\sqrt[q]{c})$ is potentially of degree $q^4$ over $K$, and so  the element $y$ of $L_3(\sqrt[q]{c})$ will be represented by a tuple of $q^4$ elements of $K$. We denote this tuple of $K$-elements by $\bar y$.  For future reference, denote  \eqref{eq:Sint} by 
   \[
   S(x,\bar y,b,c, b_{\calS_K},\{a_{\pp_K}\}_{\pp_K \in \calS_K}).
   \]  
\end{notation}   
   We can specialize Theorem \ref{S-integers} in several ways.  For example, if we restrict $K$ to being a global field, we can get a version of Rumely-type result.
\begin{corollary}
If $K$ is a global field of characteristic greater than 2 or of characteristic equal to 2 with the constant field containing subfield of size 4, and $\calS_K$ is a finite set of primes of $K$, then the ring of $\calS_K$-integers of $K$ has a uniform definition over $K$ of the form $\forall\forall \exists \ldots \exists$ using $|\calS_K|+1$ parameters.
\end{corollary}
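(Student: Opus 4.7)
The plan is to reduce the corollary directly to Theorem \ref{S-integers} by choosing an auxiliary prime $q$ appropriate to the characteristic of $K$. Since $K$ is a global function field, its constant field $\G_p$ is a finite field, hence automatically has extensions of every degree, so the only genuine constraint to verify is the presence of a primitive $q$-th root of unity together with $q \neq p$.

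First I would handle the case $p > 2$. Here one takes $q = 2$. The hypothesis $q \neq p$ is immediate, a primitive $2$nd root of unity is just $-1 \in \G_p$, and any finite field admits an extension of degree $2$. Therefore the three bulleted hypotheses of Theorem \ref{S-integers} are satisfied, and the theorem delivers a uniform $\forall\forall \exists\ldots\exists$-definition of $O_{K,\calS_K}$ with $|\calS_K|+1$ parameters.

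Next I would handle the case $p = 2$, where we must take $q = 3$, since $q = 2 = p$ is not permitted. Again $q \neq p$ is clear, and $\G_p$ has an extension of degree $3$ because it is finite. The only nontrivial point is the primitive cube root of unity: the third roots of unity generate $\F_4$ over $\F_2$, so we need $\F_4 \subseteq \G_p$, which is precisely the hypothesis that the constant field contains the subfield of size $4$. Thus Theorem \ref{S-integers} applies again with the same quantifier shape and parameter count.

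The main point is simply that both cases are uniform in $K$ and in $p$ separately, and the two specializations $q = 2$ and $q = 3$ cover all global function fields satisfying the stated constraint on the constant field. No new obstacle arises beyond the verification of the root-of-unity hypothesis in characteristic two; once $\F_4 \subseteq \G_p$ is imposed, the universal definition $S(x,\bar y,b,c,b_{\calS_K},\{a_{\pp_K}\}_{\pp_K \in \calS_K})$ of Notation \ref{not:S-integers} works verbatim.
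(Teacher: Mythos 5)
Your reduction is exactly the paper's intended argument: the corollary is obtained by specializing Theorem \ref{S-integers} with $q=2$ when $p>2$ (where $-1$ is the primitive square root of unity) and $q=3$ when $p=2$ (where the hypothesis $\F_4\subseteq\G_p$ supplies the primitive cube root of unity), finiteness of the constant field giving the required degree-$q$ extension in both cases. This matches the route indicated in the paper, so nothing further is needed.
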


One can also get rid of the requirement that the fields contain the relevant roots of unity by using more quantifiers. If we wanted to use the $q$-version of the definition and the field did not have a primitive $q$-th root of unity, then we would have to draw the ``$c$'' in the norm equation from an extension of degree less or equal to $q-1$.  So to capture such a ``$c$'' in the extended field, we would potentially need to use $q$ variables instead of $1$, adding $q-1$ quantifiers.

  This discussion leaves out the fields whose constant fields have only extensions with the degree equal to the power of the characteristic.  We will be able to handle that case also but unfortunately in a very non-uniform manner, i.e. the definition not only will depend on the characteristic but also on the size of the constant field relative to the genus of the function field.

\section{Updating some old results}
\setcounter{equation}{0}
As we have mentioned in the introduction, the results of this section are necessary to produce a uniform definition of $p$-th powers over the field.  We start with a uniform definition of $p$-th powers over some rings of $\calS$-integers.  Below we require additional notation.
\begin{notation}
Given a non-zero element $x \in K$, let $h_K(x)$ denote the $K$-height of $x$, i.e. the degree of the zero divisor or the degree of the pole divisor of $x$ in $K$.  For $x=0$ set $h_K( x)=0$.
\end{notation}
\subsection{$p$-th powers over the ring where only one prime is allowed in the denominator}
\begin{lemma}[Lemma 2.12 and Lemma 4.5 of \cite{Sh31}]
\label{uniformpowers}
 Let $\pp_K$ be a prime of $K$ and suppose $w \in O_{K,\{\pp_K\}}$ is not a unit.  If $p > 2$, $f \in O_{K,\{\pp_K\}}$, the following equations are satisfied in all the other variables with values in $O_{K,\{\pp_K\}}$, then for some $k \in \Z_{>0}$ we have that $f=w^{p^k}$.
\begin{equation}
\label{eq:p}
\left \{
\begin{array}{c}
(f + 1)^2 - w(w + 2)h^2 = 1,\\
(2f + 1)^2 - 2w(w + 2)g^2 = 1.
\end{array}
\right .
\end{equation}

For characteristic $p = 2, f, g \in O_{K,\{\pp_K\}}$ the system
\begin{equation}
\label{eq2}
\left \{
\begin{array}{c}
f^2 + fhw^2 + h^2w^2 = w^2,\\
(f^2 + f)^2 + (w + 1)^2w^2(f^2 + f)g + w^2(w + 1)^2g^2 = (w + 1)^2w^2
\end{array}
\right .
\end{equation}
has solutions in $O_{K,\{\pp_K\}}$ if and only if $\exists  k \in \Z_{>0}$ such that $f = w^{2^k}$. 
\end{lemma}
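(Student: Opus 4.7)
The statement is quoted from Lemmas 2.12 and 4.5 of \cite{Sh31}, so my plan is to reproduce the Pell-equation strategy behind those lemmas. I focus on $p>2$ and indicate how to adapt for $p=2$.

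The first equation is a norm equation in the quadratic extension $L=K(\sqrt{D})$, where $D=w(w+2)=(w+1)^2-1$. Since $w$ is a non-unit, $D$ has odd order at some prime of $K$, so $[L:K]=2$, and $\alpha=(w+1)+\sqrt{D}$ is a norm-one unit satisfying $\alpha\bar\alpha=1$. The first equation asserts that $\xi=(f+1)+h\sqrt{D}$ is a norm-one element of the integral closure $\tilde O$ of $O_{K,\{\pp_K\}}$ in $L$. A unit-theorem analysis at the primes of $L$ above $\pp_K$ shows that every such $\xi$ has the form $\epsilon\alpha^n$ with $\epsilon$ a norm-one constant and $n\in\Z$, so writing $\alpha^n=X_n+Y_n\sqrt{D}$ we obtain $f+1=\pm X_n$. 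Frobenius then identifies the $p^k$-indexed solutions:
\[
\alpha^{p^k}=(w+1)^{p^k}+D^{(p^k-1)/2}\sqrt{D},
\]
so $X_{p^k}=(w+1)^{p^k}=w^{p^k}+1$, giving $f=X_{p^k}-1=w^{p^k}$, as required.

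To show that $n$ must be a power of $p$, I would rewrite the second equation as $(X_n+1)g^2=2X_nY_n^2$ using the identity $(X_n-1)(X_n+1)=DY_n^2$, and then examine the factorization of $X_n\pm 1$: for $n$ odd with $n\neq\pm 1$, the evaluations $X_n(0)=1$ and $X_n(-2)=(-1)^n$ force $X_n-1=wa^2$ and $X_n+1=(w+2)b^2$ with $ab=Y_n$, producing an odd-order pole of $g^2$ at the prime $w+2$; for $n$ even, the doubling identity $X_n+1=2X_{n/2}^2$ reduces the problem to a smaller exponent, and the induction terminates only when $v_p(n)$ is arbitrarily large, i.e.\ $n$ is a pure power of $p$. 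The main obstacle is this divisibility bookkeeping, together with controlling the torsion factor $\epsilon$ and the factorization of $X_n\pm 1$ over a general function field $K$, where $O_{K,\{\pp_K\}}$ need not be a UFD. For $p=2$, the equation $f^2+fhw^2+h^2w^2=w^2$ forces $w\mid f$ by integral closedness of $O_{K,\{\pp_K\}}$; setting $f=wy$ reduces it to the Artin--Schreier-type norm equation $y^2+yhw+h^2=1$, and the same two-step analysis (Frobenius for existence, exponent selection for uniqueness) yields $f=w^{2^k}$, with the converse direction obtained by writing out the Frobenius formulas for $g$ and $h$ and checking them explicitly.
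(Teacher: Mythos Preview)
The paper does not prove this lemma at all; it is imported verbatim from \cite{Sh31} (Lemmas 2.12 and 4.5) and used as a black box, so there is no ``paper's own proof'' to compare against.

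Your sketch is the correct Pell-equation strategy that underlies those lemmas. The Frobenius computation $\alpha^{p^k}=(w+1)^{p^k}+D^{(p^k-1)/2}\sqrt{D}$ does give the existence direction cleanly, and identifying the first equation as the norm-one condition $N_{L/K}(\xi)=1$ with $\xi=(f+1)+h\sqrt{D}$ is right. The structure theorem you invoke---that every norm-one $\calS$-unit is $\varepsilon\alpha^n$---is indeed the key input, but it needs the $\calS$-unit theorem for function fields together with a check that $\alpha$ generates the non-torsion part, which you have not supplied.

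The genuine gap is in your use of the second equation. Your factorization argument speaks of ``the prime $w+2$'' and an ``odd-order pole of $g^2$'' there, but in a general $K$ the ring $O_{K,\{\pp_K\}}$ is not a UFD and $w+2$ need not be prime (or even squarefree). The argument in \cite{Sh31} avoids this by observing that $X_n,Y_n\in\G_p[w]$ and carrying out the entire divisibility analysis inside the polynomial ring $\G_p[w]$, where $w$ and $w+2$ \emph{are} coprime primes; only afterwards does one use that $w\in O_{K,\{\pp_K\}}$ is a non-unit to conclude in $K$. Your outline is salvageable, but the factorization step must be relocated to $\G_p[w]$.
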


We often need ``synchronized'' $p$-th powers as delivered by the following two lemmas.

For future reference denote \eqref{eq:p} or \eqref{eq2} (depending on the characteristic) by $PPE(f,w,h,g)$.

\begin{lemma}
\label{synch}
  Let $w, x, y=w(x+1)x+1 \in O_{K,\{\pp_K\}} \setminus \G_p$,  and assume the following statement is true. 
\begin{equation}
\label{sync:1}
\begin{array}{c}
\exists f_1,h_1,g_1, f_2,h_2,g_2,f_3,h_3,g_3  \in O_{K,\{\pp_K\}}:\\
PPE(f_1,y,h_1,g_1) \land PPE(f_2,yx,h_2,g_2) \land PPE(f_3,y(x+1),h_3,g_3) \land f_3=f_2 +f_1.
\end{array}
\end{equation}
In this case there exists $s\geq0$ such that $f_1=y^{p^s}, f_2=(yx)^{p^s}$, and $h=\frac{(f_1-1)f_1^2}{f_2(f_2+f_1)}=w^{p^s}$.
\end{lemma}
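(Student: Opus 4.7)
The plan is to extract from each of the three $PPE$ instances in \eqref{sync:1} a $p$-th power exponent via Lemma \ref{uniformpowers}, then force those three exponents to coincide via a pole-order argument at $\pp_K$, and finally recover $h$ by direct substitution. Lemma \ref{uniformpowers} applied to each conjunct gives nonnegative integers $a, b, c$ with $f_1 = y^{p^a}$, $f_2 = (yx)^{p^b}$, $f_3 = (y(x+1))^{p^c}$. Using Frobenius additivity to expand $(x+1)^{p^c}=x^{p^c}+1$, the hypothesis $f_3 = f_1 + f_2$ rewrites as
\[
x^{p^b} y^{p^b} - x^{p^c} y^{p^c} = y^{p^c} - y^{p^a}.
\]

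Next I would show $a=b=c$. Writing $n_u := -\ord_{\pp_K}(u)$ for the pole order at $\pp_K$, I use that $x, w \in O_{K,\{\pp_K\}}\setminus\G_p$ have their only pole at $\pp_K$, so $n_x, n_w > 0$, and that $y-1 = w x(x+1)$ gives $n_y = n_w + 2 n_x > 2 n_x$. Two quick reductions first: if $b = c$, the left side of the displayed identity vanishes and $y^{p^a}=y^{p^c}$; since $y$ has a pole it is not a root of unity, so $a = c$. The case $a = c$ is symmetric and forces $b = c$. It remains to exclude the situation where both $a \neq c$ and $b \neq c$: then on each side of the identity the two terms have distinct pole orders at $\pp_K$, so
\[
\max(p^b, p^c)(n_y + n_x) = \max(p^a, p^c)\, n_y.
\]
Since $n_y + n_x > n_y$, this forces $\max(p^a, p^c) > \max(p^b, p^c)$, and in particular $a > c$. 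Splitting on whether $b > c$ or $b < c$, the identity simplifies either to $p^{a-b} = 1 + n_x/n_y$ or to $p^{a-c} = 1 + n_x/n_y$; in both cases the right side is $< 3/2$ by $n_y > 2 n_x$, while the left side is $\geq p \geq 2$, a contradiction.

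With $a = b = c =: s$, direct substitution together with $y - 1 = w x(x+1)$ gives
\[
h = \frac{(y^{p^s}-1)\,y^{2p^s}}{x^{p^s} y^{p^s} \cdot y^{p^s}(x+1)^{p^s}} = \left(\frac{y-1}{x(x+1)}\right)^{p^s} = w^{p^s},
\]
which completes the proof. The main obstacle is the pole-order case analysis in the middle paragraph; the strict inequality $n_y > 2 n_x$, equivalent to $w \notin \G_p$, is precisely what prevents the residual case $p^{a-c} = 1 + n_x/n_y$, so the hypothesis $w \in O_{K,\{\pp_K\}} \setminus \G_p$ is used essentially there.
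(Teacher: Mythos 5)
Your proof is correct, and its skeleton is the paper's: extract exponents $a,b,c$ from the three $PPE$ conjuncts, expand $f_3=(y(x+1))^{p^c}$ by Frobenius, force the exponents to coincide by comparing valuations in $f_3=f_1+f_2$, and then recover $h=w^{p^s}$ by exactly the substitution you perform. Where you genuinely diverge is in how the exponents are pinned down. The paper splits into two cases: when the pole of $y^{p^s}$ at $\pp_K$ is dominated by that of $(yx)^{p^m}$ it compares pole orders at the distinguished prime, much as you do; in the remaining case it leaves $\pp_K$ altogether and compares orders at the zeros of $y$, using the opening observation that $y=wx(x+1)+1$ is coprime to $x(x+1)$. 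You instead stay entirely at the single prime $\pp_K$ and exclude mismatched exponents through the quantitative relation $n_y=n_w+2n_x>2n_x$ (i.e.\ through $w\notin\G_p$), so the coprimality of $y$ with $x(x+1)$ and the zeros of $y$ are never needed; your case analysis ($b=c$, $a=c$, or both differ from $c$) together with the bound $1+n_x/n_y<3/2<p$ is airtight, including the implicit fact that the displayed equality forces $a>b$ in the subcase $b>c$. The two routes are comparably short; yours is a bit more local and self-contained, while the paper's argument uses the hypothesis on $w$ only through nonconstancy of $y$ and leans instead on the multiplicative structure of $y-1$. One small point you could make explicit at the first step: Lemma \ref{uniformpowers} requires the bases $y$, $yx$, $y(x+1)$ to be non-units of $O_{K,\{\pp_K\}}$, which holds since each has a pole at $\pp_K$ and hence zeros at other primes.
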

\begin{proof}
First of all observe that $y=wx(x+1) +1$ has no zeros in common with $x$ and $x+1$ since $w$ and $x$ have a pole at $\pp_K$ only.  Next note that  for some $s,r,m \in \Z_{\geq0}$  we have that $f_1=y^{p^s}, f_2=y^{p^r}x^{p^r}, f_3 =y^{p^m}(x+1)^{p^m}$
\begin{equation}
\label{sync:3}
f_3-f_1=y^{p^m}x^{p^m}+y^{p^m}-y^{p^s}=y^{p^r}x^{p^r}. 
\end{equation} 
If $p^s|\ord_{\qq_K}y| < p^m|\ord_{\qq_K}y| + p^m|\ord_{\qq_K}x|$, then $r=m$ and we are done.  So suppose $p^s|\ord_{\qq_K}y| \geq  p^m|\ord_{\qq_K}y |+ p^m|\ord_{\qq_K}x|$  and in particular $s > m$.  Since $(y,x(x+1))=1$, we now conclude from looking at zeros of $y$, that $r=m$ and we  are done.
\end{proof}
Again, for future reference, denote \eqref{sync:1}  by $SPPE(w, x, h, f,\ldots)$ so that if 
\[
SPPE(w, x, h, f,\ldots)
\]
 is satisfied over $O_{K,\{\pp_K\}}$, we have $h=w^{p^s},f =x^{p^s}$ for some non-negative integer $s$.

We will also need to specify that the exponent of $p$  is divisible by a fixed positive integer $z$ using a parameter over $O_{K,\{\pp_K\}}$.  
\begin{lemma}
\label{le:divbyz}
Let $g, g^{p^z} \in O_{K,\{\pp_K\}} \setminus \G_p$ be given.  Let $h, x \in O_{K,\{\pp_K\}}$ and assume the following equations are satisfied over $O_{K,\{\pp_K\}}$.
\begin{equation}
\label{same1}
PPE(g,h, \ldots)
\end{equation}
\begin{equation}
\label{same2}
SPPE(g,x,h,v, \ldots) 
\end{equation}

\begin{equation}
\label{same3}
(g^{p^z-1}-1) {\Large |_{O_{K,\{\pp_K\}}}} (\frac{h}{g}-1)
\end{equation}
In this case for some positive integer $s$ we have that $v =x^{p^s}$ and $s \equiv 0 \mod z$.
\end{lemma}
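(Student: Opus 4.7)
The plan is to combine the two $p$-th-power hypotheses to fix an exponent $s\geq 1$ with $h=g^{p^s}$ and $v=x^{p^s}$, and then translate \eqref{same3} into a divisibility of polynomials evaluated at $g$. From $SPPE(g,x,h,v,\ldots)$ one reads off $h=g^{p^s}$ and $v=x^{p^s}$ for some $s\geq 0$, already giving the claimed form of $v$; the remaining role of $PPE(g,h,\ldots)$ is to promote $s$ to a positive integer. It yields a nontrivial $p$-power relation between $g$ and $h$ with exponent $p^k$, $k\geq 1$, and since $g\notin\G_p$ while $\G_p/\F_p$ is algebraic, $g$ is transcendental over $\F_p$, forcing the two $p$-power exponents to agree, so $s=k\geq 1$.

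With $s\geq 1$ fixed, \eqref{same3} now reads $g^{p^z-1}-1\mid g^{p^s-1}-1$ in $O_{K,\{\pp_K\}}$. Classically, in $\F_p[X]$ one has $X^{p^z-1}-1\mid X^{p^s-1}-1$ iff $(p^z-1)\mid(p^s-1)$ iff $z\mid s$, so I would argue by contradiction. Assume $z\nmid s$ and perform Euclidean division in $\F_p[X]$ to obtain $Q,R\in\F_p[X]$ with $R\neq 0$ and $\deg R<p^z-1$ satisfying
\[
X^{p^s-1}-1=(X^{p^z-1}-1)Q(X)+R(X).
\]
Specializing $X=g$ and using the assumed divisibility forces $g^{p^z-1}-1\mid R(g)$ in $O_{K,\{\pp_K\}}$, a conclusion that must be ruled out.

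The contradiction comes from a height/degree comparison in $K$, and this is the step I expect to be the main obstacle because one must translate divisibility in the ring $O_{K,\{\pp_K\}}$ into containment of zero divisors in $K$. Because $g$ is non-constant in $O_{K,\{\pp_K\}}$, its only pole in $K$ is $\pp_K$, so the same is true of both $R(g)$ and $g^{p^z-1}-1$; moreover $R(g)\neq 0$ because $g$ is transcendental over $\F_p$. Standard computations then give $h_K(R(g))=(\deg R)\,h_K(g)$ and $h_K(g^{p^z-1}-1)=(p^z-1)h_K(g)$, and since the pole divisor of each is concentrated at $\pp_K$, their zero divisors are supported entirely away from $\pp_K$ and have these degrees. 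Divisibility in $O_{K,\{\pp_K\}}$ forces the zero divisor of $g^{p^z-1}-1$ to sit inside that of $R(g)$, yielding $\deg R\geq p^z-1$, contradicting $\deg R<p^z-1$. Hence $z\mid s$, which together with the first paragraph completes the proof.
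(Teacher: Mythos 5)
Your proof is correct, and its skeleton matches the paper's: both combine the PPE and SPPE hypotheses to fix one positive exponent $s$ with $h=g^{p^s}$ and $v=x^{p^s}$ (the positivity coming from PPE, the synchronization from SPPE and the transcendence of $g$ over $\F_p$), and both then reduce the statement to showing that $(g^{p^z-1}-1)\mid(g^{p^s-1}-1)$ in $O_{K,\{\pp_K\}}$ forces $z\mid s$. Where you diverge is in how that last implication is established. The paper observes that the quotient $\frac{g^{p^s-1}-1}{g^{p^z-1}-1}$ lies in $\G_p(g)\cap O_{K,\{\pp_K\}}=\G_p[g]$ (an element of $\G_p(g)$ whose only possible pole lies over the pole of $g$ is a polynomial in $g$), so the divisibility takes place in the polynomial ring $\G_p[g]$ and the classical equivalence $X^{p^z-1}-1\mid X^{p^s-1}-1 \Leftrightarrow (p^z-1)\mid(p^s-1) \Leftrightarrow z\mid s$ finishes at once. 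You instead argue by contradiction via Euclidean division in $\F_p[X]$ and then rule out a nonzero remainder $R$ by comparing heights: divisibility by $g^{p^z-1}-1$ in $O_{K,\{\pp_K\}}$ forces containment of zero divisors away from $\pp_K$, giving $\deg R\geq p^z-1$. The step you flagged as the main obstacle --- translating ring divisibility into divisor containment --- is exactly what the paper's intersection identity packages in one line; your route makes the role of ``$g$ has its only pole at $\pp_K$'' explicit and self-contained, at the cost of a slightly longer computation, and it is equally valid (including the degenerate case $\deg R=0$). One cosmetic remark: read literally, the paper's notation $PPE(f,w,\ldots)$ asserts $f=w^{p^k}$, so \eqref{same1} would say $g=h^{p^k}$; like the paper's own proof, you adopt the only consistent reading $h=g^{p^k}$ with $k\geq 1$ (the other reading would force $g$ to be algebraic over $\F_p$, contradicting $g\notin\G_p$), which is fine.
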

\begin{proof}
From \eqref{same1} and \eqref{same3} we have for some positive integer $s$ that $h=g^{p^s}$ and
\[
(g^{p^z-1}-1) {\Large |_{O_{K,\{\pp_K\}}}} (g^{p^s-1}-1).  
\]
Note that 
\[
\frac{g^{p^s-1}-1}{g^{p^z-1}-1} \in  \G_p(g) \cap O_{K,\{\pp_K\}}=\G_p[g]
\]
 and therefore we can consider the divisibility condition over the polynomial ring $\G_p[g]$ instead of $O_{K,\{\pp_K\}}$.  In this case it is easy to see that $(p^z-1) | (p^s-1)$ in $\Z$ and therefore $z |s$.  Now from \eqref{same2} we  derive $v=x^{p^s}$. 
\end{proof}
We  also need same power and power divisible by a fixed integer sets defined over a  function field.  Thus we also need to use the following result concerning (non-uniform) existential definability of $p$-th powers over a function field of positive characteristic.
\begin{proposition}
\label{prop:pthpower}
For any function field $K$ of positive characteristic  the set $P(K)=\{(f,g) \in K^2|  \exists s \in \Z_{\geq 0}, \, g=f^{p^s}\}$ is existentially definable (has a Diophantine definition) over $K$. (See \cite{ES2}.)
\end{proposition}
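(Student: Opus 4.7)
The plan is to follow the approach of \cite{ES2}. I would first pick a transcendental element $t \in K$ so that $K$ is a finite separable extension of $\F_p(t)$, which is always possible since $K$ is a function field of transcendence degree one. The problem then reduces to defining $P(K)$ in terms of data tied to the single element $t$.

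The first step is to give an existential definition of the auxiliary set $P_t(K)=\{(t,y) : y = t^{p^s},\ s \in \Z_{\geq 0}\}$. One can do this by combining two kinds of Diophantine constraints. The first is a pole-order condition: if $y = t^{p^s}$, then at every $K$-prime lying above the pole of $t$ the order of $y$ is exactly $p^s$ times that of $t$, and $y$ has no other poles. The second is a Frobenius congruence at a suitable family of primes, say $y \equiv t^{p^s} \pmod{\pp_K}$ for every $\pp_K$ in a Diophantine set of primes rich enough to pin down $y$. Both conditions can be encoded by the kinds of norm equations developed in Section \ref{divorder} (when the constant field of $K$ admits an extension of degree coprime to $p$) or in Section \ref{degp} (otherwise, via Artin--Schreier extensions of degree $p$), possibly after an auxiliary algebraic constant extension and a descent argument back to $K$.

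Once $P_t(K)$ is in hand, the second step bootstraps to the full set $P(K)$ by exploiting the fact that Frobenius is a ring endomorphism. Given $f, g \in K$, the relation $g = f^{p^s}$ is equivalent to a finite conjunction of Diophantine conditions involving pairs $(a_i, a_i^{p^s})$ drawn from copies of the base set with $t$ replaced by suitable elements $a_i$, together with additive and multiplicative relations $(f + a_i)^{p^s} = g + a_i^{p^s}$ that transfer Frobenius from the $a_i$ to $f$. Finitely many such witnesses suffice to isolate $s$ and the identity $g = f^{p^s}$.

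The main obstacle is the synchronization of the exponent $s$ across all of the auxiliary witnesses, which is the analogue for the whole field $K$ of Lemma \ref{synch} for the ring $O_{K,\{\pp_K\}}$. In \cite{ES2} this is handled by augmenting the system with parameters and polynomial constraints that force a single common value of $s$ throughout; the ingredients are again norm equations and $p$-th power bookkeeping similar to what is used in Lemma \ref{le:divbyz}. Once synchronization is achieved, the remainder of the argument is a routine verification that the final conjunction of polynomial equations cuts out exactly $P(K)$.
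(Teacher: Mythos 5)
You should first note that the paper does not prove this proposition at all: it is quoted verbatim from \cite{ES2}, so there is no internal argument to measure your sketch against, and the burden of proof falls entirely on your outline.

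Judged on its own, the outline has a genuine gap at its first and decisive step. You propose to encode, existentially, the conditions ``$y$ has poles only above the pole of $t$, of order exactly $p^s$ times that of $t$'' and ``$y\equiv t^{p^s}$ modulo a rich family of primes'' by the norm equations of Sections \ref{divorder} and \ref{degp}. But those norm-equation criteria control the behaviour of an element at \emph{all} primes only after one quantifies universally over the auxiliary parameters ($c$, and $b$ in the $bx^q+b^q$ variant): for a single, existentially quantified choice of $c$, solvability of the norm equation gives only the weak per-prime disjunction of Proposition \ref{prop:norm2} (either $c$ is a $q$-th power at that prime, or the order is nonnegative, or the order is divisible by $q$), which comes nowhere near forcing $y=t^{p^s}$. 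This is precisely why every definition built from that machinery in the present paper has the shape $\forall\exists\ldots\exists$ or $\forall\forall\exists\ldots\exists$; following your recipe you would obtain a definition with universal quantifiers, not a Diophantine one, so the proposal as written does not prove the proposition. The genuinely existential definitions of $p$-th powers over the field (Pheidas, Videla, Shlapentokh, and \cite{ES2}) rest on different, field-dependent tools --- e.g.\ the Pell-type equations over $O_{K,\{\pp_K\}}$ of Lemma \ref{uniformpowers} together with height bounds and vertical-method arguments --- and the converse direction of your step one (``conditions rich enough to pin down $y$'') is exactly the hard part, which you leave as an assertion. Your second step, transferring Frobenius from auxiliary elements to $f$ via additivity and finitely many constants, is essentially the content of Lemma \ref{prop:pthpower2}; it is sound, but in the paper that lemma is \emph{derived from} the proposition you are trying to prove, so it cannot carry the argument without a correct first step.
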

We denote the equations defining $P(K)$ by PPEF$(f,g)$ ($p$-th power equations over a field), so that PPEF$(f,g)$ hold over $K$ if and only if $\exists s \in \Z_{\geq 0}: g=f^{p^s}$.
\begin{lemma}
\label{prop:pthpower2}
If  $K$ is any function field, $x \in K$ any non-zero element, then the set
\[
\{(x, X, f ,Y) \in K^3|  \exists s \in \Z_{\geq 0}, \, X=x^{p^s}, Y=f^{p^s}\},
\]
and for a fixed $z \in \Z_{>0}$ the set
\[
\{(t,T, T_z) \in K^3|  \exists s \in \Z_{\geq 0}, \, t =T^{p^s}, T_z=t^{p^{zs}}\}
\]
are existentially (non-uniformly) definable (have a Diophantine definition) over $K$.
\end{lemma}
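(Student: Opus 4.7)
The plan is to leverage Proposition \ref{prop:pthpower}, which provides the existential predicate $\text{PPEF}(f,g)$ over $K$ asserting $g = f^{p^s}$ for some $s \geq 0$. Throughout, the ``non-uniformly'' qualifier permits parameters depending on $K$ (e.g., a transcendental element or an element with prescribed divisor support), in the same spirit as the inspirational synchronization result over rings in Lemma \ref{synch}.

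For the first set, given $x, X, f, Y \in K$, I would assert the conjunction $\text{PPEF}(x, X) \wedge \text{PPEF}(f, Y) \wedge \text{PPEF}(x+f, X+Y)$, yielding $X = x^{p^{s_1}}$, $Y = f^{p^{s_2}}$, and via the Frobenius identity $(x+f)^{p^m} = x^{p^m} + f^{p^m}$ in characteristic $p$, the relation
\[
x^{p^{s_1}} + f^{p^{s_2}} = x^{p^m} + f^{p^m}
\]
for some $m \geq 0$. Analyzing this identity at a prime $\pp$ where $x$ has a pole but $f$ is a unit forces $p^{s_1}\ord_\pp(x) = p^m \ord_\pp(x)$, hence $s_1 = m$; a symmetric argument at a prime where $f$ has a pole but $x$ is a unit yields $s_2 = m$, giving the desired synchronization $s_1 = s_2$.

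The main obstacle is the degenerate situation where $x$ and $f$ have overlapping divisor supports (including the case where $x$ or $f$ lies in $\G_p$), in which case the naive order analysis breaks down. To handle this I would introduce an auxiliary parameter $v \in K$ chosen so that $v$ has a pole at some prime disjoint from the supports of both $x$ and $f$, and add the assertions $\text{PPEF}(v, V) \wedge \text{PPEF}(xv, XV) \wedge \text{PPEF}(fv, YV)$. Examining orders at $v$'s distinguished pole then pins down the common exponent for $x$ and $v$ (and likewise for $f$ and $v$), forcing $s_1 = s_3 = s_2$ where $s_3$ is the exponent in $\text{PPEF}(v,V)$. For the corner case of $x$ or $f$ in $\G_p$, the relation $f^{p^s} = Y$ becomes a periodic constraint on $s$ modulo $[\F_p(f):\F_p]$, which can be captured by a finite disjunction of polynomial equations (translatable to a single polynomial equation over any infinite field).

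For the second set, I would chain $z$ instances of the synchronized predicate from the first set. Introduce auxiliary variables $u_0, u_1, \ldots, u_z \in K$, require $u_0 = t$ and $u_z = T_z$, and for each $i = 1, \ldots, z$ invoke the first set's predicate applied to the tuple $(T, t, u_{i-1}, u_i)$, thereby forcing simultaneously $t = T^{p^{s_i}}$ and $u_i = u_{i-1}^{p^{s_i}}$ with a common exponent $s_i = s$ independent of $i$. Induction on $i$ then yields $u_i = t^{p^{is}}$, so $u_z = t^{p^{zs}} = T_z$, exactly as required. No new obstacles arise beyond those already handled for the first set.
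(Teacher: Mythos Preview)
Your chaining argument for the second set is essentially the paper's (see \eqref{as}); the problem is with the first set.

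The gap is in your treatment of the degenerate case. You propose an auxiliary parameter $v \in K$ whose pole lies outside the supports of both $x$ and $f$, but $f$ is a \emph{variable} of the set being defined, so $v$ cannot be chosen to depend on it. For any single fixed $v$ there will be $f$'s whose divisor contains the distinguished pole of $v$, and your order comparison at that pole then yields nothing. The same objection applies to your handling of $f \in \G_p$: whether $f$ is constant, and if so the period $[\F_p(f):\F_p]$, are functions of the variable $f$ and cannot be encoded by a fixed finite disjunction when $\G_p$ may be infinite.

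The paper's remedy is a pigeonhole over \emph{many} constant shifts rather than one auxiliary element. One fixes a non-constant $t$ and constants $c_0=0,c_1,\ldots,c_n$ with $n \geq 2h_K(t)$ (extending constants if necessary), and for every $i$ couples $\mathrm{PPEF}(f+c_i,Y_i)$ multiplicatively to $t$ via $\mathrm{PPEF}\bigl(t(f+c_i),\,Y_iT\bigr)$ with $T=t^{p^s}$, together with additive compatibilities $Y_i=Y_0+C_i$. Since distinct shifts $f+c_i$ have pairwise disjoint zero sets (a common zero would make the nonzero constant $c_i-c_j$ vanish modulo a prime), the $n+1$ shifts contribute at least $n+1$ distinct zeros, so some $f+c_i$ has a zero $\pp$ outside the divisor of $t$; comparing orders at $\pp$ in $(f+c_i)^{p^{s_i}}t^{p^s}=(t(f+c_i))^{p^{\hat s_i}}$ forces $s_i=\hat s_i$, whence $t^{p^s}=t^{p^{\hat s_i}}$ and $s_i=s$, and the additive relations propagate this back to $s_0$. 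The constant case is absorbed automatically: when $f\in\G_p$ the coupling with the non-constant $t$ alone already pins the exponent. This is the uniform-in-$f$ mechanism your argument is missing.
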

\begin{proof}
Let $t \not =0$ be a fixed element of $K$.  Without loss of generality we can assume $f \not =0$.  Assume initially that $K$ contains  $n \geq 2h_K(t)$  distinct  constants $c_0=0, c_1,\ldots, c_n$ and consider a system of equations:
\begin{equation}
\label{eq:p1}
\exists s_i, m_i, y_i \in \Z_{\geq 0}: Y_i=(f+c_i)^{p^{s_i}}, X_i=(x+c_i)^{p^{y_i}}, C_i=c_i^{p^{m_i}}, i=0,\ldots,n,  
\end{equation}
\begin{equation}
\label{eq:p2}
Y_i=Y_0 + C_i, X_i=X_0 +C_i, i=1,\ldots,n,
\end{equation}
\begin{equation}
\label{eq:p3}
\exists s \in \Z_{\geq 0}: T=t^{p^s}
\end{equation}
\begin{equation}
\label{eq:pn}
\exists \hat s_i \in \Z_{\geq 0}:Y_iT=(t(f+c_i))^{p^{\hat s_i}}, 
\end{equation}
\begin{equation}
\label{eq:pn2}
\exists \hat y_i \in \Z_{\geq 0}:X_iT=(t(x+c_i))^{p^{\hat y_i}}, 
\end{equation}
Assume \eqref{eq:p1}--\eqref{eq:pn2} are satisfied and first suppose $f$ is not a constant. Since $n \geq 2h_K(t)$, for some $i =0,\ldots, n$ it is the case that $f+c_i$ has a zero at a prime $\pp$ which does not occur in the divisor of $t$.
So from \eqref{eq:pn} we have for this $i$ that $(f+c_i)^{p^{s_i}}t^{p^{s}}=t^{p^{\hat s_i}}(f+c_i)^{p^{\hat s_i}}$ and comparing orders at $\pp$ we conclude that $\hat s_i=s_i$ leading to $t^{p^{s}}=t^{p^{\hat s_i}}$ and $\hat s_i=s$.  Further from \eqref{eq:p2} we have that $(f+c_i)^{p^{s_i}}=f^{p^{s_0}} +C_i$, where $c_i, C_i$ are constants. Thus $s_0=s_i=s$.  

If $f$ is a constant, then $Y_0$ is a constant and \eqref{eq:pn} for $i=0$ gives us $\hat s_0=s$.  We also conclude that $Y_0=f^{p^{s_0}}=f^{p^s}$.  By a similar argument we get $X_0=x^{p^s}$.

Conversely, if we set $m_i=s_i=\hat s_i=s=y_i=\hat y_i$, then all the equations can be satisfied. 
 
 Finally, if we need more constants than we have in our field $K$, we can initially take a separable constant field extension of $K$ (thus preserving the height of $t$), write down the necessary equations over the extension, and then rewrite the equations over $K$ so that all the coefficients of the equations are in $K$ and all the variables range over $K$.

 We can denote equations \eqref{eq:p1}--\eqref{eq:pn2} by SPPEF (same pth-power equations over a field).  So SPPEF$(x,X, f,Y)$ will hold if and only if $\exists s \in \Z_{\geq 0}: X=x^{p^s}$ and $Y=f^{p^s}$.
 
 Utilizing SPPEF equations, one can also produce $p^{zs}$-th powers of a given element for any fixed positive integer $z$.
For any $f \in K $ the following system of equations
\begin{equation}
\label{as}
\left \{
\begin{array}{c}
\mbox{SPPEF}(f, Y,x, X) \\
\mbox{SPPEF}(f,Y,X,X_1) \\
\ldots\\
\mbox{SPPEF}(f,Y,X_{z-2},X_{z-1}) \\
\end{array}
\right .
\end{equation}
has solutions in $K$ in all the remaining variables if and only if $X_{z-1}=f^{p^{zs}}$. 
\end{proof}
\subsection{Preparing to use $p$-th power equations  with the weak vertical method}
\begin{proposition}
\label{prop:pthpower3}
If $f \in K$ then for any $\pp_K$ with  $\ord_{\pp_K}t \geq 0$, and $e(\pp_K/\pp_{\F_p(t)})\not \equiv 0 \mod q$ there exists $\bar s \in \Z_{>0}$ such that for all positive integers $s \equiv 0 \mod \bar s$
\begin{equation}
\label{eq:positive}
\ord_{\pp_{K}}(t^{p^{\bar s}}-t) >0 \land \ord_{\pp_{K}}(t^{p^{\bar s}}-t) \not \equiv 0 \mod q.
\end{equation}
Additionally, if also $\ord_{\pp_K}f <0$, then
\begin{equation}
\label{eq:together}
\ord_{\pp_K}\frac{f^{qp^s}-f^q}{t^{p^s}-t} \not \equiv 0 \mod q.
\end{equation}
\end{proposition}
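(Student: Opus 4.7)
The plan is to take $\bar s = d$, where $d$ is the degree of the prime $\pp_{\F_p(t)}$ of $\F_p(t)$ lying below $\pp_K$. Since $\ord_{\pp_K}t \geq 0$ and the restriction of $\pp_K$ to $\F_p(t)$ must be non-trivial (a valuation on $K$ trivial on $\F_p(t)$ would be trivial on the algebraic extension $K$), this prime corresponds to an irreducible polynomial $g(t) \in \F_p[t]$ of degree $d$, with residue field $\F_p[t]/(g)\cong\F_{p^d}$, and the image $\bar t$ of $t$ there generates $\F_{p^d}$ over $\F_p$.

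First I would establish that for every positive integer $s$ with $d\mid s$ one has $\ord_{\pp_{\F_p(t)}}(t^{p^s}-t)=1$. The polynomial $t^{p^s}-t$ has derivative $-1$ and is therefore separable over $\F_p$, so its irreducible factorization in $\F_p[t]$ is a product of pairwise distinct polynomials. Since $\F_{p^d}\subseteq\F_{p^s}$, the polynomial $g$ occurs among these factors, and by separability with multiplicity exactly one. Lifting through the ramification identity on $\F_p(t)^\times$ yields
\[
\ord_{\pp_K}(t^{p^s}-t) = e(\pp_K/\pp_{\F_p(t)}) \cdot \ord_{\pp_{\F_p(t)}}(t^{p^s}-t) = e(\pp_K/\pp_{\F_p(t)}),
\]
which by hypothesis is positive and not divisible by $q$. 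This establishes \eqref{eq:positive} (reading the right-hand side with $s$ in place of the apparent typographic $\bar s$, so that the quantifier over multiples of $\bar s$ is meaningful).

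For the additional claim, assume $\ord_{\pp_K}f<0$. Then $qp^s\ord_{\pp_K}(f) < q\ord_{\pp_K}(f)$ (both negative), so the order of the sum is that of the more-negative term:
\[
\ord_{\pp_K}(f^{qp^s}-f^q) = qp^s\ord_{\pp_K}(f) \equiv 0 \mod q.
\]
Subtracting the denominator's order computed above gives
\[
\ord_{\pp_K}\frac{f^{qp^s}-f^q}{t^{p^s}-t} = qp^s\ord_{\pp_K}(f) - e(\pp_K/\pp_{\F_p(t)}) \equiv -e(\pp_K/\pp_{\F_p(t)}) \not\equiv 0 \mod q,
\]
which is \eqref{eq:together}.

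The whole argument is valuation-theoretic bookkeeping and poses no real obstacle. The only step needing a moment of attention is that the multiplicity of $g$ in $t^{p^s}-t$ is exactly one rather than $\geq 1$; this is immediate from the separability of $t^{p^s}-t$ over $\F_p$, and everything else reduces to the multiplicativity of orders under extensions and the hypothesis $e(\pp_K/\pp_{\F_p(t)})\not\equiv 0 \mod q$.
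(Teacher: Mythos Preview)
Your proof is correct and follows essentially the same approach as the paper: both arguments observe that $\ord_{\pp_{\F_p(t)}}(t^{p^s}-t)=1$ for all $s$ divisible by a suitable $\bar s$, multiply by the ramification degree to pass to $K$, and then compute $\ord_{\pp_K}(f^{qp^s}-f^q)=qp^s\ord_{\pp_K}f$ for the second part. You are slightly more explicit than the paper in naming $\bar s$ as the residue degree $d$ of $\pp_{\F_p(t)}$ and in invoking separability of $t^{p^s}-t$ to pin down the multiplicity as exactly one, but the underlying idea is identical.
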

\begin{proof}
First of all, observe that in $\F_p(t)$ for any $\pp_{\F_p(t)}$ which is not a pole of $t$ there exists $\bar s \in \Z_{>0}$ such that  $\ord_{\pp_{\F_p(t)}}(t^{p^{\bar s}}-t)=1$. (This is true because every element of the algebraic closure of $\F_p$ must be a root of a polynomial $x^{p^s}-x$ for some $s$.)  Consequently it is also true that $\ord_{\pp_{\F_p(t)}}(t^{p^{s}}-t)=1$ for all $s \equiv 0 \mod \bar s$.  Therefore, assuming that the ramification degree of any factor of $\pp_{\F_p(t)}$ in $K$ is not divisible by $q$ we conclude that
\begin{equation}
\label{eq:t0}
\ord_{\pp_K}(t^{p^s}-t)\not \equiv 0 \mod q
\end{equation}
for any $\pp_K$ lying above $\pp_{\F_p(t)}$ in $K$ and any $s \equiv 0 \mod \bar s$.    Next suppose that $\ord_{\pp_K}f <0$ and observe that
\begin{equation}
\label{eq:f00}
\ord_{\pp_K}f^{qp^s}-f^q=qp^s\ord_{\pp_K}f \equiv 0 \mod q.
\end{equation}
 Finally combining \eqref{eq:t0} and \eqref{eq:f00}, we conclude that \eqref{eq:together} is true.
\end{proof}
Next we use a similar idea to require certain zeros.
\begin{lemma}
\label{le:ordminus1}
If $f \in K$, $s \in \Z_{>0}$ then for any $\pp_K$ not ramified over $\F_p(t)$ with $\ord_{\pp_{K}}t \geq 0$, $\ord_{\pp_{K}}f \geq 0$, and $\ord_{\pp_K} (t^{p^s}-t) >0$  either
\[
\ord_{\pp_K}\frac{f^{p^s}-f}{t^{p^s} -t}=-1 \mbox{ or } \ord_{\pp_K}(f^{p^s}-f)>0
\]
\end{lemma}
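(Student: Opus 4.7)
The plan is to pin down the order of $t^{p^s}-t$ at $\pp_K$ first, and then read off the two cases by comparing with $\ord_{\pp_K}(f^{p^s}-f)$. Let $\pp=\pp_K\cap\F_p(t)$. Since $\ord_{\pp_K}t\geq 0$, the prime $\pp$ is not the pole of $t$, so it corresponds to an irreducible polynomial in $t$. The hypothesis $\ord_{\pp_K}(t^{p^s}-t)>0$ forces $\ord_{\pp}(t^{p^s}-t)>0$, so $\pp$ divides $t^{p^s}-t$. Because $t^{p^s}-t=\prod_{a\in\F_{p^s}}(t-a)$ is a product of pairwise distinct linear factors, this means $\pp$ is generated by $(t-a)$ for some $a\in\F_{p^s}$, and moreover $\ord_{\pp}(t^{p^s}-t)=1$.

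Next I would use unramifiedness: since $\pp_K$ is unramified over $\F_p(t)$, we have $e(\pp_K/\pp)=1$ and therefore
\[
\ord_{\pp_K}(t^{p^s}-t)=e(\pp_K/\pp)\,\ord_{\pp}(t^{p^s}-t)=1.
\]
Now we turn to the numerator. Since $\ord_{\pp_K}f\geq 0$, the element $f$ is $\pp_K$-integral, hence so is $f^{p^s}-f$, i.e. $\ord_{\pp_K}(f^{p^s}-f)\geq 0$.

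Finally, I would split into two cases according to the sign (i.e.\ positivity) of $\ord_{\pp_K}(f^{p^s}-f)$. If $\ord_{\pp_K}(f^{p^s}-f)>0$, we are directly in the second alternative of the lemma. Otherwise $\ord_{\pp_K}(f^{p^s}-f)=0$, and then
\[
\ord_{\pp_K}\frac{f^{p^s}-f}{t^{p^s}-t}=0-1=-1,
\]
giving the first alternative. No single step here is a real obstacle; the only substantive point is the identification of $\pp$ with a prime of the form $(t-a)$, $a\in\F_{p^s}$, which rests on the well-known factorization $t^{p^s}-t=\prod_{a\in\F_{p^s}}(t-a)$ and hence gives $\ord_{\pp}(t^{p^s}-t)=1$ outright, after which unramifiedness and integrality do all the remaining work.
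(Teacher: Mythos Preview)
Your proof is correct and follows essentially the same approach as the paper: show $\ord_{\pp_K}(t^{p^s}-t)=1$ using separability of $t^{p^s}-t$ in $\F_p(t)$ together with unramifiedness, then split on whether $\ord_{\pp_K}(f^{p^s}-f)$ is zero or positive. One minor imprecision: the prime $\pp$ of $\F_p(t)$ need not be generated by a linear $t-a$ (it is generated by the minimal polynomial of some $a\in\F_{p^s}$ over $\F_p$), but since $t^{p^s}-t$ is squarefree your conclusion $\ord_{\pp}(t^{p^s}-t)=1$ still holds.
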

\begin{proof}
If for some $\pp_K$ as in the statement of the lemma we  do not have $\ord_{\pp_K}(f^{p^s}-f)>0$, then given our assumptions,  $\ord_{\pp_K}(f^{p^s}-f)=0$ and therefore
\[
\ord_{\pp_K}\frac{f^{p^s}-f}{t^{p^s} -t}= -\ord_{\pp_K}(t^{p^s} -t)=-1,
\]
where the last equality follows from the fact that $t^{p^s}-t$ does not have zeros of order higher than one in $\F_p(t)$ and we assumed $\pp_K$ is not ramified in the extension $K/\F_p(t)$.
\end{proof}
The first benefit of having zeros as described in the lemma above is explained below:
\begin{lemma}
\label{le:zerosbelow}
If $s \in \Z_{>0}$, $f \in K$ and $\pp_K$ is a prime of $K$ with $\ord_{\pp_K}(f^{p^s}-f) >0$, then for every $\pp_{\tilde K}$ lying above $\pp_K$ in $\tilde K$ there exists $b \in \tilde \F_p$ such  that $\ord_{\pp_{\tilde K}}(f -b)>0$.
\end{lemma}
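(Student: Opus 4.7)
The plan is to use the polynomial identity
\[
X^{p^s} - X = \prod_{b \in \F_{p^s}}(X - b)
\]
as the main ingredient. Substituting $X = f$ gives a factorization of $f^{p^s} - f$ into $p^s$ linear factors $(f-b)$ with $b \in \F_{p^s} \subseteq \tilde\F_p$. The strategy is to lift this factorization to $\tilde K$ and use additivity of the valuation $\ord_{\pp_{\tilde K}}$ to isolate one factor of positive order.

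First I would observe that since $\ord_{\pp_K}(f^{p^s}-f) > 0$ is finite-positive, and $\ord_{\pp_K}(f^{p^s}-f) = p^s \ord_{\pp_K}(f)$ whenever $\ord_{\pp_K}(f) < 0$, we must in fact have $\ord_{\pp_K}(f) \geq 0$. Then for every $\pp_{\tilde K}$ lying above $\pp_K$, I get $\ord_{\pp_{\tilde K}}(f) \geq 0$ as well, so that each of the constants $b \in \F_{p^s}$ (which satisfy $\ord_{\pp_{\tilde K}}(b) \geq 0$, in fact $=0$ or $+\infty$) yields $\ord_{\pp_{\tilde K}}(f - b) \geq 0$.

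Next I would apply the valuation to the factored form
\[
\ord_{\pp_{\tilde K}}(f^{p^s} - f) = \sum_{b \in \F_{p^s}} \ord_{\pp_{\tilde K}}(f-b).
\]
Since $\pp_{\tilde K}$ restricts to $\pp_K$ and $\ord_{\pp_K}(f^{p^s}-f) > 0$, the left side is strictly positive. Every summand on the right is non-negative by the previous paragraph, so at least one summand must be strictly positive, i.e.\ there exists $b \in \F_{p^s} \subseteq \tilde\F_p$ with $\ord_{\pp_{\tilde K}}(f-b) > 0$, as required.

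There is no real obstacle here; the only point to be careful about is not to accidentally allow $\ord_{\pp_K}(f)$ to be negative, which is ruled out by the hypothesis $\ord_{\pp_K}(f^{p^s}-f) > 0$. The rest is just the Frobenius factorization plus additivity of valuations.
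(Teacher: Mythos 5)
Your proof is correct and follows essentially the same route as the paper: factor $f^{p^s}-f=\prod_{b\in\F_{p^s}}(f-b)$ over $\tilde K$ and conclude from positivity of the order of the product that some factor $f-b$ has positive order at $\pp_{\tilde K}$. The paper phrases the last step via the observation that distinct factors cannot share a zero (which also gives uniqueness of $b$), while you use nonnegativity of each factor plus additivity of the valuation; these are the same argument in substance.
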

\begin{proof}
Over $\tilde K$ we can factor $f^{p^s}-f=\prod_{b \in \F_{p^s}}(f - b)$.  Observe that no two factors in the product share a zero, and therefore  for every factor $\pp_{\tilde K}$ of $\pp_K$ in $\tilde K$ there must be a unique $b \in \tilde \F_p$ such that $\ord_{\pp_{\tilde K}}(f -b)>0$.
\end{proof}

Finally we state the property which makes sure that $p$-th power equations ``work'' for polynomials.
\begin{lemma}
\label{will work}
Let $f \in \F_{p^s}[t]$, where $t$ is transcendental over $\F_{p^s}$.  In this case 
\[
\frac{f^{p^s}-f}{t^{p^s}-t} \in \F_{p^s}[t].
\]
\end{lemma}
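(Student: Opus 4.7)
The plan is to view the Frobenius on $\F_{p^s}[t]$ as a substitution, and then reduce the divisibility assertion to the trivial divisibility $(X-t)\mid f(X)-f(t)$ in $\F_{p^s}[t][X]$.

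First, I would observe that for any $g=\sum a_i t^i\in\F_{p^s}[t]$, Frobenius gives
\[
g^{p^s}=\sum a_i^{p^s}t^{ip^s}=\sum a_i t^{ip^s}=g(t^{p^s}),
\]
using $a_i^{p^s}=a_i$ for all $a_i\in\F_{p^s}$. Thus in particular $f^{p^s}=f(t^{p^s})$.

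Next, working in the polynomial ring $\F_{p^s}[t][X]$, I would apply the standard fact that $f(X)-f(t)$ has $X=t$ as a root when viewed as a polynomial in $X$ over $\F_{p^s}[t]$. Hence there is a polynomial $h(X,t)\in\F_{p^s}[t][X]$ with
\[
f(X)-f(t)=(X-t)\,h(X,t).
\]
Substituting $X=t^{p^s}$ yields, inside $\F_{p^s}[t]$,
\[
f^{p^s}-f=f(t^{p^s})-f(t)=(t^{p^s}-t)\,h(t^{p^s},t),
\]
and since $h(X,t)\in\F_{p^s}[t][X]$, the value $h(t^{p^s},t)$ lies in $\F_{p^s}[t]$. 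Therefore
\[
\frac{f^{p^s}-f}{t^{p^s}-t}=h(t^{p^s},t)\in\F_{p^s}[t],
\]
as required.

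There is really no obstacle here: the two inputs are the identity $f^{p^s}=f(t^{p^s})$ (which uses only that $\F_{p^s}$ is fixed by the $p^s$-th power map) and the tautological divisibility $(X-t)\mid f(X)-f(t)$ in $\F_{p^s}[t][X]$. One could alternatively phrase the argument without introducing $X$ by expanding $f(t)=\sum a_i t^i$ and writing
\[
f^{p^s}-f=\sum_i a_i\bigl(t^{ip^s}-t^i\bigr),
\]
then using $t^{ip^s}-t^i=(t^{p^s}-t)\bigl(t^{(i-1)p^s}+t^{(i-2)p^s+1}+\cdots+t^{i-1}\bigr)$, which exhibits the quotient term by term as an element of $\F_{p^s}[t]$; this is essentially the same computation made concrete.
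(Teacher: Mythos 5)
Your proof is correct and takes essentially the same approach as the paper: both rest on the Frobenius identity $f^{p^s}=f(t^{p^s})$ (since coefficients in $\F_{p^s}$ are fixed) together with the divisibility of $f(t^{p^s})-f(t)$ by $t^{p^s}-t$, which the paper carries out term by term exactly as in your closing remark.
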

\begin{proof}
Observe that for any non-negative integer $i$ we have that 
\[
t^{ip^s}-t^i\equiv 0 \mod (t^{p^s}-t) \mbox{ in } \F_{p^s}[t].
\]
Let $f=\sum_{i=0}^{d}a_it^i, a_i \in \F_{p^s}$ and $a_i^{p^s}=a_i$.  Now 
\[
f^{p^s}=\sum_{i=0}^{d}a_it^{ip^s} \equiv  \sum_{i=0}^{d}a_it^{i} =f \mod  (t^{p^s}-t) \mbox{ in } \F_{p^s}[t].
\]
\end{proof}

\subsection{The weak vertical method, bounds on heights and prime splitting in cyclic extensions}
\label{subsec:weak}
We describe  what elsewhere we called ``the weak vertical method'' (see for example Chapter 10 of \cite{Sh34}).  It is a method of showing that an element belongs to a (rational) subfield of a given field.  First we need the following lemma.
\begin{lemma}
\label{basis}
Let $M/K$ be a finite separable extension of function fields.  Let $\sigma_1=\id,\ldots,\sigma_n$ be all the embeddings of $M$ into $\tilde K$ leaving $K$ fixed. Let $\{ \omega_1=1, \ldots, \omega_k \}$ be a basis of $M$ over
  $K$ and let $w \in M$ with $w=\sum_{i=1}^na_i\omega_i$.  Suppose now that $w, \omega_1, \ldots, \omega_n $ are all integral with respect to a prime $\pp_K$ (i.e. neither the listed elements nor their conjugates have any poles at factors of $\pp_K$),  and $\ord_{\pp_K}\det^2(\sigma_j(\omega_i))=0$.  In this case, $\ord_{\pp_K}a_i \geq 0$.
\end{lemma}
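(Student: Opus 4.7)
The plan is to set up and solve, via Cramer's rule, the linear system obtained by applying each of the $n$ embeddings $\sigma_1,\dots,\sigma_n$ to the expansion $w=\sum_{i=1}^n a_i\omega_i$. Since $a_i\in K$ and each $\sigma_j$ fixes $K$, this gives
\[
\sigma_j(w)=\sum_{i=1}^n a_i\,\sigma_j(\omega_i),\qquad j=1,\dots,n,
\]
a square system with matrix $\Omega=(\sigma_j(\omega_i))_{j,i}$. Its determinant is nonzero, in fact a $\pp_K$-unit, by the hypothesis $\ord_{\pp_K}\det(\Omega)^2=0$. Hence the system has the unique solution
\[
a_i=\frac{\det(\Omega_i)}{\det(\Omega)},
\]
where $\Omega_i$ is $\Omega$ with its $i$-th column replaced by $(\sigma_j(w))_j$.

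The next step is to multiply numerator and denominator by $\det(\Omega)$ and rewrite
\[
a_i=\frac{\det(\Omega_i)\,\det(\Omega)}{\det(\Omega)^2}.
\]
The denominator is a $\pp_K$-unit by assumption, so the whole task becomes showing that the numerator is integral at $\pp_K$. Let $L$ be the Galois closure of $M$ over $K$ (a finite separable extension of $K$). Every $\sigma_j(\omega_i)$ and every $\sigma_j(w)$ lies in $L$ and, by the integrality hypothesis, is integral at every factor of $\pp_K$ in $L$. Since the numerator is a polynomial with integer coefficients in these quantities, it is also integral at every factor of $\pp_K$ in $L$.

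Finally I would verify that the numerator lies in $K$. Any $\tau\in\Gal(L/K)$ permutes the $\sigma_j$ (via $\tau\circ\sigma_j=\sigma_{\pi(j)}$ for some permutation $\pi$ depending on $\tau$), and therefore permutes the rows of both $\Omega$ and $\Omega_i$ by the \emph{same} permutation. Consequently $\tau$ multiplies both $\det(\Omega)$ and $\det(\Omega_i)$ by $\mathrm{sgn}(\pi)$, so their product is $\Gal(L/K)$-invariant and hence lies in $K$. An element of $K$ that is integral at every factor of $\pp_K$ in $L$ is integral at $\pp_K$ itself, so the numerator is integral at $\pp_K$. Dividing by the $\pp_K$-unit $\det(\Omega)^2$ gives $\ord_{\pp_K}a_i\ge 0$, as required.

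The main obstacle is really only the Galois bookkeeping: one must keep straight that although the individual $\sigma_j(\omega_i)$ lie in $L$ rather than $K$, the specific combination $\det(\Omega_i)\det(\Omega)$ is $\Gal(L/K)$-stable, and then invoke the standard fact that a $K$-element integral at every factor of $\pp_K$ in a finite extension is integral at $\pp_K$. Everything else is a routine application of Cramer's rule.
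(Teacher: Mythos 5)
Your proposal is correct and follows essentially the same route as the paper, which simply solves the linear system $\sum_i a_i\sigma_j(\omega_i)=\sigma_j(w)$ by Cramer's rule; you have merely filled in the integrality and Galois-invariance bookkeeping that the paper leaves implicit (and indeed, since the $a_i$ already lie in $K$, the descent of integrality from factors of $\pp_K$ in the Galois closure to $\pp_K$ itself is immediate).
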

\begin{proof}
The lemma follows from considering the linear system 
\begin{equation}
\label{ls}
\sum a_i\sigma_j(\omega_i)=\sigma_j(w), i,j=1,\ldots,n
\end{equation}
 and solving it by Cramer's rule for $a_1,\ldots, a_n$.   
\end{proof}
\begin{proposition}[Slightly modified Theorem 10.1.1 of \cite{Sh34}]%
\label{prop:weak}
Let $M/K$ be a finite separable extension of function fields.  Let $\{ \omega_1=1, \ldots, \omega_n \}$ be a basis of $M$ over
  $K$ and let $w \in M$ with $w=\sum_{i=1}^na_i\omega_i$.  Suppose also that there exists a finite set $\calV_K$ of primes of $K$ such that
\begin{equation}%
\label{eq:equiv}
w\equiv b(\mathfrak A) \mod \mathfrak A,
\end{equation}%
where $\AA \in  \calV_K$, $b(\mathfrak A) \in K$ and we interpret the equivalence as saying that for any factor $\mathfrak c$ of $\mathfrak A$ in $M$ we
have that $\ord_{\mathfrak c}(w-b(\mathfrak A)) \geq e(\mathfrak c/\mathfrak A)$, the ramification degree of $\mathfrak c$ over $\mathfrak A$.  Assume that $w$ and all elements of the basis are integral at all the prime of $K$, $\det^2(\sigma_j(\omega_i)$ has no zeros at primes of $\calV_K$ and $|\calV_K| > \max h_K(a_i),  a_i \not = 0, i=2,\ldots,n$.  In this case $w \in K$.
\end{proposition}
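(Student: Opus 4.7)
\medskip

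\noindent\textbf{Proof proposal.} The plan is to combine the congruence hypothesis with Lemma \ref{basis} to show that every prime $\AA \in \calV_K$ is a zero of each coefficient $a_i$ with $i \geq 2$, and then finish by a degree count using the assumed bound on $|\calV_K|$. If all $a_i$ with $i\geq 2$ are zero we are done, so assume at least one is nonzero; we will derive a contradiction.

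\medskip

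\noindent\emph{Step 1 (forcing zeros of the $a_i$).} Fix $\AA \in \calV_K$ and let $\pi \in K$ be a local uniformizer at $\AA$. Set
\[
v \;=\; \frac{w - b(\AA)}{\pi} \;=\; \frac{a_1 - b(\AA)}{\pi}\,\omega_1 + \frac{a_2}{\pi}\,\omega_2 + \cdots + \frac{a_n}{\pi}\,\omega_n .
\]
For any factor $\mathfrak c$ of $\AA$ in $M$ the congruence hypothesis gives $\ord_{\mathfrak c}(w - b(\AA)) \geq e(\mathfrak c/\AA)$, hence $\ord_{\mathfrak c}(v) \geq 0$. Applying any embedding $\sigma_j:M\hookrightarrow\tilde K$ fixing $K$ preserves $\pi$ and $b(\AA)$ and sends the factors of $\AA$ in $M$ bijectively to the factors of $\AA$ in $\sigma_j(M)$, so the same inequality holds for $\sigma_j(v)$ at every factor of $\AA$ in $\tilde K$. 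Away from $\AA$, integrality of $v$ and its conjugates at all other primes of $K$ follows from the global integrality assumption on $w$ and on $\omega_1,\dots,\omega_n$.

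\medskip

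\noindent\emph{Step 2 (applying Lemma \ref{basis}).} Since $\ord_{\AA}\det^2(\sigma_j(\omega_i)) = 0$ by hypothesis, Lemma \ref{basis} applied to $v$ yields
\[
\ord_{\AA}\!\Bigl(\tfrac{a_i}{\pi}\Bigr) \;\geq\; 0 \qquad (i = 1,\dots,n),
\]
and in particular $\ord_{\AA}(a_i) \geq 1$ for every $i \geq 2$.

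\medskip

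\noindent\emph{Step 3 (degree count).} Suppose some $a_{i_0}$ with $i_0 \geq 2$ is nonzero. By Step 2, every $\AA \in \calV_K$ appears in the zero divisor of $a_{i_0}$ with multiplicity at least one, so
\[
h_K(a_{i_0}) \;\geq\; \sum_{\AA \in \calV_K}\deg\AA \;\geq\; |\calV_K|,
\]
contradicting the assumption $|\calV_K| > \max\{\,h_K(a_i) : a_i\neq 0,\ i=2,\dots,n\,\}$. Therefore $a_i = 0$ for all $i \geq 2$ and $w = a_1 \in K$.

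\medskip

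\noindent The only delicate point is Step 1: we must make sure that $v$ together with all of its $K$-conjugates remains integral at every prime of $\tilde K$ above $\AA$, so that Lemma \ref{basis} is genuinely applicable. This is exactly what the strengthened congruence condition $\ord_{\mathfrak c}(w-b(\AA)) \geq e(\mathfrak c/\AA)$ is designed to give, combined with the fact that $b(\AA)$ and $\pi$ lie in $K$ and are therefore fixed by every $\sigma_j$.
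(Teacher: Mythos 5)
Your proof is correct and takes essentially the same route as the paper's: divide $w-b(\AA)$ by an element of $K$ of order one at $\AA$, apply Lemma \ref{basis} at $\AA$ to force $\ord_{\AA}a_i>0$ for $i\geq 2$, and conclude $a_i=0$ from the bound $|\calV_K|>h_K(a_i)$. The only caveat is your side remark in Step 1 that $v$ and its conjugates are integral at all primes away from $\AA$ — this is neither true in general (the uniformizer $\pi$ and $b(\AA)$ can introduce poles at other primes) nor needed, since Lemma \ref{basis} is a purely local statement at the single prime $\AA$.
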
%
\begin{proof}%

   Observe $$w- b(\mathfrak A)= a_1 - b(\mathfrak A)+ a_2\omega_2 +\dots + a_n\omega_n.$$ 
For each prime ${\mathfrak A}$ of $\calV_K$, let
$B(\mathfrak A) \in K$ be such that $\ord_{\mathfrak A}B(\mathfrak A) =1$.  In this case 
\[
\displaystyle z=\frac{w-b(\mathfrak A)}{B(\mathfrak A)}
\]
 is integral
at $\mathfrak A$ and thus $z = \sum_{i=1}^n b_i(\Aa) \omega_i$, where
$b_i(\Aa) \in K$, is  integral at $\mathfrak A$.  We
observe further
\[
a_1 - b(\mathfrak A)+ a_2\omega_2 + \dots + a_n\omega_n=w-b(\mathfrak
A) = B(\mathfrak A)z=\sum_{i=1}^n B(\mathfrak A)b_i(\Aa)\omega_i.
\]
Thus, for $i=2,\ldots,n$ for all ${\mathfrak A}$ in $\calV_K$ we have
that $a_i = B(\mathfrak A)b_i(\Aa)$, implying that for $i=2,\ldots,n$,
for all ${\mathfrak A}$ in $\calV_K$ we have that $\ord_{\mathfrak A}a_i
>0$.  This is impossible unless for $i=2,\ldots,n$ we have that
$a_i=0$ and thus $w \in K$.
\end{proof}%
We now set up a mechanism to produce sets of primes $\calV_K$ satisfying the necessary conditions.  Below we describe the specific situation we encounter.
\be
\item Let $M/K$ be a finite Galois extension of degree $n$ of function fields over the same field $\G_p$ of constants.
\item Let $\sigma_1=\id,\ldots,\sigma_n$ be all the embeddings of $M$ into $\tilde K$ leaving $K$ fixed.
\item Let $\Omega =\{ \omega_1=1, \ldots, \omega_n\}$ be the basis of $M$ over $K$.
\item Let $f \in M$ and let $f=\sum_{i=1}^n a_i\omega_i, a_i \in K$.
\item \label{primeofK} If $\pp_K$ is a prime of $K$ such that $f, \omega_1,\ldots,\omega_n$ are integral with respect to $\pp_K$, i.e. neither $f, \omega_1,\ldots,\omega_n$ nor their conjugates over $K$ have poles at factors of $\pp_K$ in $M$, and $\ord_{\pp_K}\det^2(\sigma_j(\omega_i))=0$ then $\ord_{\pp_K}a_i \geq 0$ by Lemma \ref{basis}.   The number of primes $\pp_K$ dividing $\det^2(\sigma_j(\omega_i))$ or having factors occurring as poles of elements of $\Omega$ can be bound by $h_K(\det^2(\sigma_j(\omega_i)))\max_ih_M(\omega_i) \leq h_M(\det^2(\sigma_j(\omega_i)))\max_ih_M(\omega_i) $.
\item $h_M(a_i) \leq h_M(f) + C_M(\Omega)$, where $C_M(\Omega)$ depends on $n$, and $\max h_M(\omega_i)$ only:  this follows from linear system \eqref{ls}.   
\item \label{heights} Let $\hat M, \hat K$ be constant field extensions of $M$ and $K$ such that $\hat M$ and $\hat K$ have the same constant fields.  In this case $[\hat M:\hat K]=[M:K]$, $\Omega$ is a basis of $\hat M/\hat K$.  Further, $f \in M$, now as an element of $\hat M$ has the same coordinates with respect to $\Omega$ and both assertions of \eqref{primeofK} are still true since the degree of a divisor remains the same under a separable constant field extension.  For the same reason 
\[
h_{\hat M}(a_i) \leq h_{\hat M}(f) + C_{\hat M}(\Omega),
\]
 with $h_{\hat M}(a_i) =h_M(a_i)$,  $h_{\hat M}(f) =h_M(f)$, $C_{\hat M}(\Omega)=C_M(\Omega)$.
\item Let $t \in K$ with $K/\G_p(t)$ separable, and  let $\EE_M$ be the product of all primes of $M$ ramifying in the extension $M/\G_p(t)$,
\item Let $e_M$ be the degree of $\EE_M$.
\item Observe that $e_{\hat M}=e_M$.  This follows from the fact that if a prime does not ramify in the extension $M/\G_p(t)$, then a prime above it will not ramify in the extension $\hat M/\hat \G_p(t)$.  Thus under any algebraic constant field extension the number of such primes is bounded by the degree of $\EE_M$.
\ee

We now review an aspect of Chebotarev density theorem to get a sufficient number of degree one primes not splitting in a cyclic extension. 

\begin{lemma}
\label{Chebotarev}
Let $M_z/K_z$ be a degree $n$ cyclic extension of global fields over a finite field of constants $\F_{p^z}$ such that $M_z$ and $K_z$ have the same constant field.  Let $t \in K_z$ such that $M_z/\F_{p^z}(t)$ is Galois.  In this case for any sufficiently large integer $s \equiv 0 \mod z$, there exist  finite constant field extensions $\hat M=M_{s}$ of $M_z$ and $ \hat K=K_{s}$ of $K_z$ such that 
\be
\item $\hat M/\hat K$ is cyclic and $[\hat M: \hat K]=n$,
\item $\hat M$ and $\hat K$ have the same constant field of size $p^{s}$,
\item the number of degree 1 primes of $\hat K$ not splitting in $\hat M$ is greater than $p^{s/2}$,
\item each of this primes lies above the zero of  $t-a$, where $a \in  \F_{p^{s}}$.
\ee
\end{lemma}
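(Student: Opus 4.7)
The plan is to set $\hat K = K_z\F_{p^s}$ and $\hat M = M_z\F_{p^s}$ for $s\equiv 0 \bmod z$, check that the degree and cyclicity are preserved, and then apply an effective Chebotarev density theorem for function fields to the extension $\hat M/\hat K$.

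First I would verify that $[\hat M:\hat K]=n$ and that $\Gal(\hat M/\hat K)\cong \Gal(M_z/K_z)$ is cyclic of order $n$. The intersection $M_z\cap \hat K$ sits in the cyclic tower $K_z\subset M_z$, so it corresponds to a subfield of $M_z$ obtained by adjoining constants; since by hypothesis the full constant field of $M_z$ is $\F_{p^z}$ (the same as that of $K_z$), this intersection must equal $K_z$, so the constant extension does not absorb any of $M_z/K_z$. The constant field of both $\hat M$ and $\hat K$ is $\F_{p^s}$ because $s\equiv 0\bmod z$. Since $\hat M/\F_{p^s}(t)$ remains Galois (Galois extensions are preserved under base change), we are in a standard Chebotarev setup.

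Next I would record that the genus $g(\hat M)$ equals $g(M_z)$, since separable constant field extensions preserve the genus. Thus the genus is bounded independently of $s$. Fixing a generator $\sigma$ of $\Gal(\hat M/\hat K)$, the effective Chebotarev density theorem for function fields over finite fields (e.g.\ the standard consequence of the Weil bounds, as in the function-field Chebotarev estimates used in \cite{Sh34}) gives
\begin{equation*}
\#\{\pp_{\hat K} \mbox{ of degree } 1 : \mathrm{Frob}_{\pp_{\hat K}}=\sigma\} \;\geq\; \frac{p^s}{n} - C\bigl(g(M_z), n, e_{M_z}\bigr)\,p^{s/2},
\end{equation*}
where the constant $C$ depends only on the genus of $M_z$, on $n$, and on the ramification divisor $\EE_{M_z}$. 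Since in a cyclic extension of degree $n$ a prime fails to split iff its Frobenius has order $n$, summing over the $\phi(n)\geq 1$ generators of $\Gal(\hat M/\hat K)$ produces at least $\frac{\phi(n)}{n}p^s - \phi(n)Cp^{s/2}$ degree one primes of $\hat K$ that do not split in $\hat M$.

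Finally, each such degree one prime of $\hat K$ restricts to a degree one prime of $\F_{p^s}(t)$, which is either the zero of $t-a$ for some $a\in \F_{p^s}$ or the pole of $t$. Only at most $[\hat K:\F_{p^s}(t)] = [K_z:\F_{p^z}(t)]$ of them can lie above the pole of $t$. Subtracting this bounded number and choosing $s$ (divisible by $z$) large enough that
\[
\frac{\phi(n)}{n}p^s - \phi(n)Cp^{s/2} - [K_z:\F_{p^z}(t)] \;>\; p^{s/2},
\]
which holds for all sufficiently large $s$ because the main term $p^s/n$ dominates $p^{s/2}$, yields the desired count. The main obstacle is just invoking the right effective form of Chebotarev with the correct dependence of the error constant on the genus, the degree, and the ramification divisor; once that is in hand, the rest is bookkeeping.
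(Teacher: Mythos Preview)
Your proposal is correct and follows essentially the same approach as the paper: take the constant field extension to $\F_{p^s}$, use that genus is preserved under separable constant field extensions so the error term in the effective Chebotarev estimate is bounded independently of $s$, and observe that degree-one primes of $\hat K$ must lie over degree-one primes of $\F_{p^s}(t)$, hence over zeros of $t-a$ or the pole of $t$. The paper simply invokes Proposition 6.4.8 of \cite{Friednew} for the count, whereas you spell out the linear-disjointness check, the $\phi(n)$ summation over generators, and the subtraction of primes over the pole of $t$; these are exactly the details the paper leaves implicit.
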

\begin{proof}
The proof of this lemma follows from an effective version of Chebotarev Density Theorem and from the fact that the genus does not change under separable constant field extensions.  More precisely consider all primes of $M$ lying above degree one primes of $K$ not splitting in $M$.   For sufficiently large $s$ the number of such primes is greater $p^{s/2}$ by  Proposition 6.4.8 of \cite{Friednew}.  At the same time the relative degree of these primes of $K$ over the primes below in $\F_{p^{s}}(t)$ must be 1 and the primes below must be of degree 1, i.e. must either be zeros of elements of the form $t-a$ with $a^{p^{s}}-a=0$ or the pole of $t$.
\end{proof}
Finally, we show how $p$-th powers equations can be combined with the vertical method:
\begin{lemma}
\label{use order}
Let $z,s, M_z=M, K_z=K, \hat M, \hat K, \hat G_p, t$ be as in Lemma \ref{Chebotarev}, let $\EE_{M}$ be the product of all $M$-primes ramifying in the extension $M/\F_{p^{z}}(t)$, let $e_{M}$ be the degree of $\EE_{M}$, and let $f \in M$ be such that 
\[
\ord_{\pp_{M}}\frac{f^{p^{s}}-f}{t^{p^{s}}-t}\geq 0
\]
 for all $M$-primes $\pp_{M}$ that are not poles or zeros of $t$ and do not divide $\EE_{M}$.  Assume further that for some basis $\Omega =\{ \omega_1=1, \ldots, \omega_n\}$ of $M$ over $K$ we have that 
\[
p^{s/2}>h_{M}(f)+ C_{M}(\Omega) + h_{M}(\det(\sigma_j(\omega_i)))^2\max_ih_{M}(\omega_i) + e_{M} + h_{M}(t).
\]
In this case, $f \in K$.
\end{lemma}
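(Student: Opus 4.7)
The plan is to apply the weak vertical method (Proposition \ref{prop:weak}) in the constant field extension $\hat M/\hat K$, since by Lemma \ref{Chebotarev} this extension has the favorable supply of degree-one non-splitting primes that we need. The basis $\Omega$ of $M/K$ remains a basis of $\hat M/\hat K$ of the same cardinality, and since $f \in M$ its coordinates with respect to $\Omega$ are unchanged, lying in $K \subseteq \hat K$; likewise the heights of $f$, of the $\omega_i$, and of $\det(\sigma_j(\omega_i))$ are preserved by item (7) preceding Lemma \ref{Chebotarev}.

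First, I would invoke Lemma \ref{Chebotarev} to produce more than $p^{s/2}$ distinct degree-one primes of $\hat K$, each lying above a zero of some $t - a$ with $a \in \F_{p^s}$, each with a unique factor $\hat\pp_{\hat M}$ in $\hat M$. For any such prime whose factor avoids $\EE_M$ and for which $a \neq 0$, the hypothesis gives $\ord_{\hat\pp_{\hat M}}\bigl((f^{p^s}-f)/(t^{p^s}-t)\bigr) \geq 0$, while $\hat\pp_{\hat M}$ divides $t^{p^s}-t = \prod_{c \in \F_{p^s}}(t-c)$, so $\ord_{\hat\pp_{\hat M}}(f^{p^s}-f) > 0$. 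Factoring $f^{p^s}-f = \prod_{b \in \F_{p^s}}(f-b)$ as in the proof of Lemma \ref{le:zerosbelow}, I extract $b(\hat\Aa) \in \F_{p^s} \subseteq \hat K$ with $\ord_{\hat\pp_{\hat M}}(f - b(\hat\Aa)) \geq 1$; since $\hat\pp_{\hat M}/\hat\Aa$ is unramified (the unique factor of a non-split prime lying above a degree-one prime outside $\EE_M$), this delivers exactly the congruence in \eqref{eq:equiv}.

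Next, I would discard from the list of candidate primes those $\hat\Aa \in \hat K$ excluded by the remaining hypotheses of Proposition \ref{prop:weak}: the primes some factor of which is a pole of $f$ or of some $\omega_i$, those lying under zeros of $\det(\sigma_j(\omega_i))$, those lying under primes in $\EE_M = \EE_{\hat M}$, and the zero and pole of $t$. By items (5)--(8) preceding Lemma \ref{Chebotarev}, the total number discarded is bounded by $h_M(\det(\sigma_j(\omega_i)))^2 \max_i h_M(\omega_i) + h_M(f) + e_M + h_M(t)$. Since $h_{\hat K}(a_i) = h_M(a_i) \leq h_M(f) + C_M(\Omega)$ for the coordinates $a_i$ of $f$, the displayed hypothesis on $p^{s/2}$ ensures that the surviving set $\calV_{\hat K}$ still has cardinality strictly greater than $\max_{i \geq 2,\, a_i \neq 0} h_{\hat K}(a_i)$. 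Proposition \ref{prop:weak} then forces $a_i = 0$ for all $i \geq 2$, so $f = a_1 \in \hat K$.

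Finally, since $\hat K/K$ is a pure constant field extension and $\G_p$ is algebraically closed in $M$, we have $M \cap \hat K = K$, so $f \in K$. The main technical nuisance is the bookkeeping in the counting step: verifying that the pole and ramification contributions are really absorbed by the budget afforded by $p^{s/2}$, and that heights, the constant $C_M(\Omega)$, the degree $e_M$, and the discriminant term are all invariant under the separable constant field extension $\hat M/M$ (which is the reason the inequality can be stated in terms of data from $M/K$ alone).
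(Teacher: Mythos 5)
Your proof is essentially the paper's own argument: pass to the constant field extension $\hat M/\hat K$, use Lemma \ref{Chebotarev} to supply more than $p^{s/2}$ degree-one non-split primes of $\hat K$ lying over zeros of $t-a$, obtain the congruences $f\equiv b$ (with $b\in\F_{p^s}$) at their unramified factors by factoring $f^{p^s}-f$, and then apply the weak vertical method (Proposition \ref{prop:weak}), using the invariance of heights, $C_M(\Omega)$, $e_M$ and the discriminant data under the constant field extension, to conclude $f\in\hat K\cap M=K$. One bookkeeping caveat: as written you charge $h_M(f)$ to the budget twice, once to discard primes over poles of $f$ and once in the bound $h_{\hat K}(a_i)\le h_M(f)+C_M(\Omega)$, which strictly overshoots the stated hypothesis by $h_M(f)$; this is harmless because the discard is in fact empty --- at any candidate prime the relation $\ord(f^{p^s}-f)>0$ already forces $f$ to be integral there --- which is how the paper's count is arranged.
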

\begin{proof}
Let $\calV_{\hat K}$ be the set of $\hat K$-primes such that  $\pp_{\hat K} \in \calV_{\hat K}$ if and only if $\deg \pp_{\hat K}=1$, $\pp_{\hat K}$ does not lie above a prime of $\F_{p^{s}}(t)$ ramifying in the extension $\hat M/\F_{p^{s}} (t)$, $\pp_{\hat K}$ is not a pole of $t$ and $\pp_{\hat K}$ does not split in the extension $\hat M/\hat K$.  By Lemma \ref{Chebotarev} and our assumptions, we can conclude that 
\[
|\calV_{\hat K}| >h_M(f)+ C(\Omega) + h_M(\det(\sigma_j(\omega_i)))^2\max_ih_M(\omega_i)  = 
\]
\[
h_{\hat M}(f)+ C_{\hat M}(\Omega) + h_{\hat M}(\det(\sigma_j(\omega_i)))^2\max_ih_{\hat{M}}(\omega_i) .
\]
For each $\pp_{\hat K}$ and some constant $b\in \hat K$ we have that $\ord_{\pp_{\hat K}}(f-b)>0$ and $|\calV_{\hat K}| > h_{\hat K}(a_i), i =2,\ldots,n$, where $f=\sum_{i=1}^na_i\omega_i, \omega_1=1$.  Thus, applying the weak vertical method (Proposition \ref{prop:weak}) we conclude that $f \in \hat K \cap M=K$.
\end{proof}
We now utilize the $p$-th power equations over a rational field.
\begin{lemma}
\label{rational}
Let $g \in \G_p(t)$ and assume that for some positive integer $s$ we have that the number of distinct primes dividing the pole divisor of $\displaystyle\frac{g^{p^s}-g}{t^{p^s}-t}$ is less than a positive constant $C$ while $h_{\G_p(t)}(g) < p^s -C$.  In this case $g \in \F_{p^s}(t)$.
\end{lemma}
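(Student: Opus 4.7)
My approach is to argue via the $p^s$-Frobenius $\phi$ acting only on coefficients in $\G_p$: its fixed locus inside $\G_p(t)$ is $\F_{p^s}(t)\cap\G_p(t)\subseteq\F_{p^s}(t)$, so the conclusion follows once I show that $H:=\phi(g)-g\in\G_p(t)$ vanishes. Replacing $\G_p$ by the finite subfield generated by the finitely many coefficients of $g$ (which preserves the height of $g$ and the pole divisor in question), I may assume $\G_p$ is finite, and I set $L:=\G_p\F_{p^s}$.

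The key local calculation is this: for each $a\in\F_{p^s}$, let $\pp_a$ be the prime of $\G_p(t)$ generated by the minimal polynomial of $a$ over $\G_p$. Since $t^{p^s}-t=\prod_{a\in\F_{p^s}}(t-a)$ is squarefree, each $\pp_a$ divides $t^{p^s}-t$ with multiplicity exactly one. Using the identity $g^{p^s}(t)=(\phi g)(t^{p^s})$ together with $a^{p^s}=a$, the residue of $g^{p^s}-g$ modulo $\pp_a$ is $H(a)$; hence $\pp_a$ contributes to the pole divisor of $\frac{g^{p^s}-g}{t^{p^s}-t}$ exactly when either $a$ is a pole of $g$ or $g(a)\notin\F_{p^s}$, i.e., $H(a)\neq 0$.

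Now assume for contradiction that $H\neq 0$. Since $h_{\G_p(t)}(H)\leq 2h_{\G_p(t)}(g)<2(p^s-C)$, the rational function $H$ has at most $2h(g)$ zeros in $\tilde\F_p$, and $g$ has at most $h(g)$ poles; thus at least $p^s-3h(g)$ of the values $a\in\F_{p^s}$ yield $\pp_a$ in the pole divisor. In the extension $L(t)/\G_p(t)$ every $\pp_a$ splits into the distinct degree-one primes $(t-a')$ for $a'$ in the Galois orbit of $a$, so over $L(t)$ the function $\frac{g^{p^s}-g}{t^{p^s}-t}$ acquires at least $p^s-3h(g)$ distinct degree-one poles of the form $(t-a)$. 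The complementary set of ``good'' $(t-a)$'s (where $g(a)\in\F_{p^s}$) serves as the set $\calV$ in the Weak Vertical Method (Proposition~\ref{prop:weak}) applied to the constant-field extension $L(t)/\F_{p^s}(t)$: with a constant basis $\Omega\subseteq L$ of $L/\F_{p^s}$, writing $g=\sum_i a_i\omega_i$, the inequality $|\calV|>\max_i h_{\F_{p^s}(t)}(a_i)$ forces $g\in\F_{p^s}(t)$.

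The main obstacle is the height bookkeeping in the last step: naive estimates give $\max_i h_{\F_{p^s}(t)}(a_i)\leq [L:\F_{p^s}]\,h(g)+C_\Omega$, whereas $|\calV|\geq p^s-3h(g)$; matching these against the hypothesis $h(g)<p^s-C$ requires both a judicious choice of $\Omega$ (a normal basis, so that coordinates behave well under $\phi$) and a refined split-count in $L(t)/\G_p(t)$, so that the constant $C$ absorbs all basis-dependent and orbit-size factors. Reconciling these constants—showing that $p^s-3h(g)$ really does exceed $\max_i h_{\F_{p^s}(t)}(a_i)$ given only $h(g)<p^s-C$—is where the delicate work lies.
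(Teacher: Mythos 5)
Your reduction to a finite constant field and the local computation at the primes $\pp_a$ are correct, and the mechanism you identify is in essence the paper's: both arguments come down to the fact that the coefficient-Frobenius difference $H=\phi(g)-g$ is forced to vanish at almost all zeros of $t^{p^s}-t$ (because a non-pole of the quotient at $\pp_a$ means exactly that $a$ is not a pole of $g$ and $g(a)\in\F_{p^s}$), while $H$ has too small a height to admit that many zeros unless it is identically zero. The paper, however, carries this out with no machinery at all: writing $g=g_1/g_2$ in lowest terms and letting $\bar g_i$ be the polynomial obtained by raising each coefficient to the power $p^s$, it observes that $\bar g_1g_2-g_1\bar g_2$ (which is just $g_2\bar g_2H$ with denominators cleared) is divisible by every irreducible factor of $t^{p^s}-t$ whose prime is not a pole of the quotient, yet has degree at most $\deg g_1+\deg g_2$; hence it vanishes, and coprimality gives $g_1=\bar g_1$, $g_2=\bar g_2$. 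No passage to $L(t)$, no Galois orbits, and no Weak Vertical Method are needed.

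The genuine gap is precisely the step you defer. The hypothesis only gives $h_{\G_p(t)}(g)<p^s-C$, so $h(g)$ may be as large as $p^s-C-1$; then your lower bound ``at least $p^s-3h(g)$ bad values of $a$'' is vacuous (already for $h(g)\geq p^s/3$), and the Weak Vertical inequality $|\calV|>\max_i h_{\F_{p^s}(t)}(a_i)$ is equally out of reach, since $\max_i h(a_i)$ can be as large as $h(g)$ itself (and your bound for it is $[L:\F_{p^s}]\,h(g)+C_\Omega$) while $|\calV|\leq p^s$ minus the bad count. This is not a matter of clever bookkeeping: take $\G_p=\F_{p^2}$, $s>C$ odd, $c\in\F_{p^2}\setminus\F_p$, and $\pi\in\F_p[t]$ the degree-$s$ minimal polynomial of a generator of $\F_{p^s}$ (still irreducible over $\F_{p^2}$); then $g=c\,(t^{p^s}-t)/\pi(t)$ has $h(g)=p^s-s<p^s-C$, the quotient $(g^{p^s}-g)/(t^{p^s}-t)$ has poles only at the place $\pi$ and at infinity, yet $g\notin\F_{p^s}(t)$. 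So the inequality you need must come from a stronger height gap, of the shape $2h_{\G_p(t)}(g)<p^s-Cs$ (each excluded $\G_p(t)$-prime dividing $t^{p^s}-t$ accounts for up to $s$ values of $a$, and $h(H)\leq 2h(g)$); this is the reading under which the degree count closes and which the paper's subsequent applications supply with ample slack, since there $p^s$ dwarfs all heights. With that strengthening, the efficient finish is your own direct count — forced zeros of $H$ versus $h(H)\leq 2h(g)$ — and the detour through the constant-field Weak Vertical Method should simply be dropped, as it only inflates the bound you must beat.
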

\begin{proof}
Write $\displaystyle g=\frac{g_1}{g_2}, g_1, g_2 \in \G_p[t], (g_1,g_2)=1$.  Now 
\[
\frac{g^{p^s}-g}{t^{p^s}-t}=\frac{g_1^{p^s}g_2-g_1g_2^{p^s}}{g_2^{p^s+1}(t^{p^s}-t)}=\frac{(g_1^{p^s}-g_1)g_2 +g_1(g_2 -g_2^{p^s})}{g_2^{p^s+1}(t^{p^s}-t)}=
\]
\[
\frac{(g_1^{p^s}-\bar g_1)g_2+(\bar g_1-g_1)g_2 +g_1(g_2 -\bar g_2) +g_1(\bar g_2-g_2^{p^s})}{g_2^{p^s+1}(t^{p^s}-t)},
\]
where for a polynomial $z(t)=\sum_{i=0}^ma_it^i$, we let $\bar z=\sum_{i=0}^ma_i^{p^s}t^i$.
Thus,
\[
\frac{g^{p^s}-g}{t^{p^s}-t}=\frac{g_3+g_4}{g_2^{p^s}} + \frac{(\bar g_1-g_1)g_2 +g_1(g_2 -\bar g_2)}{g_2^{p^s+1}(t^{p^s}-t)},
\]
where $\displaystyle g_3=\frac{(g_1^{p^s}-\bar g_1)g_2}{(t^{p^s}-t)} \in \G_p[t]$ and $\displaystyle g_4=\frac{(g_2^{p^s}-\bar g_2)g_1}{(t^{p^s}-t)} \in \G_p[t]$.  Now observe that 
\[
\deg( (\bar g_1-g_1)g_2 +g_1(g_2 -\bar g_2) )\leq h_{\G_p}(g) < p^s -C.
\]
At the same time a factor of $t^{p^s}-t$ of degree greater than $p^s-C$ divides $(\bar g_1-g_1)g_2 +g_1(g_2 -\bar g_2)$.    Therefore, we must conclude that $(\bar g_1-g_1)g_2 +g_1(g_2 -\bar g_2)=0$ and $\displaystyle \frac{g_1}{g_2}=\frac{\bar g_1}{\bar g_2}$, where $\deg g_1=\deg \bar g_1$ and $\deg g_2=\deg \bar g_2$ and by assumption $(g_1,g_2)=1$.  Thus, $g_1=\bar g_1$ and $g_2 =\bar g_2$ forcing every coefficient $a$ of $g_1$ and $g_2$ to satisfy $a^{p^s}=a$.
\end{proof}
We now use Lemma \ref{rational} to prove an analogous results for algebraic extensions of the rational function field.
\begin{lemma}
\label{algebraic}
Let $g, t \in M \setminus \G_p$, let $M_G$ be the Galois closure of $M$ over $\G_p(t)$, let 
\[
[M_G:\G_p(t)]=m,
\]
 and assume that for some positive integer $s$ we have that the number of distinct primes dividing the pole divisor of $\displaystyle\frac{g^{p^s}-g}{t^{p^s}-t}$ is less than a positive constant $C$ while $mh_{M}(g) < p^s -C$.  In this case $g$ satisfies a minimal polynomial with all the coefficients  in $\F_{p^s}(t)$.
\end{lemma}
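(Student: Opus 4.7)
The plan is to apply Lemma~\ref{rational} to each coefficient $a_i$ of the minimal polynomial $P(T) = T^d + a_1 T^{d-1} + \cdots + a_d \in \G_p(t)[T]$ of $g$ over $\G_p(t)$, where $d = [\G_p(t)(g):\G_p(t)] \leq m$.  In $M_G$ one has $P(T) = \prod_{j=1}^{d}(T - g_j)$, with $g_1 = g, g_2, \ldots, g_d$ the $\Gal(M_G/\G_p(t))$-conjugates of $g$, so each $a_i$ is (up to sign) the $i$-th elementary symmetric function of the $g_j$.  Since conjugation preserves heights, the additivity of orders of products and sums at each prime bounds $h_{M_G}(a_i) \leq d^2\, h_{M_G}(g)$; combining with $h_{M_G}(a_i) = m\, h_{\G_p(t)}(a_i)$ (as $a_i \in \G_p(t)$) and with $h_{M_G}(g) = [M_G:M]\, h_M(g)$ yields the uniform bound $h_{\G_p(t)}(a_i) \leq m\, h_M(g) < p^s - C$, which is exactly the height hypothesis of Lemma~\ref{rational}.

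The delicate step will be to bound the number of primes of $\G_p(t)$ dividing the pole divisor of $\frac{a_i^{p^s}-a_i}{t^{p^s}-t}$ by a constant depending only on $m$ and $C$.  Any such pole $\qq$ falls into one of three types.  Type (i): $\qq$ is a pole of $a_i$, so $\qq$ lies below a pole in $M_G$ of some conjugate of $g$ and, since $\Gal(M_G/\G_p(t))$ permutes primes above $\qq$, below a pole of $g$ itself in $M_G$; this contributes at most $[M_G:M]$ times the number of poles of $g$ in $M$.  Type (ii): the unique pole of $t$.  Type (iii): $\qq \mid t^{p^s}-t$ with $\qq$ not a pole of $a_i$ yet $\ord_\qq(a_i^{p^s}-a_i) < \ord_\qq(t^{p^s}-t)$.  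The key claim is that each such $\qq$ must lie below a pole of $\frac{g^{p^s}-g}{t^{p^s}-t}$ in $M_G$; otherwise $\ord_{\pp_{M_G}}(g^{p^s}-g) \geq \ord_{\pp_{M_G}}(t^{p^s}-t)$ at every $\pp_{M_G}\mid\qq$, and conjugating by $\Gal(M_G/\G_p(t))$ extends the same bound to every conjugate $g_j$ and every $\pp_{M_G}\mid\qq$.  Because $\qq$ is not below a pole of any conjugate, a multi-linearity telescoping on $e_i(g_1,\ldots,g_d)$ then gives $\ord_{\pp_{M_G}}(a_i^{p^s}-a_i) \geq \min_j \ord_{\pp_{M_G}}(g_j^{p^s}-g_j) \geq \ord_{\pp_{M_G}}(t^{p^s}-t)$; dividing by the common ramification index at $\pp_{M_G}\mid\qq$ transfers the same inequality to $\qq$ in $\G_p(t)$, contradicting the case assumption.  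Thus Type (iii) contributes at most $[M_G:M]\cdot C$ primes.

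Adding the three counts gives a pole bound $C' = C'(m, C)$, and Lemma~\ref{rational} applied to each $a_i$ (with $C$ enlarged to $C'$ in its hypothesis, which is allowed since the constant $C$ in the present lemma may be taken as large as needed) yields $a_i \in \F_{p^s}(t)$.  Hence $P \in \F_{p^s}(t)[T]$, and any irreducible factor of $P$ in $\F_{p^s}(t)[T]$ having $g$ as a root serves as a minimal polynomial of $g$ with coefficients in $\F_{p^s}(t)$.  The main obstacle is the Type (iii) analysis: it relies on using the Galois action to upgrade local integrality of $\frac{g^{p^s}-g}{t^{p^s}-t}$ at a single $\pp_{M_G}\mid\qq$ to all such $\pp_{M_G}$ at once, and then on the multi-linearity of elementary symmetric polynomials to carry this integrality from $g$ to each $a_i$ without any loss in order.
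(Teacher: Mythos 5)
Your overall strategy is essentially the paper's: pass to the elementary symmetric functions $a_i=\pm e_i(g_1,\ldots,g_d)$ of the Galois conjugates of $g$, use the fact that $e_i(g_1^{p^s},\ldots,g_d^{p^s})-e_i(g_1,\ldots,g_d)$ lies in the ideal generated by the $g_j^{p^s}-g_j$ to transfer the congruence modulo $t^{p^s}-t$ down to each coefficient, bound $h_{\G_p(t)}(a_i)$ by $mh_M(g)$, and finish with Lemma \ref{rational}. The trouble is in your pole count and in how you feed it back into Lemma \ref{rational}.

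Two concrete problems. First, your claim that the pole bound is a constant $C'=C'(m,C)$ is unsupported: your Type (i) contribution is governed by the number of distinct poles of $g$ in $M$, and nothing you say bounds that by a function of $m$ and $C$ alone --- a priori $g$ could have on the order of $h_M(g)\approx p^s/m$ distinct poles. (It is in fact controlled, because every pole of $g$ that is not a pole of $t$ is a pole of $\frac{g^{p^s}-g}{t^{p^s}-t}$, so $g$ has fewer than $C+m$ poles in $M$; but this observation is missing and is exactly what makes the count a constant.) Second, and more seriously, even granting $C'=C'(m,C)$ you have $C'>C$, and applying Lemma \ref{rational} to $a_i$ with the constant $C'$ requires $h_{\G_p(t)}(a_i)<p^s-C'$; the hypothesis only yields $mh_M(g)<p^s-C$, and since $p^s-C'<p^s-C$ the needed inequality does not follow. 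The remark that ``the constant $C$ in the present lemma may be taken as large as needed'' is not a legitimate move: $C$ is fixed by the hypothesis of the statement being proved. The repair is the paper's sharper bookkeeping: count primes downstairs in $\G_p(t)$ rather than upstairs in $M_G$, and note that \emph{every} pole of $\frac{a_i^{p^s}-a_i}{t^{p^s}-t}$ lies below a pole of $\frac{g^{p^s}-g}{t^{p^s}-t}$ in $M$ or below the pole of $t$ (poles of $a_i$ lie below poles of $g$, hence below poles of the ratio or of $t$; congruence-failure primes lie below poles of the ratio by your Type (iii) reasoning), so Lemma \ref{rational} applies with essentially the original constant $C$ and the given height inequality suffices. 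A minor further slip: your Type (iii) argument assumes $\qq$ is not below a pole of any conjugate of $g$, which does not follow from $\qq$ not being a pole of the particular $a_i$ under consideration; that subcase has to be absorbed into the ``below a pole of the ratio'' count as just described.
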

\begin{proof}
Let $\calV_{M_G}$ be the set of all primes which can appear as poles of $ \displaystyle \frac{g^{p^s}-g}{t^{p^s}-t}$ in $M_G$ and let $\calV_M$ be the set of primes below $\calV_{M_G}$ in $M$.  Note that $|\calV_M| \leq C$ by assumption, and  we have that $g^{p^s}-g \equiv 0 \mod t^{p^s}-t$ in $O_{M_G,\calV_{M_G}}$.  Further for any conjugate $\hat g$ of $g$ over $\G_p(t)$ we also have $\hat g^{p^s}-g \equiv 0 \mod t^{p^s}-t$ in $O_{M_G,\calV_{M_G}}$.  Thus, any elementary symmetric function $A$ of the conjugates will satisfy the same equivalence: $A^{p^s}-A\equiv 0 \mod t^{p^s}-t$ in $O_{M_G,\calV_{M_G}} \cap \G_p(t)=O_{\G_p(t), \calV_{\G_p(t)}}$.  Observe that $\calV_{\G_p(t)}$ contains $C$ primes as they all lie below primes of $\calV_M$, and $h_{\G_p(t)}(A) \leq h_{M}(A) \leq mh_M(g)$.  Thus, $\displaystyle\frac{A^{p^s}-A}{t^{p^s}-t}$ has poles at less than $C$ primes while $h_{\G_p(t)}(A)< p^s-C$.  Now we can apply Lemma \ref{rational} to conclude that $A \in \G_{p^s}(t)$.
\end{proof}

\section{Defining Polynomial Rings}
\setcounter{equation}{0}
Before describing assorted definitions of various polynomial rings, we remark below on the fact that a polynomial ring is a ring of $\calS$-integers of a certain kind and every ring of $\calS$-integers is an integral closure of a polynomial ring.
\subsection{Polynomial Rings as Rings of $\calS_K$ integers}
Let $K$ be a rational function field, let $\qq_K$ be a prime of $K$ of degree one.  In this case $O_{K,\{\qq_K\}}$ is a polynomial ring.  Indeed, let $\pp_K$ be another degree one prime.  Since in any rational field any zero degree divisor  is principal, there exist an element $t \in K$ with a divisor equal to $\displaystyle \frac{\pp_K}{\qq_K}$.  Now $t \in O_{K,\{\qq_K\}}$.  Let $f \in O_{K,\{\qq_K\}}$ be of height $1$.  Since $\pp_K$ is of degree 1, for some constant $a$ we have that $f \equiv a \mod \pp_K$.  Now consider $\displaystyle \frac{f-a}{t} \in O_{K,\{\qq_K\}}$ and note that the height of this element is 0.  ($f-a$ is of height 1 and when we divide by $t$ we eliminate the only zero and the only pole of $f-a$.)  Hence, $f-a=bt$ for some constant $b$.  Now proceeding by induction on the height of $f$, assume the height is $m>0$ and $f\equiv a \mod \pp_K$.  In this case  $\displaystyle\frac{f-a}{t} \in O_{K,\{\qq_K\}}$ and has one less zero than $f-a$.  Assuming inductively that $\displaystyle\frac{f-a}{t}=\sum_{i=0}^{m-1} a_it^i$, we conclude that $f$ is also a polynomial in $t$.

Now let $K$ be a function field and let $\calS_K$ be a non-empty set of its primes.  Let $t$ be any element of the field having poles at elements of $\calS_K$ only and consider the rational subfield of $K$ formed by adjoining $t$ to the constant field of $K$.  It is now not hard to show that the integral closure of the polynomial ring generated by $t$ in $K$ is exactly $O_{K,\calS_K}$.\\

We now proceed to definitions of polynomial rings.
\subsection{Combining the $p$-th power equations and the weak vertical method to define polynomial rings using two universal quantifiers: the uniform version.}
\label{sec:uniform definition}
Given a non-constant element $t \in K$, our goal is to define $\G_p[t]$  using $p$-th power equations and the weak vertical method.  We describe a plan for defining a polynomial ring in a given non-constant field element $t$ using four additional parameters: $a_{\pp_K},b_{\pp_K}, d,g$ with values to be specified below. Here we assume that $t$ is not a $p$-th power in $K$ so that $K/\G_p(t)$ is separable.   We also assume that the constant field has an extension of degree $q \not =p$ and that $\xi_q \in K$.   Thus our definition is uniform across all the fields which have an extension of degree $q$ not equal to the characteristic and has a $q$-th root of unity.  We will be able to treat the  case when only extension of degree $p$ is available but unfortunately the definition there will not be uniform.  It will ultimately depend on the genus of the field.  Below we set up our assumptions and notation.
\begin{notationassumptions}
\label{not:uniform}
\be
\item Let $M$ be the Galois closure of $K$ over $\G_p(t)$.  Let $\alpha \in M$ be a generator of $M$ over $\G_p(t)$.
\item Let $K_1,\ldots, K_r$ be all the cyclic subextensions of $M$ containing $\G_p(t)$ and let $m=[M:\G(t)]$.
\item  Let $\EE_{M}$ be the $M$-divisor obtained by multiplying all the primes of $M$ ramifying in the extension $M/\G_p(t)$.  Let $e_M =\deg \EE_M$.
\item Choose a basis $\Omega_\ell=\{1,\ldots,\omega_{\ell,m_{\ell}}\}$ for $M$ over $K_\ell$.
\item If the constant field of $M$ is infinite, then let $z \in \Z_{>0}$ be such that there is no constant field extension in the extension $\F_{p^z}(t,\alpha, \Omega_1,\ldots, \Omega_r)/\F_{p^z}(t)$.  If the constant field of $M$ is finite, then let $p^z$ be the size of this field.
\item \label{it:d} Choose a prime $\pp_K$ not ramified in the extension $M/K$ and not occurring in the divisor of $t$, and an element $d \in K$ such that for some $u \in \Z_{>0}$ we have that $d=t^{p^{zu}}$ and  $p^{zu} >qm^2\deg \pp_K$.  (We will need to raise other variables to $p^{zu}$.  To do this we need the parameter $d$.  We set $t^{p^e}=d$ and then raise other variables to the power $p^e$ using SPPE equations.)

\item 
Now let be $a_{\pp_K}, b_{\pp_K}$ as in Proposition \ref{finitely many for p not q}. More specifically, we assume $\ord_{\pp_K}a_{\pp_K} =-1$, $a_{\pp_K}$ has only one other pole $\qq_K$ not ramified in the  extension $M/\G_p(t)$,  $\ord_{\qq_K}(b_{\pp_K}-1) >0 $, $b_{\pp_K}$ is not a $q$-th power in the residue field of $\pp_K$, and has a zero at every prime lying above any prime ramifying in the  extension $M/\G_p(t)$ and not a pole of $t$.  Additionally assume 
\[
h_K(b_{\pp_K}) > \max_i\{C_M(\Omega_i) + h_M(\det(\sigma_j(\omega_{\ell,i}))^2\max_{\ell}h_M(\omega_{\ell,i})\} + e_M + 2h_M(t).  
\]
A $b_{\pp_K} \in K$ satisfying Proposition \ref{finitely many for p not q} and the height inequality exists by the  Strong Approximation Theorem.  
\ee
\end{notationassumptions}
 Let $g\in K$ be given.
Now consider the following set of equations holding for some $s,e, w,y, s_1, w_1  \in\Z_{> 0} $.   We divide the equations into two groups with the first group directly below.  The range of values for variables in these equations is either $K$ or   $O_{K,\pp_K}$, as specified.  
\begin{equation}
\label{rangeO}
b_1, b_2,  t_1,t_2, f_1, f_2, A, B,  \in O_{K,\pp_K}, 
\end{equation}
\begin{equation}
\label{rangeK}
f, Y, v, x_0, \ldots, x_{q-1} \in K,
\end{equation}

\begin{equation}
\label{eq:1.0}
b_{\pp_K}=\frac{b_1}{b_2},  
\end{equation}
\begin{equation}
\label{eq:t}
t^{p^{y_1}(p^{{w_1}-1})}=\frac{t_1}{t_2}, At_1 +Bt_2=1
\end{equation}
\begin{equation}
\label{eq:pthpower1}
v =t^{p^{s}}-t,
\end{equation}

\begin{equation}
\label{eq:all poles}
f^{p^y(p^w-1)}=\frac{f_1}{f_2}, \ord_{\pp_K}f \geq 0, f_2 {\large |}_{O_{K,\pp_K}} t_2
\end{equation}
\begin{equation}
\label{eq:2}
\ord_{\pp_K}a_{\pp_K}^{p^s}\leq \ord_{\pp_K}(f_1^{p^{zu}}f_2^{p^{zu}})  +\ord_{\pp_K}(b_1^{p^{zu}}b_2^{p^{zu}}) \land s \equiv 0 \mod z
\end{equation}
\begin{equation}
\label{eq:powerf1}
 Y=f^{qp^{s}}-f^q,
 \end{equation}
\begin{equation}
\label{eq:x1}
 x_i=t^i\frac{Y}{v}, i = 1,\ldots,q-1,
\end{equation}
\begin{equation}
\label{eq:norms1.1}
\exists i =0,\ldots, q-1 \forall c \in K \exists y  \in L_4(\sqrt[q]{c}) \in {\mathbf N}_{L_4(\sqrt[q]{c})/L_4}(y) = x_i,
\end{equation}
where for each $i$ the field $L_4=K(\sqrt[q]{1+x_i^{-1}}, \sqrt[q]{1+(c+c^{-1})x_i^{-1}})$.

\begin{claim}
Equations \eqref{eq:1.0}--\eqref{eq:norms1.1} are satisfied if and only if $f^q \in  \G_p[t]$.  
\end{claim}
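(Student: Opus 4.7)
The plan is to prove the equivalence in two directions. For the forward implication, assume $f^q \in \G_p[t]$. Since its finitely many coefficients are algebraic over $\F_p$, we have $f^q \in \F_{p^{z'}}[t]$ for some positive integer $z'$. Choose $s$ to be a large multiple of $\operatorname{lcm}(z,z')$; by Lemma \ref{will work} the quotient $Y/v = (f^{qp^s}-f^q)/(t^{p^s}-t)$ lies in $\F_{p^s}[t]$, so each $x_i = t^iY/v$ is a polynomial in $t$ with no poles outside the pole divisor of $t$. In particular $x_i$ is integral in the sense of Proposition \ref{prop:norm2} with empty $\calS_K$, so the norm equation ${\mathbf N}_{L_4(\sqrt[q]{c})/L_4}(y) = x_i$ is solvable for every $c \in K$ and some $i$. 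The remaining equations, which encode that $t^{p^{y_1}(p^{w_1}-1)}$, $f^{p^y(p^w-1)}$ and $b_{\pp_K}$ admit coprime numerator/denominator representations in $O_{K,\pp_K}$, can be met because the poles of $f$ are contained in those of $t$ and $\pp_K$ itself is a pole of neither; the constraint \eqref{eq:2} holds once $s$ is taken sufficiently large with $s \equiv 0 \pmod{z}$.

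For the backward implication, the first task is to localize the poles of $f$. Condition \eqref{eq:all poles}, combined with the coprimality $At_1 + Bt_2 = 1$ from \eqref{eq:t} and the divisibility $f_2 \mid_{O_{K,\pp_K}} t_2$, forces $f$ to be integral at $\pp_K$ and to have all its other poles at primes that are already poles of $t$. Hence every pole of $f^q$ lies above the pole of $t$ in $\G_p(t)$. Next, \eqref{eq:norms1.1} together with Proposition \ref{prop:norm2} (applied with $\calS_K = \emptyset$ and the outer universal quantifier on $c$, invoking the Strong Approximation Theorem to realize each prime as a non-$q$-th-power locus) produces a fixed $i \in \{0,\ldots,q-1\}$ such that at every $K$-prime $\pp_K$, either $\ord_{\pp_K} x_i \geq 0$ or $\ord_{\pp_K} x_i \equiv 0 \pmod q$.

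The decisive step is the order analysis at a prime $\pp_K$ that is unramified over $\G_p(t)$ and lies outside the support of $t$. At such a prime $\ord_{\pp_K} t = 0$, so $\ord_{\pp_K} x_i = \ord_{\pp_K}(Y/v)$ independently of $i$; the separability of $T^{p^s}-T$ over $\F_p$ gives $\ord_{\pp_K}(t^{p^s}-t) \in \{0,1\}$, and the integrality of $f$ just established yields $\ord_{\pp_K}(f^{qp^s}-f^q) \geq 0$. Therefore $\ord_{\pp_K}(Y/v) \in \{-1\} \cup \Z_{\geq 0}$; since $-1 \not\equiv 0 \pmod q$ for $q \geq 2$, the value $-1$ is ruled out by the norm condition, forcing $\ord_{\pp_K}(Y/v) \geq 0$. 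Because unramified $M$-primes over such $\pp_K$ have ramification index one, we obtain $\ord_{\pp_M}(Y/v) \geq 0$ for every $M$-prime $\pp_M$ unramified in $M/\G_p(t)$ and not above the support of $t$.

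At this point I would apply Lemma \ref{use order} to $f^q \in M$ with $\G_p(t)$ in place of $K$ and $M$ as in Notation \ref{not:uniform}: the integrality hypothesis on $(f^{qp^s}-f^q)/(t^{p^s}-t)$ is supplied by the previous paragraph, while the height inequality $p^{s/2} > h_M(f^q) + C_M(\Omega) + h_M(\det(\sigma_j\omega_i))^2 \max_i h_M(\omega_i) + e_M + h_M(t)$ is guaranteed by the lower bound on $s$ imposed by \eqref{eq:2}, the bound on $h_K(b_{\pp_K})$ in Notation \ref{not:uniform}, and the choice $d = t^{p^{zu}}$ with $p^{zu} > qm^2 \deg \pp_K$. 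Lemma \ref{use order} then yields $f^q \in \G_p(t)$, and combined with the pole localization this forces $f^q \in \G_p[t]$. The main obstacle will be extracting the required height inequality from \eqref{eq:2}: one must trace how the pole orders at $\pp_K$ of the auxiliary elements $a_{\pp_K}, b_{\pp_K}, f_1, f_2$ encode, via the $p^{zu}$-th power device, a bound on $h_M(f^q)$ that fits the shape demanded by Lemma \ref{use order}, and check that this bookkeeping works uniformly across the fields satisfying Assumption \ref{ass:pnotq}.
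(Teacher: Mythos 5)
Your overall strategy tracks the paper's: localize the poles of $f$ via \eqref{eq:all poles}, use \eqref{eq:norms1.1} together with Proposition \ref{prop:norm2} and the order analysis at unramified zeros of $t^{p^s}-t$ to force $\ord(Y/v)\geq 0$ there, and then invoke the weak vertical method with the height bound coming from \eqref{eq:2} and the choice of $b_{\pp_K}$. But there is a genuine gap at the decisive step: you apply Lemma \ref{use order} ``with $\G_p(t)$ in place of $K$ and $M$ as in Notation \ref{not:uniform}'', i.e.\ to the extension $M/\G_p(t)$ itself. Lemma \ref{use order} rests on Lemma \ref{Chebotarev}, which requires the extension to be \emph{cyclic}: the weak vertical method needs many degree-one primes that do not split at all, and in a Galois extension a non-split prime has cyclic decomposition group equal to the whole Galois group, so such primes exist only when the extension is cyclic. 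Since $M$ is the Galois closure of $K$ over $\G_p(t)$, the extension $M/\G_p(t)$ is in general not cyclic, and your application fails. This is exactly why the paper introduces the fields $K_1,\ldots,K_r$ (the subfields over which $M$ is cyclic), applies Lemma \ref{use order} to each $M/K_i$ to get $f^q\in\F_{p^s}K_i$, and then uses $\bigcap_i \F_{p^s}K_i=\F_{p^s}(t)$ before descending to $\G_p[t]$ via $s\equiv 0\bmod z$; none of this appears in your argument, and your direct conclusion ``$f^q\in\G_p(t)$'' does not follow as stated.

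Two further points. First, Lemma \ref{use order} and Lemma \ref{Chebotarev} are statements about global fields; when $\G_p$ is infinite the paper cannot use them directly and instead passes through Lemma \ref{algebraic} (bounding the coefficients of the minimal polynomial of $f^q$ over $\G_p(t)$ and applying Lemma \ref{rational}) before rejoining the finite-field argument. Your proposal makes no case distinction and so does not cover the infinite constant field at all. Second, in the satisfiability direction your justification that the norm equations hold --- ``$x_i$ is integral in the sense of Proposition \ref{prop:norm2} with empty $\calS_K$'' --- is not right: with $\calS_K=\emptyset$ the ring $O_{K,\calS_K}$ consists of constants, and $x_i$ has a pole at every pole of $t$. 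What is actually needed, and what the paper uses, is that among $x_0,\ldots,x_{q-1}$ some $x_i$ is a polynomial in $t$ of degree divisible by $q$, so that all its poles have order divisible by $q$ and the Hasse-principle argument goes through for every $c$. You also leave the height bookkeeping from \eqref{eq:2} explicitly unresolved, which the paper carries out in \eqref{eq:cube}--\eqref{ineq:height}; as written, the proposal is a plausible outline of the paper's approach with its central lemma misapplied and its hardest verifications deferred.
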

\begin{proof}
First assume equations \eqref{eq:1.0}--\eqref{eq:norms1.1} are satisfied.  Now from \eqref{eq:all poles} we know that $f$ has poles only at poles of $t$.  Now from \eqref{eq:2}, using assumptions on $p^{zu}$, we conclude that
\begin{equation}
\label{eq:cube}
p^s\deg \pp_K \geq h_K(f^{p^{zu}}) +h_K(b_{\pp_K}^{p^{zu}}) > qm\deg \pp_Kh_K(f) + qm\deg \pp_Kh_K(b_{\pp_K}).
\end{equation}
Hence,
\begin{equation}
p^s > qm^2h_K(f) + qm^2h_K(b_{\pp_K})  \geq h_M(f^q) + h_K(b_{\pp_K}).
\end{equation}
Now using assumptions on $b_{\pp_K}$ we obtain
\begin{equation}
\label{ineq:height}
p^s  \geq mh_M(f^q) +\max_i\{C_M(\Omega_i) + h_K(\det(\sigma_j(\omega_{\ell,i}))^2\max_{\ell}h_M(\omega_{\ell,i})\} + e_M+ 2h_M(t).  
\end{equation}
Below we consider separately the cases of a finite constant field and an infinite constant field.  
We will address the case of the finite constant field first.\\

{\it The case of a finite constant field:}  Observe that $f \in M \subset \F_{p^s}M$.  From \eqref{eq:norms1.1} we conclude by  Proposition \ref{prop:norm2} and Lemma \ref{le:ordminus1} that for any prime $\aaa_K$ such that $\aaa_K$ is not a pole of $t$, is not ramified in the extension $K/\F_{p^z}(t)$, it is the case that 
\[
 \ord_{\aaa_K}\frac{f^{qp^{s}}-f^q}{t^{p^{s}}- t} >0.
 \]
   Therefore, over $M$ we have that for any prime $\aaa_M$ such that $\aaa_M$ is not a pole or zero of $t$, is not ramified in the extension $\F_{p^s}M/\F_{p^s}(t)$, we have that $\displaystyle\ord_{\aaa_M} \frac{f^{qp^{s}}-f^q}{t^{p^{s}}- t}>0$. 

Let $K_i$ be a cyclic subextension of $M$ containing $\G_p(t)$  as described above, and apply Lemma \ref{use order} to conclude that $f^q \in \F_{p^s}K_i$.  Since $\G_p \subseteq \F_{p^s}$ by the second part of \eqref{eq:2}, we have $\F_{p^s}(t) =\bigcap_{i=1}^r \F_{p^s}K_i$, we conclude that $f^q \in \F_{p^s}(t)$.  Since we know from \eqref{eq:all poles} that $f$ has no poles at any prime other than the prime which is the pole of $t$, we conclude that $f^q \in \F_{p^s}[t] \cap K=\F_{p^z}[t]=\G_p[t]$.\\

{\it The case of an infinite constant field with an extension of degree $q$.}  Here we start with Lemma \ref{algebraic} and \eqref{ineq:height} to conclude using Proposition \ref{prop:norm2} and Lemma \ref{le:ordminus1} again  that the monic minimal polynomial of $f$ over $\G_p(t)$ has coefficients in $\F_{p^s}[t]$.  From this point on we proceed as in the finite case. \\

We now show that if $f \in \G_p[t]$, then all the equations can be satisfied.  We start with \eqref{eq:all poles} and \eqref{eq:t} .  Observe that since every divisor of $K$ corresponds to a divisor of some global field and global fields have finite class numbers with respect to zero degree divisors, for every zero-degree divisor of $K$, there exists a power $n$ such that raised to power $n$, the divisor becomes principal.  Now for any positive integer $n$ for some $w, y \in \Z_{\geq 0}$  we have that $p^y(p^w-1)$  is divisible by $n$.  Let the divisor of $f$ be of the form $\displaystyle \frac{\AA_K}{\BB_K}$, where $\AA_K$ and $\BB_K$ are integral divisors  of $K$ without common factors. Note that since $f \in \G_p[t]$, all the factors of $\BB_K$ must occur in the pole divisor of $t$ in $K$.  Further the divisor $\displaystyle \frac{\BB_K^{\deg \pp_K}}{\pp_K^{\deg \BB}}$ is a  zero degree divisors and so for some $n \in \Z_{>0}$ there exists an element $x \in K$ such that the divisor of $x$ is $\displaystyle \left (\frac{\BB_K^{\deg \pp_K}}{\pp_K^{\deg \BB}}\right )^n$.  Thus 
\[
f^{n\deg \pp_K}=\frac{\hat x}{x}, 
\]
where zeros of $x$ are exactly the poles of $f$ and $x, \hat x \in O_{K, \{\pp_K\}}$.  As mentioned above for some positive integers $y,w, \ell$ we have that $p^y(p^w-1)=\ell n\deg \pp_K  $, so that 
\[
f^{p^y(p^w-1)}=\left (\frac{\hat x}{x} \right )^{\ell}
\]
	and we can set $f_2=x^{\ell}$.  Clearly the divisibility condition in \eqref{eq:all poles} can now be satisfied over $O_{K, \{\pp_K\}}$ as the zero divisor of $t_2$ should have occurrences of factors of $\BB_K$ to sufficiently large power if we select $y_1, w_1$ to be large enough.  A similar argument and the Strong Approximation Theorem show that \eqref{eq:t} can be satisfied also.
	
	  The only other equation we need to consider is \eqref{eq:norms1.1}.  (All other equations can clearly be satisfied.)  Now, in order for \eqref{eq:norms1.1} to hold for some $i=0,\ldots,q-1$, and some $s$, we just need to make sure that for some $i=0,\ldots, q-1$ all the poles of 
\begin{equation}
\label{eq:ratio} 
t^i\frac{f^{qp^{s}}-f^q}{t^{p^{s}}-t} 
\end{equation}
are of order divisible by $q$.  However, if $f$ is a polynomial in $t$ over $\F_{p^{s}}$, the ratio in \eqref{eq:ratio} will have a pole only at the pole of $t$.  Since for some $i$ the degree of the polynomial equal to the ratio must be divisible by $q$, we have that \eqref{eq:norms1.1} can be satisfied by Proposition \ref{prop:norm2}.\\

The only remaining task is  to make sure that a given function, not just its $q$-th power, is in $\G_p[t]$. To this end we add some equations identical (except for the obvious changes in the names of some variables) to \eqref{rangeO}-\eqref{eq:norms1.1} saying  for some $E \in K$ with its  zero divisor divisible by all primes ramified in $M/\G_p(t)$, that  $E^q \in \G_p[t]$.    (Note that this divisibility condition can be arranged over $O_{K,\pp_K}$ by requiring that the ``numerator'' of some power of $E$  with respect to the ring is divisible by $b_1$, where $b_1$ is defined in \eqref{eq:1.0}.)  Further, we add the equation

\begin{equation}
\label{eq:f0}
f=E^qg+1
\end{equation}
and an equation saying that $g$ has poles at poles of $t$ only (similarly, by forcing the ``denominator'' of some power of $g$ to divide the denominator of some power of $t$ in $O_{K,\pp_K}$.)  This is the second group of equations we mentioned earlier.

   Suppose now that $f \not \in \G_p(t)$ while $f^q \in \G_p[t]$.  In  this case we conclude that $f^q$ is not a $q$-th power in $\G_p(t)$.  Further, since the constant field of $K$ is the same as the constant field of $\G_p(t)$, we conclude that at least one zero of $f^q$ must be of order not divisible by $q$.  In this case, the prime corresponding to this zero is ramified in the extension $K/\G_p(t)$.  However, we know that $E$ has a zero at every prime ramifying in the extension $K/\G_p(t)$ and not a pole of $t$, and we also know that $g$ has a pole at poles of $t$ only.  Thus $f$ does not have a zero at any prime ramifying in the extension $K/\G_p(t)$.  Consequently, $f \in \G_p[t]$ and $g \in \G_p(t)$ with all of its poles at the primes that are poles of $t$.  Hence $g \in \G_p[t]$.

\end{proof}
We now have all the pieces together for the following theorem.
\begin{theorem}
\label{uniform poly}
If $K$ is a function field in one variable over a field of constants $\G_p$ algebraic over a finite field of $p$ elements,  $q$ is a prime number different from $p$, $t$ is a non-constant element of  $K$ and the following conditions are satisfied:
\be
\item $\G_p$ has an extension of degree $q$,
\item $\G_p$ contains a primitive $q$-th root of unity,
\item $t$ is not a $p$-th power (or, alternatively $K/\G_p(t)$, is separable),
\ee
then the polynomial ring $\G_p[t]$ has a $\exists \ldots \exists \forall\forall \exists \ldots \exists$-definition over $K$ requiring three additional parameters (besides $t$), and for a fixed $q$ this definition is uniform across all the fields and elements $t$ satisfying the conditions above.
\end{theorem}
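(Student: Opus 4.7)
The plan is to assemble the equations \eqref{eq:1.0}--\eqref{eq:norms1.1} that were analyzed in the claim, together with an analogous block applied to an auxiliary element $E$ and the relation $f = E^q g + 1$ from \eqref{eq:f0}, into a single first-order formula $\Psi(t, a_{\pp_K}, b_{\pp_K}, d\,;\,f)$ whose prenex form is exactly $\exists\cdots\exists\,\forall\forall\,\exists\cdots\exists\,P$, where $P$ is a finite Boolean combination of polynomial equations with coefficients in $\Z/p$. The claim already proved that the conjunction of \eqref{eq:1.0}--\eqref{eq:norms1.1} holds over $K$ exactly when $f^q \in \G_p[t]$, so the bulk of the mathematical content is in place; what remains is the bookkeeping needed to verify the quantifier count, the parameter count, and uniformity.

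First I would fix the parameters $t, a_{\pp_K}, b_{\pp_K}, d$ as in Notation and Assumptions \ref{not:uniform}: pick an auxiliary prime $\pp_K$ unramified in $M/K$ and prime to $t$; choose $a_{\pp_K}$ of order $-1$ at $\pp_K$ with only one further pole, choose $b_{\pp_K}$ not a $q$-th power modulo $\pp_K$, congruent to $1$ at that further pole, vanishing at every prime ramified in $M/\G_p(t)$ other than poles of $t$, and of height exceeding the vertical-method threshold of Proposition \ref{prop:weak}; and set $d = t^{p^{zu}}$ for $u$ so large that the height inequality \eqref{ineq:height} is forced whenever \eqref{eq:2} holds. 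These are the only data that depend on $K$, giving three parameters besides $t$. Then I would repeat the block \eqref{eq:1.0}--\eqref{eq:norms1.1} verbatim with $f$ replaced by $E$ to express $E^q \in \G_p[t]$, and append \eqref{eq:f0} together with the existential conditions that $g$ has poles only at poles of $t$ (by a divisibility test in $O_{K,\pp_K}$) and that the zero divisor of $E$ is divisible by every prime ramified in $K/\G_p(t)$ but not a pole of $t$ (arranged by requiring the ``numerator'' of a suitable power of $E$ to be divisible by $b_1$, since $b_{\pp_K}$ already vanishes at every such prime).

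Next I would verify that, apart from the universal quantifier in \eqref{eq:norms1.1}, every remaining condition is existentially definable in the language of rings: the integrality and non-integrality tests at $\pp_K$ are existential by Proposition \ref{finitely many for p not q} and Remark \ref{rem:existential}; the $p$-th power relations $T = t^{p^s}$, $Y = f^{qp^s}-f^q$, $v = t^{p^s}-t$ synchronised by a common $s$ are existential by Proposition \ref{prop:pthpower} and Lemma \ref{prop:pthpower2}; the divisibility $s \equiv 0 \bmod z$ in \eqref{eq:2} is existential by Lemma \ref{le:divbyz}, with its two auxiliary parameters absorbable into $d$; and the divisor conditions such as $f_2 \mid t_2$ in $O_{K,\pp_K}$ are existential via the integrality definition. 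Consequently the only universal quantifiers are the two copies of $\forall c$ coming from \eqref{eq:norms1.1} applied to $f$ and to $E$. I would finally translate each norm equation $\mathbf{N}_{L_4(\sqrt[q]{c})/L_4}(y) = x_i$ into a polynomial system in $c$, the $x_i$, and coordinate variables for $y$, all with coefficients in $\Z/p$, by the coordinate-descent procedure spelled out in the proof of Theorem \ref{S-integers}; uniformity is then automatic because the translated system depends on $q$ alone, while the dependence on $K$ is confined to the four parameters.

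The main obstacle, and the only step requiring genuine care, is the prenex bookkeeping. In order to produce exactly the quantifier alternation $\exists\cdots\exists\,\forall\forall\,\exists\cdots\exists$, every auxiliary variable whose value can be chosen independently of the universally quantified $c$'s (the shifts $i, j \in \{0,\dots,q-1\}$ for $f$ and $E$, the $p$-th-power witnesses whose exponents depend only on the height of $f$ and of $E$, the integrality witnesses at $\pp_K$, and the element $g$) must be pulled out in front of the universal block, while the coordinates of the norm solutions $y, y'$ and of any quantities like $L_4$ that genuinely depend on $c$ must be placed in the inner existential block. The legality of pulling $\exists i$ and $\exists j$ outward rests on the observation, implicit in the claim, that a successful shift depends only on the degree of $f^q$ or $E^q$ modulo $q$ and not on $c$. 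Once this rearrangement is checked, the theorem follows from the claim and from the uniform polynomial translation of the norm equations.
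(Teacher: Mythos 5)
There is a genuine gap, and it sits exactly where you declare the step ``existential'': you conflate two different rings attached to $\pp_K$. Proposition \ref{finitely many for p not q} and Remark \ref{rem:existential} give an existential definition of the \emph{valuation ring} $\{x \in K \mid \ord_{\pp_K}x \geq 0\}$, but the block \eqref{rangeO}--\eqref{eq:norms1.1} needs the variables $b_1,b_2,t_1,t_2,f_1,f_2,A,B$ (and the divisibility $f_2\mid t_2$, and the hypotheses of Lemmas \ref{uniformpowers}, \ref{synch}, \ref{le:divbyz}) to live in $O_{K,\{\pp_K\}}$, the ring of elements with poles \emph{only} at $\pp_K$. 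That ring is not known to be existentially definable; it is defined by the $\forall\forall\exists\ldots\exists$ formula of Theorem \ref{S-integers} (with $|\calS_K|=1$), and in the paper's proof this is precisely the source of the two universal quantifiers: the $\forall c$ of \eqref{eq:norms1.1} does not contribute new universal variables but reuses one of the two already deployed to define $O_{K,\{\pp_K\}}$. Your accounting, which locates the two universals in the two copies of \eqref{eq:norms1.1} for $f$ and $E$ and treats everything else as existential, therefore rests on a membership condition you cannot actually express existentially.

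The second, related, problem is your appeal to Proposition \ref{prop:pthpower} and Lemma \ref{prop:pthpower2} for the $p$-th power relations. Those definitions are explicitly \emph{non-uniform} (field dependent), so a formula built from them cannot deliver the uniformity asserted in the theorem; your closing claim that ``uniformity is then automatic'' fails at exactly this point. The whole reason the construction of Section \ref{sec:uniform definition} routes field elements through $O_{K,\{\pp_K\}}$ (via \eqref{eq:t}, \eqref{eq:all poles}) is that the $p$-th power, same-power, and exponent-congruence equations of Lemmas \ref{uniformpowers}, \ref{synch} and \ref{le:divbyz} are uniform only over that ring; the price of this detour is the $\forall\forall$ prefix, which is then shared (by merging universal quantifiers across the conjunction and reusing one of them as the $c$ of the norm conditions) to keep the overall shape $\exists\ldots\exists\forall\forall\exists\ldots\exists$. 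If you instead use the field-level $p$-th power equations you obtain, at best, a non-uniform statement in the spirit of Theorem \ref{thm:best}, not the theorem as stated.
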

\begin{proof}
Choose a prime $\pp_K$ and set the values of parameters $a_{\pp_K},b_{\pp_K}, d$ as in Section \ref{sec:uniform definition}.    We can now rewrite all the equations above  so that all the variables range over $K$ and convert $p$-th power equations to uniform polynomial ones.  We can do this using the fact that we have a definition of $O_{K,\pp_K}$ and over $O_{K,\pp_K}$ the $p$-th power equations and ``the same power equations'' are uniform. We can also use Lemma \ref{le:divbyz} to rewrite congruences on exponents (e.g. the equivalence ``$s \equiv 0 \mod z$'' in  \eqref{eq:2}). Further, we can re-use one of the variable in the range of the universal quantifiers we deployed to define $O_{K,\pp_K}$ in the norm equations  so that we do not need any additional universal quantifiers beyond the two needed to define $O_{K, \pp_K}$ uniformly.
\end{proof}
As we have discussed in the introduction, we can specialize the theorem above to the class of global fields to get the following corollary.
\begin{corollary}
If $K$ is a global field of characteristic $p$ greater than 2 or of characteristic equal to 2 with the constant field containing subfield of size 4, and a non-constant $t \in K$ is not a $p$-th power of $K$, then the polynomial ring of $t$ over the largest  finite field contained in $K$ has a uniform (in $K$ and $p$) definition over $K$ of the form $\exists \ldots \exists \forall\forall \exists \ldots \exists$ using three additional parameters (besides $t$).
\end{corollary}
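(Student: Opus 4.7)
The plan is to deduce the corollary directly from Theorem \ref{uniform poly} by specializing the auxiliary prime $q$ to the two characteristic regimes allowed by the hypothesis. A global field has a finite constant field $\F_{p^k}$, and every finite field of characteristic $p$ admits an extension of each prescribed degree, so the ``$\G_p$ has an extension of degree $q$'' hypothesis of the theorem will be automatic once $q$ has been fixed. The only sensitive point is the requirement $\xi_q \in K$, which is what forces the choice of $q$ to depend on the characteristic.

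When $p > 2$ I would take $q = 2$: here $\xi_2 = -1 \in K$ always, and $\F_{p^{2k}}/\F_{p^k}$ supplies the required degree-$2$ extension of the constant field. Since we also have $q = 2 \neq p$ and $t$ is not a $p$-th power by hypothesis, every assumption of Theorem \ref{uniform poly} is satisfied, and the theorem yields a formula of shape $\exists\cdots\exists\forall\forall\exists\cdots\exists$ with three additional parameters, uniformly across all global fields of odd characteristic and all non-$p$-th-power $t$. When $p = 2$ and $\F_4 \subseteq K$, I would take $q = 3$: the constant field is some $\F_{2^k}$ with $k$ even, so $\F_{2^{3k}}$ is a degree-$3$ extension of it, and any nontrivial element of $\F_4^\times$ is a primitive third root of unity, lying in $K$ by assumption. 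Applying Theorem \ref{uniform poly} with $q = 3$ then gives a uniform formula of the same quantifier shape and parameter count for all such fields.

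I do not foresee a genuine obstacle; the corollary is essentially just the bookkeeping of selecting $q$ correctly in each regime, with the $\F_4$-hypothesis in characteristic $2$ existing precisely to guarantee $\xi_3 \in K$. If one wishes to present a single formula valid uniformly in $p$ across both regimes, one can take the disjunction of the two formulas produced above; this preserves the $\exists\cdots\exists\forall\forall\exists\cdots\exists$ prefix and the three-parameter count, since in each disjunct the parameters are chosen with reference to an auxiliary prime $\pp_K$ of $K$ together with the element $d$ of Notation \ref{not:uniform}, and the same choices can be made to serve both disjuncts simultaneously. Alternatively, one simply reads the corollary as delivering one uniform definition per characteristic regime, which is what Theorem \ref{uniform poly} gives directly.
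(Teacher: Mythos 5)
Your proposal is correct and is exactly the paper's (implicit) argument: the corollary is stated without separate proof as the specialization of Theorem \ref{uniform poly} with $q=2$ for odd characteristic (where $\xi_2=-1$ always lies in $K$) and $q=3$ for characteristic $2$ with $\F_4$ in the constant field (where $\xi_3\in\F_4$), all other hypotheses being automatic for finite constant fields. Your closing aside about merging the two regimes into one formula via disjunction is the only loose point (sharing the two universal variables across the disjuncts needs a characteristic-distinguishing guard in the matrix, since prenexing a disjunction of $\forall\forall\exists$ formulas naively doubles the universal quantifiers), but it is inessential, as the paper reads the uniformity regime by regime just as in your alternative reading.
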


One can also define a polynomial ring over a finite field of a specific size or its intersection with the constant field of $K$.
\begin{corollary}
\label{cor:specific size}
\label{uniform poly}
If $K$ is a function field in one variable over $\G_p$,  $q$ is a prime number different from $p$, $z$ is a positive integer, $t$ is a non-constant element of  $K$ and the following conditions are  satisfied:
\be
\item $ \G_p$ has an extension of degree $q$,
\item $\G_p$ contains a primitive $q$-th root of unity,
\item $t$ is not a $p$-th power (or, alternatively $K/\hat \G_p(t)$, is separable),
\ee
then the polynomial ring $\G_{p^z}[t] \cap K$ has a $\exists \ldots \exists \forall\forall \exists \ldots \exists$-definition over $K$ requiring three additional  parameters (besides $t$), and this definition is uniform across all the fields and elements $t$ satisfying the conditions above.
\end{corollary}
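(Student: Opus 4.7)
The plan is to adapt the proof of Theorem \ref{uniform poly} with three minor modifications. In the setup of Notation \ref{not:uniform}, I would let the integer $z$ be the one specified in the corollary, take $d = t^{p^{zu}}$ with $u$ chosen large enough to dominate the height inequality \eqref{ineq:height}, and enforce $s \equiv 0 \mod z$ in \eqref{eq:2} via Lemma \ref{le:divbyz} applied to this specific $z$. Running the same system \eqref{rangeO}--\eqref{eq:norms1.1} together with the second block producing $E$, $g$, and \eqref{eq:f0} verbatim yields, by the same Chebotarev-plus-weak-vertical argument as in Theorem \ref{uniform poly}, that $f \in \F_{p^s}[t]$ for some $s$ divisible by $z$.

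To narrow from $f \in \F_{p^s}[t] \cap K = (\F_{p^s} \cap \G_p)[t]$ down to the intended ring $\G_{p^z}[t] \cap K = (\F_{p^z} \cap \G_p)[t]$, I would append one additional existential block asserting that $\frac{f^{p^z}-f}{t^{p^z}-t}$ has no poles outside the poles of $t$. This condition is expressible over $O_{K,\{\pp_K\}}$ using the same $\pp_K$ parameter and reusing the two universally quantified variables already present in the $O_{K,\{\pp_K\}}$-definition, so no new universal quantifiers are introduced. Necessity is immediate from Lemma \ref{will work} applied with $s = z$. For sufficiency, given the background $f = \sum a_i t^i \in \F_{p^s}[t]$, the divisibility $(t^{p^z}-t) \mid (f^{p^z}-f)$ in $\G_p[t]$ is analyzed by evaluating at the roots $c \in \F_{p^z}$ of $t^{p^z}-t$ in $\tilde\F_p$, yielding $\sum (a_i^{p^z}-a_i)c^i = 0$ for every $c \in \F_{p^z}$, which by Vandermonde forces $a_i^{p^z}=a_i$ whenever $\deg f < p^z$.

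The hard part will be extending this Vandermonde argument to polynomials of arbitrary degree, since a single integrality equation only constrains the sums of $(a_i^{p^z}-a_i)$ taken over indices sharing the same residue modulo $p^z - 1$. I would resolve this by supplementing with analogous integrality conditions applied to $f^{p^{kz}}$ (obtained from $f$ using the SPPEF machinery of Lemma \ref{prop:pthpower2} together with the parameter $d$) for $k$ ranging over a finite set whose size is controlled by the height bound encoded in $d$. Different values of $k$ trigger distinct iterated Frobenius images within $\Gal(\F_{p^s}/\F_{p^z})$, and the resulting system of Vandermonde conditions decouples the coefficients, pinning each $a_i$ into $\F_{p^z}$, hence into $\F_{p^z} \cap \G_p = \G_{p^z}$. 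Since all supplementary equations are existential and reuse the existing universally quantified variables, the final definition retains the prefix $\exists \ldots \exists \forall\forall \exists \ldots \exists$ with three additional parameters beyond $t$, uniformly across all $K$, $t$, and $p$ satisfying the hypotheses.
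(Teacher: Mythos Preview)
Your route diverges from the paper's, and the ``hard part'' fix in your third paragraph does not work. The integrality conditions you impose on $f^{p^{kz}}$ for different $k$ are all equivalent to the $k=0$ condition. Writing $f=\sum_i a_it^i$ and $g=f^{p^{kz}}$, one has $g^{p^z}-g=(f^{p^z}-f)^{p^{kz}}$; reducing modulo $t^{p^z}-t$ (using $t^{p^{kz}}\equiv t$) the divisibility $(t^{p^z}-t)\mid(g^{p^z}-g)$ becomes $\sum_i(a_i^{p^z}-a_i)^{p^{kz}}c^i=0$ for every $c\in\F_{p^z}$. Since $c^{p^{kz}}=c$ for such $c$, this is precisely the $p^{kz}$-th power of the $k=0$ identity and hence equivalent to it. Frobenius iterates of $f$ cannot decouple coefficients whose indices agree modulo $p^z-1$. (A smaller issue: hijacking the symbol $z$ from Notation~\ref{not:uniform} is risky, since that $z$ is chosen to make the Chebotarev/weak-vertical step go through, and the corollary's $z$ need not satisfy that constraint. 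Leave the definition of $\G_p[t]$ alone.)

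The paper's argument is much shorter and avoids the obstruction entirely. Keep the definition of $\G_p[t]$ from Theorem~\ref{uniform poly} unchanged, and append a purely existential clause, stated inside the already-defined polynomial ring: there exists $s$ with $p^s$ exceeding $\deg f$ (an existential height comparison already available via the $O_{K,\{\pp_K\}}$ machinery) such that both $(t^{p^s}-t)\mid(f^{p^s}-f)$ and $(t^{p^{s+z}}-t)\mid(f^{p^{s+z}}-f)$ hold in $\G_p[t]$. By Lemma~\ref{rational} these force $f\in\F_{p^s}[t]\cap\F_{p^{s+z}}[t]=\F_{p^{\gcd(s,s+z)}}[t]=\F_{p^{\gcd(s,z)}}[t]\subseteq\F_{p^z}[t]$. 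Conversely, for $f\in(\F_{p^z}\cap\G_p)[t]$ both divisibilities hold for every sufficiently large $s$ divisible by $z$, by Lemma~\ref{will work}. The idea you were missing is to let the exponent grow with $f$ and to use a \emph{pair} $s,\,s+z$: the intersection of the two constant fields automatically lands inside $\F_{p^z}$, with no Vandermonde bookkeeping required.
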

\begin{proof}
Suppose we want to define $\G_p[t] \cap \F_{p^z}[t]$ for some fixed $z$.  It is enough to take an element $f \in \G_p[t]$ and  some positive integer $s$ such that $f | (t^{p^s}-t)$ in the polynomial ring and then require two divisibility conditions: $f^{p^s}-f$ is divisible by $t^{p^s}-t$ and $f^{p^{z+s}}-f$ is divisible by $t^{p^{z+s}}-t$ in the polynomial ring.
\end{proof}
\begin{remark}
\label{rem:pthpower}
If $t=w^{p^m}$ and $w$ is not a $p$-th power, then defining the polynomial ring in $t$ is not complicated.  All one needs to do is to define the polynomial ring in $w$ and then define (in a uniform fashion) the set of $p^s$ powers of the elements of the polynomial ring where $s \equiv 0 \mod m$.  Thus we can remove a restriction on $t$ by adding another parameter or more universal quantifiers (to state that some variable is not a $p$-th power).
\end{remark}
We now turn our attention to non-uniform definitions.  This will allow us to use $p$-th power equations defined over the field itself, without a detour to a ring of $\calS_K$-integers.  
We start with the function field with a field of constants having an extension of degree prime to characteristic.
\section{One quantifier definitions of polynomials over  function fields with a constant field extension of degree $q$}

\setcounter{equation}{0}
\begin{notation}
We now review and  specialize our assumptions from  preceding sections.
\label{not:2}
\begin{itemize}
\item Let $\G_p$ have an extension of degree $q$ and let $\xi_q \in \G_p$.
\item Let $t \in K \setminus \G_{p}$ be such that $t$ is not a $p$-th power in $K$ or in other words the extension $K/\G_p(t)$ is separable.
\item Let $\pp_{K,\infty}$ be a pole of $t$.
\item Let $e=e(\pp_{K,\infty}/\pp_{G(t),\infty})$ be the ramification degree of $\pp_{K,\infty}$ over $\G_p(t)$.
\item Let $o$ be the smallest positive integer such that for some $K$-prime $\aaa_K$ we have that $\ord_{\aaa_K}t=o$.  
\end{itemize}
\end{notation}

\begin{lemma}
\label{le:bigc}
If $\pp_K$ is a prime of $K$  such that $\ord_{\pp_K}t \geq 0$, then for any sufficiently large  $n \in \Z_{>0}$  there exists an element $b \in K$ such that $b$ is not a $q$-th power modulo $\pp_K$ and $\ord_{\pp_{K,\infty}}b =-ne$.
\end{lemma}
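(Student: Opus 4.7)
The plan is to combine Kummer theory (to locate a non-$q$-th power in the residue field of $\pp_K$) with Riemann--Roch on the smooth curve underlying $K$ (to realize that residue class with a prescribed pole order at $\pp_{K,\infty}$).

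First I would establish that the residue field $k$ of $\pp_K$ contains some $\alpha\in k^\times\setminus(k^\times)^q$. Since $K/\G_p(t)$ is finite and $\G_p$ is algebraically closed in $K$, the field $k$ is a finite extension of $\G_p$; in particular $k$ is still algebraic over $\F_p$ and $\xi_q\in\G_p\subseteq k$. Writing $k=\G_p(\beta)$ by primitivity and fixing $m$ with $\beta\in\F_{p^m}$, we have $k\subseteq\G_p\cdot\F_{p^m}$, whose Steinitz number is $\operatorname{lcm}(s_{\G_p},m)$. Since $\G_p$ has an extension of degree $q$, $q^{\infty}\nmid s_{\G_p}$, and $m$ is finite, so $q^{\infty}\nmid s_k$; thus $k$ itself admits an extension of degree $q$. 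As $\xi_q\in k$ and $q\neq p$, Kummer theory gives $k^\times/(k^\times)^q\neq 1$, producing $\alpha$.

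Second I would apply Riemann--Roch on $K$. Since $\ord_{\pp_K}t\geq 0$ while $\ord_{\pp_{K,\infty}}t=-e<0$, we have $\pp_K\neq\pp_{K,\infty}$. Let $\mathfrak g$ denote the genus of $K$ and choose $N$ so that $(Ne-1)\deg\pp_{K,\infty}-\deg\pp_K>2\mathfrak g-2$. Then for every $n\geq N$, each of the divisors
\[
(ne-1)\pp_{K,\infty},\quad ne\pp_{K,\infty},\quad (ne-1)\pp_{K,\infty}-\pp_K,\quad ne\pp_{K,\infty}-\pp_K
\]
has degree exceeding $2\mathfrak g-2$, so $\dim L(D)=\deg D-\mathfrak g+1$ in each case. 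Two consequences follow:
\begin{itemize}
\item The $\G_p$-linear reduction map $L((ne-1)\pp_{K,\infty})\to k$ has kernel $L((ne-1)\pp_{K,\infty}-\pp_K)$ of codimension exactly $\deg\pp_K=[k:\G_p]$, hence is surjective. Choose $b_0\in L((ne-1)\pp_{K,\infty})$ with $b_0\equiv\alpha\pmod{\pp_K}$.
\item The strict inclusion $L((ne-1)\pp_{K,\infty}-\pp_K)\subsetneq L(ne\pp_{K,\infty}-\pp_K)$ (dimensions differ by $\deg\pp_{K,\infty}\geq 1$) supplies $y_n\in K$ with $\ord_{\pp_K}y_n\geq 1$ and $\ord_{\pp_{K,\infty}}y_n=-ne$.
\end{itemize}

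Third I would set $b=b_0+y_n$. Since $y_n\in\pp_K$, we have $b\equiv\alpha\pmod{\pp_K}$, so $\ord_{\pp_K}b=0$ and $b$ is not a $q$-th power modulo $\pp_K$. Since $\ord_{\pp_{K,\infty}}b_0\geq -(ne-1)>-ne=\ord_{\pp_{K,\infty}}y_n$, the ultrametric inequality forces $\ord_{\pp_{K,\infty}}b=-ne$ exactly. The only genuinely delicate step is the residue-field analysis, which must accommodate the possibility that $\G_p$ is infinite and algebraic over $\F_p$; the Riemann--Roch book-keeping is routine once the threshold $N$ is chosen in terms of $\mathfrak g$, $\deg\pp_K$, and $\deg\pp_{K,\infty}$.
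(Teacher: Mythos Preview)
Your proof is correct but follows a genuinely different path from the paper's. The paper proceeds much more directly: it picks any $b\in K$ whose residue modulo $\pp_K$ is a non-$q$-th power, then observes that adding $t^{m}P(t)$ --- where $P(t)\in\G_p[t]$ is the monic irreducible corresponding to the prime of $\G_p(t)$ below $\pp_K$ --- does not change the residue at $\pp_K$ (since $P(t)$ vanishes there) while forcing $\ord_{\pp_{K,\infty}}(b+t^mP(t))=-e(m+\deg P)$ for $m$ large. So the paper exploits the explicit rational subfield $\G_p(t)\subset K$ in place of Riemann--Roch.

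Your Riemann--Roch argument is cleaner in that it never touches $t$ beyond noting $\pp_K\neq\pp_{K,\infty}$, and it would transfer verbatim to any function field with two marked places; the paper's argument is quicker but leans on the particular presentation of $K$ over $\G_p(t)$. Your treatment of the residue field is also more careful: the paper simply asserts that the residue field, being a finite extension of $\G_p$ containing $\xi_q$, has a cyclic degree-$q$ extension and hence non-$q$-th powers, whereas you justify this via Steinitz numbers. Either way the substance is the same; the real difference is Riemann--Roch versus explicit polynomial adjustment.
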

\begin{proof}
The residue field of $\pp_K$ is a finite extension of $\G_p$, it contains $\xi_q$, has an extension of degree $q$ which must be cyclic, and therefore  it contains elements that are not $q$-th powers.   Let $b \in K$ be such that that its residue class mod $\pp_{K}$ is not a $q$-th power in the residue field. (This assumption implies that $\ord_{\pp_K}b=0$.)   Let $\pp_{\G_p(t)}$ be the prime of $\G_p(t)$ lying below $\pp_{K}$ and let $P(t)\in \G_p[t]$ be the monic irreducible polynomial corresponding to $\pp_{\G_p(t)}$.  Let $d=\ord_{\pp_{K,\infty}}b$ and observe that $b + t^{m+d}P(t) \equiv b \mod \pp_K$ while
\[
\ord_{\pp_{K,\infty}}(b + t^{m+d}P(t))=-e(m+d+\deg P(t)).
\]
Consequently, as long as $n \geq \deg P(t)+d$, we can find an element of $K$ such that it is equivalent to $b$ mod $\pp_K$ and has order exactly $-ne$ at $\pp_{K,\infty}$.
\end{proof}

We are now ready for the main result of this section.
\begin{theorem}[Defining polynomials using one quantifier]
\label{thm:best}
$\G_p[t]$ is definable over $K$ using one universal quantifier.
\end{theorem}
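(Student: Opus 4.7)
The plan is to realize $\G_p[t]$ via a formula of shape $\forall z\, \exists \bar{u}\, P(y,z,\bar{u},t,\text{parameters})$, in which the single universally quantified variable $z$ plays a dual role: its order at $\pp_{K,\infty}$ dictates the exponent $p^m$ appearing in the $p$-th power equations, and simultaneously $z$ itself serves as the non-$q$-th-power parameter $c$ in a norm equation of the type handled by Proposition \ref{prop:norm2}. The Strong Approximation Theorem, via Lemma \ref{le:bigc}, is what permits collapsing the customary two universal quantifiers (one over $m$, one over $c$) into a single one.

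First I would fix the pole $\pp_{K,\infty}$ of $t$ with ramification $e=e(\pp_{K,\infty}/\pp_{\G_p(t),\infty})$, and use the existential definition of $P(K)$ (Proposition \ref{prop:pthpower}) together with the synchronized power sets from Lemma \ref{prop:pthpower2} to place, existentially, witnesses $T=t^{p^m}$ and $Y=y^{p^m}$ sharing the same unknown exponent $m$. The heart of the formula is then: \emph{either} $\ord_{\pp_{K,\infty}}z\ge 0$, \emph{or} $\ord_{\pp_{K,\infty}}z$ is not of the form $\ord_{\pp_{K,\infty}}t^{p^m}$ for any $m$, \emph{or} there exists $i\in\{0,\ldots,q-1\}$ and a solution $w$ to ${\mathbf N}_{L_4(\sqrt[q]{z})/L_4}(w)=x_i$, where $x_i:=t^i\cdot(y^{qp^m}-y^q)/(t^{p^m}-t)$ and $L_4=K(\sqrt[q]{1+x_i^{-1}},\sqrt[q]{1+(z+z^{-1})x_i^{-1}})$. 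The matching condition ``$\ord_{\pp_{K,\infty}}z=\ord_{\pp_{K,\infty}}T$'' is encoded existentially by divisibility relations between $z$ and $T$ inside $O_{K,\{\pp_{K,\infty}\}}$, which is in turn existentially defined by Proposition \ref{finitely many for p not q} with parameters $a_{\pp_K},b_{\pp_K}$ at an auxiliary prime $\pp_K\neq\pp_{K,\infty}$.

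The correctness check has two halves. If $y\in\G_p[t]$, Lemma \ref{will work} guarantees that for all sufficiently divisible $m$ the ratio $(y^{qp^m}-y^q)/(t^{p^m}-t)$ is itself a polynomial in $t$; choosing $i$ so that $x_i$ has pole order at $\pp_{K,\infty}$ divisible by $q$, Proposition \ref{prop:norm2} yields a solution $w$ for every $z$ with the matching order property, and the universal statement holds. Conversely, if $y\notin\G_p[t]$, then $y$ has a pole at some prime $\Aa\neq\pp_{K,\infty}$. Proposition \ref{prop:pthpower3} then supplies an $m$ for which $\ord_{\Aa}x_i\not\equiv 0\mod q$, while Lemma \ref{le:bigc} produces an element $z\in K$ with $\ord_{\pp_{K,\infty}}z=-ep^m$ that simultaneously fails to be a $q$-th power modulo $\Aa$. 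For this specific witness $z$, Proposition \ref{prop:norm2} rules out solvability of the norm equation, so $y$ fails the universal statement. A secondary layer of equations modelled on $f=E^qg+1$ from Equation \eqref{eq:f0} of Subsection \ref{sec:uniform definition} is then appended to upgrade ``$y^q\in\G_p[t]$'' to ``$y\in\G_p[t]$'', by forcing an auxiliary $E$ to vanish at every prime ramified in $K/\G_p(t)$ outside the pole locus of $t$.

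The hard part will be verifying that this single $z$ can carry both duties without forcing new universal quantifiers during the polynomial-form translation of the norm equation (as spelled out at the end of Theorem \ref{S-integers}): the dependence of $L_4$ on $z$ must be handled purely by coordinate polynomials and reused existential blocks. A subsidiary concern is confirming that Lemma \ref{le:bigc} can always be tuned to deliver an element whose order at $\pp_{K,\infty}$ is exactly $-ep^m$ for the $m$ produced by Proposition \ref{prop:pthpower3}; since the lemma allows any sufficiently large $n$, this reduces to noting that the integers $ep^m$ are cofinal in $\Z_{>0}$, so once $m$ is chosen large enough the required $z$ exists.
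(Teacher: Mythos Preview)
Your outline captures the central trick --- letting the single universal variable $z$ double as both the $p$-th-power index and the norm parameter $c$, with Lemma~\ref{le:bigc} supplying the witnesses --- and this is indeed the idea behind the paper's proof. However, two genuine gaps remain.

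\textbf{The converse direction is incomplete.} You write ``if $y\notin\G_p[t]$, then $y$ has a pole at some prime $\Aa\neq\pp_{K,\infty}$,'' but this implication is false: the ring $O_{K,\{\pp_{K,\infty}\}}$ is strictly larger than $\G_p[t]$ whenever $K\neq\G_p(t)$. Your argument therefore only forces $y$ into a ring of $\calS$-integers, not into the rational subfield. What is missing is the weak vertical method (Proposition~\ref{prop:weak} and Lemma~\ref{use order}): once you know that $\ord_{\pp_K}x_i\geq 0$ or $\equiv 0\bmod q$ at all unramified $\pp_K$, Lemma~\ref{le:ordminus1} upgrades this to $\ord_{\pp_K}(y^{qp^m}-y^q)>0$ at every prime dividing $t^{p^m}-t$, and the weak vertical method then pushes $y^q$ down to $\G_p(t)$ provided $m$ is large enough relative to the height of $y$. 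This step is essential and nowhere hinted at in your proposal.

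\textbf{The forward direction fails as written.} You bind the exponent $m$ rigidly to $\ord_{\pp_{K,\infty}}z$, but if $y\in\F_{p^r}[t]$ with $r\nmid m$, Lemma~\ref{will work} does \emph{not} apply: the ratio $(y^{qp^m}-y^q)/(t^{p^m}-t)$ acquires poles of order $-1$ at zeros of $t^{p^m}-t$, and a suitable $z$ (non-$q$-th-power there, with the matching pole order) will falsify the norm equation. The paper avoids this by introducing a \emph{second} existentially quantified exponent $\hat s$ constrained only by $s\mid\hat s$ (via equation~\eqref{compare powers}), and running the norm equation with $\hat s$ rather than $s$. When $y\in\F_{p^r}[t]$ one then chooses $\hat s$ divisible by both $s$ and $r$, so Lemma~\ref{will work} applies. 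You need this decoupling.
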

\begin{proof}
We assume that $t$ is not a $p$-th power in $K$.  Otherwise as we discussed before in Remark \ref{rem:pthpower}, we replace $t$ by a parameter $w$ such that $t=w^{p^s}$ and $w$ is not a $p$-th power. The polynomial ring of $t$ is then existentially definable in the polynomial ring of $w$.   Let $E(t)$ be the polynomial divisible by all primes which ramify in $K/\G_p(t)$ and are not poles of $t$.  Fix $f \in K$ and consider the following first-order statement $\tt S$ in the language of  rings:
\[
\forall c \in K \exists v, \hat v, \tilde t, \tilde v \in K
\]

\begin{equation}
\label{clause :1}
(\exists s \in \Z_{>0}: f^{p^s}=f) \lor (\ord_{\pp_{K,\infty}} c > e \ord_{\pp_{K,\infty}}t) \lor [ (\exists  s \in \Z_{\geq 0}: v=t^{p^s}) \land (e \ord_{\pp_{K,\infty}}v^p < \ord_{\pp_K} c  <e \ord_{\pp_{K,\infty}}v)]
\end{equation}
or
\begin{equation}
\label{eq:clause2}
\exists  s \in \Z_{\geq 0}:[ v=t^{p^s} \land \tilde v=\tilde t ^{p^ s}]
\end{equation}
and
\begin{equation}
\label{eq:orderpole}
\ord_{p_{K,\infty}}c  = e \ord_{\pp_{K,\infty}}v,
\end{equation}
and
\begin{equation}
\label{hat power}
\exists  \hat s \in \Z_{\geq 0}: \hat v=t^{p^{\hat s}}, 
\end{equation}
and
\begin{equation}
\label{compare powers}
\exists \tilde s:  \frac{\tilde v^o}{\tilde t^o}=\left (\frac{\hat v}{t}\right)^{p^{\tilde s}}
\end{equation}
and
\begin{equation}
\label{eq:orderzero}
 \ord_{\pp_K}f \geq 0 \mbox{ for all $K$-primes } \pp_K \mbox{ ramifying in the extension } K/\G_p(t) \mbox{ and not poles of t}
\end{equation}
and
\begin{equation}
\label{eq:zerooft}
\ord_{\pp_K}f \geq 0 \mbox{ for all } \pp_K \mbox{ such that } \ord_{\pp_K}t >0
\end{equation}
and
\begin{equation}
\label{eq:qth1}
\bar f=E(t)f+1, 
\end{equation}
and
\begin{equation}
\label{eq:x1nonu}
 x_{i,1}=t^i\frac{f^{qp^{\hat s}}-f^q}{t^{p^{\hat s}}-t},  i = 0,\ldots,q-1,
\end{equation}
and
\begin{equation}
\label{eq:x2}
x_{i,2} =t^i\frac{\bar f^{qp^{\hat s}}-\bar f^q}{t^{p^{\hat s}}-t},  i = 0,\ldots,q-1,
\end{equation}
and
\[
\mbox{for all } j=1,2, i =0,\ldots,q-1,
\]
\begin{equation}
\label{eq:orderzerox}
 \ord_{\pp_K}x_{i,j} \geq 0 \mbox{ for all $K$-primes } \pp_K \mbox{ ramifying in the extension } K/\G_p(t) \mbox{ and not poles of }t,
\end{equation}

\begin{equation}
\label{eq:normsi1}
\exists i  \in \{0,\ldots,q-1\}  \exists y \in L_4(\sqrt[q]{c}) : {\mathbf N}_{L_4(\sqrt[q]{c})/L_4}(y) = x_{i,1},
\end{equation}
and
\begin{equation}
\label{eq:normsi2}
\exists i  \in \{0,\ldots,q-1\} \exists \tilde y \in L_4(\sqrt[q]{c}) :   {\mathbf N}_{L_4(\sqrt[q]{c})/L_4}(\tilde y) = x_{i,2},
\end{equation}
where where for each $i$ the field $L_4=K(\sqrt[q]{1+x_i^{-1}}, \sqrt[q]{1+(c+c^{-1})x_i^{-1}})$.

Suppose {\tt S} is true, $f$ is not a constant and for some $c$ we have that equations \eqref{eq:clause2} and \eqref{eq:orderpole} hold. (Note that in this case $c \not =0$.)  In this case, equation \eqref{compare powers} also holds, i.e. 
\begin{equation}
\tilde t^{o(p^s-1)} =t^{p^{\tilde s}(p^{\hat s}-1)}
\end{equation}
If $\aaa_K$ is a prime of $K$ such that $\ord_{\aaa_K}t=o$, then $\aaa_K$ must be a zero of $\tilde t$ and we have 
\[
(\ord_{\aaa_K}\tilde t)o(p^s-1)=op^{\tilde s}(p^{\hat s}-1).
\]
  Consequently, we must have $(p^s-1) | (p^{\hat s}-1)$ and $s |\hat s$.

Now assume that {\tt S} is true while for some prime $\qq_K$ such that $\ord_{\qq_K}t = 0$ we have that $\ord_{\qq_K}f <0$.  (Note that by \eqref{eq:orderzero} and \eqref{eq:zerooft} we cannot have the case where $\ord_{\qq_K}f<0$ while $\ord_{\qq_K}t>0$ or $\qq_K$ ramifies in the extension $K/\G_p(t)$.)  Let $\bar s$ be the smallest positive integer $s$ such that $\ord_{\qq_K}(t^{p^s}-t)>0$.  Such an $\bar s$ exists by Proposition \ref{prop:pthpower3}. Let $c \in K$ be such that $c$ is not a $q$-th power modulo $\qq_K$ and $\ord_{\pp_{K,\infty}}c=ep^s$ with $s \equiv 0 \mod \bar s$.  Existence of such a $c$ follows by Lemma \ref{le:bigc}.  For this $c$ equations \eqref{eq:clause2}-- \eqref{eq:normsi2} must hold. 

As discussed above, from \eqref{compare powers} we also have that if $s \equiv 0 \mod \bar s$, then $\hat s \equiv 0 \mod \bar s$ too.   We now deduce that  $\ord_{\qq_K}x_{i,1} <0$ for all $i=0,\ldots,q-1$ and $\ord_{\qq_K}x_{i,1} \not \equiv 0 \mod q$ for all $i=0,\ldots,q-1$ by Proposition \ref{prop:pthpower3}.  Thus, by Proposition \ref{prop:norm2} we conclude that none of the norm equations in \eqref{eq:normsi1} can hold and therefore we obtain a contradiction.  Consequently, if for some $f$ the sentence above is true, we must conclude that all the poles of $f$ are factors of the pole divisor of $t$.  

Further, for $s$ sufficiently large and such that $\F_{p^s}(t)$ contains all the coefficients of the monic irreducible polynomial of $f^q$ over $\G_p(t)$, we can apply Proposition \ref{use order} (the weak vertical method) to conclude that $f^q \in \G_p(t)$.  Hence $f^q$ is a polynomial.  Similarly, we conclude that $\bar f^q \in \F_p[t]$.  Now we use the same argument as in Section \ref{sec:uniform definition} to conclude that $f \in \F_p[t]$.

We will now assume that $f$ is a polynomial over $\G_p$ and show that the sentence {\tt S} above is true for  $f$.  Without loss of generality we can assume that $f$ is not a constant and so $f^{p^s} \not =f$ for any positive integer $s$.  Let $z$ be a positive integer such that the coefficients of $f$ and $\bar f$ are in $\F_{p^z}$.  Let  $s_1, s_2$ be positive integers such that $o |p^{s_1}(p^{s_2}-1)$. Let $c \in K$ be given. If 
\[
\forall s \in \Z_{\geq 0}: \ord_{\pp_{K,\infty}}c \not = -ep^s,
\]
 then \eqref{clause :1}  is true and {\tt S} is true.  So we can assume that 
\[
 \exists s \in \Z_{\geq 0}:\ord_{\pp_{K,\infty}}c=-ep^s 
\]
    Let $\hat s$ be a multiple of $szs_2$ and let $\tilde s=s_1$ so that
\[
o(p^s-1)|p^{\tilde s}(p^{\hat s}-1).
\]
Now let 
\[
w=\frac{p^{\tilde s}(p^{\hat s}-1)}{o(p^s-1)}, \tilde t =t^w,
\]
 and observe that \eqref{compare powers} now holds.   The finite field of $p^{\hat s}$ elements contains the coefficients of $f$ and $\bar f$, and therefore by Lemma \ref{will work}, for some values of $i_1, i_2=0,\ldots, q-1$ we have that $x_{i_1,1}$ and $x_{i_2,2}$ are both polynomials of degree divisible by $q$.  Therefore, by Proposition \ref{prop:norm2}, we can satisfy  \eqref{eq:normsi1} and \eqref{eq:normsi2}.   We should finish, as before, by noting that the norm equations can be rewritten in the polynomial form with all the variables ranging over $K$.

\end{proof}

\section{Defining polynomial using extensions of degree $p$ in characteristic $p$}
We now adjust the discussion from the preceding section to the situation when the constant filed has extensions only of degree divisible by the characteristic.  First we need a version of Lemma \ref{le:bigc} proved in the same fashion using Lemma \ref{le:finite}.
\begin{lemma}
\label{le:bigcp}
If $\pp_K$ is a prime of $K$  such that $\ord_{\pp_K}t \geq 0$, then for any sufficiently large  $n \in \Z_{>0}$  there exists an element $b \in K$ such that the equation $T^p-b^{p-1}T -1=0$ has no solution modulo $\pp_K$ and $\ord_{\pp_{K,\infty}}b =-ne$, where $e$ is the ramification degree of $\pp_{K,\infty}$ over $\G_p(t)$.
\end{lemma}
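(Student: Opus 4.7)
The plan is to mimic the argument of Lemma \ref{le:bigc}, substituting the role of ``not a $q$-th power in the residue field'' with ``$T^p - b_0^{p-1}T - 1$ has no root in the residue field.''  Structurally, I will first locate a suitable $b_0$ in the residue field $R$ of $\pp_K$, lift $b_0$ to an element $b' \in K$ integral at $\pp_K$ with residue $b_0$, and then adjust by adding a multiple of the defining polynomial of the $\G_p(t)$-prime below $\pp_K$ in order to achieve the prescribed order at $\pp_{K,\infty}$ without disturbing the residue class at $\pp_K$.

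To produce $b_0$ I would apply Lemma \ref{le:finite} to the residue field $R$.  The hypothesis required is that $R$ itself admits an extension of degree $p$.  Since $R$ is a finite extension of $\G_p$ and both lie in $\tilde \F_p$, the Steinitz-number description of subfields of $\tilde \F_p$ shows that the $p$-component of the Steinitz number of $R$ equals the maximum of the $p$-component of $\G_p$ and the (finite) $p$-adic valuation of $[R:\G_p]$, and is therefore still finite under our standing assumption that $\G_p$ has an extension of degree $p$.  Hence $R$ has an extension of degree $p$, and Lemma \ref{le:finite} with $\G_p$ replaced by $R$ delivers $b_0 \in R \setminus \{0\}$ such that $T^p - b_0^{p-1}T - 1$ is irreducible, and in particular root-free, over $R$.

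For the lifting and adjustment, pick any $b' \in K$ integral at $\pp_K$ reducing to $b_0$ modulo $\pp_K$; automatically $\ord_{\pp_K} b' = 0$.  Set $d = \ord_{\pp_{K,\infty}} b'$ and let $P(t) \in \G_p[t]$ be the monic irreducible polynomial associated with the prime $\pp_{\G_p(t)}$ of $\G_p(t)$ lying below $\pp_K$.  For a positive integer $m$ with $m+d \geq 0$, define
\[
b = b' + t^{m+d}P(t).
\]
Since $P(t) \in \pp_{\G_p(t)} \subseteq \pp_K$ and $t^{m+d}$ is integral at $\pp_K$ in this range, $b \equiv b' \equiv b_0 \pmod{\pp_K}$, so $T^p - b^{p-1}T - 1$ retains the property of having no solution modulo $\pp_K$.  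At the distinguished pole,
\[
\ord_{\pp_{K,\infty}}\bigl(t^{m+d}P(t)\bigr) = -e\bigl(m+d+\deg P\bigr),
\]
and once $m$ is large enough that this strictly undercuts $d$, the ultrametric inequality gives $\ord_{\pp_{K,\infty}} b = -e(m+d+\deg P)$.  Choosing $m$ so that $m+d+\deg P = n$ then yields the required $\ord_{\pp_{K,\infty}} b = -ne$ for every sufficiently large $n$.

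The only step that is not a direct transcription from Lemma \ref{le:bigc} is the verification that the residue field inherits a degree-$p$ extension from $\G_p$; this is handled cleanly by the Steinitz-number observation above.  I therefore do not anticipate any substantive new obstacle.
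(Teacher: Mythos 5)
Your proposal is correct and is exactly what the paper intends: the paper gives no separate proof of this lemma, stating only that it is proved ``in the same fashion'' as Lemma \ref{le:bigc} using Lemma \ref{le:finite}, which is precisely the argument you carry out (find a root-free polynomial of the form $T^p-b_0^{p-1}T-1$ over the residue field via Lemma \ref{le:finite}, lift, then add $t^{m+d}P(t)$ to prescribe the order at $\pp_{K,\infty}$ without changing the residue class at $\pp_K$). One cosmetic quibble: the $p$-exponent of the Steinitz number of the residue field $R$ is the sum $a_p+v_p([R:\G_p])$ (equivalently $\max(a_p,v_p(k))$ if one writes $R=\G_p\F_{p^k}$), not the maximum of $a_p$ and $v_p([R:\G_p])$ as you wrote, but this is immaterial since finiteness of the exponent is all your argument uses.
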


We now construct the definition of a polynomial ring for this case.

\begin{proposition}
Let $K$ be a function field over the field of constants $\G_p$ and assume that $\G_p$ has an extension of degree $p$.  In this case for any non-constant $t$, the ring $\G_p[t]$ has a first-order definition over $K$ of the form $\forall \exists \ldots \exists $.
\end{proposition}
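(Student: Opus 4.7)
The plan is to adapt the proof of Theorem \ref{thm:best} by replacing its single universal quantifier $\forall c \in K$, which there controlled a degree-$q$ Kummer extension $K(\sqrt[q]{c})/K$, by $\forall a \in K$ controlling a degree-$p$ Artin-Schreier-like extension $K(\alpha)/K$, where $\alpha$ is a root of $T^p-a^{p-1}T-1=0$ (Lemma \ref{lemma:allsol}), and by replacing Proposition \ref{prop:norm2} throughout by Proposition \ref{prop:need later}. After reducing to the case $t$ not a $p$-th power (Remark \ref{rem:pthpower}), I would write a sentence $\mathtt{S}(f)$ of the shape
\[
\forall a\in K\ \exists\,\hat s,\,v,\,\tilde t,\,\tilde v,\,\ell,\,\bar y\in K:\ \Psi(a,f,t,\ldots),
\]
in which $\Psi$ either declares that $\ord_{\pp_{K,\infty}}a$ does not match $-ep^s$ for any $s\geq 0$ (the analogue of clauses \eqref{clause :1}--\eqref{compare powers}, witnessed by $v=t^{p^{\hat s}}$ together with the $o$-th-root identity \eqref{compare powers} that ties $\hat s$ to $s$), or asserts the order conditions \eqref{eq:orderzero}, \eqref{eq:zerooft} on $f$ together with, for $Y_i=t^i(f^{p^{\hat s}}-f)/(t^{p^{\hat s}}-t)$ and some $i\in\{0,\dots,p-1\}$ and some sufficiently large $\ell$, the solvability of
\[
{\mathbf N}_{H(\alpha)/H}(y)=1+a^{\ell}Y_i
\]
in the sense of Proposition \ref{prop:need later}, with $H=K(\beta)$ and $\beta$ a root of $T^p+T+1/(1+a^{\ell}Y_i)$. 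The auxiliary $p$-th-power identities $v=t^{p^{\hat s}}$ and $\tilde v=\tilde t^{p^{\hat s}}$ are witnessed existentially via Proposition \ref{prop:pthpower} and Lemma \ref{prop:pthpower2}, and the norm equation is rewritten as a polynomial system over $K$ exactly as in the proof of Theorem \ref{S-integers}. Unlike in Theorem \ref{thm:best}, no $\bar f=E(t)f+1$ clause is required, because the norm equation now tests $f$ directly rather than a $q$-th power of $f$.

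For the ``only if'' direction I argue by contraposition: if $\qq_K$ is a pole of $f$ not lying over a pole of $t$ (zeros of $t$ and ramified primes being ruled out by \eqref{eq:zerooft} and \eqref{eq:orderzero}), then using Lemma \ref{le:bigcp} I pick $a\in K$ with $T^p-a^{p-1}T-1$ irreducible in the residue field of $\qq_K$ (so $[K(\alpha):K]=p$ and $\qq_K$ does not split in $K(\alpha)/K$ by Lemma \ref{lemma:allsol}) and with $\ord_{\pp_{K,\infty}}a=-ep^s$ for a prescribed $s$ divisible by the integer $\bar s$ of Proposition \ref{prop:pthpower3}. The growth clause cannot absorb this $a$, so some witness $\hat s$ must exist, and \eqref{compare powers} forces $s\mid\hat s$; consequently $\ord_{\qq_K}(t^{p^{\hat s}}-t)=1$ and $\ord_{\qq_K}Y_i=p^{\hat s}\ord_{\qq_K}f-1\not\equiv 0\pmod p$ for every $i$, contradicting Proposition \ref{prop:need later}. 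Having eliminated all such poles, the weak vertical method (Lemma \ref{use order} in the finite-constant case, Lemma \ref{algebraic} otherwise) applied with $\hat s$ driven arbitrarily large through further existential constraints (via Proposition \ref{prop:pthpower}, as at the end of Section \ref{sec:uniform definition}) then yields $f\in\G_p[t]$.

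The ``if'' direction is routine: given $f\in\G_p[t]$ and any $a\in K$, either $\ord_{\pp_{K,\infty}}a\notin\{-ep^s:s\geq 0\}$ and the growth clause is satisfied, or we take $\hat s$ divisible by the matching $s$ and large enough that $p^{\hat s}$ exceeds the size of the subfield of constants generated by the coefficients of $f$; Lemma \ref{will work} then guarantees that each $Y_i$ is itself a polynomial in $t$, some $i$ makes $\deg_t Y_i$ divisible by $p$, and Corollary \ref{big enough} provides a norm solution for all sufficiently large $\ell$.

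The main obstacle is the ramification at the zeros of $a$ produced by the Artin-Schreier-like extension (Lemma \ref{lemma:allsol}), combined with the Hensel-type bound hidden in Corollary \ref{big enough}: these together force the ``$1+a^{\ell}Y_i$'' buffer in place of a naked norm ${\mathbf N}(y)=Y_i$, and the required threshold on $\ell$ depends on the genus of $K$ and on the orders of the zeros of $a$. This is precisely what makes the resulting definition non-uniform in $K$, in contrast to the two-quantifier uniform definition of Theorem \ref{uniform poly}; threading this bookkeeping while keeping only a single universal quantifier is the delicate part of the argument.
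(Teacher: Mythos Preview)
Your proposal is correct and follows essentially the same route as the paper: replace the Kummer extension $K(\sqrt[q]{c})$ by the Artin--Schreier-type extension generated by a root of $T^p-c^{p-1}T-1$, swap Proposition~\ref{prop:norm2} for Proposition~\ref{prop:need later}, introduce the $1+c^{p^r}x$ buffer to absorb the ramified zeros of $c$ via Corollary~\ref{big enough}, and otherwise mirror the clause structure of Theorem~\ref{thm:best}. You also correctly observe that the auxiliary $\bar f=E(t)f+1$ clause is no longer needed, since the norm equation now controls $f$ directly rather than $f^q$.

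One simplification in the paper that you miss: the $t^i$ factors in your $Y_i$ are unnecessary here. In the $q$-case they were used to force $\deg_t x_{i}$ to be divisible by $q$ so that the pole $\pp_{K,\infty}$ of $t$ satisfies the order condition in Proposition~\ref{prop:norm2}. In the $p$-case, whenever the second clause of the sentence is active one has $\ord_{\pp_{K,\infty}}c=-ep^s<0$, so $\pp_{K,\infty}$ is a \emph{pole} of the universal variable $c$; by Lemma~\ref{lemma:allsol} every pole of $c$ splits completely in $K(\gamma)/K$, and the argument of Proposition~\ref{eq:ordp} shows this splitting persists in $G(\gamma)/G$. Hence the norm equation is automatically locally solvable at $\pp_{K,\infty}$ regardless of $\ord_{\pp_{K,\infty}}x$, and the paper simply takes $x=(f^{p^{\hat s}}-f)/(t^{p^{\hat s}}-t)$ with no $t^i$ correction. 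Your version with the $t^i$ factor still works, it is just redundant. Also, your attribution of the non-uniformity to the $\ell$-threshold is not quite right: $\ell$ (the paper's $r$) is existentially captured via $p$-th power equations, and the genuine source of non-uniformity is that the field-level PPEF equations of Proposition~\ref{prop:pthpower} are themselves not uniform in $K$.
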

\begin{proof}
As before, without loss of generality we assume that $t$ is not a $p$-th power in $K$.   Now fix $f \in K$ and consider the following first-order statement $\tt S_p$ in the language of  rings:
\[
\forall c \in K \exists v, \hat v, \tilde t, \tilde v \in K
\]

\begin{equation}
\label{clause :1p}
(\exists s \in \Z_{>0}: f^{p^s}=f) \lor (\ord_{\pp_{K,\infty}} c > e \ord_{\pp_{K,\infty}}t) 
\end{equation}
\[
\lor [ (\exists  s \in \Z_{\geq 0}: v=t^{p^s}) \land (e \ord_{\pp_{K,\infty}}v^p < \ord_{\pp_K} c  <e \ord_{\pp_{K,\infty}}v)]
\]
or
\begin{equation}
\label{eq:clause2p}
\exists  s \in \Z_{\geq 0}:[ v=t^{p^s} \land \tilde v=\tilde t ^{p^ s}]
\end{equation}
and
\begin{equation}
\label{eq:orderpolep}
\ord_{p_{K,\infty}}c  = e \ord_{\pp_{K,\infty}}v,
\end{equation}
and
\begin{equation}
\label{hat powerp}
\exists  \hat s \in \Z_{\geq 0}: \hat v=t^{p^{\hat s}}, 
\end{equation}
and
\begin{equation}
\label{compare powersp}
\exists \tilde s:  \frac{\tilde v^o}{\tilde t^o}=\left (\frac{\hat v}{t}\right)^{p^{\tilde s}}
\end{equation}
and
\begin{equation}
\label{eq:orderzerop}
 \ord_{\pp_K}f \geq 0 \mbox{ for all $K$-primes } \pp_K \mbox{ ramifying in the extension } K/\G_p(t) \mbox{ and not poles of t}
\end{equation}

and
\begin{equation}
\label{eq:xp}
 x=\frac{f^{p^s}-f}{t^{p^s}-t},
\end{equation}
and

\begin{equation}
\label{eq:orderzeroxp}
 \ord_{\pp_K}x \geq 0 \mbox{ for all $K$-primes } \pp_K \mbox{ ramifying in the extension } K/\F_p(t) \mbox{ and not poles of }t,
\end{equation}
and
\begin{equation}
\label{pext}
\gamma^p-c^{p-1}\gamma-1=0,  
\end{equation}
and
\begin{equation}
\label{eq:normsi1p}
\exists r \in \Z_{>0}:  (1 +c^{p^r}x \not =0) \land {\mathbf N}_{G(\gamma)/G}(y) =1+ c^{p^r}x,
\end{equation}
where $G=K(\delta)$ and $\delta$ is a root of the polynomial $T^p+T+\frac{1}{1 + c^{p^r}x}$. The fact that  sentence  ${\tt S_p}$  is true  if and only if $f$ is a polynomial in $t$ follows by the same argument as in Proposition \ref{thm:best}, except that we need to use Propositions  \ref{eq:ordp} and \ref{prop:need later} in place of Propositions \ref{prop:norm} and \ref{prop:norm2}.

\end{proof}
\section{Higher Transcendence Degree}
\setcounter{equation}{0}
We will now allow the constant field to be arbitrary while not containing the algebraic closure of a finite field and add to our notation list.  As above we also consider separately the case where we use extensions of degree of $q\not=p$ and extensions of degree $p$.  We start with describing notation and assumptions common to both cases.
\begin{notationassumptions}
\label{not:transcendental}
\begin{itemize}
\item  Let $H$ be an arbitrary extension of $\G_p$ such that $\G_p$ is algebraically closed in $H$.
\item Let $t$ be transcendental over $H$.
 \item Let $M$ be a finite Galois extension  of $H(t)$ of degree $d_M$.
\item Let $K$ be the algebraic closure of $\G_p(t)$ in $M$.

\item Let $o$ be the smallest positive integer such that for some $M$-prime $\aaa_M$ we have that $\ord_{\aaa_M}t=o$, let $s_1, s_2$ be such that $o|p^{s_1}(p^{s_2}-1)$ and let $s_0=\ord_qs_2$.  
\end{itemize}
\end{notationassumptions}
We now separate the case where we use extensions of degree $q \not= p$ and the case where we use extensions of degree $p$.
\subsection{Using extensions of degree $q$}
\begin{notationassumptions}
\begin{itemize}
\item Assume $\G_p$ has an extension of degree $q$ and a primitive $q$-th root of unity.

\item Let $\G_p^{\infty}$ be a possibly infinite algebraic extension  of $\G_p$ obtained by adjoining all the elements of $\tilde \F_p$ whose degree over $\G_p$ is prime to $q$.  (Observe that $\G_p^{\infty}$ still has an extension of degree $q$ and if $c \in \G_p$ is not a $q$-th power in $\G_p$, then $c$ is not a $q$-th power in $\G_p^{\infty}$.)
\item Let $m_q$ be such that $\G_p$ contains $\F_{q^{m_q}}$ but not $\F_{q^{m_q+1}}$.
\item Let $N=\G_p^{\infty}M$. (Observe that $[N:\G^{\infty}_pH(t)]=d_M$ and the extension $N/\G^{\infty}_pH(t)$ is normal.)
\item Let $n_q=\ord_{q}d_M$. 
\item Let $\gamma$ generate  an extension of $\G^{\infty}_p$ of degree $q^{n_q}$.
\end{itemize}

\end{notationassumptions}
\begin{lemma}
\label{le:alclose}
If $\pp_{N(\gamma)}$ is a prime of $N(\gamma)$ lying above a $\G_p^{\infty}H(\gamma, t)$-prime $t-a$ with $a \in \G_p^{\infty}$, then $\G_p^{\infty}(\gamma)$ is algebraically closed in the residue field of $\pp_{N(\gamma)}$.
\end{lemma}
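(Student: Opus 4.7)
The plan is to reduce the lemma to showing that the algebraic closure of $\G_p$ in the residue field $k$ of $\pp_{N(\gamma)}$ is exactly $\G_p^\infty(\gamma)$; since any element of $k$ algebraic over $\G_p^\infty(\gamma)$ is already algebraic over $\G_p$, and since $k\supseteq \G_p^\infty(\gamma)$ trivially, this reformulation is equivalent to the stated conclusion.

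First I would record two structural facts. Because $\G_p$ is perfect (being algebraic over $\F_p$) and is algebraically closed in $H$ by hypothesis, the extension $H/\G_p$ is regular, so $H$ is linearly disjoint over $\G_p$ from every algebraic extension of $\G_p$. This implies that $\G_p^\infty H/\G_p^\infty$ and $\G_p^\infty H(\gamma)/\G_p^\infty(\gamma)$ are regular, and that for any finite extension $L/\G_p^\infty H$ the algebraic closure $C_L$ of $\G_p$ in $L$ satisfies $[C_L:\G_p^\infty]\mid [L:\G_p^\infty H]$ by linear disjointness. Second, because $\G_p^\infty$ was built by adjoining to $\G_p$ every element of $\tilde \F_p$ of prime-to-$q$ degree, the Galois group $\mathrm{Gal}(\overline{\G_p}/\G_p^\infty)$ is the pro-$q$ (pro-cyclic, as a quotient of $\hat \Z$) part of $\mathrm{Gal}(\overline{\G_p}/\G_p)$; so $\overline{\G_p}/\G_p^\infty$ has for each $m$ a \emph{unique} subextension of degree $q^m$, and $\G_p^\infty(\gamma)$ is the unique one of degree $q^{n_q}$. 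Applying this to the constant field $C_M$ of $M$, linear disjointness from $H(t)$ gives $[C_M:\G_p]\mid d_M$, hence $[\G_p^\infty C_M:\G_p^\infty]$ is a $q$-power dividing $q^{n_q}$, so $\G_p^\infty C_M\subseteq \G_p^\infty(\gamma)$. In particular the constant field $C_N$ of $N=\G_p^\infty M$ equals $\G_p^\infty C_M\subseteq \G_p^\infty(\gamma)$, and the constant field of $N(\gamma)$ is exactly $\G_p^\infty(\gamma)$.

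Next I would restrict $\pp_{N(\gamma)}$ to $N$ to obtain a prime $\pp_N$ above $(t-a)$ in $\G_p^\infty H(t)$ with residue field $k_N$. Since $N/C_N$ is a regular function-field extension and $\G_p^\infty(\gamma)/C_N$ is algebraic, $N$ and $\G_p^\infty(\gamma)$ are linearly disjoint over $C_N$, so $N(\gamma)\cong N\otimes_{C_N}\G_p^\infty(\gamma)$. Reduction modulo $\pp_{N(\gamma)}$ then factors through the finite product $k_N\otimes_{C_N}\G_p^\infty(\gamma)$, and $k$ is isomorphic to one of its factors, namely the compositum $k_N\cdot\G_p^\infty(\gamma)$ inside a common algebraic closure of $k_N$. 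It therefore suffices to show that the algebraic closure $C_{k_N}$ of $\G_p$ in $k_N$ lies in $\G_p^\infty(\gamma)$. By the first structural fact, $[C_{k_N}:\G_p^\infty]\mid [k_N:\G_p^\infty H]$; the residue degree $[k_N:\G_p^\infty H]$ divides $[N:\G_p^\infty H(t)]$, which in turn divides $d_M$ because $N=M\cdot\G_p^\infty H(t)$ is the compositum of $\G_p^\infty H(t)$ with the Galois extension $M/H(t)$. As $C_{k_N}\subset \overline{\G_p}$ sits inside the pro-$q$ extension of $\G_p^\infty$, $[C_{k_N}:\G_p^\infty]$ is a $q$-power, so it divides $q^{n_q}$, and by the uniqueness from the second structural fact $C_{k_N}\subseteq \G_p^\infty(\gamma)$. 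Combining, the algebraic closure of $\G_p$ in $k=k_N\cdot\G_p^\infty(\gamma)$ equals $C_{k_N}\cdot\G_p^\infty(\gamma)=\G_p^\infty(\gamma)$, which gives the lemma.

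The main obstacle I expect is the clean identification $k=k_N\cdot\G_p^\infty(\gamma)$ via the tensor-product decomposition: one needs to check that the reduction map on $N(\gamma)$, which is determined by its restrictions to $N$ and to the constants $\G_p^\infty(\gamma)$, indeed surjects onto this compositum and corresponds to a single factor of $k_N\otimes_{C_N}\G_p^\infty(\gamma)$, handling uniformly the several primes of $N(\gamma)$ that can lie above $\pp_N$ when $C_N\subsetneq \G_p^\infty(\gamma)$.
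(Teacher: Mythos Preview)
Your argument is correct and follows the same overall strategy as the paper: restrict to the prime $\pp_N$ of $N$, bound the algebraic closure of $\G_p^\infty$ in its residue field $k_N$ by $\G_p^\infty(\gamma)$ using that the residue degree divides $d_M$ and that every finite extension of $\G_p^\infty$ is a $q$-power extension, and then identify the residue field of $\pp_{N(\gamma)}$ with the compositum $k_N\cdot\G_p^\infty(\gamma)$.

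The difference lies in how the last identification is carried out. The paper does it by hand in two steps: it first adjoins to $N$ the intermediate constant field $\hat\G_p^\infty$ (the algebraic closure of $\G_p^\infty$ in $k_N$), observing that the minimal polynomial of a generator splits completely modulo $\pp_N$ so the residue field does not grow; it then adjoins $\gamma$, whose minimal polynomial over $\hat\G_p^\infty$ stays irreducible modulo the prime, so the residue field becomes exactly $k_N(\gamma)$. You instead invoke regularity of $N/C_N$ (valid since $C_N$ is perfect) to get linear disjointness, hence $N(\gamma)\cong N\otimes_{C_N}\G_p^\infty(\gamma)$, and read off the residue field as a factor of $k_N\otimes_{C_N}\G_p^\infty(\gamma)$. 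Your route is more structural and avoids the explicit two-step splitting analysis; the paper's route is more elementary and makes the prime behaviour visible. The ``obstacle'' you flag---matching $\pp_{N(\gamma)}$ with a single tensor factor---is precisely what the paper's two-step argument resolves concretely, and your observation that all factors are isomorphic (since $\G_p^\infty(\gamma)/C_N$ is Galois) handles it equally well.
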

\begin{proof}
Let $\pp_N$ lie below $\pp_{N(\gamma)}$ in $N$ and note that $t-a$ is also the $\G^{\infty}_pH(t)$ prime below it.  Since the extension is Galois, the relative degree $f$ of $\pp_N$ over $t-a$ divides $d_M$.  As there is no constant field extension, and $t-a$ is of degree 1 in the rational field, we conclude that the degree of $\pp_N$ divides $d_M$.  Thus, if $\hat \G_p^{\infty}$ is the algebraic closure of $\G_p^{\infty}$ in the residue field of $\pp_N$, we must have that $[\hat \G_p^{\infty}:\G_p^{\infty}]$ is a power of $q$ and divides $d_M$.  Hence, $[\hat \G_p^{\infty}:\G_p^{\infty}]=q^{\ell}$ with $\ell$ being a positive integer less or equal to $n_q$.  Observe that $\gamma$ is of degree $q^{n_q-\ell}$ over $\hat \G_p^{\infty}$.  

We adjoin $\gamma$ to $N$ in two steps.  First we adjoin $\hat \G_p^{\infty}$.  Let $\beta \in \tilde \F_p$ be a generator of this extension over $N$ and $\G_p^{\infty}$, and note that modulo $\pp_N$ the monic irreducible polynomial of $\beta$ splits completely and thus the relative degree of any $N(\beta)$-factor $\pp_{N(\beta)}$ of $\pp_N$ is one.  Hence the residue fields of $\pp_{N(\beta)}$ and $\pp_N$ are the same and $\hat \G_p^{\infty}$ is algebraically closed in the residue field of $\pp_{N(\beta)}$. 

 We now adjoin $\gamma$, whose monic irreducible polynomial remains prime modulo $\pp_{N(\beta)}$, and thus $\pp_{N(\beta)}$ does not split in this extension.  The residue field of its single factor $\pp_{N(\gamma)}$ will be obtained by adjoining $\gamma$ to the residue field of $\pp_{N(\beta)}$.  Thus the residue field of $\pp_{N(\gamma)}$ will contain $\G_p^{\infty}(\gamma)$ and  $\G_p^{\infty}(\gamma)$ will be algebraically closed in this residue field. 
\end{proof}
We now have the following corollary.

\begin{corollary}
\label{cor:notqth}
Let $c \in \G_p(\gamma)\setminus (\G_p(\gamma))^q$,  i.e. $c$ is not a $q$-th power in $\G_p(\gamma)$.  In this case the following statements are true: 
\be
\item $c$ is not a $q$-th power in $\G^{\infty}_p(\gamma), M(\gamma), N(\gamma)$
\item $c$ is not a $q$-th power modulo all $N(\gamma)$-primes $\pp_{N(\gamma)}$ lying above an $\G_p^{\infty}H(\gamma, t)$-prime $t-a$, with $a \in \G^{\infty}_p$.
\ee
\end{corollary}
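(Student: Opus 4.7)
My plan is to show that the algebraic closure of $\F_p$ in $N(\gamma)$, as well as in the residue field of every $\pp_{N(\gamma)}$ lying above a prime $t-a$ with $a \in \G_p^{\infty}$, equals $\G_p^{\infty}(\gamma)$; once this is established, both conclusions reduce to the statement that $c$ is not a $q$-th power in $\G_p^{\infty}(\gamma)$.  The backbone of the argument is the observation that $\Gal(\tilde \F_p/\G_p^{\infty})$ is pro-$q$: by construction every finite subextension of $\G_p^{\infty}/\G_p$ is generated by elements of prime-to-$q$ degree, so each such subextension has prime-to-$q$ degree; dually, every finite subextension of $\tilde \F_p/\G_p^{\infty}$ has $q$-power degree, and these subfields form a unique chain $\G_p^{\infty} \subset \G_p^{\infty}(\gamma_1) \subset \G_p^{\infty}(\gamma_2) \subset \cdots$ with $[\G_p^{\infty}(\gamma_k):\G_p^{\infty}] = q^k$.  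In particular $\G_p^{\infty}(\gamma)$ is the unique subfield of $\tilde \F_p / \G_p^{\infty}$ of degree $q^{n_q}$.

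First I verify that $c$ is not a $q$-th power in $\G_p^{\infty}(\gamma)$.  Because $\xi_q \in \G_p$, the assumption $c \notin \G_p(\gamma)^q$ makes $\G_p(\gamma,\sqrt[q]{c})/\G_p(\gamma)$ Kummer cyclic of degree $q$.  A finite subextension of $\G_p^{\infty}(\gamma)/\G_p(\gamma)$ has the form $\G_p'(\gamma)$ with $\G_p'/\G_p$ a finite subextension of $\G_p^{\infty}/\G_p$, and $[\G_p'(\gamma):\G_p(\gamma)]$ divides $[\G_p':\G_p]$, hence is prime to $q$.  If $\sqrt[q]{c}$ were to lie in such a $\G_p'(\gamma)$, the degree-$q$ field $\G_p(\gamma,\sqrt[q]{c})$ would be contained in it, forcing $q \mid [\G_p'(\gamma):\G_p(\gamma)]$, a contradiction.

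Next I identify $F$, the algebraic closure of $\F_p$ in $N$.  Since $\G_p$ is algebraically closed in $H$ and $\G_p^{\infty}/\G_p$ is Galois, $\G_p^{\infty}$ and $H$ are linearly disjoint over $\G_p$; it follows readily that $\G_p^{\infty}$ is algebraically closed in $\G_p^{\infty}H(t)$.  This disjointness lifts to $F$: the fields $F$ and $\G_p^{\infty}H(t)$ are linearly disjoint over $\G_p^{\infty}$, so $[F:\G_p^{\infty}] = [F\cdot \G_p^{\infty}H(t):\G_p^{\infty}H(t)]$, which divides $d_M = [N:\G_p^{\infty}H(t)]$.  Since $F \subseteq \tilde \F_p$, the pro-$q$ structure forces $[F:\G_p^{\infty}] = q^e$; combining with $q^e \mid d_M$ yields $e \leq n_q = \ord_q(d_M)$.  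The chain property then gives $F \subseteq \G_p^{\infty}(\gamma)$; since $\gamma$ is itself algebraic over $\F_p$, the algebraic closure of $\F_p$ in $N(\gamma)$ equals $F(\gamma) = \G_p^{\infty}(\gamma)$.

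The two parts of the corollary now fall out.  For part (1), a hypothetical $q$-th root of $c$ in $N(\gamma)$ (which contains both $M(\gamma)$ and $\G_p^{\infty}(\gamma)$) would be algebraic over $\F_p$, hence would sit inside $\G_p^{\infty}(\gamma)$, contradicting the second paragraph.  For part (2), Lemma \ref{le:alclose} asserts that $\G_p^{\infty}(\gamma)$ is algebraically closed in the residue field of any $\pp_{N(\gamma)}$ above $t-a$, so any $q$-th root of $c$ modulo $\pp_{N(\gamma)}$, being algebraic over $\F_p$, must again live inside $\G_p^{\infty}(\gamma)$, again contradicting the second paragraph.  The delicate step is the third paragraph, where the linear-disjointness juggling pins down $[F:\G_p^{\infty}]$ and forces $F \subseteq \G_p^{\infty}(\gamma)$; the rest is essentially Kummer theory and formal bookkeeping.
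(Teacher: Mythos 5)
Your proof is correct, and for part (2) it is precisely the intended derivation: the paper states the corollary as an immediate consequence of Lemma \ref{le:alclose}, combining the algebraic closedness of $\G_p^{\infty}(\gamma)$ in the relevant residue fields with the fact, which you supply via Kummer theory and the prime-to-$q$ degrees of finite subextensions of $\G_p^{\infty}/\G_p$, that $c$ remains a non-$q$-th power in $\G_p^{\infty}(\gamma)$. Where you take a genuinely different route is part (1): instead of deducing it from part (2) (a $q$-th root of $c$ in $N(\gamma)$ is a constant, so one could simply reduce it modulo one of the primes covered by Lemma \ref{le:alclose}), you pin down the exact field of constants of $N(\gamma)$ as $\G_p^{\infty}(\gamma)$, re-running for the global field $N$ the same pro-$q$ and degree-divisibility count that the paper carries out for residue fields inside the proof of Lemma \ref{le:alclose}; this is a slightly stronger statement than needed but makes part (1) independent of part (2). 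The only glossed step is your final assertion that the algebraic closure of $\F_p$ in $N(\gamma)=N\cdot F(\gamma)$ equals $F(\gamma)$: this does not follow merely from $\gamma$ being algebraic over $\F_p$, but it does follow from one more application of the linear-disjointness argument you already used (if $F'$ denotes the constants of $N(\gamma)$, then $F'\cap N=F$ and $F'/F$ is Galois, so $[F':F]=[F'N:N]\le[N(\gamma):N]\le[F(\gamma):F]$, forcing $F'=F(\gamma)$), so I regard it as a minor omission rather than a gap. The quantitative point you do establish carefully—that the constant extension of $N$ has $q$-power degree dividing $d_M$, hence sits inside the unique chain below $\G_p^{\infty}(\gamma)$—is indeed the essential one, since a non-$q$-th power of $\G_p(\gamma)$ can become a $q$-th power in a larger $q$-power constant extension.
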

We now add to our notation list.  
\begin{notation}
Let $c \in \G_p(\gamma)$ denote an element that is not a $q$-th power in the field.
\end{notation}
We will need the following general fact.
\begin{lemma}[Lemma 2.3 of \cite{Sh8}]
\label{le:algebraic}
Let $M/F$ be an arbitrary field extension and let $h(z) \in M(z)$ be such that for infinitely many values of $a \in F$ we have that $h(a) \in F$.  In this case $h(z) \in F(z)$.
\end{lemma}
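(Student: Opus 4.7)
The plan is to use elementary linear algebra to show that the coefficients of $h$, initially lying in $M$, are forced into $F$ by the hypothesis. First I would write $h(z) = p(z)/q(z)$ with coprime $p, q \in M[z]$ and $q \neq 0$. Let $W \subseteq M$ be the finite-dimensional $F$-subspace of $M$ spanned by the (finitely many) coefficients of $p$ and $q$, and fix an $F$-basis $w_1, \ldots, w_m$ of $W$. We may then decompose uniquely $p(z) = \sum_{k=1}^m w_k p_k(z)$ and $q(z) = \sum_{k=1}^m w_k q_k(z)$ with $p_k, q_k \in F[z]$.

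Next I would exploit the hypothesis. Let $A \subseteq F$ be the infinite set of $a \in F$ such that $h(a) \in F$; discarding the finitely many zeros of $q$, we may assume $q(a) \neq 0$ and set $c_a := h(a) = p(a)/q(a) \in F$ for $a \in A$. The equation $p(a) - c_a q(a) = 0$ rewrites as $\sum_k w_k \bigl(p_k(a) - c_a q_k(a)\bigr) = 0$. Because each value $p_k(a) - c_a q_k(a)$ lies in $F$ and the $w_k$ are $F$-linearly independent, every scalar coefficient vanishes: $p_k(a) = c_a q_k(a)$ for every $k$ and every $a \in A$. Cross-multiplying gives $p_k(a)q_{k'}(a) - p_{k'}(a)q_k(a) = 0$ for all $a \in A$ and all $k, k'$; since $A$ is infinite and $p_k q_{k'} - p_{k'} q_k \in F[z]$, this polynomial identity must hold in $F[z]$.

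To conclude, I would choose (without loss of generality) an index with $q_1 \neq 0$ and write $p_1/q_1$ in lowest terms as $\tilde p/\tilde q$ with $\tilde p, \tilde q \in F[z]$ and $\gcd(\tilde p, \tilde q) = 1$. From $p_k q_1 = p_1 q_k$, coprimality in the UFD $F[z]$ forces $p_k = \tilde p \, u_k$ and $q_k = \tilde q \, u_k$ for some $u_k \in F[z]$. Substituting back yields $p(z) = \tilde p(z)\sum_k w_k u_k(z)$ and $q(z) = \tilde q(z)\sum_k w_k u_k(z)$, so $h(z) = \tilde p(z)/\tilde q(z) \in F(z)$, as required. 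There is no serious obstacle in this argument; the one point that needs care is that $\{w_k\}$ must be genuinely $F$-linearly independent in $M$, which is exactly what an $F$-basis of $W$ provides, and the extraction of the common factor $u_k$ relies on unique factorization in $F[z]$.
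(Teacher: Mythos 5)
The paper never proves this lemma itself --- it is quoted from \cite{Sh8} --- so there is no internal argument to compare against; judged on its own, your proof is correct and complete. The decisive steps all check out: the coefficients of $p$ and $q$ span a finite-dimensional $F$-subspace, so the decomposition $p=\sum_k w_k p_k$, $q=\sum_k w_k q_k$ with $p_k,q_k\in F[z]$ exists and is unique; for $a\in A$ the scalars $p_k(a)-c_aq_k(a)$ lie in $F$, so $F$-linear independence of the $w_k$ kills each one; cross-multiplying gives $p_kq_{k'}-p_{k'}q_k\in F[z]$ vanishing on the infinite set $A$ (note $F$ is automatically infinite since $A\subseteq F$ is), hence identically; and the coprimality argument in the UFD $F[z]$ extracts the common factor $u_k$, with the degenerate cases (some $q_k\neq 0$ exists because $q\neq 0$; if $p_1=0$ then $h=0$) causing no trouble. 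Your route differs somewhat from the standard proof of such statements, which fixes degree bounds, views $P_0(a_i)=h(a_i)Q_0(a_i)$ for sufficiently many $a_i$ as a linear system over $F$ in the unknown coefficients, invokes the fact that solvability of a linear system is unchanged under field extension to get a nonzero solution over $F$, and finishes with a degree count on $P_0Q-PQ_0$; your coordinate-wise decomposition along an $F$-basis avoids the rank/solvability argument and the degree count, and directly exhibits $h=\tilde p/\tilde q$ with $\tilde p,\tilde q\in F[z]$, which is arguably cleaner. Either argument suffices for the use made of the lemma in this paper.
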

We now apply the lemma above to a specific class of rational functions.
\begin{lemma}
\label{le:manys}
If $f \in H(t) \subset \tilde \F_pH(t)$ and for infinitely many $s$  there exists a distinct $a \in \F_{p^s}$ (a different $a$ for different $s$) such that $\displaystyle\frac{f^{qp^s}-f^q}{t^{p^s}-t}$ has no pole in $\tilde \F_pH(t)$ at $t-a$, then  $f \in \tilde \G_p(t)$. 
\end{lemma}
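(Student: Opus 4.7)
My plan is to translate the pole-free hypothesis at $t - a_s$ into an equality in the residue field, deduce that $f(a_s)$ lies in $\tilde{\F}_p$ for infinitely many distinct $a_s \in \tilde{\F}_p$, and then invoke Lemma~\ref{le:algebraic} to conclude $f \in \tilde{\F}_p(t) = \tilde{\G}_p(t)$.

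First I would observe that $a_s \in \F_{p^s} \subseteq \tilde{\F}_p$ makes $t - a_s$ a degree-one prime of $\tilde{\F}_p H(t)$ with residue field $\tilde{\F}_p H$, and that the factorization $t^{p^s} - t = \prod_{b \in \F_{p^s}}(t - b)$ gives $\ord_{t - a_s}(t^{p^s} - t) = 1$. Thus the hypothesis $\ord_{t - a_s}\frac{f^{qp^s} - f^q}{t^{p^s} - t} \geq 0$ is equivalent to $\ord_{t - a_s}(f^{qp^s} - f^q) \geq 1$. Note that $a_s$ cannot be a pole of $f$: if $\ord_{t - a_s} f = -n < 0$ then $\ord_{t - a_s}(f^{qp^s} - f^q) = -qp^s n$, forcing a pole of the ratio. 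Hence $f$ is regular at $t - a_s$, and reducing modulo $t - a_s$ yields $f(a_s)^{qp^s} = f(a_s)^q$ inside $\tilde{\F}_p H$. If $f(a_s) = 0$ this is trivially an element of $\tilde{\F}_p$; otherwise $f(a_s)^{q(p^s - 1)} = 1$, making $f(a_s)$ a root of unity. Since $\tilde{\F}_p$ is algebraically closed and every root of unity in characteristic $p$ is algebraic over $\F_p$, we have $f(a_s) \in \tilde{\F}_p$ in every case.

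Since the $a_s$ are pairwise distinct, this gives infinitely many distinct elements of $F := \tilde{\F}_p$ at which $f$ takes values in $F$. Taking $M := H\tilde{\F}_p$ (a field containing both $F$ and $H$, inside which $f \in H(t) \subseteq M(t)$), Lemma~\ref{le:algebraic} applied to $h := f$ yields $f \in F(t) = \tilde{\F}_p(t) = \tilde{\G}_p(t)$. The entire argument reduces to a residue-field computation followed by an off-the-shelf lemma; the only mild care required is bookkeeping to identify $t - a_s$ correctly as a single degree-one prime of $\tilde{\F}_p H(t)$, so that the ``no pole'' condition translates unambiguously into an equality between residues lying in $\tilde{\F}_p H$.
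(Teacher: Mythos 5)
Your proof is correct and follows essentially the same route as the paper: translate the absence of a pole at $t-a_s$ (using $\ord_{t-a_s}(t^{p^s}-t)=1$) into $f(a_s)\in\tilde\F_p$ for infinitely many distinct $a_s\in\tilde\F_p$, then apply Lemma~\ref{le:algebraic}. The only cosmetic difference is that the paper gets $f^q(a)\in\F_{p^s}$ via the factorization $f^{qp^s}-f^q=\prod_{b\in\F_{p^s}}(f^q-b)$, whereas you perform the equivalent residue-field computation $f(a_s)^{qp^s}=f(a_s)^q$ directly.
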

\begin{proof}
 If  $\displaystyle \frac{f^{qp^s}-f^q}{t^{p^s}-t} \in \tilde \F_pH(t)$ has no pole at the prime corresponding to a polynomial $t-a$, with $a \in \F_{p^s}$, then $f^{qp^s}-f^q$ has a zero at this prime.   Hence for some $b \in \F_{p^s}$ we have that $f^q-b$ has a zero at $t-a$ or $f^q(a)=b$. Thus, for infinitely many $a \in \tilde \F_p$, we have that $f^q(a) \in \tilde \F_p$ and $f(a) \in \tilde \F_p(t)$ Therefore, by Lemma \ref{le:algebraic}, we have that $f \in H(t) \cap\tilde \F_p(t)=\G_p(t)$.
\end{proof}
Our next step is to revisit Proposition \ref{prop:norm2} to adjust it for an arbitrary constant field.  We will do this in two lemmas.  The first lemma is of a general nature.  The proof is left to the reader.
 \begin{lemma}
 \label{le:samenorm}
If $G/R$ is a field extension of fields of characteristic not equal to $q$, $\xi_q \in R$, and $c \in R$ is not a $q$-th power in $G$, then for any element $y \in R(\sqrt[q]{c})$ we have that 
\[
{\mathbf N}_{G(\sqrt[q]{c})/G}(y)={\mathbf N}_{R(\sqrt[q]{c})/R}(y).
\]
\end{lemma}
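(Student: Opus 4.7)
The plan is to exploit the Kummer-extension structure and the fact that, under the hypotheses, both norms are products over the same set of conjugates of $y$. Since $R \subseteq G$ and $c$ is not a $q$-th power in $G$, it is \emph{a fortiori} not a $q$-th power in $R$, so by Lemma \ref{le:first}\eqref{firstit:0} (applied in both fields) we have $[R(\sqrt[q]{c}):R] = q = [G(\sqrt[q]{c}):G]$. Because $\xi_q \in R \subseteq G$ and the characteristic is prime to $q$, both extensions are cyclic Galois of degree $q$.

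Next I would identify the two Galois groups explicitly. Let $\sigma_i \in \Gal(R(\sqrt[q]{c})/R)$ be defined by $\sigma_i(\sqrt[q]{c}) = \xi_q^i \sqrt[q]{c}$ for $i = 0, 1, \ldots, q-1$, and similarly let $\tau_i \in \Gal(G(\sqrt[q]{c})/G)$ be defined by $\tau_i(\sqrt[q]{c}) = \xi_q^i \sqrt[q]{c}$. The restriction map $\tau_i \mapsto \tau_i|_{R(\sqrt[q]{c})}$ sends $\tau_i$ to an automorphism of $R(\sqrt[q]{c})$ over $R$ (since $\tau_i$ fixes $G$, hence $R$, and sends the subfield $R(\sqrt[q]{c})$ to itself because it sends $\sqrt[q]{c}$ into it). By comparing the action on $\sqrt[q]{c}$, we get $\tau_i|_{R(\sqrt[q]{c})} = \sigma_i$ for each $i$.

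Finally, for $y \in R(\sqrt[q]{c})$ I would write out the two norms as
\[
{\mathbf N}_{R(\sqrt[q]{c})/R}(y) = \prod_{i=0}^{q-1} \sigma_i(y), \qquad {\mathbf N}_{G(\sqrt[q]{c})/G}(y) = \prod_{i=0}^{q-1} \tau_i(y),
\]
and observe that since $y \in R(\sqrt[q]{c})$, the value $\tau_i(y)$ depends only on $\tau_i|_{R(\sqrt[q]{c})} = \sigma_i$, so $\tau_i(y) = \sigma_i(y)$ term-by-term. The two products therefore coincide, which is the claimed equality.

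There really is no substantive obstacle here beyond bookkeeping: the whole content of the lemma is that restriction of Galois automorphisms is compatible with evaluation on elements of the smaller field, and that the Kummer description of the Galois group is uniform in the base field once $\xi_q$ is present and $c$ remains a non-$q$-th power. The one subtle point worth stating carefully is the argument that restriction gives a well-defined isomorphism of Galois groups; this requires knowing that $R(\sqrt[q]{c})/R$ is degree $q$, which is exactly where the hypothesis ``$c$ not a $q$-th power in $G$'' is used (it forces the stronger statement that $c$ is not a $q$-th power in $R$).
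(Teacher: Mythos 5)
Your argument is correct, and since the paper leaves this lemma's proof to the reader there is no written proof to compare it against: the computation you give — both extensions are degree-$q$ Kummer extensions because $c$ is a fortiori not a $q$-th power in $R$, restriction identifies $\Gal(G(\sqrt[q]{c})/G)$ with $\Gal(R(\sqrt[q]{c})/R)$ via the action on $\sqrt[q]{c}$, and the two norms are then the same product of conjugates term by term — is exactly the standard argument the paper intends. The only cosmetic point is your citation of Lemma \ref{le:first}, which the paper states only for function fields; what you actually need is the general fact that $X^q-c$ is irreducible over any field of characteristic prime to $q$ when $c$ is not a $q$-th power, which is what your degree count really rests on.
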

The next lemma is a variation on the theme of Proposition \ref{prop:norm2}.
\begin{proposition}
\label{prop:norm2t}
If $x \not =0, x \in M$ and there exists $y \in M(\gamma, \sqrt[q]{1+x^{-1}},\sqrt[q]{c}) $ such that
\begin{equation}
\label{eq:norm2t}
{\mathbf N}_{M(\gamma, \sqrt[q]{1+x^{-1}},\sqrt[q]{c})/M(\gamma, \sqrt[q]{1+x^{-1}})}(y) = x,
\end{equation}
then for any  prime $\pp_{M(\gamma)}$ of $M(\gamma)$ it is the case that one of the following conditions hold:
\be
\item \label{nass1:1}$c$ is a $q$-th power mod $\pp_{M(\gamma)}$, or
\item \label{nass1:2}$\ord_{\pp_{M(\gamma)}} x \geq 0$, or
\item \label{nass1:4}$\ord_{\pp_{M(\gamma)}}x \equiv 0 \mod q$.
\ee
At the same time if $x \in \G_p[t]$ and $x$ is of degree divisible by $q$, then \eqref{eq:norm2t} has a solution  $y \in M(\gamma, \sqrt[q]{1+x^{-1}},\sqrt[q]{c}) $. 

\end{proposition}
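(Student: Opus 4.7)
The plan is to follow the structure of the proof of Proposition \ref{prop:norm2}, with two adaptations for the higher-transcendence-degree setting: the primes in the conclusion live on $M(\gamma)$ rather than on a field algebraic over $\F_p$, and for the converse direction the Hasse Norm Principle cannot be invoked directly over $M(\gamma,\sqrt[q]{1+x^{-1}})$, so I would descend to a global function field using Lemma \ref{le:samenorm}.

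For the necessity direction, suppose some prime $\pp_{M(\gamma)}$ fails all three listed conditions, i.e. $c$ is not a $q$-th power modulo $\pp_{M(\gamma)}$, $\ord_{\pp_{M(\gamma)}} x < 0$, and $\ord_{\pp_{M(\gamma)}} x \not\equiv 0 \mod q$. Set $L_1 = M(\gamma,\sqrt[q]{1+x^{-1}})$. Because $\ord_{\pp_{M(\gamma)}} x^{-1} > 0$ forces $1+x^{-1} \equiv 1 \mod \pp_{M(\gamma)}$, Lemma \ref{le:first}(\ref{firstit:3}) makes $\pp_{M(\gamma)}$ split completely in $L_1/M(\gamma)$ (since $q$ is prime). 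For any factor $\pp_{L_1}$, the residue field is unchanged, so $c$ remains not a $q$-th power modulo $\pp_{L_1}$ and $\ord_{\pp_{L_1}} x = \ord_{\pp_{M(\gamma)}} x \not\equiv 0 \mod q$. Since $c \in \G_p(\gamma)$ is a constant and hence a unit at every prime, Lemma \ref{le:first}(\ref{firstit:1}) makes $L_1(\sqrt[q]{c})/L_1$ unramified at $\pp_{L_1}$, and Lemma \ref{le:first}(\ref{firstit:2}) makes $\pp_{L_1}$ inert there. Lemma \ref{le:cycextensions} then forces every norm from $L_1(\sqrt[q]{c})$ to $L_1$ to have order at $\pp_{L_1}$ divisible by $q$, contradicting $\ord_{\pp_{L_1}} x \not\equiv 0 \mod q$.

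For the converse, assume $x \in \G_p[t]$ with $q \mid \deg x$, and set $R = \G_p(\gamma)(t,\sqrt[q]{1+x^{-1}})$. Since $\G_p$ is algebraic over $\F_p$, the field $\G_p(\gamma)$ is finite and $R$ is a global function field. If $c$ is a $q$-th power in $M(\gamma,\sqrt[q]{1+x^{-1}})$, the norm equation is trivial and $y = x$ works; otherwise $c$ is not a $q$-th power in $R \subseteq M(\gamma,\sqrt[q]{1+x^{-1}})$, and $R(\sqrt[q]{c})/R$ is cyclic of degree $q$ and unramified everywhere because $c$ is a constant. I claim that $\ord_{\pp_R} x \equiv 0 \mod q$ at every prime $\pp_R$ of $R$: the only pole of $x$ in $\G_p(\gamma)(t)$ is the pole of $t$, where $\ord x = -\deg x$ is divisible by $q$ and which splits completely in $R$ by Lemma \ref{le:first}(\ref{firstit:3}) (since $1+x^{-1} \equiv 1$ there); any zero of $x$ of order not divisible by $q$ is totally ramified in $R$ by Lemma \ref{le:first}(\ref{firstit:4}), so its factors in $R$ carry orders of $x$ divisible by $q$; and $x$ has order zero at all remaining primes. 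Consequently $x$ is locally a norm at every prime of $R$ (units are norms in unramified extensions of local fields, and elements whose order is a multiple of $q$ differ from units by $q$-th powers, which are norms locally in any degree-$q$ extension). By the Hasse Norm Principle there exists $y \in R(\sqrt[q]{c})$ with ${\mathbf N}_{R(\sqrt[q]{c})/R}(y) = x$, and Lemma \ref{le:samenorm} then identifies this with ${\mathbf N}_{M(\gamma,\sqrt[q]{1+x^{-1}},\sqrt[q]{c})/M(\gamma,\sqrt[q]{1+x^{-1}})}(y) = x$. The only real subtlety is this final descent step: because $M$ may have an arbitrary (possibly transcendental) constant field, the Hasse principle is unavailable over $M(\gamma,\sqrt[q]{1+x^{-1}})$ directly, but the hypothesis that $x$ is a polynomial of degree divisible by $q$ is precisely what allows us to set up the norm equation inside the global function field $R$.
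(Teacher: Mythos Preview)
Your proof is correct and follows the paper's approach closely: the necessity argument mirrors Propositions~\ref{prop:badprime}/\ref{prop:norm2} specialized to a constant $c$, and for the converse you descend to a function field algebraic over $\F_p(t)$, invoke the Hasse Norm Principle there, and lift via Lemma~\ref{le:samenorm} (the paper descends to $K(\gamma,\sqrt[q]{1+x^{-1}})$ with $K$ the algebraic closure of $\G_p(t)$ in $M$, you to $\G_p(\gamma)(t,\sqrt[q]{1+x^{-1}})$, but either works since $x\in\G_p[t]$). One small slip: $\G_p(\gamma)$ need not be finite when $\G_p$ is an infinite algebraic extension of $\F_p$, so your $R$ is not literally a global function field; however, as the paper notes when first invoking the Hasse Norm Principle in Proposition~\ref{prop:norm}, one can always pass to a global subfield containing the relevant data, so your argument survives unchanged.
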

\begin{proof}
The proof of the lemma, as before, is based on the following ideas:
\be
\item Any prime in the pole divisor of $x$ splits completely in the extension 
\[
M(\gamma, \sqrt[q]{1+x^{-1}})/M(\gamma),
\]
 and in $M(\gamma, \sqrt[q]{1+x^{-1}})$ the zero divisor of $x$ is a $q$-th power of another divisor.  
\item If none of the conditions \eqref{nass1:1}--\eqref{nass1:4} is satisfied, then for all factors $\pp_{M(\gamma,\sqrt[q]{1+x^{-1}})}$ of $\pp_{M(\gamma)}$ we have that $\ord_{\pp_{M(\gamma,\sqrt[q]{1+x^{-1}})}}x \not \equiv 0 \mod q$ and $c$ is not a $q$-th power mod $\pp_{M(\gamma,\sqrt[q]{1+x^{-1}})}$.  
\ee
As before the last item implies \eqref{eq:norm2t} has no solution.  At the same time, if $x \in \G_p[t]$ and $x$ is of degree divisible by $q$, given that $c$ is a constant and therefore no prime is ramified in this extension, the first item assures that 
\begin{equation}
\label{eq:norm3t}
{\mathbf N}_{K(\gamma, \sqrt[q]{1+x^{-1}},\sqrt[q]{c})/K(\gamma, \sqrt[q]{1+x^{-1}})}(y) = x,
\end{equation}
has a solution $y \in K(\gamma, \sqrt[q]{1+x^{-1}},\sqrt[q]{c}).$  However, by Lemma \ref{le:samenorm}, we have that 
\[
{\mathbf N}_{K(\gamma, \sqrt[q]{1+x^{-1}},\sqrt[q]{c})/K(\gamma, \sqrt[q]{1+x^{-1}})}(y) ={\mathbf N}_{M(\gamma, \sqrt[q]{1+x^{-1}},\sqrt[q]{c})/M(\gamma, \sqrt[q]{1+x^{-1}})}(y),
\]
and therefore \eqref{eq:norm2t} has a solution also.
\end{proof}
We now ready to describe the most important part of the definition of $K$ over $M$.
\begin{lemma}
\label{le:inK}
Let $f \in M$, $f \not \in \G_p$,  and suppose there exists an infinite set $S$ of positive integers and a positive integer $s_b$ such that for all $s \in S$ we have that $\ord_qs \leq s_b$ and the following equations hold:
\begin{equation}
\label{eq:xt}
 x_{i,s}=t^i\frac{f^{qp^{ s}}-f^q}{t^{p^{ s}}-t},  i = 0,\ldots,q-1,
\end{equation}
and
\begin{equation}
\label{eq:normst}
\exists i  \in \{0,\ldots,q-1\}  \exists y  \in M(\gamma, \sqrt[q]{1+x_{i,s}^{-1}},\sqrt[q]{c}): {\mathbf N}_{M(\gamma, \sqrt[q]{1+x_{i,s}^{-1}},\sqrt[q]{c})/M(\gamma, \sqrt[q]{1+x_{i,s}^{-1}})}(y) = x_{i,s}.
\end{equation}
In this case $f \in K$.  

At the same time, if $f \in \F_{p^{z}}[t]$, and all $s \in S$, the infinite set above, are divisible by $z$, then  for all $s \in S$ for some $i$ the norm equation  \eqref{eq:normst} has a solution $y \in M(\gamma, \sqrt[q]{1+x_{i,s}^{-1}},\sqrt[q]{c})$.
 
\end{lemma}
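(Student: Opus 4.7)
The plan is to extract enough order information from the hypotheses to conclude that every Galois conjugate of $f^q$ specializes to $\tilde\F_p$ at infinitely many $t\mapsto a\in\G_p^\infty$, and then apply Lemma \ref{le:algebraic} to the coefficients of the minimal polynomial of $f^q$ over $H(t)$. Because $S$ is infinite while $i$ in \eqref{eq:normst} ranges over $\{0,\ldots,q-1\}$, pigeonhole lets me pass to an infinite subset $S_0\subseteq S$ on which a single index $i$ works. For every $s\in S_0$ Proposition \ref{prop:norm2t} then yields the trichotomy: at every prime $\pp_{M(\gamma)}$ of $M(\gamma)$, either $c$ is a $q$-th power mod $\pp_{M(\gamma)}$, or $\ord_{\pp_{M(\gamma)}}x_{i,s}\geq 0$, or $\ord_{\pp_{M(\gamma)}}x_{i,s}\equiv 0\pmod q$.

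I then specialize this trichotomy to primes $\pp_{M(\gamma)}$ lying above $\G_p^\infty H(\gamma,t)$-primes $t-a$ with $a\in\G_p^\infty\cap\F_{p^s}$, $a\neq 0$, avoiding the finitely many primes that ramify in $M(\gamma)/\G_p^\infty H(\gamma,t)$ or are poles of $f$. Corollary \ref{cor:notqth} excludes the first alternative. At such $\pp_{M(\gamma)}$ one has $\ord_{\pp_{M(\gamma)}}t=0$, $\ord_{\pp_{M(\gamma)}}(t^{p^s}-t)=1$ (only the factor $t-a$ of $t^{p^s}-t=\prod_{b\in\F_{p^s}}(t-b)$ vanishes there), and $\ord_{\pp_{M(\gamma)}}(f^{qp^s}-f^q)\geq 0$; a brief case-check of the remaining two alternatives forces $\ord_{\pp_{M(\gamma)}}(f^{qp^s}-f^q)\geq 1$, equivalently that $\overline{f^q}$ at $\pp_{M(\gamma)}$ lies in $\F_{p^s}$.

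Now let $\mu(X)=X^d+h_{d-1}X^{d-1}+\cdots+h_0\in H(t)[X]$ be the minimal polynomial of $f^q$ over $H(t)$. For any admissible $a$ the reduction $\bar\mu(X)\in H[X]$ has roots (in an algebraic closure of $H$) equal to the reductions of $f^q$ at the primes of $M$ above $t-a$; because the preceding order conclusion applies to \emph{every} such prime, every root of $\bar\mu$ lies in $\F_{p^s}$, hence $h_j(a)\in\F_{p^s}\subseteq\tilde\F_p$ for every $j$. The hypothesis $\ord_q s\leq s_b$ enters here: for each $s\in S_0$ the subfield $\F_{p^{s/q^{\ord_q s}}}\subseteq\F_{p^s}$ has prime-to-$q$ degree over $\F_p$ and therefore sits inside $\G_p^\infty$, and since $s/q^{\ord_q s}\geq s/q^{s_b}$ is unbounded as $s$ ranges over $S_0$, these subfields give infinitely many admissible $a\in\G_p^\infty$. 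Lemma \ref{le:algebraic} applied with $F=\tilde\F_p$ then gives $h_j\in\tilde\F_p(t)\cap H(t)=\G_p(t)$, where the intersection equals $\G_p(t)$ by uniqueness of reduced rational-fraction representations combined with $\tilde\F_p\cap H=\G_p$. Thus $f^q$, and hence $f$, is algebraic over $\G_p(t)$, forcing $f\in K$.

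For the converse statement, when $f\in\F_{p^z}[t]$ and $z\mid s$, we have $f^q\in\F_{p^z}[t]\subseteq\F_{p^s}[t]$, so Lemma \ref{will work} gives $(f^{qp^s}-f^q)/(t^{p^s}-t)\in\F_{p^z}[t]\subseteq\G_p[t]$; the degrees $\deg x_{0,s},\ldots,\deg x_{q-1,s}$ are $q$ consecutive integers, so exactly one is divisible by $q$, and for that $i$ the existence half of Proposition \ref{prop:norm2t} furnishes the required solution of \eqref{eq:normst}. The main technical obstacle in the forward direction is twofold: one must argue that the single norm-equation hypothesis on $f$ constrains \emph{every} prime of $M(\gamma)$ above each good $t-a$ (so that every Galois conjugate of $f^q$, not just $f^q$ itself, reduces into $\F_{p^s}$), and one must leverage the bound $\ord_q s\leq s_b$ — without it (e.g.\ if $S$ consisted only of powers of $q$) the intersection $\G_p^\infty\cap\F_{p^s}$ would be too small to feed into Lemma \ref{le:algebraic}.
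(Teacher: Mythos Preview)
Your argument follows essentially the same route as the paper's: extract from Proposition~\ref{prop:norm2t} the order trichotomy at primes above $t-a$ with $a\in\G_p^\infty\cap\F_{p^s}$, use Corollary~\ref{cor:notqth} to eliminate the $q$-th-power alternative, deduce that $f^{qp^s}-f^q$ vanishes there, pass to the coefficients of the minimal polynomial of $f^q$, and finish with Lemma~\ref{le:algebraic}. The converse via Lemma~\ref{will work} and the degree-divisible-by-$q$ choice of $i$ is exactly what the paper does as well.

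One point of imprecision worth cleaning up: you run the argument inside $M(\gamma)$, speaking of ``$M(\gamma)$-primes lying above $\G_p^\infty H(\gamma,t)$-primes $t-a$'' and of the reduction $\bar\mu\in H[X]$, but $M(\gamma)$ does not contain $\G_p^\infty H(\gamma,t)$ and for $a\notin H$ the evaluation $\bar\mu$ lands in $H(a)[X]$, not $H[X]$. The paper handles this by first lifting the norm equation from $M(\gamma,\ldots)$ to $N(\gamma,\ldots)=\G_p^\infty M(\gamma,\ldots)$ via Lemma~\ref{le:samenorm} (legitimate since $c$ is not a $q$-th power in $N(\gamma)$), and then working entirely in $N(\gamma)$ where $t-a$ is a genuine degree-one prime and Corollary~\ref{cor:notqth}(2) applies directly. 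Your version can be repaired the same way, or by observing that ``$c$ not a $q$-th power modulo $\pp_{N(\gamma)}$'' descends to the $M(\gamma)$-prime below; either way the lift to $N$ is the step that makes the field tower honest.
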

\begin{proof}
First of all observe that since $S$ is infinite and $\ord_qs$ is bounded, as $s \rightarrow \infty$ we also have $|\F_{p^s}\cap \G^{\infty}_p| \rightarrow \infty$.   Since $c$ is not $q$-th power in $N(\gamma)$, by Lemma \ref{le:samenorm}, we also have that for all $s \in S$, 
\begin{equation}
\label{eq:normst2}
\exists i  \in \{0,\ldots,q-1\}  \exists y  \in N(\gamma, \sqrt[q]{1+x_{i,s}^{-1}},\sqrt[q]{c}): {\mathbf N}_{N(\gamma, \sqrt[q]{1+x_{i,s}^{-1}},\sqrt[q]{c})/N(\gamma, \sqrt[q]{1+x_{i,s}^{-1}})}(y) = x_{i,s}.
\end{equation}
Therefore, by Proposition \ref{prop:norm2t}, there exists an infinite set $S$ of positive integers  such that for all $s \in S$ we can find a distinct  $a \in \F_{p^s} \cap \G_p^{\infty}$ (a different $a$ for different $s$) such that for all $N(\gamma)$-factors $\pp_{N(\gamma)}$ of $t-a$ we have that 
\[
\ord_{\pp_{N(\gamma)}}\frac{f^{qp^s}-f^q}{t^{p^s}-t} \geq 0 \land \ord_{\pp_{N(\gamma)}}f  \geq 0.
\]
In other words, $f^{qp^s}-f=(t^{p^s}-t)\omega$, where $\omega$ has no poles at any prime lying above the rational prime $t-a$ and $f$ does not have a pole at any prime lying above the rational prime $t-a$.   Let  $A_0 + A_1T + \ldots +T^k$, with $0 \leq k \leq d_M$ be the monic irreducible polynomial of $f^q$ over $\G_p^{\infty}H(t)$.  Since each $A_i \in \G_p^{\infty}H(t)$ is a symmetric function of conjugates of $f^q$ over $\G_p^{\infty}H(t)$, it is not hard to see that each $A_i$ satisfies
\[
A_i^{p^s}-A=(t^{p^s}-t)\hat \omega,
\]
where $\hat \omega \in H(t)$ does not have poles at the prime $t-a$.    Thus by Lemma \ref{le:manys}, $A_i \in \G_p^{\infty}(t)$.  Consequently, $f \in \G_p^{\infty}K$ and since $f \in M$, we have that $f \in K$.

The fact that if $f \in \G_p(t)$, we can satisfy \eqref{eq:xt}  and \eqref{eq:normst} can be proved in exactly same fashion as in Theorem \ref{thm:best}.
\end{proof}
We now ready to produce a first order definition of $K$ over $M$.  Unfortunately it will use many universal quantifiers and the non-uniform version of the $p$-th power equations.
\begin{proposition}
\label{prop:definable}
$K$ is first order definable over $M$.
\end{proposition}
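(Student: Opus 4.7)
My plan is to use Lemma \ref{le:inK} to define an intermediate set $A \subset M$ satisfying $\G_p[t] \subset A \subset K$, and then recover $K$ from $A$ as the set of elements of $M$ algebraic over $A$. By the forward direction of Lemma \ref{le:inK}, the condition ``there exists an infinite set $S \subset \Z_{>0}$ with $\ord_q s$ uniformly bounded such that equations \eqref{eq:xt}, \eqref{eq:normst} are satisfiable for every $s \in S$'' forces $f \in K$. Applying the backward direction to each finite subfield $\F_{p^z} \subset \G_p$ (and choosing $S$ to be the multiples of $z$ of fixed $q$-adic valuation) shows that every $f \in \F_{p^z}[t]$ satisfies the condition, so $\G_p[t] = \bigcup_z \F_{p^z}[t] \subset A$.

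To turn the condition of Lemma \ref{le:inK} into a first-order formula, I would write it in the $\forall \exists$ form ``there exists $s_b$ such that for every $N \in \Z_{>0}$ some $s > N$ with $\ord_q s \leq s_b$ makes the equations solvable''. Integers $N, s$ and the bound $s_b$ are parametrised by the field elements $t^{p^N}, t^{p^s}, t^{p^{s_b}}$ via the existential $p$-th power equations over $M$ (Proposition \ref{prop:pthpower}); the relation $s > N$ is encoded by declaring $t^{p^s}$ to be a further $p$-th power iterate of $t^{p^N}$; powers of $f$ are synchronised with powers of $t$ using the same-power equations of Lemma \ref{prop:pthpower2}; and ``$\ord_q s \leq s_b$'' is expressed by a congruence on the $p$-power exponents in the spirit of Lemma \ref{le:divbyz}. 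Finally, the norm equation \eqref{eq:normst} is rewritten as a finite system of polynomial equations over $M$ by the floating-norm rewriting developed in the proof of Theorem \ref{S-integers}. The formula built from these ingredients defines the desired set $A$.

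For the final step, I would use that $[K:\G_p(t)] \leq d_M$ and that every $f \in K$ is algebraic over $\G_p(t) \subset \operatorname{Frac}(A)$: every such $f$ satisfies a polynomial relation of degree $\leq d_M$ with coefficients in $\G_p[t] \subset A$, and conversely any $f \in M$ algebraic over $A \subset K$ is algebraic over $\G_p(t)$ and thus belongs to $K$ by definition. Hence
\[
K \;=\; \Big\{\, f \in M : \exists\, a_0, \dots, a_{d_M} \in A,\ \textstyle\sum_{j=0}^{d_M} a_j f^j = 0 \text{ and some } a_j \neq 0 \,\Big\},
\]
which is first-order definable in $M$ once $A$ is. The main obstacle, I expect, is the clean first-order encoding of the bound ``$\ord_q s \leq s_b$'': since the language of rings carries no integer parameters, $s_b$ itself must be represented by a field element and the bound translated into a divisibility congruence on exponents, and one must carefully coordinate the synchronising $p$-th power equations across the outer universal and inner existential quantifiers so that the bound is enforced consistently. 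A secondary subtlety is that the degree bound $d_M$ in the algebraic-closure step is a fixed constant of $M$, which is what lets the outer quantifier block stay finitary.
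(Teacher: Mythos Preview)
Your overall strategy matches the paper's: invoke Lemma \ref{le:inK} to produce a first-order definable set $A$ with $\G_p[t]\subseteq A\subseteq K$, then recover $K$ from $A$. The paper does the second step by writing every element of $K$ as a $\G_p(t)$-linear combination of a fixed basis (carried as parameters); your algebraicity formula with the degree bound $d_M$ is an equally valid alternative, and the bound $[K:\G_p(t)]\le d_M$ does hold because $\G_p(t)$ is algebraically closed in $H(t)$, so $K$ and $H(t)$ are linearly disjoint over $\G_p(t)$.

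The genuine gap is exactly where you flag it, and your proposed fix does not work as stated. You want to existentially quantify over $s_b$ and then express ``$\ord_q s\le s_b$'' via a congruence ``in the spirit of Lemma \ref{le:divbyz}''. But Lemma \ref{le:divbyz} (and its field analogue in Lemma \ref{prop:pthpower2}) encodes $z\mid s$ for a \emph{fixed} $z$: the modulus appears as the number of equations in the system \eqref{as}, so it cannot vary. The condition $\ord_q s\le s_b$ is equivalent to $q^{s_b+1}\nmid s$, i.e.\ the \emph{negation} of a divisibility, and with $s_b$ a field-represented variable this is not expressible by any bounded disjunction of such congruences. The paper avoids this entirely by observing that $s_b$ need not be quantified: since $\G_p$ has an extension of degree $q$, there is a largest $m_q$ with $\F_{p^{q^{m_q}}}\subset\G_p$, so every $z$ with $\F_{p^z}\subset\G_p$ satisfies $\ord_q z\le m_q$; together with the contribution $s_0=\ord_q s_2$ from the $o$-divisibility requirement this gives a fixed field-constant bound. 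With that bound hard-coded, ``$\ord_q \hat s$ is at most the bound'' becomes the finite disjunction $\bigvee_{i=1}^{q^{\,\mathrm{const}}-1}\bigl(\hat s\equiv i \bmod q^{\,\mathrm{const}}\bigr)$, each disjunct a genuine congruence of the Lemma \ref{le:divbyz} type. Once you make this change your argument goes through.

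A smaller difference: the paper enforces ``$\hat s$ is large'' not by $\hat s>N$ but by $s\mid\hat s$, using the auxiliary variable $\tilde t$ and equation \eqref{compare powerst2} together with the parameter $o$. Your encoding (``$t^{p^{\hat s}}$ is a further $p$-power iterate of $t^{p^N}$'') is simpler and works equally well for producing infinitely many admissible $\hat s$.
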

\begin{proof}
Fix $f \in M$ and consider the following first-order statement $\tt T$ in the language of  rings:
\[
\forall v \in M \exists  \hat v, \tilde t, \tilde v \in M
\]

\begin{equation}
\label{clause :1t}
\forall s \in \Z_{\geq 0} \mbox{ with } s \equiv 1 \mod q : v \not =t^{p^s}
\end{equation}
or
\begin{equation}
\label{eq:clause2t}
\exists   s \in \Z_{\geq 0}  \mbox{ with } s \equiv 1 \mod q :[ v=t^{p^s} \land \tilde v=\tilde t ^{p^ s}]
\end{equation}

and
\begin{equation}
\label{hat powert}
\exists i =1,\ldots, q^{s_0+n_q +1}-1 \exists  \hat s \in \Z_{\geq 0}: \hat v=t^{p^{\hat s}}, \hat s \equiv i \mod q^{s_0+n_q+1}
\end{equation}
and
\begin{equation}
\label{compare powerst2}
\exists \tilde s:  \frac{\tilde v^o}{\tilde t^o}=\left (\frac{\hat v}{t}\right)^{p^{\tilde s}}
\end{equation}
and
\begin{equation}
\label{eq:xt2}
 x_i=t^i\frac{f^{qp^{\hat s}}-f^q}{t^{p^{\hat s}}-t},  i = 0,\ldots,q-1,
\end{equation}

and

\begin{equation}
\label{eq:normsit2}
\exists i  \in \{0,\ldots,q-1\}  \exists y \in M(\gamma,\sqrt[q]{1+x^{-1}},\sqrt[q]{c}) : {\mathbf N}_{M(\gamma,\sqrt[q]{1+x_i^{-1}},\sqrt[q]{c})/M(\gamma,\sqrt[q]{1+x_i^{-1}})}(y) = x_{i},
\end{equation}
Suppose {\tt T} is true for some $f \in M$ and we  are given  $v = p^{s}$ with $s \equiv 1 \mod q$.  For such a $v$ the second clause of the statement must be true.  Thus,  equation \eqref{compare powerst2} must hold, i.e. 
\begin{equation}
\tilde t^{o(p^s-1)} =t^{p^{\tilde s}(p^{\hat s}-1)}
\end{equation}
If $\aaa_K$ is a prime of $K$ such that $\ord_{\aaa_K}t=o$, then $\aaa_K$ must be a zero of $\tilde t$ and we have 
\[
(\ord_{\aaa_K}\tilde t)o(p^s-1)=op^{\tilde s}(p^{\hat s}-1).
\]
  Consequently, we must have $(p^s-1) | (p^{\hat s}-1)$ and $s |\hat s$.  Further from \eqref{hat powert} we also have that $\ord_q\hat s=s_0+n_q$. .  Therefore, if {\tt T} is true for some $f \in M$ then there exists infinitely many $\hat s$ with $\ord_q\hat s=n_q+s_0$ and such that \eqref{eq:xt2} and \eqref{eq:normsit2} hold.  Thus by Lemma \ref{le:inK}, we conclude that $f \in K$.  
  
  Conversely, suppose $f \in \F_{p^{z}}[t] \subset \G_p[t]$ for positive integer $z$.  Note that by definition of $n_q$ and $M$, in this case $\ord_qz \leq n_q$.   Let $v \in M$ be given. If 
\[
\forall s \in \Z_{\geq 0}: v \not =t^{p^s} \mbox{ with } s \equiv 1\mod q,
\]
 then \eqref{clause :1t}  is true and {\tt T} is true.  So we can assume that 
\[
 \exists s \equiv 1 \mod q: v=t^{p^s} 
\]
    Let $\hat s$ be a multiple of $szs_2$, and  note that $\ord_q \hat s\leq n_q+s_0$.  Let $\tilde s=s_1$ so that
\[
o(p^s-1)|p^{\tilde s}(p^{\hat s}-1).
\]
Now let 
\[
w=\frac{p^{\tilde s}(p^{\hat s}-1)}{o(p^s-1)}, \tilde t =t^w,
\]
 and observe that \eqref{compare powerst2} now holds.   
Finally, since $z |\hat s$, for some $i$ we have that \eqref{eq:normsit2} holds.  Any rational function in $\G_p(t)$ can of course be written as a ratio of two polynomials and we can use a basis of $K/\G_p(t)$ as parameters to generate the rest of the field.

\end{proof}
\subsection{Using extensions of degree $p$}
The proof in this case proceeds in an analogous fashion, except that we will need to substitute an extension  of degree which is a power of $p$ for an extension of degree which is a power of $q$.  Also the norm equations \eqref{eq:normsit2} assuring integrality at a set of primes have to be modified along the lines of \eqref{eq:normsi1p}. \\

Before we state the main theorem of this section, we need the following well-known lemma whose proof we omit.
\begin{lemma}
\label{le:perfect}
Let $H$ be a perfect field of characteristic $p>0$, let $t$ be transcendental over $H$, and let $M$ be a finite extension of $H(t)$.  In this case the extension $M/H(t)$ is separable if and only if $t$ is not a $p$-th power in $M$.
\end{lemma}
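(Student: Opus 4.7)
The plan is to prove both implications by standard characteristic-$p$ arguments, exploiting the facts that (a) $H$ perfect implies $H \subseteq M^p$, and (b) for a function field in one variable over a perfect field, $[M:M^p] = p$.

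For the ``only if'' direction, I would argue by contrapositive. Suppose $t = w^p$ for some $w \in M$. First I would show $w \notin H(t)$: if $w = f(t)/g(t)$ with $f,g$ coprime, then $t\,g(t)^p = f(t)^p$; since $H$ is perfect, $g(t)^p$ and $f(t)^p$ are polynomials in $t^p$ with coefficients in $H$, so the LHS lives in $t \cdot H[t^p]$ (only exponents $\equiv 1 \pmod p$) while the RHS lives in $H[t^p]$ (only exponents $\equiv 0 \pmod p$). Both sides must therefore vanish, which is impossible. Thus $X^p - t$, which factors as $(X-w)^p$ over $M$, is irreducible over $H(t)$, so $H(t)(w)/H(t)$ is a purely inseparable subextension of degree $p$ inside $M$, forcing $M/H(t)$ to be inseparable.

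For the ``if'' direction, I would use the criterion: for a field $K$ of characteristic $p$ and finite extension $M/K$, one has $M/K$ separable iff $M = K \cdot M^p$. Because $H$ is perfect, every element of $H$ is a $p$-th power in $H \subseteq M^p$, so $H \subseteq M^p$. Now I invoke the classical fact that for a function field $M$ of transcendence degree one over a perfect field $H$, the Frobenius gives $[M:M^p] = p$. If $t \notin M^p$, then $H(t) \not\subseteq M^p$, so $H(t) \cdot M^p$ is a subfield of $M$ strictly larger than $M^p$; since $[M:M^p]=p$ is prime, we must have $H(t)\cdot M^p = M$, and hence $M/H(t)$ is separable.

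The main delicate point is justifying $[M:M^p] = p$ for a one-variable function field over a perfect constant field; this is a standard fact (e.g.\ a consequence of the existence of a separating transcendence basis, or of the equality $\dim_M \Omega_{M/H} = 1$), and I would either cite it or derive it from the existence of some separating element $u \in M$ (since $H$ is perfect, pick any $u \in M \setminus M^p$; then $M/H(u)$ is separable by the argument of the previous paragraph applied to $u$, and $[M:M^p] = [H(u):H(u^p)] = p$). Everything else reduces to elementary manipulation of $p$-th powers and the coprimality argument in $H[t]$.
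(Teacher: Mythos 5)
The paper explicitly treats this as a ``well-known lemma whose proof we omit,'' so there is no in-paper argument to compare with; judged on its own terms, your proof is essentially correct. The contrapositive direction is complete: the exponent-mod-$p$ computation with a coprime representation shows $t\notin\bigl(H(t)\bigr)^p$, hence $X^p-t=(X-w)^p$ is irreducible over $H(t)$ and $H(t)(w)\subseteq M$ is a purely inseparable subextension of degree $p$, so $M/H(t)$ is inseparable. The converse is also sound: it rests on the standard criterion that a finite extension $M/K$ in characteristic $p$ is separable if and only if $M=K\,M^p$, plus the fact $[M:M^p]=p$ for $M$ of transcendence degree one over a perfect field; both are legitimately citable, and your prime-index argument forcing $H(t)\,M^p=M$ is correct.

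The one point to repair is your fallback justification of $[M:M^p]=p$: as written it is circular, since ``the argument of the previous paragraph applied to $u$'' is exactly the argument that presupposes $[M:M^p]=p$. A self-contained substitute is short: Frobenius is an isomorphism of $M$ onto $M^p$ carrying $H(t)$ onto $H^p(t^p)=H(t^p)$ (perfectness of $H$ enters here), so $[M^p:H(t^p)]=[M:H(t)]=n$, while $[M:H(t^p)]=[M:H(t)]\,[H(t):H(t^p)]=np$; comparing degrees in the tower $H(t^p)\subseteq M^p\subseteq M$ gives $[M:M^p]=p$. With that replacement, or simply with the citation you already propose, the proof is complete.
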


Summarizing the results of this section, we have the following theorem.

\begin{theorem}
\label{thm:general}
Let $H$ be any field of positive characteristic $p$ not containing the algebraic closure of a finite field.  Let $t$ be transcendental over $H$ and let $M$ be a finite extension of $H(t)$.  Further, let $\G_p$ be the algebraic closure of a finite field in $H$ and let $\F_{p^s}$ be any finite field contained in $H$.  In this case $\G_p[t]$ ans $\F_{p^s}[t]$ are first-order definable (with parameters) over $M$.
\end{theorem}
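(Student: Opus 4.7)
The plan is to combine two results already proved in the paper: the first-order definability of the algebraic function subfield $K = M \cap \widetilde{\G_p(t)}$ inside $M$ (Proposition \ref{prop:definable} in the regime where $\G_p$ has an extension of some degree $q \neq p$, or its degree-$p$ analog sketched in Section 7), together with the definability of $\G_p[t]$ inside such a $K$ (Theorem \ref{thm:best} or its degree-$p$ counterpart, the proposition immediately after Theorem \ref{thm:best}). Composing these two definitions will yield $\G_p[t]$ as a first-order definable subset of $M$, and the passage from $\G_p[t]$ to $\F_{p^s}[t]$ is then a polynomial-divisibility condition along the lines of Corollary \ref{cor:specific size}.

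First I would select the auxiliary prime. Since $H$ does not contain $\widetilde{\F}_p$, the field $\G_p$ is a proper subfield of $\widetilde{\F}_p$ and hence has an extension of some prime degree $\ell$. If such an $\ell \neq p$ exists, set $q = \ell$; otherwise every proper finite extension of $\G_p$ has $p$-power degree, so $\G_p$ has an extension of degree $p$ and I use the degree-$p$ branch. In the $q \neq p$ branch, if $\xi_q \notin M$, I would pass to $L = M(\xi_q)$: the algebraic closure of $\F_p$ in $H(\xi_q)$ is $\G_p(\xi_q)$, which still has an extension of degree $q$ (since $[\G_p(\xi_q):\G_p]$ divides $q-1$ and is coprime to $q$) and now contains $\xi_q$, so the hypotheses of Proposition \ref{prop:definable} are met over $L$ with base constant field $H(\xi_q)$. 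That proposition defines $K_L = L \cap \widetilde{\G_p(\xi_q)(t)}$ inside $L$, and Theorem \ref{thm:best} then defines $\G_p(\xi_q)[t]$ inside $K_L$ using one universal quantifier. Because $[L:M] \leq q-1$ is bounded, the rewriting technique from the proof of Theorem \ref{S-integers} (represent each element of $L$ as a coordinate tuple in a fixed power basis for $\xi_q$) pushes the whole definition down to a first-order definition of $\G_p(\xi_q)[t] \cap M = \G_p[t]$ over $M$ itself, with only finitely many additional parameters drawn from $M$. The degree-$p$ branch is entirely parallel, replacing norm equations in a Kummer extension by the Artin--Schreier norm equations \eqref{eq:normsi1p}.

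Three loose ends remain, none of which requires new ideas. (i) If $t$ happens to be a $p$-th power in $M$, I would, as in Remark \ref{rem:pthpower}, introduce a parameter $w$ with $t = w^{p^m}$ and $w$ not a $p$-th power, apply the argument above to $w$, and then cut out $\G_p[t]$ from $\G_p[w]$ via the $p^m$-th power relation provided by Lemma \ref{prop:pthpower2}. (ii) If $M/H(t)$ is not Galois, I would first carry out the construction in its Galois closure $M^{\mathrm{gal}}$ and descend to $M$ by representing $M^{\mathrm{gal}}$-elements as coordinate tuples of $M$-elements with respect to a fixed $M$-basis, the same trick used repeatedly in the paper. (iii) To obtain $\F_{p^s}[t]$ from $\G_p[t]$, I would apply the recipe of Corollary \ref{cor:specific size}: an element $f \in \G_p[t]$ lies in $\F_{p^s}[t]$ precisely when $(t^{p^s}-t) \mid (f^{p^s}-f)$ in the polynomial ring $\G_p[t]$, since this divisibility forces every coefficient $a$ of $f$ to satisfy $a^{p^s} = a$.

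The main obstacle is bookkeeping rather than any single new technical step: one must check that the various rewriting operations (reducing a norm equation in $L(\sqrt[q]{c})$ down to polynomial equations over $M$, encoding $K_L$-valued quantifiers as tuples of $M$-valued quantifiers, and then reformulating the weak-vertical-method assertion of Lemma \ref{le:inK} after these encodings) all compose into a single first-order formula over $M$ with a fixed finite parameter list. A secondary and more expository obstacle is that the paper only sketches the degree-$p$ analog of Proposition \ref{prop:definable}; that analog would need to be written out explicitly, substituting $q$-adic conditions on the exponent $\hat s$ in \eqref{hat powert} by $p$-adic ones and the Kummer norm equations by Artin--Schreier norm equations of the form of Proposition \ref{prop:need later}, while the rest of the argument (the weak vertical method applied over $\G_p^\infty M$, the use of Lemma \ref{le:algebraic} to force coefficients into the algebraic subfield) goes through verbatim.
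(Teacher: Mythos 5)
Your overall architecture is the same as the paper's: reduce to the Galois situation of Notation and Assumptions \ref{not:transcendental}, apply Proposition \ref{prop:definable} together with the one-variable polynomial definitions, descend from a finite extension of $M$ by coordinate rewriting, and carve out $\F_{p^s}[t]$ by divisibility conditions. But there is a genuine gap in your reduction to the Galois setting, and it is exactly the point on which the paper's proof spends almost all of its effort. Your case analysis covers only (i) $t$ a $p$-th power in $M$ and (ii) $M/H(t)$ separable but not normal, where a Galois closure exists. Since $H$ is not assumed perfect, $M/H(t)$ can be inseparable even though $t$ is not a $p$-th power in $M$: take $H=\F_p(u)$ and $M=H(t)(v)$ with $v^p=ut$; then $M=\F_p(u,v)$, $t=v^p/u$ is not a $p$-th power in $M$, yet $M/H(t)$ is purely inseparable, so neither your step (i) nor your step (ii) applies and ``the Galois closure of $M$ over $H(t)$'' does not exist. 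The paper handles precisely this by passing to the perfect (inseparable) closure $H_{ins}$ of $H$, choosing $w\in H_{ins}M$ (not merely in $M$) with $t=w^{p^m}$ and $w$ not a $p$-th power in $H_{ins}M$, so that $H_{ins}M/H_{ins}(w)$ is separable by Lemma \ref{le:perfect} (which requires a perfect base field), observing that $\G_p[t]$ is the set of $p^m$-th powers of elements of $\G_p[w]$, and then descending to a finite extension $\bar H(\alpha,w)\supseteq M$ with $\bar H/H$ finite before taking a Galois closure over $\bar H(w)$. Without this maneuver your appeal to Proposition \ref{prop:definable} is unjustified for general $M$.

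A secondary slip: your criterion for $\F_{p^s}[t]$ inside $\G_p[t]$, namely the single divisibility $(t^{p^s}-t)\mid(f^{p^s}-f)$, is false once $\deg f\geq p^s$: for $a\in\G_p\setminus\F_{p^s}$ the element $f=a(t^{p^s}-t)$ satisfies $f^{p^s}\equiv f\equiv 0 \bmod (t^{p^s}-t)$ but $f\notin\F_{p^s}[t]$. The coefficientwise argument behind Lemma \ref{will work} only runs in reverse when the degree of $f$ is small relative to the exponent, so one must impose the divisibility at arbitrarily large exponents lying in the arithmetic progression $s\Z$ (or otherwise build in a degree bound), which is what the pair of congruence families at the end of the paper's proof and in Corollary \ref{cor:specific size} is designed to achieve. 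The remaining ingredients of your plan --- composing Proposition \ref{prop:definable} with Theorem \ref{thm:best}, adjoining $\xi_q$ and descending by bounded-degree coordinate rewriting, and handling a $p$-th-power $t$ via Remark \ref{rem:pthpower} --- are consistent with the paper's argument.
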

\begin{proof}
Before we proceed with the proof we note that it would be enough to show that the proposition holds for a finite extension of the given field (as we can rewrite all the equations to have variables range in the given field and to have all the coefficients in the given field). 

We start with a definition of $\G_p[t]$.  The only point we need to address before applying Proposition \ref{prop:definable} is the requirement in Notation and Assumptions \ref{not:transcendental} that $M/H(t)$ is a Galois extension.    

Let $\beta_1,\ldots, \beta_k$ be  generators of $M$ over $H(t)$, or in other words let $M=H(t,\beta_1,\ldots, \beta_k)$.  Let $H_{ins}$ be the inseparable closure of $H$,  let $t=w^{p^m}$ for some $m \in \Z_{\geq 0}$  and for some $w \in H_{ins}M$ such that $w$ is not a $p$-th power in $H_{ins}M$.  Note that an element $x \in M$ is a polynomial in $t$ with coefficients in $\G_p$  if and only if it is a $p^m$-th power of a polynomial in $w$ with coefficients in $\G_p$.  Thus it is enough to define the polynomial ring in $w$ with coefficients in $\G_p$.   

Since $w$ is not a $p$-th power, by Lemma \ref{le:perfect}, the extension $H_{ins}M/H_{ins}(w)$ is separable. The extension is also finite, since $H_{ins}M=H_{ins}(w, \beta_1,\ldots,\beta_k)$.  Thus, this extension is simple.  Let $\alpha$ be a generator and let $R(T)=A_0(w)+A_1(w)T +\ldots +T^r$ be the monic irreducible polynomial of $\alpha$ over $H_{ins}(w)$.  Note that by assumption on $\alpha$, it is the case that $R(T)$ does not have multiple roots in the algebraic closure of $H(w)$.  Write $\beta_i=\sum_{j=0}^{r-1}g_{i,j}(w)\alpha^j, g_{i,j} \in H_{ins}(w)$.  Now let $\bar H$ be a finite extension of $H$ containing all the coefficients of $A_0(w), \ldots, A_{r-1}(w), g_{1,0}(w),\ldots, g_{k,r-1}(w)$ and consider the extension $\bar H(\alpha, w)/\bar H(w)$.  This extension is finite and separable, while $\bar H(\alpha,w)$ is a finite extension of $M$.  Finally we can take the Galois closure of $\bar H(\alpha, w)$ over $\bar H(w)$ to satisfy the the requirement in Notation and Assumptions \ref{not:transcendental}.

Finally, to obtain a definition of $\F_{p^s}[t]$, we can define $\G_p[t]$ first and consider elements inside $\G_p[t]$ satisfying $f^{p^{u}}-f\equiv 0 \mod t^{p^{u}}-t$ and $f^{p^{u+s}}-f\equiv 0 \mod t^{p^{u+s}}-t$ for all $u \in \Z_{>0}$.

\end{proof}
\section{Appendix: Rumley's Formula}
In this appendix we estimate a lower bound on the number of universal quantifiers necessary to rewrite R. Rumely's formulas in the prenex normal form.  We rewrite the formula defining  what he called $O_x$, the integral closure of the polynomial ring in $x$ in the global field in question.  To simplify the matters, we consider the case of the characteristic greater than 2, so that in Rumeley's notation we require the case of $l=2$ only.  We therefore suppress this index in the calculations below and rewrite all the formulas part of the way towards the prenex normal form, far enough to estimate the number of universal quantifiers required.  We start with the quadratic norm equations:
\[
N_2(b,a_0,a_1)=N_2(b,\vec{a})=a_0^2-a_1^2b=N(b,a_0,a_1),
\]
and continue through the formulas defining valuation rings:
\[
R(t;c;d) \Longleftrightarrow \exists \vec{a}_1\exists \vec{a}_2\exists \vec{a}_3\exists w (w=N(d,\vec{a}_1) \land cw=N(cd,\vec{a}_2) \land t=N(w,\vec{a}_3)),
\]

\[
S(x, c_1,d_1,c_2,d_2) \Longleftrightarrow \exists t_1\exists t_2 (1+c_1x^l=t_1t_2\land R(t_1,c_1,d_1) \land R(t_2,c_2,d_2)) \Longleftrightarrow
\]
\[
\exists t_1\exists t_2 (1+c_1x^l=t_1t_2\land \exists \vec{a}_{1,1}\exists \vec{a}_{1,2}\exists \vec{a}_{1,3}\exists w_1 (w_1=N(d_1,\vec{a}_{1,1}) \land c_1w_1=N(c_1d_1,\vec{a}_{1,2}) \land t_1=N(w_1,\vec{a}_{1,3})) 
\]
\[
\land \exists \vec{a}_{2,1}\exists \vec{a}_{2,2}\exists \vec{a}_{2,3}\exists w_2 (w_2=N(d_2,\vec{a}_{2,1}) \land c_2w_2=N(c_2d_2,\vec{a}_{2,2}) \land t_2=N(w_2,\vec{a}_{2,3}))) \Longleftrightarrow
\]
\[
\exists t_1\exists t_2 \exists \vec{a}_{1,1}\exists \vec{a}_{1,2}\exists \vec{a}_{1,3}\exists w_1  \exists \vec{a}_{2,1}\exists \vec{a}_{2,2}\exists \vec{a}_{2,3}\exists w_2:
\]
\[
1+c_1x^l=t_1t_2\land w_1=N(d_1,\vec{a}_{1,1}) \land c_1w_1=N(c_1d_1,\vec{a}_{1,2})\land t_1=N(w_1,\vec{a}_{1,3}) \land
\]
\[
 w_2=N(d_2,\vec{a}_{2,1}) \land c_2w_2=N(c_2d_2,\vec{a}_{2,2}) \land t_2=N(w_2,\vec{a}_{2,3})
\]
\[
\Longleftrightarrow  \exists t_1\exists t_2 \exists \vec{a}_{1,1}\exists \vec{a}_{1,2}\exists \vec{a}_{1,3}\exists w_1  \exists \vec{a}_{2,1}\exists \vec{a}_{2,2}\exists \vec{a}_{2,3}\exists w_2:
\]
\[
P(x,c_1,d_1,c_2,d_2,  t_1,t_2,\vec{a}_{1,1},\vec{a}_{1,2},\vec{a}_{1,3}, w_1 , \vec{a}_{2,1},\vec{a}_{2,2}, \vec{a}_{2,3},w_2),
\]
where $P(\ldots)$ is a system of polynomial equations.
Let 
\[
\vec{u} =( t_1,t_2,\vec{a}_{1,1},\vec{a}_{1,2},\vec{a}_{1,3}, w_1 , \vec{a}_{2,1},\vec{a}_{2,2}, \vec{a}_{2,3},w_2)
\]
 and let 
\[
S(x, \vec c)=S(x,c_1,c_2,c_3,c_4)\Longleftrightarrow \exists \vec{u} P(x,c_1,c_2,c_3,c_4, \vec u).
\]
Continuing to follow Rumley's formulas we get
\[
V(x,\vec c) \Longleftrightarrow 
\]
\[
[\forall y\forall z(\lnot S(y, \vec c) \lor \lnot S(z, \vec c) \lor (S(-y, \vec c)\land S(y+z, \vec c)\land S(yz, \vec c) \land ((y=0) \lor S(y, \vec c) \lor S(1/y, \vec c)))] 
\]
\[
\Rightarrow S(x, \vec c)
\]
\[
\Longleftrightarrow
\]
\[
 [\exists y \exists z (S(y, \vec c) \land S(z, \vec c) \land (\lnot S(-y, \vec c) \lor \lnot S(y+z, \vec c) \lor \lnot S(yz, \vec c) \lor  ((y \not =0) \land \lnot S(y, \vec c) \land \lnot S(1/y, \vec c))))] 
 \]
 \[
 \lor  S(x, \vec c)
\]

Now  $O_x$ is defined by the formula  $[\forall \vec c(V(x, \vec{c} ) \Rightarrow  V(t, \vec{c} ) )] \Longleftrightarrow[\forall \vec c(\lnot V(x, \vec c) \lor V(t, \vec{c} ) )]\Longleftrightarrow  $
\[
\forall \vec c
\]
\[
[\forall y\forall z(\lnot S(y,\vec c) \lor \lnot S(z, \vec c) \lor (S(-y,\vec c)\land S(y+z,\vec c)\land S(yz, \vec c)) \land ((y=0) \lor S(y, \vec c) )\lor S(1/y,\vec c)))
\]
\[
\land \lnot S(x,\vec c))
\]
\[
 \lor 
\]
\[
\exists y \exists z (S(y,\vec c) \land S(z,\vec c) \land (\lnot S(-y,\vec c)
\]
\[
 \lor \lnot S(y+z,\vec c) \lor \lnot S(yz,\vec c) \lor ((y \not =0) \land \lnot S(y, \vec c) \land \lnot S(1/y, \vec c))) \lor  S(t, \vec c)]
\]
Above and beyond the variables $c_1,\ldots,c_4, y,z$ appearing in the range of a universal quantifier, we  have 10 variables appearing in the range of a universal quantifier in the negation of $S(\ldots, \vec c)$.  Thus, even assuming we can reuse these variables for all negations of $S(\ldots, \vec c)$  we still have at least  16 variables in the range of a universal quantifier.  The definition of the polynomial ring has the definition of $O_x$ as a part of a conjunction.  So it will require at least as many quantifers.


\begin{thebibliography}{10}

\bibitem{C}
Claude Chevalley.
\newblock {\em Introduction to the theory of Algebraic Functions of One
  Variable}, volume~6 of {\em Mathematical Surveys}.
\newblock AMS, Providence, RI, 1951.

\bibitem{CTSDS}
Jean-Louis Colliot-Th{\'{e}}l{\`{e}}ne, Alexei Skorobogatov, and Peter
  Swinnerton-Dyer.
\newblock Double fibres and double covers: Paucity of rational points.
\newblock {\em Acta Arithmetica}, 79:113--135, 1997.

\bibitem{Da2}
Martin Davis, Yuri Matiyasevich, and Julia Robinson.
\newblock Hilbert's tenth problem. {D}iophantine equations: Positive aspects of
  a negative solution.
\newblock In {\em Proc. Sympos. Pure Math.}, volume~28, pages 323-- 378. Amer.
  Math. Soc., 1976.

\bibitem{Demeyer}
Jeroen Demeyer.
\newblock Recursively enumerable sets of polynomials over a finite field are
  {D}iophantine.
\newblock {\em Invent. Math.}, 170(3):655--670, 2007.

\bibitem{Eis2012}
Kirsten Eisentr{\"a}ger.
\newblock Hilbert's {T}enth {P}roblem for function fields of varieties over
  algebraically closed fields of positive characteristic.
\newblock {\em Monatsh. Math.}, 168(1):1--16, 2012.

\bibitem{EE}
Kirsten Eisentr{\"a}ger and Graham Everest.
\newblock Descent on elliptic curves and {H}ilbert's tenth problem.
\newblock {\em Proc. Amer. Math. Soc.}, 137(6):1951--1959, 2009.

\bibitem{EES}
Kirsten Eisentr{\"a}ger, Graham Everest, and Alexandra Shlapentokh.
\newblock Hilbert's tenth problem and {M}azur's conjectures in complementary
  subrings of number fields.
\newblock {\em Math. Res. Lett.}, 18(6):1141--1162, 2011.

\bibitem{ES2}
Kirsten Eisentr{\"a}ger and Alexandra Shlapentokh.
\newblock Hilbert's tenth problem over function fields of positive
  characteristic not containing the algebraic closure of a finite field.
\newblock "arXiv:1306.2669 [math.NT]".

\bibitem{Friednew}
Michael~D. Fried and Moshe Jarden.
\newblock {\em Field arithmetic}, volume~11 of {\em Ergebnisse der Mathematik
  und ihrer Grenzgebiete. 3. Folge. A Series of Modern Surveys in Mathematics
  [Results in Mathematics and Related Areas. 3rd Series. A Series of Modern
  Surveys in Mathematics]}.
\newblock Springer-Verlag, Berlin, second edition, 2005.

\bibitem{Koenig2}
Jochen Koenigsmann.
\newblock Defining {$\Z$} in {$\Q$}.
\newblock {\em Annals of Math.}
\newblock to appear.

\bibitem{L}
Serge Lang.
\newblock {\em Algebraic Number Theory}.
\newblock Addison Wesley, Reading, MA, 1970.

\bibitem{M1}
Barry Mazur.
\newblock The topology of rational points.
\newblock {\em Experimental Mathematics}, 1(1):35--45, 1992.

\bibitem{M2}
Barry Mazur.
\newblock Questions of decidability and undecidability in number theory.
\newblock {\em Journal of Symbolic Logic}, 59(2):353--371, June 1994.

\bibitem{M3}
Barry Mazur.
\newblock Speculation about the topology of rational points: An up-date.
\newblock {\em Asterisque}, 228:165--181, 1995.

\bibitem{M4}
Barry Mazur.
\newblock Open problems regarding rational points on curves and varieties.
\newblock In A.~J. Scholl and R.~L. Taylor, editors, {\em Galois
  Representations in Arithmetic Algebraic Geometry}. Cambridge University
  Press, 1998.

\bibitem{Po2}
Bjorn Poonen.
\newblock {H}ilbert's {T}enth {P}roblem and {M}azur's conjecture for large
  subrings of ${\Q}$.
\newblock {\em Journal of AMS}, 16(4):981--990, 2003.

\bibitem{PO4}
Bjorn Poonen.
\newblock Characterizing integers among rational numbers with a
  universal-existential formula.
\newblock {\em Amer. J. Math.}, 131(3):675--682, 2009.

\bibitem{PS}
Bjorn Poonen and Alexandra Shlapentokh.
\newblock Diophantine definability of infinite discrete non-archimedean sets
  and diophantine models for large subrings of number fields.
\newblock {\em Journal f{\"u}r die Reine und Angewandte Mathematik},
  2005:27--48, 2005.

\bibitem{Reiner}
I.~Reiner.
\newblock {\em Maximal orders}, volume~28 of {\em London Mathematical Society
  Monographs. New Series}.
\newblock The Clarendon Press Oxford University Press, Oxford, 2003.
\newblock Corrected reprint of the 1975 original, With a foreword by M. J.\
  Taylor.

\bibitem{Rob1}
Julia Robinson.
\newblock Definability and decision problems in arithmetic.
\newblock {\em Journal of Symbolic Logic}, 14:98--114, 1949.

\bibitem{Rob2}
Julia Robinson.
\newblock The undecidability of algebraic fields and rings.
\newblock {\em Proceedings of the American Mathematical Society}, 10:950--957,
  1959.

\bibitem{Rob3}
Julia Robinson.
\newblock On the decision problem for algebraic rings.
\newblock In {\em Studies in mathematical analysis and related topics}, pages
  297--304. Stanford Univ. Press, Stanford, Calif, 1962.

\bibitem{Rum}
Robert Rumely.
\newblock Undecidability and definability for the theory of global fields.
\newblock {\em Transactions of the American Mathematical Society},
  262(1):195--217, November 1980.

\bibitem{Sh31}
Alexandra Shlapentokh.
\newblock Hilbert's tenth problem for rings of algebraic functions in one
  variable over fields of constants of positive characteristic.
\newblock {\em Trans. Amer. Math. Soc.}, 333(1):275--298, 1992.

\bibitem{Sh14}
Alexandra Shlapentokh.
\newblock Diophantine relations between rings of ${S}$-integers of fields of
  algebraic functions in one variable over constant fields of positive
  characteristic.
\newblock {\em J. Symbolic Logic}, 58(1):158--192, 1993.

\bibitem{Sh8}
Alexandra Shlapentokh.
\newblock Algebraic and {T}uring separability of rings.
\newblock {\em Journal of Algebra}, 185:229--257, 1996.

\bibitem{Sh25}
Alexandra Shlapentokh.
\newblock Defining integrality at prime sets of high density over function
  fields.
\newblock {\em Monatshefte fuer Mathematik}, 135:59--67, 2002.

\bibitem{Sh34}
Alexandra Shlapentokh.
\newblock {\em Hilbert's Tenth Problem: Diophantine Classes and Extensions to
  Global Fields}.
\newblock Cambridge University Press, 2006.

\bibitem{Sh36}
Alexandra Shlapentokh.
\newblock Diophantine definability and decidability in the extensions of degree
  2 of totally real fields.
\newblock {\em Journal of Algebra}, 313(2):846--896, 2007.

\bibitem{W}
Andr\'{e} Weil.
\newblock {\em Basic Number Theory}.
\newblock Springer Verlag, 1974.

\end{thebibliography}
\end{document}